\def\R{{\mathbb R}}
\theoremstyle{definition}
\newtheorem{theorem}{Theorem}[section]
\newtheorem{corollary}{Corollary}[section]
\newtheorem{definition}{Definition}[section]
\newtheorem{proposition}{Proposition}[section]
\newtheorem{remark}{Remark}[section]
\newtheorem{lemma}{Lemma}[section]
\begin{document}

\title{\sc{Deterministic ill-posedness and probabilistic well-posedness of the viscous nonlinear wave equation
describing fluid-structure interaction}}
\author{Jeffrey Kuan and Sun\v{c}ica \v{C}ani\'{c}\\
Department of Mathematics\\
University of California Berkeley}
%\date{December 12, 2019}
\maketitle
\begin{abstract}
We study low regularity behavior of the nonlinear wave equation in $\R^2$ augmented by the viscous dissipative effects described by the 
Dirichlet-Neumann operator. Problems of this type arise in fluid-structure interaction where the Dirichlet-Neumann operator
models the coupling between a viscous, incompressible fluid and an elastic structure. 
We show that despite the viscous regularization,
the Cauchy problem with 
initial data $(u,u_t)$ in $H^s(\R^2)\times H^{s-1}(\R^2)$, 
is ill-posed whenever ${{0 < s < s_{cr}}}$,
where the critical exponent $s_{cr}$ depends on the degree of nonlinearity. 
In particular, for the quintic nonlinearity $u^5$, the critical exponent in $\R^2$ is $s_{cr} = 1/2$,
which is the same as the critical exponent for the associated nonlinear wave equation without the viscous term. 
We then show that if the initial data is perturbed using a Wiener randomization, which perturbs initial data
in the frequency space, then the Cauchy problem for the quintic nonlinear viscous wave equation is well-posed
almost surely for the supercritical exponents $s$ such that $-1/6 < s \le s_{cr} = 1/2$. 
To the best of our knowledge, this is the first result showing ill-posedness and probabilistic well-posedness 
for the nonlinear viscous wave equation arising in fluid-structure interaction. 
\end{abstract}

\section{Introduction}\label{introduction}

We study low regularity behavior of the nonlinear wave equation augmented by the viscous effects 
described by the Dirichlet-Neumann operator
typically arising in fluid-structure interaction problems:

{{}{\begin{equation}\label{model_equation}
\partial_{tt} u - \triangle u + u^p + 2\mu \sqrt{-\triangle} \partial_t u = 0, \quad (x,y) \in \R^2, t \in \R,
\end{equation}
where $p>0$ is an odd integer, and $\mu > 0$.}}
The model above can be thought of as a mathematical prototype for the interaction between 
a prestressed, stretched membrane and a viscous, incompressible fluid. 
The membrane (an infinitely large drum surface) is
modeled by the linear wave equation:
$$
\partial_{tt} u - \triangle u = f, \quad (x,y) \in \R^2, t \in \R,
$$
where $u=u(x,y)$ is a scalar function describing transverse membrane displacement. {{}{We assume for simplicity that the structure experiences displacement only in the transverse, $z$ direction and hence experiences no tangential displacements from its reference configuration.}}
The incompressible,
viscous Newtonian fluid is located under the membrane in the half space $z < 0$,
modeled by the  Stokes equations:
\begin{equation}\label{Stokes}
\left.
\begin{array}{rcl}
 \nabla \pi &=& \mu \triangle {\boldsymbol v}, \\
\nabla \cdot {\boldsymbol v} &=& 0,
\end{array}
\right\} \quad {\rm in} \ \Omega = \{ (x,y,z) \in \R^3 : z<0\},
\end{equation}
where $\pi$ is the fluid pressure, ${\boldsymbol v}$ is the fluid velocity, and $\mu$ is the kinematic viscosity coefficient.
The first equation in \eqref{Stokes} describing the second Newton's law of motion (balance of forces), can be written as
$$
 \nabla \cdot \boldsymbol\sigma(\pi,{\boldsymbol v})=0,
$$
where the Cauchy stress tensor $\boldsymbol\sigma$ is given by
{{}{\begin{equation}\label{sigma}
\boldsymbol\sigma = -\pi \boldsymbol{I} + 2\mu {\boldsymbol{D(v)}}
\end{equation}
for Newtonian fluids, $\boldsymbol{I}$ is the identity matrix, and $\boldsymbol{D(v)}$ denotes the symmetrized gradient of velocity.}}

The fluid and structure are coupled through two coupling conditions: the kinematic and dynamic coupling conditions.
For simplicity, we will be assuming that the coupling is at the fixed fluid-structure interface, 
which we denote by {{}{$\Gamma = \{ (x,y,z) \in \R^{3} : z = 0\}$}}. 
The fixed interface corresponds to the reference configuration of the stretched (prestressed) membrane. The coupling conditions read:
\begin{itemize}
\item The kinematic coupling condition (describing continuity of velocities):
\begin{equation}\label{kinematic}
{\boldsymbol v}|_{\Gamma} = u_t {\boldsymbol e}_z, \quad (x,y,z) \in \Gamma,
\end{equation}
where ${\boldsymbol e}_z$ is the unit vector pointing in the positive $z$ direction.
Notice that the kinematic coupling condition \eqref{kinematic} states that on the boundary $\Gamma$, the tangential 
components of the trace of the fluid velocity are equal to zero. Only the normal component of the fluid velocity is assumed
to be nonzero on $\Gamma$.
\item The dynamic coupling condition (describing balance of forces, i.e., the second Newton's law of motion):
\begin{equation}\label{dynamic}
\partial_{tt} u - \triangle u = - \boldsymbol\sigma \boldsymbol{e}_z \cdot \boldsymbol{e}_z +F_{\rm ext}(u).
\end{equation}
The right hand-side of \eqref{dynamic} describes the jump in the normal stress (traction) across the fluid-structure 
interface, where $F_{\rm ext}(u)$ denotes external force, which in general may depend on $u$, acting on the membrane in the 
normal direction $-\boldsymbol{e}_z$. 
\end{itemize}

In our model we will be assuming that the external force is a nonlinear function of $u$, and that it is given by
$$
F_{\rm ext}(u) = - u^p,
$$
where 
the nonlinear term $u^p$ models, for example,  the nonlinear spring-type effects,
distributed across the membrane surface by, say, a surrounding medium (e.g., structure) sitting on top of the membrane. 
These types of external source terms have been used in modeling blood flow in compliant arteries, see \cite{Astorino},
to describe the tissue surrounding pulsating arteries.
In cylindrical geometry, the nonlinear term in the cylindrical nonlinear membrane/shell 
equations appears due to the contribution of the circumferential stress/strain, as was done in \cite{CanicCMAME}. 

{{}{Next, we compute the contribution of the term arising from the Cauchy stress tensor, $-\boldsymbol{\sigma} \boldsymbol{e_{z}} \cdot \boldsymbol{e_{z}}$, on the right hand side of \eqref{dynamic}, which is evaluated on the fixed fluid-structure interface $\Gamma$. Using \eqref{sigma},
\begin{equation*}
-\boldsymbol{\sigma} \boldsymbol{e_{z}} \cdot \boldsymbol{e_{z}} = \pi - 2\mu \frac{\partial v_{z}}{\partial z}.
\end{equation*}
Because we are evaluating this quantity on $\Gamma$, we note that 
\begin{equation}\label{normal}
\frac{\partial v_{z}}{\partial z} = 0 \qquad \text{ on } \Gamma,
\end{equation}
by the divergence-free condition $\nabla \cdot \boldsymbol{v} = 0$ and the fact that $v_{x} = v_{y} = 0$ on $\Gamma$, since we are assuming that the structure experiences displacement only in the transverse $z$ direction. Therefore,
\begin{equation}\label{pressure}
-\boldsymbol{\sigma} \boldsymbol{e_{z}} \cdot \boldsymbol{e_{z}} = \pi,
\end{equation}
where $\pi$ is the fluid pressure given as a solution to the Stokes equations \eqref{Stokes}.

So it remains to find an appropriate expression for $\pi$ in terms of the structure displacement $u$ as follows. We will derive the formula
\begin{equation}\label{pressureDN}
\pi = -2\mu \sqrt{-\Delta} u_{t} \quad {{} \text{on} \ \Gamma},
\end{equation}
under the assumption that $u$ and $u_{t}$ are smooth functions, along with their spatial derivatives, that are rapidly decreasing at infinity. We will also impose the boundary conditions on \eqref{Stokes}, {{}{stating}} that the fluid velocity is bounded on the lower half {{}{space}}, and the pressure $\pi$ has a limit equal to zero as $|x| \to \infty$ in the lower half {{}{space}}.

To derive the formula \eqref{pressureDN}, we note that by taking the inner product of the first equation in \eqref{Stokes} with $\boldsymbol{e_{z}}$, we obtain
\begin{equation}\label{Neumannpressure}
\frac{\partial \pi}{\partial z} = \mu \Delta_{x, y} v_{z} + \mu \frac{\partial^{2} v_{z}}{\partial z^{{}{2}}},
\end{equation}
where $\Delta_{x, y} := \frac{\partial^{2}}{\partial x^{2}} + \frac{\partial^{2}}{\partial y^{2}}$. Furthermore, by taking the divergence of the first equation in \eqref{Stokes}, and by using the divergence-free condition, we get that the pressure $\pi$ is harmonic. Thus, if we can compute the right hand side of \eqref{Neumannpressure} on $\Gamma$, we can recover $\pi$ as the solution to a Neumann boundary {{}{value}}  problem for Laplace's equation in the lower half {{}{space}}, with the boundary condition 
{{}{requiring}} that $\pi$ goes to zero at infinity. 

To compute the right hand side of \eqref{Neumannpressure}, 
we need to compute $v_{z}$.
We use a Fourier analysis argument. {{}{From \eqref{Neumannpressure} we see that $v_{z}$ satisfies}}
\begin{equation*}
\Delta v_{z} = \frac{\partial \pi}{\partial z}.
\end{equation*}
Taking the Laplacian {{}{on both sides}}, and using the fact that $\pi$ is harmonic, we obtain
\begin{equation}\label{biharmonic}
\Delta^{2} v_{z} = 0, \qquad \text{ on } \Omega = \{(x, y, z) \in \R^{3} : z < 0\}.
\end{equation}
{{}{Thus, $v_z$ satisfies the}} biharmonic equation with {{}{the following}} two boundary conditions:
 {{}{from}} the kinematic coupling condition we get 
\begin{equation}\label{bc1}
v_{z}(x, y, 0) = u_{t}(x, y, 0), \qquad \text{ on } \Gamma = \{(x, y, z) \in \R^{3}: z = 0\},
\end{equation}
and by \eqref{normal}, {{}{we have}}
\begin{equation}\label{bc2}
\frac{\partial v_{z}}{\partial z}(x, y, 0) = 0, \qquad \text{ on } \Gamma = \{(x, y, z) \in \R^{3}: z = 0\}.
\end{equation}
{{}{We remark that the biharmonic equation to analyze the Stokes problem has been previously used
in many works,}} see for example \cite{Sim}. 

We solve \eqref{biharmonic} with boundary conditions \eqref{bc1} and \eqref{bc2} by taking a Fourier transform in the variables $x$ and $y$, but not in $z$. We will denote the Fourier variables associated with $x$ and $y$ by $\xi_{1}$ and $\xi_{2}$, and we will denote $\boldsymbol{\xi} = (\xi_{1}, \xi_{2})$, $|\boldsymbol{\xi}|^{2} = \xi_{1}^{2} + \xi_{2}^{2}$. 
{{}{The Fourier transform equation then reads:}}
 \begin{equation}\label{Fourierstokes}
|\xi|^{4} \widehat{v_{z}}(\boldsymbol{\xi}, z) - 2|\boldsymbol{\xi}|^{2} \frac{\partial^{2}}{\partial z^{2}} \widehat{v_{z}}(\boldsymbol{\xi}, z) + \frac{\partial^{4}}{\partial z^{4}} \widehat{v_{z}}(\boldsymbol{\xi}, z) = 0.
\end{equation}
The general solution to \eqref{Fourierstokes} is
\begin{equation*}
\widehat{v_{z}}(\boldsymbol{\xi}, z) = C_{1}(\boldsymbol{\xi}) e^{|\boldsymbol{\xi}|z} + C_{2}(\boldsymbol{\xi}) ze^{|\boldsymbol{\xi}|z}+ C_{3}(\boldsymbol{\xi}) e^{-|\boldsymbol{\xi}|z} + C_{4}(\boldsymbol{\xi}) z e^{-|\boldsymbol{\xi}|z}.
\end{equation*}
Because $e^{-|\boldsymbol{\xi}|z}$ and $z e^{-|\boldsymbol{\xi}|z}$ are unbounded in the lower half plane where $z < 0$, we exclude these two terms and are left with
\begin{equation}\label{general}
\widehat{v_{z}}(\boldsymbol{\xi}, z) = C_{1}(\boldsymbol{\xi}) e^{|\boldsymbol{\xi}|z} + C_{2}(\boldsymbol{\xi}) ze^{|\boldsymbol{\xi}|z}.
\end{equation}
In Fourier variables, the two boundary conditions \eqref{bc1} and \eqref{bc2} translate to
\begin{equation*}
\widehat{v_{z}}(\boldsymbol{\xi}, 0) = \widehat{u_{t}}(\boldsymbol{\xi}), \qquad \frac{\partial \widehat{v_{z}}}{\partial z}(\boldsymbol{\xi}, 0) = 0,
\end{equation*}
which allow us to solve for the general functions $C_{1}(\boldsymbol{\xi})$ and $C_{2}(\boldsymbol{\xi})$ in \eqref{general}, giving the result
\begin{equation}\label{finalFourier}
\widehat{v_{z}}(\boldsymbol{\xi}, z) = \widehat{u_{t}}(\boldsymbol{\xi}) e^{|\boldsymbol{\xi}| z} - |\xi| \widehat{u_{t}}(\boldsymbol{\xi}) z e^{|\boldsymbol{\xi}| z}.
\end{equation}

We can now compute the right hand side of \eqref{Neumannpressure}. Taking the Fourier transform of \eqref{Neumannpressure} in the $x$ and $y$ variables, and evaluating {{}{the equation}} on $\Gamma$ by using {{}{the kinematic coupling condition}} \eqref{kinematic}, {{}{we get}}
\begin{equation*}
\frac{\partial \widehat{\pi}}{\partial z}(\boldsymbol{\xi}, 0) = -\mu |\xi|^{2} \widehat{u_{t}}(\boldsymbol{\xi}) + \mu \frac{\partial^{2} \widehat{v_{z}}}{\partial z^{{}{2}}}(\boldsymbol{\xi}, 0). 
\end{equation*}
{{}{From}} the explicit formula for $\widehat{v_{z}}(\boldsymbol{\xi}, z)$ in \eqref{finalFourier}, we conclude that
\begin{equation}\label{NeumannFourier}
\frac{\partial \widehat{\pi}}{\partial z}(\boldsymbol{\xi}, 0) = -2\mu |\xi|^{2} \widehat{u_{t}}(\boldsymbol{\xi}).
\end{equation}

{{}{We have now obtained that the pressure $\pi$ is a harmonic function in the lower half space, satisfying
a Neumann boundary condition, posed in Fourier space as \eqref{NeumannFourier}.
To obtain $\pi$ on $\Gamma$ and recover formula
\eqref{pressureDN}, we can now employ the Neumann to Dirichlet operator. }} 

It is well-known that the Dirichlet to Neumann operator for Laplace's equation in the lower half {{}{space}} (with the solution to Laplace's equation having a limit of zero at infinity) is given by $\sqrt{-\Delta}$, see for example \cite{CS}. Therefore, the Neumann to Dirichlet operator for Laplace's equation in the lower half {{}{space}} (with the solution to Laplace's equation having a limit of zero at infinity) is a Fourier multiplier of the form $\frac{1}{|\xi|}$. Since $\pi$ is a harmonic function satisfying the Neumann boundary condition \eqref{NeumannFourier}, by applying the Neumann to Dirichlet operator we get
\begin{equation*}
\widehat{\pi}(\xi) = -2\mu |\xi| \widehat{u_{t}}(\xi) \qquad \text{ on } \Gamma,
\end{equation*}
which establishes the desired formula \eqref{pressureDN}.

\if 1 = 0
To capture the effects of fluid viscosity on the regularization of the 
coupled fluid-structure interaction problem, we ignore the pressure in the fluid 
problem and consider only the Laplace's equation for $\boldsymbol{v}$:
\begin{equation}\label{Laplace}
\triangle \boldsymbol{v} = 0 \quad {\rm in} \quad \Omega.
\end{equation}
In that case, the term $\boldsymbol\sigma \boldsymbol{e}_z \cdot \boldsymbol{e}_z$ is given by:
$$
\boldsymbol\sigma \boldsymbol{e}_z \cdot \boldsymbol{e}_z = \mu \frac{\partial v_z}{\partial {\boldsymbol e}_z }.
$$
%
%The term involving the square root of the negative Laplacian is associated with the coupling between the fluid
%and structure via a Dirichlet-Neumann operator, assuming the non-slip condition at the fluid-structure interface.
Therefore, equation \eqref{dynamic} now becomes:
\begin{equation}\label{equation0}
\partial_{tt} u - \triangle u + u^p  + \mu \frac{\partial v_z}{\partial {\boldsymbol e}_z }= 0.
\end{equation}
The term  involving the normal derivative of the $z$-component of the fluid velocity, $v_z$, can now be expressed in
term of the Dirichlet-to-Neumann operator, since ${\partial v_z}/{\partial {\boldsymbol e}_z }$ appearing in \eqref{equation0}
is Neumann data for the fluid velocity on $\Gamma$, where the fluid velocity  
satisfies the Laplace's equation in $\Omega$, with Dirichlet data \eqref{kinematic} on $\Gamma$.
%
%Since we are interested in the effects of viscous dissipation on ill-posedness of \eqref{model_equation},
%we will be assuming that the variations in the fluid pressure from some constant pressure $p_0$ are negligible, so that we can 
%ignore the term $\nabla p$ in the Stokes equations. 
Therefore, we can express that term as the square root of the negative Laplacian applied to $u_t$
(see, e.g., Caffarelli and Silvestre \cite{CS}).   
\fi

The result in \eqref{pressureDN}, along with \eqref{pressure},  implies the following form of the dynamic coupling condition:
$$
\partial_{tt} u - \triangle u + u^p + 2\mu \sqrt{-\triangle} u_t= 0,
$$
which accounts for the influence of the fluid viscosity within the fluid domain $\Omega$ and its trace on the domain boundary $\Gamma$.
%In the case when the (approximately) constant fluid pressure $p_0 = p_{\rm ext}$, we get:
%$$
%\partial_{tt} u - \triangle u + u^p  + \mu \sqrt{-\triangle} u_t= 0,
%$$
%which is exactly \eqref{model_equation}.
For {{}{simplicity}}, we will set $2\mu = 1$, {{}{and}} study the equation 
$$
\partial_{tt} u - \triangle u + u^p + \sqrt{-\triangle} u_t= 0.
$$
We will refer to equation \eqref{model_equation} as the {\emph{viscous nonlinear wave equation}} (vNLW). }}

We are interested in the Cauchy problem for equation \eqref{model_equation}, where $p > 0 $ is an odd integer, and $\mu > 0$, 
supplemented with 
initial data:
\begin{equation}\label{data}
u(0,\cdot) = f \quad {\rm and} \quad u_t(0,\cdot) = g,
\end{equation}
where 
$(f, g) \in \mathcal{H}^{s}(\mathbb{R}^{2}) = H^{s}(\mathbb{R}^{2}) \times H^{s - 1}(\mathbb{R}^{2})$.
Here $H^s$ denotes the usual (inhomogeneous) Sobolev space.

The analysis of fluid-structure interaction problems involving incompressible, viscous fluids and elastic 
structures started in  the early 2000's with works in which the coupling between the fluid 
and structure was assumed across a fixed fluid-structure interface ({\emph{linear coupling}}) 
as in \cite{Gunzburger,BarGruLasTuff2,BarGruLasTuff,KukavicaTuffahaZiane}, 
and was then extended to problems with {\emph{nonlinear coupling}} in the works 
\cite{BdV1,Lequeurre,FSIforBIO_Lukacova,CDEM,CG,MuhaCanic13,
BorSun3d,LengererRuzicka,CSS1,CSS2,Kuk,ChenShkoller,ChengShkollerCoutand,IgnatovaKukavica,Raymod,ignatova2014well,
BorSunSlip,BorSunNonLinearKoiter,BorSunMultiLayered,Grandmont16}. 
In all these studies, a major underlying  reason for the well-posedness 
is the regularization by the fluid viscosity and the dispersive nature of the wave-like operators in more than one spatial dimension. 
One of the main questions is ``by how much'' does the fluid viscosity regularize the coupled problem? How does the 
viscous regularization ``compete'' with the nonlinearities in the problem?
In the present work, we study the influence of fluid viscosity and nonlinearity on the well-posedness of the Cauchy problem 
for the nonlinear viscous wave equation by studying the following two problems:
\begin{enumerate}
\item[(P1)] For a given exponent $p > 0$, which is an odd integer describing the nonlinearity in the problem, 
is there a critical exponent $s_{cr}$ such that equation \eqref{model_equation}
with initial data $(f, g) \in \mathcal{H}^{s}(\mathbb{R}^{2}) = H^{s}(\mathbb{R}^{2}) \times H^{s - 1}(\mathbb{R}^{2})$,
with {{$0 < s < s_{cr}$}}, is {\sl{ill-posed}} in the sense that the solution mapping
$$
(f,g) \mapsto u,
$$
which takes the initial data $(f, g) \in \mathcal{H}^{s}(\mathbb{R}^{2})$ and maps it to 
a solution $u \in C^0([0,T],H^s(\R^2))\cap C^1([0,T],H^{s-1}(\R^2))$,
fails to be continuous?
\item[(P2)] If well-posedness fails for some critical exponent $s_{cr}$, how ``generic'' is that behavior?
Is there a random perturbation of the initial data that would provide well-posedness, even for 
``supercritical'' initial data, namely for $s < s_{cr}$, and how generic is that random perturbation?
\end{enumerate}

As we shall see, the answer to problem (P1) is yes. Namely, despite the regularization by fluid viscosity, 
there is a critical exponent $s_{cr}$ depending on the nonlinearity $p$, below which
the viscous nonlinear wave equation \eqref{model_equation} is not well-posed for the initial data 
$u \in C^0([0,T],H^s(\R^2))\cap C^1([0,T],H^{s-1}(\R^2))$ such that {{$0 < s < s_{cr}$}}. 
Namely, as $t\to 0$, the energy of the low frequency Fourier modes gets transferred to the high frequencies
so that the $H^s$-norm of the solution becomes arbitrarily large as $t \to 0$, even though the $H^s$-norm
of the initial data is arbitrarily small.

The answer to problem (P2) is that the ill-posedness addressed in problem (P1) is {\sl{not generic}} in a certain sense.
Namely, we show for the quintic nonlinearity $p = 5$, for example,
that if we randomize the initial data using a Wiener randomization, which perturbs initial data in frequency space 
via independent random variables with bounded six moments (associated with $p = 5$), 
then the Cauchy problem for the quintic nonlinear viscous wave equation \eqref{model_equation}
will be well-posed almost surely. 

To obtain answers to problems (P1) and (P2), we start by looking for symmetries of equation \eqref{model_equation}.
Symmetries can provide insight into the questions of if and when well-posedness may be expected.
As in the theory of nonlinear dispersive equations \cite{CCT,CW,LS}, 
we look for scaling symmetries. As we will see later, the scaling symmetry
\begin{equation*}
u(t, x) \to \lambda^{\frac{2}{p - 1}}u(\lambda t, \lambda x)
\end{equation*}
preserves solutions of equation \eqref{model_equation}. Furthermore, it preserves the $\dot{H}^{s}$ norm of the solution when $s = s_{cr}$, where
\begin{equation*}
s_{cr} = \frac{n}{2} - \frac{2}{p - 1} = 1 - \frac{2}{p - 1},
\end{equation*}
and $n$ is the dimension of $\R^n$ ($n = 2$ in the present work). Here, $\dot{H}^{s}(\mathbb{R}^{n})$ denotes the homogeneous Sobolev space, equipped with the norm
\begin{equation*}
||u||_{\dot{H}^{s}(\mathbb{R}^{n})}^{2} = \frac{1}{(2\pi)^{n}}\int_{\mathbb{R}^{n}} |\xi|^{2s} |\widehat{u}(\xi)|^{2}d\xi.
\end{equation*}
Note that this is the same critical exponent as for the defocusing nonlinear wave equation \cite{CCT, LS}:
\begin{equation}\label{NLW}
\partial_{tt}u - \Delta u + u^{p} = 0,
\end{equation}
where the defocusing case corresponds to the choice of the positive sign in front of the nonlinear term $u^p$.
We will see that in terms of energy inequalities and ill-posedness behavior, the viscous nonlinear wave equation \eqref{model_equation}
shares similar properties with
the nonlinear wave equation \eqref{NLW}, but the viscous nonlinear wave equation also exhibits novel behavior 
that arises from dissipation of energy 
due to viscosity.

The presence of a critical exponent is crucial for the analysis of nonlinear partial differential equations, in particular for dispersive equations. Above the critical exponent, dispersive equations usually exhibit {\bf{well-posedness}}, which can be established from fixed point arguments in combination with dispersive estimates for the linear problem \cite{CW,LS}. For example, such a result was established first for the nonlinear Schr\"{o}dinger equation above the critical exponent by Cazenave and Weissler in \cite{CW}, and a well-posedness result for the nonlinear wave equation above the critical exponent was established by Lindblad and Sogge in \cite{LS}. 

In the case of the viscous nonlinear wave equation, we expect to have well-posedness for the
exponents $s$ above the critical exponent $s_{cr}$ using similar approaches as in \cite{CW,LS}, 
since the presence of viscous regularization can only improve solution behavior.
{\emph{Well-posedness}} of the Cauchy problem for equation  \eqref{model_equation} (in the Hadamard sense) is defined by requiring 
that there exists a unique solution $u \in C^0([0,T],H^s(\R^2))\cap C^1([0,T],H^{s-1}(\R^2))$
such that the solution mapping
$$
(f,g) \mapsto u,
$$
which takes the initial data 
$$(f, g) \in \mathcal{H}^{s}(\mathbb{R}^{2})=H^{s} \times H^{s - 1}$$
 and maps it to 
a solution $u \in C^0([0,T],H^s(\R^2))\cap C^1([0,T],H^{s-1}(\R^2))$,
is  continuous.

In this manuscript, however, we are interested in the supercritical ($s < s_{cr}$) behavior of equation \eqref{model_equation}. 
Below the critical exponent which preserves the $\dot{H}^{s_{cr}}$ norm of the scaling symmetry 
$u(t, x) \to \lambda^{\frac{2}{p - 1}}u(\lambda t, \lambda x)$, we would heuristically expect {\bf{ill-posedness}}.

Indeed, such ill-posedness behavior was most famously considered for certain ranges of $s$ of initial data in $\mathcal{H}^{s}$ for the nonlinear wave equation and the nonlinear Schr\"{o}dinger equation by Christ, Colliander, and Tao in \cite{CCT}. 

It is not clear {\sl{a priori}} that the viscous nonlinear wave equation would embody the same property
because of the regularizing effects by the fluid viscosity. 
However, in Sec.~\ref{illposed}, we show that a similar ill-posedness result can be obtained using a similar procedure for the viscous nonlinear wave equation with
initial data in $\mathcal{H}^{s}$ with $0 < s < s_{cr}$. In particular, we will show lack of continuity of the solution map for the viscous nonlinear wave equation when $0 < s < s_{cr}$.

As a remark, we note that for the nonlinear wave equation, because solutions can satisfy finite speed of propagation, it is possible to construct initial data for which there is instantaneous blowup \cite{CCT}. 
%This can be accomplished by using the lack of continuity of the solution map. 
In particular, it can be shown that for a certain choice of initial data, there exist \underline{no} $T > 0$ for which there is a weak solution to the nonlinear wave equation in $C([0, T]; H^{s})$. For more details, see the discussion in \cite{CCT}. 
However, we would not expect instantaneous blowup for the viscous nonlinear wave equation, since 
the solutions no longer obey finite speed of propagation due to the viscous effects in \eqref{model_equation}.
We will show, instead, that the solution map, which associates the solution $u$ to the initial data $u(0)$, is not continuous in the sense that
the $H^s$ norm of the solution can grow without bound as $t\to 0$ even as the initial data $u(0)$ has infinitesimally small $H^s$ norm.
More precisely, we will show that 
if $0 < s < s_{cr}$, then for every $\epsilon > 0$, there exists a solution $u$ of the viscous nonlinear wave equation \eqref{VNLWE} and a positive time $t$ such that
\begin{equation*}
||u(0)||_{H^{s}} < \epsilon, \ \ \ u_{t}(0) = 0, \ \ \ 0 < t < \epsilon, \ \ \ ||u(t)||_{H^{s}} > \epsilon^{-1},
\end{equation*}
for some $u(0) \in \mathcal{S}(\mathbb{R}^{2})$, where $\mathcal{S}(\mathbb{R}^{2})$ denotes the Schwartz class. Thus, the solution map for the equation \eqref{VNLWE} is not continuous at $(0, 0) \in H^{s}(\mathbb{R}^{2}) \times H^{s - 1}(\mathbb{R}^{2})$ for $0 < s < s_{cr}$. 
The main mechanism that allows this behavior is transfer of energy from low to high Fourier modes.
The proof is based on studying the visco-dispersive limit $\nu \to 0$ in:
$$
\partial_{tt} u - \nu^2 \Delta u + \nu \sqrt{-\Delta}\partial_{t}u + u^{p} = 0,
$$
and understanding how the {{unbounded growth of the $H^{s}$ norm in time of the solution}} for the visco-dispersive limit equation $\nu = 0$
translates into the solution behavior of the perturbed equation with $\nu > 0$ small, as $t \to 0$.
By utilizing the various symmetries of the solutions to the viscous nonlinear wave equation,
one can show that the {{unbounded growth of the $H^{s}$ norm in time}} for the visco-dispersive limit $\nu = 0$,
translates into similar growth of the 
 $H^s$ norm of the solution to the perturbed equation with $\nu > 0$ small. {{Using scaling symmetries of the viscous nonlinear wave (vNLW) equation, we can then allow this unbounded $H^{s}$ norm growth to occur at arbitrarily small times $t \to 0$,
 even when the initial data has infinitesimally small $H^s$ norm.}}
 The ill-posedness result is presented in Sec.~\ref{illposed}.
 
 In Sec.~\ref{Strichartz}, we start preparing for the probabilistic well-posedness result, by 
 deriving Strichartz estimates, which will be crucial in proving probabilistic well-posedness for the 
supercritical  nonlinear viscous wave equation with initial data belonging to 
$H^{s}(\mathbb{R}^{2}) \times H^{s - 1}(\mathbb{R}^{2})$, where $s$ is below the critical exponent ($-1/6< s \le 1/2=s_{cr}$).
The Strichartz estimates, which are {\sl{linear}} estimates derived for the linear homogeneous and linear inhomogeneous equations,
will be used to prove probabilistic well-posedness of the {\sl{nonlinear}} viscous wave equation
by considering the nonlinear term as an inhomogeneous (source) term for the corresponding linear problem, 
and then using a fixed point argument to show the existence of a unique solution, which we will then show also depends continuously on the initial data.

Strichartz estimates, which are estimates that control the $L^q_t L^r_x$ norms of solutions to linear dispersive equations in terms of the initial data and source terms,
are crucial for establishing well-posedness results for dispersive equations. 
These estimates were first determined by Strichartz in \cite{S} in the context of a Fourier restriction problem, 
and were found to be equivalent to estimates for the linear wave equation. 
They were extended to a more general abstract context by Keel and Tao in \cite{KT}. 
In Sec.~\ref{Strichartz}, we use the theorem by Keel and Tao in \cite{KT} for general Strichartz estimates to derive 
the corresponding estimates for the linear viscous wave equation (homogeneous and inhomogeneous). 
We show that the presence of viscous regularization influences Strichartz estimates in two ways:
the viscous Strichartz estimates hold for a larger range of admissible exponents $q$ and $r$,
and
the {{homogeneous}} viscous Strichartz estimates hold even in $1$D. 
This is interesting because it is well-known that the linear wave equation in one dimension does not possess such Strichartz estimates.
See Sogge \cite{Sg} for a detailed exposition of Strichartz estimates and generalized Strichartz estimates.

In Sec.~\ref{random} we use the ``global'' $L^q_tL^r_x$ Strichartz estimates and combine them with the ``local''
$C^0([0,T],H^s(\R^n))$ estimates, presented in Sec.~\ref{local_estimates},
to study probabilistic well-posedness for the supercritical nonlinear viscous wave equation. 
We show that under Wiener randomization of initial data, we can get a local probabilistic well-posedness result for the nonlinear 
viscous wave equation 
in $\R^2$ that brings the threshold exponent from $s_{cr} = 1/2$ down to $s > -1/6$.

This result is in the spirit of Burq and Tzvetkov \cite{BT}, who studied probabilistic well-posedness for the supercritical cubic {\sl{wave equation}}.
Ever since Christ, Colliander, and Tao in \cite{CCT} showed an ill-posedness result below the critical exponent for 
the nonlinear wave equation, there has been considerable interest in the behavior of such supercritical wave equations under randomization of the initial data. Burq and Tzvetkov showed in \cite{BT} that under appropriate randomization, 
one has local existence to the supercritical cubic wave equation almost surely for initial data in 
$H^{s}(M)\times H^{s-1}(M)$ appropriately randomized, 
where $s \ge 1/4$, and $M$ is a compact, three-dimensional manifold. The probabilistic randomization decreases the threshold for such local existence from the critical exponent $s_{cr} = 1/2$ for the cubic nonlinear wave equation to $s = 1/4$. 
%The cubic nonlinear wave equation in this context was considered on a compact three-dimensional Riemannian manifold $M$, on which there is hence a discrete spectrum for $-\Delta$. 

The randomization used by Burq and Tzvetkov in \cite{BT} relies heavily on the fact that there is a discrete spectrum for $-\Delta$ on a compact Riemannian manifold $M$. 
The randomization of initial data is then performed by 
randomizing each of the eigenfunction components of the initial data, using a sequence of independent random variables. See \cite{BT} for more details.

Our problem, however, 
is posed on $\mathbb{R}^{2}$, which means that we do not have a discrete spectrum for $-\Delta$, 
and so the randomization from Burq and Tzvetkov in \cite{BT} does not carry over to our case. 
However, we use an analogue of this discrete randomization, known as the Wiener randomization.
Wiener randomization was developed by B\'{e}nyi, Oh, and Pocovnicu in \cite{BOP}.
%It randomizes various frequency domains in $\mathbb{R}^{n}$ on the scale of unit volume cubes, using a partition of unity of translates of a compactly supported function. We provide more details in Section 3. 
Such a randomization was used in recent years to produce existence results for randomized initial data on Euclidean domains 
for the nonlinear wave equation by L\"{u}hrmann and Mendelson in \cite{LM},
 and for the incompressible Euler equations by Wang and Wang in \cite{WW}, for example.
We describe 
the Wiener randomization and its properties in Sec.~\ref{random}, 
and show how it can be used to produce a { probabilistic} local existence result for the 
supercritical nonlinear viscous wave equation.
We prove that for a random perturbation of initial data in $H^s(\R^2)\times H^{s-1}(\R^2)$
based on Wiener randomization, the supercritical quintic nonlinear viscous wave equation is well-posed almost surely,
even for $-1/6<s \le s_{cr} = 1/2$.
In contrast with the cubic {\sl{wave equation}} for which supercritical probabilistic well-posedness holds for the exponents $s \ge 1/4$
in the case of three-dimensional compact manifolds without boundaries, see \cite{BT}, 
we show that the viscous dissipation allows us to 
bring the threshold exponent from $s_{cr} = 1/2$ all the way down to negative Sobolev space exponents $s$ for which $s > -1/6$.

\if 1 = 0
In this paper, we consider a partial differential equation associated with a model fluid structure interaction problem. In this problem, we consider the motion of an infinitely thin elastic plate, represented by the plane $\mathbb{R}^{2}$, under the influence of a fluid beneath the plate in the lower half plane of $\mathbb{R}^{3}$. To describe the motion of the elastic plate, we use a nonlinear wave equation, where the nonlinearity describes the nonlinear relationship between the displacement and the restoring elastic force. To describe the motion of the fluid, we use a simplified model for the fluid under a fully developed flow, which is given by Laplace's equation. In particular, if we define $\mathbf{u} = (u_{x}, u_{y})$ to be the fluid velocity and if we define $v$ to be the displacement of the elastic membrane from equilibrium, we have the following equations.
\begin{equation*}
\Delta \mathbf{u} = 0 \ \ \ \ \text{ in } \mathbb{R}^{2} \times (-\infty, 0)
\end{equation*}
\begin{equation*}
\partial_{tt}v - \Delta v + v^{p} = f(t) \ \ \ \ \text{ on } \mathbb{R}^{2}
\end{equation*}
where $f(t)$ is a inhomogeneous term representing the force exerted on the elastic membrane. 

\medskip

The displacement of the elastic plate $u$ and the fluid velocity $\mathbf{v}$ are coupled in the following way. First, we impose the no-slip condition, which states that at the fluid interface, the normal component of the fluid velocity is equal to the velocity of the elastic plate at that point. In particular, the no-slip condition is
\begin{equation*}
u_{y} = \partial_{t}v \ \ \ \ \text{ on } \mathbb{R}^{2}
\end{equation*} 
In addition, we impose a dynamic coupling condition, which describes the relation between the fluid velocity and the force the fluid exerts on the elastic plate. To describe this force, we use the familiar Dirichlet-Neumann operator. In particular, the dynamic coupling condition is
\begin{equation*}
f(t) = -\sqrt{-\Delta} u_{y}
\end{equation*}
where $\sqrt{-\Delta}$ is the Dirichlet-Neumann operator on $\mathbb{R}^{2}$, as described by Caffarelli and Silvestre in \cite{CS} in a much more general context.  

\medskip

In particular, combining both of the coupling conditions, 
\begin{equation*}
f(t) = -\sqrt{-\Delta}\partial_{t}v
\end{equation*}
Therefore, the fluid-structure interaction problem above can be reduced to a single equation, 
\begin{equation*}
\partial_{tt}v - \Delta v + \sqrt{-\Delta}\partial_{t}v + v^{p} = 0 \ \ \ \ \text{ on } \mathbb{R}^{2}
\end{equation*}
which is the equation we will consider in this paper. Since this is a second order equation, we need two initial conditions
\begin{equation*}
v(0, \cdot) = f(x) \ \ \ \ \ \text{ and } \ \ \ \ \ \partial_{t}v(0, \cdot) = g(x)
\end{equation*}
We will consider initial data $(f, g) \in \mathcal{H}^{s}(\mathbb{R}^{2}) = H^{s}(\mathbb{R}^{2}) \times H^{s - 1}(\mathbb{R}^{2})$. This is the initial value problem that we will consider throughout this paper. We will refer to this equation as the elastic plate equation.

\medskip

A first approach to such an equation is to look for symmetries. In the theory of nonlinear dispersive equations, a common procedure is to look for scaling symmetries. As we will see later, the scaling symmetry
\begin{equation*}
u(t, x) \to \lambda^{\frac{2}{p - 1}}u(\lambda t, \lambda x)
\end{equation*}
preserves solutions of the above partial differential equation. This symmetry preserves the $H^{s_{cr}}$ norm of the solution, for a critical exponent
\begin{equation*}
s_{cr} = 1 - \frac{2}{p - 1}
\end{equation*}
Note that this is the same critical exponent as for the defocusing nonlinear wave equation.
\begin{equation*}
\partial_{tt}v - \Delta v + v^{p} = 0 
\end{equation*}
We will see that in terms of energy inequalities and ill-posedness behavior, the elastic plate equation shares similar properties to the wave equation, but the elastic plate equation also exhibits novel behavior that arises from the dissipation of energy by the influence of the fluid. 

\medskip

The presence of a critical exponent is crucial for the analysis of nonlinear partial differential equations, in particular for dispersive equations. Above the critical exponent, dispersive equations usually exhibit well-posedness, which can be established from fixed point arguments in combination with dispersive estimates for the linear problem. For example, such well-posedness results can be established for various nonlinear partial differential equations. Such a result was established first for the nonlinear Schr\"{o}dinger equation above the critical exponent by Cazenave and Weissler in \cite{CW}, and a well-posedness result for the nonlinear wave equation above the critical exponent was established by Lindblad and Sogge in \cite{LS}. It is likely that a similar result could be established for the elastic plate equation above, though we will not pursue this route in this paper. 

\medskip

Instead, we will consider the subcritical behavior of the equation above. Below the critical exponent which preserves the $H^{s_{cr}}$ norm, we would heuristically expect some ill-posedness result. By well-posedness in the Hadamard sense, we mean existence of a solution, uniqueness of a solution, and continuity of the solution map from $\mathcal{H}^{s} = H^{s} \times H^{s - 1}$ into some appropriate function space. Some formulations in addition require the stronger requirement of uniform continuity of the solution map, but in this case, we will just consider continuity. 

\medskip

Such ill-posedness behavior was most famously considered for certain ranges of $s$ for the initial data in $\mathcal{H}^{s}$ for the nonlinear wave equation and the nonlinear Schr\"{o}dinger equation by Christ, Colliander, and Tao in \cite{CCT}. In Section 1, we show that a similar ill-posedness result can be obtained using a similar procedure for the elastic plate equation for initial data in $\mathcal{H}^{s}$ with $0 < s < s_{cr}$. We in particular will show lack of continuity of the solution map for the elastic plate equation. 

\medskip

As a remark, we note that for nonlinear wave equation solutions that satisfy finite speed of propagation, it is possible to construct initial data for which there is so-called instantaneous blowup using the lack of continuity of the solution map. In particular, for a certain choice of initial data, there exist no $T > 0$ for which there is a weak solution to the nonlinear wave equation in $C([0, T]; H^{s})$. For more details, see the discussion in \cite{CCT}. However, we would not expect instantaneous blowup for the elastic plate equation since the effect of the fluid makes it so that solutions of the elastic plate equation do not obey finite speed of propagation. 

\medskip

In this paper, we also consider dispersive estimates known as Strichartz estimates, which are estimates that control the space and time norms of solutions to linear dispersive equations. These estimates are crucial for establishing well-posedness results for dispersive equations. These estimates were first determined by Strichartz in \cite{S} in the context of a Fourier restriction problem, and were found to be equivalent to estimates for the linear wave equation. These Strichartz estimates were extended to a more general abstract context by Keel and Tao in \cite{KT}. 

\medskip

In Section 2, we use the theorem by Keel and Tao in \cite{KT} for general Strichartz estimates to derive estimates for the elastic plate equation. We will see that while the inhomogeneous elastic plate equation follows the same estimate as the inhomogeneous wave equation, the homogeneous elastic plate equation - in contrast with the homogeneous wave equation - has a wide range of $\sigma$-admissible exponents which will produce Strichartz estimates. (See also Sogge \cite{Sg} for a more detailed exposition of Strichartz estimates and generalized Strichartz estimates.)

\medskip

Specifically, for the nonlinear wave equation for which Christ, Colliander, and Tao showed an ill-posedness result below the critical exponent, there has been a recent interest in the behavior of such supercritical wave equations under randomization of the initial data. As pursued in Burq and Tzvetkov in \cite{BT}, under appropriate randomization, one has local existence to the supercritical cubic wave equation almost surely for initial data in $H^{s}(M)$ appropriately randomized, where $s \ge 1/4$. The probabilistic randomization decreases the threshold for such local existence from the critical exponent $s_{cr} = 1/2$ for the cubic nonlinear wave equation to $s = 1/4$. The cubic nonlinear wave equation in this context was considered on a compact Riemannian manifold $M$, on which there is a discrete spectrum for $-\Delta$. 

\medskip

The randomization used by Burq and Tzvetkov in \cite{BT} relies heavily on the fact that there is a discrete spectrum for $-\Delta$ on a compact Riemannian manifold $M$. One then randomizes each of the eigenfunction components of the initial data using a sequence of independent random variables. See \cite{BT} for more details.

\medskip

However, since our problem is posed on $\mathbb{R}^{2}$ which does not have a discrete spectrum for $-\Delta$, the randomization from Burq and Tzvetkov in \cite{BT} does not carry over to our case. However, an analogue of this discrete randomization, known as the Wiener randomization, was developed by B\'{e}nyi, Oh, and Pocovnicu in \cite{BOP}, which randomizes various frequency domains in $\mathbb{R}^{n}$ on the scale of unit volume cubes, using a partition of unity of translates of a compactly supported function. We provide more details in Section 3. Such a randomization has been used in recent years to produce existence results for randomized initial data on Euclidean domains, for the nonlinear wave equation in \cite{LM} by L\"{u}hrmann and Mendelson and for the incompressible Euler equations in \cite{WW} by Wang and Wang, for example.

\medskip

In Section 3, we will describe the Wiener randomization and its properties, and show that it can be used to produce a local existence result for the elastic plate equation above that brings the threshold from $s_{cr} = 1/2$ down to $s > -1/6$, encompassing even negative Sobolev spaces. We will then interpret this result in a corollary to show that the theorem we prove describes some form of ``probabilistic well-posedness."
\fi

%%%%%%%%%%%%%%%%%%%%%%%
\section{Ill-posedness for viscous nonlinear wave equation}\label{illposed}

We study the Cauchy problem for the viscous nonlinear wave equation
\begin{equation}\label{VNLWE}
\begin{array}{rcl}
\partial_{tt} u - \Delta u + \sqrt{-\Delta}\partial_{t}u + u^{p} &=& 0 \ {\rm on} \ \R^n,\\
u(0,x) = f(x), \quad u_t(0,x) &=& g(x),
\end{array}
\end{equation}
where $p > 1$ is a positive odd integer. {{The case of $n = 2$ corresponds to our given fluid-structure interaction model, but we will use general $n$, as all arguments here hold for general $n$.}}
To study ill-posedness as specified in (P1) of Sec.~\ref{introduction}, 
we begin by investigating the scaling symmetries of this equation and determine the critical exponent $s_{cr}$ that preserves the homogeneous Sobolev $\dot{H}^{s}(\mathbb{R}^{n})$ norm of solutions of 
\eqref{VNLWE} {under this scaling symmetry}. 
We recall that the homogeneous Sobolev space $\dot{H}^{s}(\mathbb{R}^{n})$ is defined as a completion of $C_0^\infty(\R^n)$ in the norm
$$
\|f\|_{\dot{H}^s} := (2\pi)^{-n/2}\| | \xi|^s \hat{f}(\xi) \|_{L^2(\R^n)}.
$$
We start by first noticing that for positive $\lambda$, we have $\sqrt{-\Delta} \left[u(\lambda x)\right]  = \lambda (\sqrt{-\Delta}u)(\lambda x)$. Indeed,
\begin{equation*}
\sqrt{-\Delta} \left[u(\lambda x)\right] 
= \frac{1}{(2\pi)^{2}} \int_{\mathbb{R}^{2}}\lambda |\xi|\widehat{u}(\xi) e^{i\lambda x \cdot \xi} d\xi = \lambda (\sqrt{-\Delta}u)(\lambda x).
\end{equation*}
Therefore, the following scaling map:
\begin{equation}\label{scaling_map}
u(t, x) \to \lambda^{\frac{2}{p - 1}}u(\lambda t, \lambda x)
\end{equation}
preserves solutions to the partial differential equation above. 
A calculation shows that the critical exponent for equation \eqref{VNLWE}
 that preserves the homogeneous Sobolev $\dot{H}^{s}(\mathbb{R}^{n})$ norm of this scaling, is given by:
\begin{equation}\label{s_critical}
s_{cr} = \frac{n}{2} - \frac{2}{p - 1}.
\end{equation}
We note that this is exactly the critical exponent for the defocusing nonlinear wave equation \cite{CCT}
\begin{equation}\label{defocusingNWE}
\partial_{tt}u - \Delta u + u^{p} = 0 \ {\rm on} \ \R^n.
\end{equation}
Christ, Colliander, and Tao have shown in \cite{CCT} that this defocusing nonlinear wave equation 
for odd integers $p > 1$ exhibits ill-posedness for initial data $(f, g) \in H^{s} \times H^{s - 1}$ where $0 < s < s_{cr}$. 
We ask whether equation \eqref{VNLWE}  exhibits similar  behavior. Intuitively, one can show through an energy estimate that even though both the nonlinear wave equation \eqref{NLW} and viscous nonlinear wave equation \eqref{VNLWE} share the same critical exponent, the presence of the viscous term in \eqref{VNLWE} dissipates energy, which might yield better estimates and a different ill-posedness result. 
We will show that this is, in fact, not  the case, although there are some differences. More precisely, we will show that 
the viscous nonlinear wave equation \eqref{VNLWE} has the same ill-posedness property as the nonlinear wave equation \eqref{NLW},
indicating that {\emph{nonlinear effects are dominant over viscous regularization}} 
associated with the fluid-structure coupling. { However, as we will see below, the viscous contribution has the potential to \emph{slow down} the speed of how ``fast" the $H^{s}$ norm of the solution grows.}

A way to show this is to use a variation of the dispersive limit argument in \cite{CCT}. 
%In particular, Christ, Colliander, and Tao \cite{CCT} consider the family of equations
%\begin{equation*}
%\partial_{tt}v - \nu^{2} \Delta v + v^{p} = 0
%\end{equation*}
%where in the ``limit" as $\nu \to 0$, the solution can be written out explicitly. 
More precisely, we  consider 
\begin{equation}\label{dispVNLWE}
\partial_{tt}u - \nu^{2}\Delta u + \nu \sqrt{-\Delta}\partial_{t}u + u^{p} = 0
\end{equation}
for $\nu > 0$, and study the solution in the {\emph{visco-dispersive limit}} when $\nu \to 0$.
In the visco-dispersive limit, one formally gets a limiting equation
\begin{equation}\label{displimit}
\partial_{tt}u + u^{p} = 0,
\end{equation}
whose solution can be written out explicitly, {{and it can be shown that the solution exhibits rapid growth of the $H^{s}$ norm in time}}.
The goal is to then show that for small $\nu$'s, solutions {to \eqref{dispVNLWE} with $\nu > 0$ small} are close to the solution with $\nu = 0$.
Note that equation \eqref{dispVNLWE} can be considered as a ``perturbation'' of the visco-dispersive limit equation
\eqref{displimit}.

To show that for small $\nu$'s, solutions of \eqref{dispVNLWE} are close to the solution with $\nu = 0$, let $\phi_{0}$ be any smooth compactly supported function on $\mathbb{R}^{n}$. 
The initial value problem we consider is
\begin{align}\label{IVPdispVNLWE}
\partial_{tt}u & - \nu^{2} \Delta u + \nu \sqrt{-\Delta}\partial_{t}u + u^{p} = 0 \text{ on } \mathbb{R}^{n},\\
&u(0, x) = \phi_{0}(x), \ \ \ \ \ \partial_{t}u(0, x) = 0,
\nonumber
\end{align}
and the visco-dispersive limit of this initial value problem is
\begin{align}\label{IVPdisplimit}
&\partial_{tt}u + u^{p} = 0, \\
u(0, x) = \ &\phi_{0}(x), \ \ \ \ \ \partial_{t}u(0, x) = 0.
\nonumber
\end{align}
The solution of \eqref{IVPdisplimit} is given by
\begin{equation}\label{displimitexpsol}
\phi^{(0)}(t, x) = \phi_{0}(x)V\left(t|\phi_{0}(x)|^{\frac{p - 1}{2}}\right) = \phi_{0}(x)V\left(t\left(\phi_{0}(x)\right)^{\frac{p - 1}{2}}\right),
\end{equation}
where $V$ is the smooth periodic solution to
\begin{equation*}
V'' + V^{p} = 0, \ \ \ \ \ V(0) = 1, \ \ \ \ \ V'(0) = 0.
\end{equation*}
Note that $V$ is even, which is why we can remove the absolute values in \eqref{displimitexpsol}.
 
Crucial for the proof is the following scaling property of the visco-dispersive limit equation \eqref{IVPdispVNLWE}: {{if $u(t, x)$ is a solution to \eqref{IVPdispVNLWE}, then
the entire one-parameter family of functions 
\begin{equation*}
\lambda^{-\frac{2}{p - 1}}u(\lambda^{-1}t, \lambda^{-1}\nu x), \quad \lambda > 0,
\end{equation*}
obtained via the scaling map \eqref{scaling_map}, is a solution to the viscous nonlinear wave equation.}}

We want to argue that for small values of $\nu$, solutions to both initial value problems \eqref{IVPdispVNLWE} and \eqref{IVPdisplimit} are close for {a bounded set of} times, {which increases as $\nu \to 0$}. We make this statement precise in the following proposition. 

{\bf Note on notation.} In what follows, we use $k$ for the exponent in $H^{k}$ whenever we want to emphasize that the Sobolev exponent is an integer, while we use $s$ for the Sobolev exponent in $H^{s}$ when the Sobolev exponent can be a general real number, possibly fractional.

\begin{proposition}\label{closeness}
Let $p > 1$ be a positive odd integer, and let $k \ge n + 1$ be an integer.
Suppose  $\phi_{0}$ is a compactly supported smooth function, and let $\phi^{(0)}$ be the solution to \eqref{IVPdisplimit}. Given any $\delta > 0$, there exist $C, c > 0$ depending on $p, k, \delta,$ {{and $\phi_{0} \in C_{0}^{\infty}(\mathbb{R}^{n})$}}, such that for all $0 < \nu \le c$, there exists a solution $\phi(t, x)$ of \eqref{IVPdispVNLWE} such that 
\begin{equation*}
||\phi(t) - \phi^{(0)}(t)||_{H^{k}(\mathbb{R}^{n})} + ||\phi_{t}(t) - \phi_{t}^{(0)}(t)||_{H^{k}(\mathbb{R}^{n})} \le C|\nu|^{1 - \delta}, \quad {\rm{for \ all}} \ 0 \le t \le c|\text{log}\nu|^{c}.
\end{equation*}
\end{proposition}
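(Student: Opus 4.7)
The plan is to seek the solution in the form $\phi = \phi^{(0)} + w$ and to show that the remainder $w$, which satisfies zero initial data $w(0)=w_t(0)=0$, remains of size $\nu^{1-\delta}$ in $H^{k}$ throughout $[0,c|\log\nu|^{c}]$. Subtracting the equation for $\phi^{(0)}$ from that for $\phi$ and expanding the nonlinearity by the binomial theorem, the remainder $w$ satisfies
\begin{equation*}
\partial_{tt} w - \nu^{2} \Delta w + \nu\sqrt{-\Delta}\,\partial_{t} w + N(w) = F_{\nu},
\end{equation*}
with source $F_{\nu} := \nu^{2} \Delta \phi^{(0)} - \nu\sqrt{-\Delta}\,\partial_{t}\phi^{(0)}$ and nonlinear remainder
\begin{equation*}
N(w) := \sum_{j=1}^{p} \binom{p}{j}\bigl(\phi^{(0)}\bigr)^{p-j} w^{j}.
\end{equation*}

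First I would control the profile $\phi^{(0)}$ itself. Since $\phi^{(0)}(t,x) = \phi_{0}(x) V(t\,\phi_{0}(x)^{(p-1)/2})$ with $V$ smooth and periodic and $\phi_{0} \in C_{0}^{\infty}$, the chain rule applied to $\phi_{0}(x)^{(p-1)/2}$ and the constancy in $t$ of the $x$-support of $\phi^{(0)}(t,\cdot)$ yield, for each integer $m\ge 0$, an exponent $A_{m}$ and a constant $C_{m}$ (depending on $\phi_{0}$) with
\begin{equation*}
\|\phi^{(0)}(t)\|_{H^{m}(\mathbb{R}^{n})} + \|\partial_{t}\phi^{(0)}(t)\|_{H^{m}(\mathbb{R}^{n})} \le C_{m} (1+t)^{A_{m}}.
\end{equation*}
Combined with the trivial bound $\|\sqrt{-\Delta}\,f\|_{H^{k}} \le \|f\|_{H^{k+1}}$, this gives $\|F_{\nu}(t)\|_{H^{k}} \le C\nu(1+t)^{A_{k+2}}$.

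Next I would run a high-order energy estimate. Since $k\ge n+1>n/2$, the space $H^{k}(\mathbb{R}^{n})$ is a Banach algebra, so the Moser-type bound
\begin{equation*}
\|N(w)\|_{H^{k}} \le C\bigl(1 + \|\phi^{(0)}\|_{H^{k}}^{p-1}\bigr)\|w\|_{H^{k}}
\end{equation*}
holds under the bootstrap hypothesis $\|w\|_{H^{k}} \le 1$. Setting
\begin{equation*}
E(t) := \|w(t)\|_{H^{k}}^{2} + \|w_{t}(t)\|_{H^{k}}^{2} + \nu^{2}\|\nabla w(t)\|_{H^{k}}^{2},
\end{equation*}
applying $D^{\alpha}$ for $|\alpha|\le k$ to the equation for $w$ and pairing in $L^{2}$ with $D^{\alpha}w_{t}$ produces the standard energy identity; discarding the non-negative dissipation $2\nu\|(-\Delta)^{1/4}D^{\alpha}w_{t}\|_{L^{2}}^{2}$ contributed by the viscous term and summing in $\alpha$ gives the differential inequality
\begin{equation*}
\frac{d}{dt}E(t) \le C\bigl(1 + \|\phi^{(0)}(t)\|_{H^{k}}^{2(p-1)}\bigr) E(t) + C\|F_{\nu}(t)\|_{H^{k}}^{2}.
\end{equation*}
Plugging in the polynomial bounds above and applying Gronwall's lemma with $E(0)=0$ yields, for some exponent $K = K(p,k,n)$,
\begin{equation*}
E(t) \le C\nu^{2}(1+t)^{2A_{k+2}+1}\exp\bigl(C(1+t)^{K}\bigr).
\end{equation*}

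To close the bootstrap I need $E(t)\le \tfrac{1}{4}\nu^{2(1-\delta)}$, and this is ensured once $C(1+t)^{K} \le 2\delta|\log\nu|$, i.e.\ once $t\le c|\log\nu|^{1/K}$ with $c$ small enough depending on $\delta,p,k,\phi_{0}$; this is precisely the $|\log\nu|^{c}$ form asserted in the proposition. Existence of $\phi$ on this interval follows from standard local well-posedness for the damped semilinear wave equation \eqref{IVPdispVNLWE} in $H^{k}$ (valid for $k>n/2$) combined with the blow-up criterion, which cannot trigger because the bootstrap bound keeps $\|w\|_{H^{k}}$ small and hence $\|\phi\|_{H^{k}}$ controlled. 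The main obstacle I anticipate is the tight interplay between the polynomial-in-$t$ growth of $\|\phi^{(0)}\|_{H^{k}}^{p-1}$ and the exponential amplification in Gronwall: one must carefully track the powers of $(1+t)$ produced by differentiating $V(t\phi_{0}^{(p-1)/2})$ so as to extract a genuine $\nu^{1-\delta}$ for every $\delta>0$, rather than a cruder power like $\nu^{1/2}$. The nonlocal viscous term is actually helpful in the energy identity since it supplies additional dissipation; its only subtlety is that the source contribution $\nu\sqrt{-\Delta}\partial_{t}\phi^{(0)}$ is not compactly supported, and must be estimated via the fractional inequality $\|\sqrt{-\Delta}\cdot\|_{H^{k}} \le \|\cdot\|_{H^{k+1}}$.
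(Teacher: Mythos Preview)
Your proposal is correct and follows essentially the same route as the paper: write $\phi=\phi^{(0)}+w$, derive the forced equation for $w$, bound the source by $C\nu(1+t)^{C}$ using $\|\sqrt{-\Delta}\,\cdot\|_{H^{k}}\le\|\cdot\|_{H^{k+1}}$ and the polynomial-in-$t$ growth of $\phi^{(0)}$, exploit the $H^{k}$ algebra property under a bootstrap $\|w\|_{H^{k}}\le 1$, discard the dissipative term in the energy identity, and close with Gronwall on the logarithmic timescale $t\lesssim|\log\nu|^{c}$. The only cosmetic differences are that the paper works with $E_{\nu,k}^{1/2}$ and controls $\|w\|_{H^{k}}$ separately via $\int_{0}^{t}\|w_{t}\|_{H^{k}}$ rather than embedding it in the energy, and it absorbs the residual polylogarithmic factor by running the argument with $\delta/2$ in place of $\delta$---a point you should make explicit when turning the sketch into a full proof.
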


\begin{proof} The proof is based on energy methods. %See also Colliander, Christ, and Tao for the Schr\"{o}dinger equation \cite{CCT}.
We begin by defining
\begin{equation*}
w(t, x) = \phi(t, x) - \phi^{(0)}(t, x),
\end{equation*}
where $\phi(t, x)$ is the solution to \eqref{IVPdispVNLWE} and $\phi^{(0)}(t, x)$ is the solution to \eqref{IVPdisplimit}. 
Then, $w$ satisfies the following initial value problem,
where we use $G$ to denote the nonlinearity $G(z) = z^{p}$:
\begin{align}\label{IVPproof}
\partial_{tt}w - \nu^{2}\Delta w + \nu \sqrt{-\Delta} \partial_{t}w = \nu^{2}\Delta \phi^{(0)} &- G(\phi^{(0)} + w) + G(\phi^{(0)}) - \nu\sqrt{-\Delta}\partial_{t}\phi^{(0)},\\
w(0, x) &= 0, \ \ \ \ \ \partial_{t}w(0, x) = 0.
\nonumber
\end{align}
One can use energy estimates to show that 
this initial value problem has a solution in $H^{k + 1} \times H^{k}$, { as long as the $H^{k + 1} \times H^{k}$ norm of the solution is bounded}.
The main idea to prove this is to first obtain the existence of a unique
{\sl{local}} solution, and then extend the solution locally whenever the energy norm is bounded.
 The proof of this is by a Picard iteration argument. 
For a full proof of the existence of a solution to \eqref{IVPproof} as long as the $H^{k + 1} \times H^{k}$ norm of the solution is bounded, we refer the reader to the Appendix. 

We will use the energy method to estimate the size of $w$. We note that we can derive an energy estimate for the inhomogeneous linear problem
\begin{equation}\label{energyestderivation}
u_{tt} - \nu^{2}\Delta u + \nu\sqrt{-\Delta} \partial_{t}u = F(t, x)
\end{equation}
as follows. Define the {{$\nu$-wave energy}} of a solution $u$ (depending on $\nu$) by
\begin{equation*}
E_{\nu}(u(t)) := \int \frac{1}{2} |u_{t}(t, x)|^{2} + \frac{\nu^{2}}{2}|\nabla u(t, x)|^{2} dx.
\end{equation*}
By multiplying the equation \eqref{energyestderivation} by $u_{t}$ and integrating in space, we get
\begin{equation*}
\int \partial_{tt}u \cdot \partial_{t}u - \nu^{2} \Delta u (\partial_{t} u) dx = \int F(t, x) \partial_{t}u(x) dx - \nu \int \sqrt{-\Delta}\partial_{t}u \cdot \partial_{t}u dx,
\end{equation*}
or equivalently (if $u$ decays rapidly at infinity)
\begin{equation*}
\frac{d}{dt} (E_{\nu}(u(t))) = \int F(t, x) \partial_{t}u(x) dx - \nu \int \sqrt{-\Delta}\partial_{t}u \cdot \partial_{t}u dx.
\end{equation*}
Using Plancherel's theorem,
\begin{align*}
\frac{d}{dt} (E_{\nu}(u(t))) &= \int F(t, x) \partial_{t}u(x) dx - \nu ||u_{t}||^{2}_{\dot{H}^{1/2}} \le \int F(t, x) \partial_{t}u(x) dx \\
&\le ||F(t, \cdot)||_{L^{2}} \cdot \sqrt{2}(E_{\nu}(u(t)))^{1/2} \le ||F(t, \cdot)||_{L^{2}} \cdot 2(E_{\nu}(u(t)))^{1/2},
\end{align*}
where we applied the Cauchy-Schwarz inequality. Therefore,
\begin{equation*}
\frac{d}{dt}\left(E_{\nu}^{1/2}(w(t))\right) \le ||F(t, \cdot)||_{L^{2}},
\end{equation*}
which gives the desired energy inequality. Using the fact that derivatives commute with $(\partial_{tt} - \nu^{2}\Delta + \nu\sqrt{-\Delta} \partial_{t})$ (since $\sqrt{-\Delta}$ is a Fourier multiplier and hence commutes with ordinary derivatives which are also Fourier multipliers), we can get an estimate on the derivatives too. In particular, if we define
\begin{equation*}
E_{\nu, k}(w(t)) := \sum_{|\alpha| \le k} E_{\nu}(\partial^{\alpha}_{x}w(t)),
\end{equation*}
we have the energy inequality
\begin{equation*}
\frac{d}{dt}\left(E_{\nu, k}^{1/2}(w(t))\right) \le C||F(t, \cdot)||_{H^{k}},
\end{equation*}
{ for a constant $C$ depending only on $k$}. Applying this energy inequality to \eqref{IVPproof} gives
\begin{equation}\label{applyenergyineq}
\frac{d}{dt}\left(E_{\nu, k}^{1/2}(w(t))\right) \le C\left(||\nu^{2}\Delta \phi^{(0)}||_{H^{k}} + ||\nu \sqrt{-\Delta} \partial_{t}\phi^{(0)}||_{H^{k}} + ||G(\phi^{(0)} + w)(t) - G(\phi^{(0)})(t)||_{H^{k}}\right).
\end{equation}
In addition, we note that we can get estimates on spatial derivatives as follows:
\begin{equation}\label{spatialest}
||w(t)||_{H^{k}} \le \int_{0}^{t} ||w_{t}(t')||_{H^{k}} dt' \le C\int_{0}^{t}E_{\nu, k}^{1/2}(w(t'))dt' \le Cte(t),
\end{equation}
where
\begin{equation*}
e(t) := \text{sup}_{0 \le t' \le t} E_{\nu, k}^{1/2}(w(t')).
\end{equation*}
Note that $\frac{d}{dt}e(t) \le \text{max}\left(\frac{d}{dt}\left(E_{\nu, k}^{1/2}(w(t))\right), 0\right)$, and so, since the right hand side of \eqref{applyenergyineq} is nonnegative, we have
\begin{equation}\label{mainenergyineq}
\frac{d}{dt}e(t) \le C\left(||\nu^{2}\Delta \phi^{(0)}||_{H^{k}} + ||\nu \sqrt{-\Delta} \partial_{t}\phi^{(0)}||_{H^{k}} + ||G(\phi^{(0)} + w)(t) - G(\phi^{(0)})(t)||_{H^{k}}\right).
\end{equation}
Recall the form of 
\begin{equation*}
\phi^{(0)}(t, x) = \phi_{0}(x)V\left(t\left(\phi_{0}(x)\right)^{\frac{p - 1}{2}}\right),
\end{equation*}
and note that $|\phi_{0}(x)|^{\frac{p - 1}{2}}$ is smooth since $p > 1$ is odd, so $\frac{p - 1}{2}$ is a positive integer. We can deduce (recalling that $\phi_{0}$ is fixed) that 
\begin{equation}\label{energyest1}
||\nu^{2}\Delta \phi^{(0)}||_{H^{k}} \le C\nu^{2}(1 + |t|)^{k + 2} \le C\nu^{2}(1 + |t|)^{C}.
\end{equation}
Using H\"{o}lder and Sobolev inequalities as in \cite{CCT} at the bottom of pg.~11, one obtains
\begin{equation}\label{CCTest}
||G(\phi^{(0)} + w)(t) - G(\phi^{(0)})(t)||_{H^{k}} \le C(1 + |t|)^{C}||w(t)||_{H^{k}}(1 + ||w(t)||_{H^{k}})^{p - 1}.
\end{equation}
Estimate \eqref{spatialest} now implies
\begin{equation}\label{energyest2}
||G(\phi^{(0)} + w)(t) - G(\phi^{(0)})(t)||_{H^{k}} \le C(1 + |t|)^{C}(e(t) + e(t)^{p}).
\end{equation}
Finally, we have to estimate $||\nu \sqrt{-\Delta} \partial_{t}\phi^{(0)}||_{H^{k}}$. Note that
\begin{align}\label{energyest3}
||\nu \sqrt{-\Delta} \partial_{t}\phi^{(0)}||_{H^{k}} &= \nu \frac{1}{(2\pi)^{2}} \int_{\mathbb{R}^{2}} (1 + |\xi|^{2})^{k} |\xi|^{2} |\partial_{t}\phi^{(0)}(t, \xi)|^{2} d\xi 
\nonumber
\\
&\le \nu \frac{1}{(2\pi)^{2}} \int_{\mathbb{R}^{2}} (1 + |\xi|^{2})^{k + 1} |\partial_{t}\phi^{(0)}(t, \xi)|^{2}d\xi 
\nonumber
\\
&\le \nu ||\partial_{t}\phi^{(0)}||_{H^{k + 1}} \le C\nu(1 + |t|)^{k + 1} \le C\nu(1 + |t|)^{C},
\end{align}
\if 1 = 0

%NOTE: I don't think we need this. There is a simpler %way to do this in Fourier space.

We do this using a simple interpolation lemma, that will convert the problem of analyzing the operator $\sqrt{-\Delta}$ into a problem posed entirely in terms of spatial derivatives. 

\begin{lemma}[Interpolation lemma]
Consider some function $F(t, x)$. If $||F(t, \cdot)||_{H^{k}} \le a(t)$ and $||(\Delta F)(t, \cdot)||_{H^{k}} \le b(t)$ for functions $a(t)$ and $b(t)$, then
\begin{equation*}
||(\sqrt{-\Delta}F)(t, \cdot)||_{H^{k}} \le (a(t)b(t))^{1/2}
\end{equation*}
\end{lemma}

\begin{proof}
We have that
\begin{multline*}
||(\sqrt{-\Delta}F)(t, \cdot)||^{2}_{H^{k}} = \int (1 + |\xi|^{2})^{k}|\xi|^{2}|\widehat{F}(t, \xi)|^{2} d\xi \\
\le \left(\int (1 + |\xi|^{2})^{k}|\xi|^{4}|\widehat{F}(t, \xi)|^{2} d\xi\right)^{1/2} \left(\int (1 + |\xi|^{2})^{k}|\widehat{F}(t, \xi)|^{2} d\xi\right)^{1/2} \\
= ||(\Delta F)(t, \cdot)||_{H^{k}} ||F(t, \cdot)||_{H^{k}} \le a(t)b(t)
\end{multline*}
where in the first inequality, we used Cauchy-Schwarz, splitting the product into 
\begin{equation*}
(1 + |\xi|^{2})^{k}|\xi|^{2}|\widehat{F}(t, \xi)|^{2} = \left[(1 + |\xi|^{2})^{k/2}|\xi|^{2}|\widehat{F}(t, \xi)|\right] \cdot \left[(1 + |\xi|^{2})^{k/2} |\widehat{F}(t, \xi)| \right]
\end{equation*}
Taking square roots gives the desired result. 
\end{proof}

Using this lemma, we can estimate $||\nu \sqrt{-\Delta} \partial_{t}\phi^{(0)}||_{H^{k}}$ where we recall that
\begin{equation*}
\phi^{(0)}(t, x) = \phi_{0}(x)V\left(t\left(\phi_{0}(x)\right)^{\frac{p - 1}{2}}\right)
\end{equation*}
where $V$ is smooth and periodic (since $p$ is odd). \textit{In particular, $V$ and all of its derivatives are uniformly bounded}. Note that
\begin{equation*}
\partial_{t}\phi^{(0)}(t, x) = \left(\phi_{0}(x)\right)^{\frac{p + 1}{2}} V'\left(t\left(\phi_{0}(x)\right)^{\frac{p - 1}{2}}\right)
\end{equation*}
and $\partial_{t}\phi^{(0)}(t, x)$ is smooth in both $t$ and $x$, since $p$ is odd. Applying Lemma 1.1, we have
\begin{equation*}
||\nu \sqrt{-\Delta} \partial_{t}\phi^{(0)}||_{H^{k}} \le \nu ||\Delta(\partial_{t}\phi^{(0)})||_{H^{k}}^{1/2}||\partial_{t}\phi^{(0)}||_{H^{k}}^{1/2}
\end{equation*}
To estimate the right hand side, we note that by the Leibnitz rule,
\begin{equation*}
||\partial_{t}\phi^{(0)}(t, x)||_{H^{k}} \le C(1 + |t|)^{k}
\end{equation*}
\begin{equation*}
||\Delta(\partial_{t}\phi^{(0)}(t, x)||_{H^{k}} \le C(1 + |t|)^{k + 2}
\end{equation*}
since each term that we get from the Leibnitz rule computation of an $N$th order spatial derivative of $\partial_{t}\phi^{(0)}$ is of the form $t^{j}f(x)V^{(j)}\left(t\left(\phi_{0}(x)\right)^{\frac{p - 1}{2}}\right)$, where $f$ is a compactly supported function independent of $t$ (that differs depending on the term in the Leibnitz rule expansion), and $0 \le j \le N$. We can then estimate the $L^{2}$ norm of terms of this type by noting that all derivatives of $V$ are uniformly bounded, and by noting that the $L^{2}$ norm of $f(x)$ is bounded and independent of $t$, since $f(x)$ is independent of $t$. Substituting everything into the right hand side of the previous inequality,
\begin{equation}
||\nu \sqrt{-\Delta} \partial_{t}\phi^{(0)}||_{H^{k}} \le C\nu(1 + |t|)^{k + 1} \le C\nu(1 + |t|)^{C}
\end{equation}

\fi
and so by using \eqref{mainenergyineq}, \eqref{energyest1}, \eqref{energyest2}, and \eqref{energyest3}, we get
\begin{equation*}
\frac{d}{dt}e(t) \le C(1 + |t|)^{C}(\nu^{2} + \nu + e(t) + e(t)^{p}).
\end{equation*}
Taking $c$ (and hence $\nu$, since $0 \le \nu < c$) sufficiently small, and using an a priori estimate $e(t) \le 1$ (which we will recover later by bootstrap, and which is valid by continuity for small $t$ since $e(t) = 0$), we get
\begin{equation*}
\frac{d}{dt}e(t) \le C(1 + |t|)^{C}(\nu + e(t)).
\end{equation*}
Gronwall's inequality gives that
\begin{equation*}
e(t) \le C\nu \cdot \text{exp}(C(1 + |t|)^{C}).
\end{equation*}
Now we choose $c$ sufficiently small such that for all $0 \le t < c|\text{log}\nu|^{c}$ and $0 < \nu \le c$, we have 
\begin{equation*}
C\nu \cdot \text{exp}(C(1 + |t|)^{C}) \le C'\nu^{1 - \delta},
\end{equation*}
\begin{equation*}
C\nu \cdot \text{exp}(C(1 + |t|)^{C}) \le 1/2.
\end{equation*}
To see that this is possible, note that we can choose $c$ sufficiently small so that $cC << 1$, $c << 1$, and $Cc^{C} << \frac{\delta}{2}$. Then, for all $0 \le t < c|\text{log}\nu|^{c}$ and $0 < \nu \le c$, 
\begin{align*}
C\nu \cdot \text{exp}(C(1 + |t|)^{C}) &\le C\nu \cdot \text{exp}(C + C|t|^{C}) \le Ce^{C}\nu \cdot \text{exp}(Cc^{C}|\text{log}\nu|^{cC}) 
\\
&\le Ce^{C}\nu \cdot \exp(Cc^{C}|\text{log}\nu|) = Ce^{C}\nu \cdot \exp(-Cc^{C}\text{log}\nu) = Ce^{C}\nu \cdot \nu^{\left(-Cc^{C}\right)} 
\\
&\le Ce^{C}\nu^{1 - \frac{\delta}{2}} \le Ce^{C}\nu^{1 - \delta} = C'\nu^{1 - \delta}.
\end{align*}
Note that $C'$ is independent of the sufficiently small $c$ we choose. Then, by making $c$ even smaller if necessary, we can also get $C'\nu^{1 - \delta} << 1/2$ for all $0 < \nu \le c$ to get the second inequality above. Then a bootstrap continuity argument (based on the second inequality above, which implies $e(t) \le 1/2$) can be used to justify the a priori assumption that $e(t) \le 1$ for the $t$ we are considering in $0 \le t < c|\text{log}\nu|^{c}$, $0 < \nu \le c$. Combining the definition of $e(t)$ and estimate \eqref{spatialest} gives the desired result. (For estimate \eqref{spatialest}, we note that with the choices above, $te(t) \le C''\nu^{1 - \delta}$ still for all $0 \le t < c|\text{log}\nu|^{c}$, since $|\text{log}\nu|^{c} \le |\text{log}\nu| \le C_{r}\nu^{-r}$ for any $r > 0$ and since we showed earlier that with our choice of $C$, we actually had $e(t) \le C'\nu^{1 - \frac{\delta}{2}}$. In particular, we can set $r = \delta/2$.)
\end{proof}

\begin{remark}

Christ, Colliander, and Tao showed an analogous result for the nonlinear wave equation \eqref{NLW}, but the resulting exponent in the analogue of the lemma above is $\nu^{2 - \delta}$. {{This shows that the dissipative effect of the $\nu \sqrt{-\Delta}\partial_{t}u$ term makes the solution of the perturbed viscous nonlinear wave equation \eqref{IVPdispVNLWE} ``less close" than the solution of the analogously perturbed initial value problem for the nonlinear wave equation \eqref{NLW} (which is just \eqref{IVPdispVNLWE} without the $\nu \sqrt{-\Delta}\partial_{t}u$ term) to the dispersive limit solution. Thus, there is the potential for the solution of the perturbed initial value problem \eqref{IVPdispVNLWE} to have $H^{s}$ norm growing less fast than for the corresponding perturbed initial value problem for the nonlinear wave equation.}}

\end{remark}

The important feature of the above lemma is that for $\delta > 0$ sufficiently small, $\nu^{1 - \delta}$ still goes to $0$ as $\nu$ goes to $0$. Therefore, the proofs of {ill-posedness} in Christ, Colliander, and Tao \cite{CCT} still apply to this equation. In particular, we have the following result, {{which holds for general $n$, although $n = 2$ will correspond to the specific case for our given fluid-structure interaction model.}}

\begin{theorem}\label{ill_posedness}
Let $p > 1$ be a positive odd integer. If $0 < s < s_{cr}$, where $s_{cr}$ is given by \eqref{s_critical},
then for every $\epsilon > 0$, there exists a solution $u$ of the viscous nonlinear wave equation \eqref{VNLWE} and a positive time $t$ such that
\begin{equation*}
||u(0)||_{H^{s}} < \epsilon, \ \ \ u_{t}(0) = 0, \ \ \ 0 < t < \epsilon, \ \ \ ||u(t)||_{H^{s}} > \epsilon^{-1},
\end{equation*}
for some $u(0) \in \mathcal{S}(\mathbb{R}^{n})$, where $ \mathcal{S}(\mathbb{R}^{n})$ is the Schwartz class. 
Thus, the solution map for the equation \eqref{VNLWE} is not continuous at 
$(u(0),u_t(0))=(0, 0) \in H^{s}(\mathbb{R}^{n}) \times H^{s - 1}(\mathbb{R}^{n})$ for $0 < s < s_{cr}$. 
\end{theorem}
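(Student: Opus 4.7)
The proof plan is to follow the dispersive-limit/scaling strategy of Christ--Colliander--Tao, using Proposition \ref{closeness} as the replacement for the analogous NLW closeness statement, and then rescaling via the one-parameter family of symmetries of \eqref{VNLWE} noted just before the proposition.

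First, I would fix a nonzero, real, smooth, compactly supported profile $\phi_0$ and recall from \eqref{displimitexpsol} that the visco-dispersive-limit solution
\begin{equation*}
\phi^{(0)}(t,x) = \phi_0(x)\, V\!\left(t\,\phi_0(x)^{(p-1)/2}\right)
\end{equation*}
has high-frequency oscillations whose wave-number grows linearly in $t$: a routine stationary-phase/Plancherel calculation (essentially Lemma 2.2 of \cite{CCT}) gives $\|\phi^{(0)}(t)\|_{H^s} \gtrsim t^{s}$ for $0<s<s_{cr}$ and $t$ large, while the high-integer norms grow only polynomially, $\|\phi^{(0)}(t)\|_{H^k} \lesssim (1+|t|)^k$. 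Choosing $k \ge n+1$ so that $H^k \hookrightarrow H^s$, Proposition \ref{closeness} then yields, for every $\delta>0$ and all sufficiently small $\nu$, a solution $\phi$ of \eqref{IVPdispVNLWE} satisfying
\begin{equation*}
\|\phi(T) - \phi^{(0)}(T)\|_{H^s} \le C\nu^{1-\delta}, \qquad T := c|\log\nu|^{c}.
\end{equation*}
Because $T$ grows faster than any power of $\nu^{-\delta'}$ for small $\delta'$, we can take $\nu$ small enough that the approximation error is negligible compared to $\|\phi^{(0)}(T)\|_{H^s}\gtrsim T^s$, giving $\|\phi(T)\|_{H^s} \gtrsim T^{s}$.

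Next I would invoke the scaling observation immediately preceding Proposition \ref{closeness}: the function
\begin{equation*}
u(t,x) := \lambda^{-\frac{2}{p-1}} \phi\!\left(\lambda^{-1}t,\, \lambda^{-1}\nu x\right)
\end{equation*}
solves \eqref{VNLWE} for any $\lambda>0$, with $u_t(0,\cdot)=0$ since $\phi_t(0,\cdot)=0$. A direct change of variables on the Fourier side shows
\begin{equation*}
\|u(0)\|_{\dot H^s} = \lambda^{s_{cr}-s}\,\nu^{-(n/2-s)}\|\phi_0\|_{\dot H^s}, \qquad \|u(\lambda T)\|_{\dot H^s} = \lambda^{s_{cr}-s}\,\nu^{-(n/2-s)}\|\phi(T)\|_{\dot H^s},
\end{equation*}
and analogous (actually better, since the rescaled profile has low-frequency content when $\lambda\nu^{-1}$ is small) relations hold for the inhomogeneous $H^s$ norm after combining $\dot H^s$ and $L^2$ pieces. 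Since $s<s_{cr}$, the prefactor $\lambda^{s_{cr}-s}$ is a contraction as $\lambda\to 0$; the ratio of $H^s$ norms at time $\lambda T$ versus at time $0$ is therefore $\gtrsim T^{s}$, a quantity we have made as large as we wish.

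Finally, given $\epsilon>0$, I would choose $\nu>0$ so small that $T = c|\log\nu|^c$ satisfies $T^{s} > 2\epsilon^{-2}\|\phi_0\|_{H^s}^{-1}$ (and so small that Proposition \ref{closeness} applies), then choose $\lambda>0$ small enough that simultaneously $\lambda T < \epsilon$ and $\|u(0)\|_{H^s} < \epsilon$, the latter being achievable since $\|u(0)\|_{H^s}$ carries the tunable factor $\lambda^{s_{cr}-s}\to 0$. The three inequalities together then force $\|u(\lambda T)\|_{H^s} > \epsilon^{-1}$, producing the required Schwartz counterexample at the small positive time $t = \lambda T$.

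The main obstacle is the bookkeeping in the three-parameter optimization over $(\nu,\lambda,T)$: unlike in \cite{CCT}, the closeness in Proposition \ref{closeness} is only of order $\nu^{1-\delta}$ rather than $\nu^{2-\delta}$, so one must verify that the $T^s$ growth from the limit profile still dominates the approximation error within the logarithmic time window, and that after rescaling by $\lambda$ the inhomogeneous (not merely homogeneous) $H^s$ norm of the initial datum is genuinely small. Once these scale-counting inequalities are checked (using $s < s_{cr} = n/2 - 2/(p-1)$ crucially to make $\lambda^{s_{cr}-s}$ a contractive factor), the discontinuity of the solution map at the origin follows at once.
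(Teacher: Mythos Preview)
Your proposal is correct and follows essentially the same route as the paper: build the two-parameter family $u^{(\nu,\lambda)}(t,x)=\lambda^{-2/(p-1)}\phi^{(\nu)}(\lambda^{-1}t,\nu\lambda^{-1}x)$ from the visco-dispersive and vNLW scaling symmetries, use Proposition~\ref{closeness} to transfer the $t^{s}$ growth of $\phi^{(0)}$ to $\phi^{(\nu)}$ on the window $0\le t\le c|\log\nu|^{c}$, and then tune $(\nu,\lambda)$ so that the initial datum is small while the $H^{s}$ norm at time $\lambda T$ is large. Two small slips to clean up when you write it out: first, $T=c|\log\nu|^{c}$ grows \emph{slower} than any positive power of $\nu^{-1}$, not faster---the comparison $C\nu^{1-\delta}\ll T^{s}$ you need holds simply because the left side tends to $0$ while the right tends to $\infty$; second, when $\lambda\nu^{-1}$ is small the rescaled profile $\phi_{0}(\nu\lambda^{-1}x)$ is spatially compressed and hence has \emph{high}-frequency content, which is precisely why (for $0<\lambda\le\nu$, as in the paper) the inhomogeneous $H^{s}$ norm is controlled by the homogeneous one.
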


%The proof of this theorem relies on the same ideas as the proof of Theorem 4 in Christ, Colliander, and Tao's paper \cite{CCT}.
For completeness, we present the main ideas of the proof here, and refer the reader to \cite{CCT} for more details.
{{The main idea is to utilize the various symmetries of the vNLW equation \eqref{VNLWE} 
to create a family of solutions depending on $\nu$, that ``translates'' $H^{s}$ growth of the solution to the visco-dispersive limit equation ($\nu = 0$)
into the unbounded growth of the $H^s$ norm of solutions to the vNLW equation \eqref{VNLWE} at progressively smaller times $t \to 0$.}}

In particular, we have two symmetries:
\begin{itemize}
\item \emph{The visco-dispersive scaling symmetry}: 
If $u(t, x)$ solves $\partial_{tt}u - \nu^{2} \Delta u + \nu \sqrt{-\Delta} \partial_{t} u + u^{p} = 0$, then $u(t, \nu x)$ solves the original vNLW equation \eqref{VNLWE}; and
\item \emph{The vNLW scaling symmetry}:  
If $u(t, x)$ solves the original vNLW equation \eqref{VNLWE}, then so does $\lambda^{\frac{2}{p - 1}}u(\lambda t, \lambda x)$.
\end{itemize}
Using these symmetries, one can construct a family of solutions of the vNLW equation as follows.
Fix an arbitrary compactly supported function $\phi_{0} \in C_{0}^{\infty}(\mathbb{R}^{n})$, 
and let $\phi^{(\nu)}(t, x)$, $\nu > 0$, denote the solution to the initial value problem:
\begin{equation*}
\partial_{tt}u - \nu^{2} \Delta u + \nu \sqrt{-\Delta} \partial_{t} u + u^{p} = 0,
\end{equation*}
\begin{equation*}
u(0, x) = \phi_{0}(x), \ \ \ \ \ \partial_{t}u(0, x) = 0,
\end{equation*}
with $(\phi_{0},0)$ as the initial data.
Using the symmetries above, and the solution $\phi^{(\nu)}(t, x)$, we can construct an entire family of solutions 
\begin{equation}\label{unulambda}
u^{(\nu, \lambda)}(t, x) = \lambda^{-\frac{2}{p - 1}} \phi^{(\nu)}(\lambda^{-1}t, \nu\lambda^{-1}x)
\end{equation}
to the original vNLW equation \eqref{VNLWE} 
corresponding to the following family of
initial  data for displacement:
\begin{equation*}
u^{(\nu, \lambda)}(0, x) = \lambda^{-\frac{2}{p - 1}} \phi_{0}(\nu\lambda^{-1}x).
\end{equation*}
One can show, following the bounds in \cite{CCT}, that for {{}{$0 < s < s_{cr} := \frac{n}{2} - \frac{2}{p - 1}$,}} the $H^{s}$ norm of the initial displacement 
$||u^{(\nu, \lambda)}(0, \cdot)||_{H^{s}(\mathbb{R}^{n})}$ is bounded by
\begin{equation*}
{{}{||u^{(\nu, \lambda)}(0, \cdot)||_{H^{s}(\mathbb{R}^{n})} \le C\lambda^{s_{cr} - s}\nu^{s - \frac{n}{2}},}}
\end{equation*}
whenever $0 < \lambda \le \nu$, for a constant $C$ independent of $0 < \lambda \le \nu$. 
So given $\epsilon > 0$, we can choose $\lambda$ and $\nu$ so that 
\begin{equation}\label{lambdanurelation}
{{}{\epsilon =C \lambda^{s_{cr} - s} \nu^{s - \frac{n}{2}}=C \frac{\lambda^{s_{cr} - s}}{\nu^{\frac{n}{2}-s}}, \quad \text{ where } s_{cr} - s > 0 \text{ and } \frac{n}{2}-s >0, \text{ since } 0 < s < s_{cr} < n/2.}}
\end{equation}
This gives a whole family of possibilities for $\lambda$ and $\nu$, where $0 < \lambda \le \nu$ for all $\nu > 0$ sufficiently small, {{since $\frac{n/2 - s}{s_{cr} - s} > 1$}}. In particular, for any choice of $\lambda, \nu$ with $\nu$ sufficiently small satisfying \eqref{lambdanurelation}, we will get that the norm of the initial displacement corresponding to the solution $u^{(\nu, \lambda)}$ in $H^{s}$ is indeed less than $\epsilon$.

To show that the $H^{s}$ norm of the solution $u^{(\nu, \lambda)}(t,x)$ is greater than $\epsilon^{-1}$ for {{some}} $0 < t < \epsilon$, 
recall that we had an explicit form for the solution of the visco-dispersive limit initial value problem:
\begin{equation*}
u_{tt} + u^{p} = 0,
\end{equation*}
\begin{equation*}
u(0, x) = \phi_{0}(x), \ \ \ \ \ \partial_{t}u(0, x) = 0,
\end{equation*}
which was given by $\phi^{(0)}(t, x) = \phi_{0}(x)V\left(t(\phi_{0}(x))^{\frac{p - 1}{2}})\right)$ 
for some smooth, even, periodic function $V$. 
Analyzing this $\phi^{(0)}$, one can see that $||\phi^{(0)}(t, \cdot)||_{H^{k}(\mathbb{R}^{n})} \sim t^{k}$ for all nonnegative integers $k \ge 0$ and sufficiently large $t$, 
and hence the result also holds for all $H^{s}$ for $s \ge 0$ not necessarily an integer, by interpolation. 
By Proposition~\ref{closeness}, we see that 
 for all $\nu > 0$ sufficiently small, $||\phi^{(\nu)}(t, \cdot)||_{H^{s}(\mathbb{R}^{n})}$ has the same behavior:
\begin{equation}\label{growth}
||\phi^{(\nu)}(t, \cdot)||_{H^{s}(\mathbb{R}^{n})} \sim t^{s},
\end{equation}
for $s \ge 0$ and for times $1 << t \le c|\text{log}\nu|^{c}$ (which can become increasingly large {{as $\nu \to 0$})}. 
Thus, the function $\phi^{(\nu)}$ transfers its energy to increasingly higher frequencies as time progresses.

We now want to show that this translates into increasingly high $H^s$ norm of $u^{(\nu, \lambda)}(t, x)$, for {{some time between $0$ and $\epsilon$}}.
Indeed, by recalling that $\phi^{(\nu)}$ appears in the definition of $u^{(\nu, \lambda)}(t, x)$ in the following way, see \eqref{unulambda}:
\begin{equation*}\label{unulambda}
u^{(\nu, \lambda)}(t, x) = \lambda^{-\frac{2}{p - 1}} \phi^{(\nu)}(\lambda^{-1}t, \nu\lambda^{-1}x),
\end{equation*}
and by using Fourier transform to examine the $H^s$ norm of $u^{(\nu, \lambda)}$ for all $\nu$ and $\lambda$ satisfying
\eqref{lambdanurelation}, one obtains, following the steps in the proof of Theorem 2, pg. 17, Sec.~4 of \cite{CCT}, 
the following estimate
$$
\| u^{(\nu, \lambda)}(\lambda t) \|_{H^s} \ge c' \epsilon t^s,
$$
which holds for a constant $c' > 0$ independent of $\lambda$ and $\nu$. 
By choosing $t$ large enough (depending on $\epsilon$) and $\lambda$ small enough (depending on $t$ and $\epsilon$),
one gets the desired estimate. 
Full details can be found in \cite{CCT}.

%%%%%%%%%%%%%%%
\if 1 = 0
We will now use the behavior of $\phi^{(\nu)}$ and the scalings of time  to conclude that for arbitrarily small time 
$$ 0 < t < \epsilon = C \frac{\lambda^{s_{c} - s}}{\nu^{\frac{n}{2}-s}}$$
the $H^s$ norm of $u^{(\nu, \lambda)}$ will become arbitrarily large as $\nu \to 0$.

Indeed, as we let $\nu \to 0$, we see that $\lambda \to 0$ so that $\epsilon$ is kept constant.
This implies that the time variable $t/\lambda$ in $\phi^{(\nu)}$ is increasing to infinity for each fixed $t \in (0,\epsilon)$,
since 
$$\frac{t}{\lambda} \in (0,\frac{\epsilon}{\lambda}) = (0, C \frac{\lambda^{s_c - s - 1}}{\nu^{\frac{n}{2} - s}})= (0,  \frac{\lambda^{- \frac{2}{p - 1}}}{\nu^{\frac{n}{2} - s}})\to (0,\infty),$$
where we used the expression for $s_{cr}$ given in \eqref{s_critical}.
This implies that the $H^s$ norm
$||\phi^{(\nu)}(t, \cdot)||_{H^{s}(\mathbb{R}^{n})}$ grows to infinity with the rate of $(t/\lambda)^s$, from \eqref{growth}.
Thus, for each given $\epsilon > 0$, the $H^s$ norm
$||\phi^{(\nu)}(t, \cdot)||_{H^{s}(\mathbb{R}^{n})}$ goes to infinity as $\nu\to 0$, for each small $t$,  $0<t < \epsilon$, which is what we wanted to show.
\fi
%%%%%%%%%%%%%%
%
%By noting that $\phi^{(\nu)}$ appears in the definition of $u^{(\nu, \lambda)}(t, x)$ in \eqref{unulambda}, we can use the immediately preceding estimate to make the $H^{s}$ norm of $\phi^{(\nu)}(t, x)$ at some future time $t$ (on the order of $t^{s}$) arbitrarily large by taking $t$ sufficiently \textit{large}. However, we want this $H^{s}$ amplification to happen at an arbitrarily \textit{small} time $0 < t < \epsilon$, but by using the symmetry $u(t, x) \to \lambda^{-\frac{2}{p - 1}}u(\lambda^{-1} t, \lambda^{-1} x)$, we can achieve this by taking $\lambda$ (and hence $\nu$) sufficiently small to scale down the time of $H^{s}$ norm amplification. 

While the proof in \cite{CCT} uses times that can be both negative and positive (since the wave equation is reversible in time), the argument from \cite{CCT} carries over to 
the vNLW equation case, where $t > 0$.

%TODO: Should we summarize the proof here?

In conclusion, we have shown by Theorem~\ref{ill_posedness},
that the viscous nonlinear wave equation is ill-posed for $0 < s < s_{cr}=n/2 - 2/(p-1)$.
In the second half of the manuscript we will show that this ill-posedness associated with the 
lack of continuity in the solution map, is in some sense a non-generic phenomenon,
and that using probabilistic arguments we can still get {{``probabilistic"}} well-posedness even below the critical exponent. 

Crucial for this argument will be Strichartz estimates, which we address next.

\section{Strichartz estimates for the linear viscous wave equation}\label{Strichartz}

In this section we show that the linear viscous wave operator has strong decay 
properties that admit a large collection of Strichartz estimates. 
Before beginning this analysis, following the abstract Strichartz estimates 
for the linear wave equation and the Schr\"{o}dinger equation of Keel and Tao \cite{KT}, 
we first consider the Fourier representation of the solution to the initial value problem for the linear viscous wave equation. We emphasize that Strichartz estimates are estimates for the \textit{linear} equation, 
which we will later use to prove probabilistic well-posedness results for the nonlinear problem studied in Sec.~\ref{random}. 

\subsection{Fourier representation of solution to the homogeneous and the inhomogeneous linear viscous wave equation}
\vskip 0.1in
\noindent
{\bf{The homogeneous linear viscous wave equation.}}
Consider the Cauchy problem for the
linear homogeneous viscous wave equation with initial conditions, $f, g \in \mathcal{S}(\mathbb{R}^{n})$:
\begin{equation*}
\partial_{tt}u - \Delta u + \sqrt{-\Delta}\partial_{t}u = 0,
\end{equation*}
\begin{equation*}
u(0, x) = f(x), \ \ \ \ \ \partial_{t}u(0, x) = g(x).
\end{equation*}
Taking spatial Fourier transforms (assuming that $u$ decays rapidly at infinity), we get
\begin{equation*}
\partial_{tt}\widehat{u}(t, \xi) + |\xi|\partial_{t}\widehat{u}(t, \xi) + |\xi|^{2}\widehat{u}(t, \xi) = 0,
\end{equation*}
\begin{equation*}
\widehat{u}(0, \xi) = \widehat{f}(\xi), \ \ \ \ \ \partial_{t}\widehat{u}(0, \xi) = \widehat{g}(\xi).
\end{equation*}
One can easily see that the solution is given by:
\begin{equation}\label{FTv}
\widehat{u}(t, \xi) = \widehat{f}(\xi)e^{-\frac{|\xi|}{2}t}\left(\text{cos}\left(\frac{\sqrt{3}}{2}|\xi|t\right) + \frac{1}{\sqrt{3}}\text{sin}\left(\frac{\sqrt{3}}{2}|\xi|t\right)\right) + \widehat{g}(\xi)e^{-\frac{|\xi|}{2}t}\frac{\text{sin}\left(\frac{\sqrt{3}}{2}|\xi|t\right)}{\frac{\sqrt{3}}{2}|\xi|}.
\end{equation}
This formula makes sense when $f, g \in \mathcal{S}(\mathbb{R}^{n})$, but it can also be extended to the case when $f, g \in L^{2}(\mathbb{R}^{n})$ by density arguments. 

An important thing to notice is that, in contrast with the wave equation, solution \eqref{FTv} has a 
damping term of the form $e^{-\frac{|\xi|}{2}t}$ associated with the viscous effects.

\vskip 0.2in
\noindent
{\bf{The inhomogeneous linear viscous wave equation.}}
Consider  the Cauchy problem for the inhomogeneous linear viscous wave equation:
\begin{align}
&(\partial_{tt} - \Delta + \sqrt{-\Delta}\partial_{t})u(t, x) = F(t, x),
\label{inhomo}\\
&\ \ u(0, x) = f(x) \ \ \ \ \ \partial_{t}u(0, x) = g(x)
\nonumber
\end{align}
To get the representation formula for the inhomogeneous viscous wave equation, we use Duhamel's principle. 
%We first verify that the usual Duhamel formulation is applicable. 
We follow the notation in Sogge \cite{Sg}: for $\tau > 0$, let $v(\tau; t, x)$ be the solution to the Cauchy problem:
\begin{equation*}
(\partial_{tt} - \Delta + \sqrt{-\Delta}\partial_{t})u = 0,
\end{equation*}
\begin{equation*}
u(\tau; 0, x) = 0, \ \ \ \ \ \partial_{t}u(\tau; 0, x) = F(\tau, x).
\end{equation*}
One can easily show that the solution to the inhomogeneous problem with source term $F(t, x)$ and zero initial data is then given by
\begin{equation*}
u(t, x) = \int_{0}^{t} u(\tau; t - \tau, x) d\tau.
\end{equation*}
%
%%%%%%%%%%%%%% DO NOT ERASE THE TEXT BELOW%%%%%%%%%%%
\if 1 = 0
We verify this in the usual way. As desired, $v(0, x) = 0$.
\begin{equation*}
\partial_{t}v(t, x) = v(t; 0, x) + \int_{0}^{t} \partial_{t}v(\tau; t - \tau, x) ds = \int_{0}^{t} \partial_{t}v(\tau; t - \tau, x) ds
\end{equation*}
So indeed $\partial_{t}v(0, x) = 0$. Taking another time derivative,
\begin{equation*}
\partial_{tt}v(t, x) = \partial_{t}v(t; 0, x) + \int_{0}^{t} \partial_{tt}v(\tau; t - \tau, x) d\tau = F(t, x) + \int_{0}^{t} \partial_{tt}v(\tau; t - \tau, x) d\tau
\end{equation*}
\begin{equation*}
\Delta v(t, x) = \int_{0}^{t} \Delta v(\tau; t - \tau, x) d\tau
\end{equation*}
\begin{equation*}
\sqrt{-\Delta} \partial_{t}v(t, x) = \int_{0}^{t} \sqrt{-\Delta} \partial_{t}v(\tau; t - \tau, x) d\tau
\end{equation*}
So, as desired,
\begin{equation*}
(\partial_{tt} - \Delta + \sqrt{-\Delta}\partial_{t})v = F(t, x) + \int_{0}^{t} (\partial_{tt} - \Delta + \sqrt{-\Delta}\partial_{t})v(\tau; t - \tau, x) d\tau = F(t, x)
\end{equation*}
\fi
%%%%%%%%%%%%%% END DO NOT ERASE %%%%%%%%%%%%%%%%%
%
Therefore, for the inhomogeneous Cauchy problem \eqref{inhomo}, we get the 
final formula for the Fourier representation of its solution:
\begin{eqnarray}\label{Fouriersoln1}
\widehat{u}(t, \xi) &= \widehat{f}(\xi)e^{-\frac{|\xi|}{2}t}\left(\text{cos}\left(\frac{\sqrt{3}}{2}|\xi|t\right) + \frac{1}{\sqrt{3}}\text{sin}\left(\frac{\sqrt{3}}{2}|\xi|t\right)\right) + \widehat{g}(\xi)e^{-\frac{|\xi|}{2}t}\frac{\text{sin}\left(\frac{\sqrt{3}}{2}|\xi|t\right)}{\frac{\sqrt{3}}{2}|\xi|} 
\nonumber
\\
&+ \displaystyle{\int_{0}^{t} \widehat{F}(\tau, \xi)e^{-\frac{|\xi|}{2}(t - \tau)}\frac{\text{sin}\left(\frac{\sqrt{3}}{2}|\xi|(t - \tau)\right)}{\frac{\sqrt{3}}{2}|\xi|} d\tau.}
\qquad\qquad\qquad
\end{eqnarray}
%Again, under suitable conditions on $f$, $g$, and $F$, this formula makes sense. 
We will write this formula alternatively as:
\begin{eqnarray}\label{Fouriersoln2}
u(t, \cdot) &= e^{-\frac{\sqrt{-\Delta}}{2}t}\left(\text{cos}\left(\frac{\sqrt{3}}{2}\sqrt{-\Delta}t\right) + \frac{1}{\sqrt{3}}\text{sin}\left(\frac{\sqrt{3}}{2}\sqrt{-\Delta}t\right)\right)f + e^{-\frac{\sqrt{-\Delta}}{2}t}\frac{\text{sin}\left(\frac{\sqrt{3}}{2}\sqrt{-\Delta}t\right)}{\frac{\sqrt{3}}{2}\sqrt{-\Delta}}g 
\nonumber
\\
&+ \displaystyle{\int_{0}^{t} e^{-\frac{\sqrt{-\Delta}}{2}(t - \tau)}\frac{\text{sin}\left(\frac{\sqrt{3}}{2}\sqrt{-\Delta}(t - \tau)\right)}{\frac{\sqrt{3}}{2}\sqrt{-\Delta}}F(\tau, \cdot) d\tau.}\qquad\qquad\qquad
\end{eqnarray}

\subsection{Statement of Strichartz estimates}

Next, we present Strichartz estimates, which will be useful when studying the nonlinear viscous wave equation using fixed point arguments. 
The estimates provide information about how the $L^q_tL^r_x$ norm of the solution to the linear problem is 
controlled in terms of data, for both the homogeneous case, and the inhomogeneous case.
While the homogeneous estimates follow using techniques similar to those used in the case of the linear wave equation
and the linear Schor\"{o}dinger equation, the inhomogeneous estimates will require different approaches because the
associated evolution operator is now self-adjoint due to the viscous contribution, as we explain below.
In both cases, the estimates will be given in terms of the $L^q_tL^r_x$ norms of the solution,
obtained via an evolution operator $U(t)$ associated with each problem separately. 
Crucial for the proof is the following important abstract result about Strichartz estimates due to Keel and Tao \cite{KT},
which uses the following definition of $\sigma$-admissible exponents $(q, r)$
 (see Sogge \cite{Sg} or Keel and Tao \cite{KT}):

\begin{definition}Let $\sigma > 0$.
The exponent pair $(q, r)$ is said to be $\sigma$-admissible if $q, r \ge 2$, $(q, r, \sigma) \ne (2, \infty, 1)$ and 
\begin{equation*}
\frac{2}{q} + \frac{2\sigma}{r} \le \sigma.
\end{equation*}
\end{definition}

\begin{theorem}[General Strichartz estimates, Keel and Tao \cite{KT}]\label{KTStr}
Let $U(t), t \in \mathbb{R}$ be a one-parameter family of operators
$$
U(t): L^2(\R^n) \to L^2(\R^n),
$$ 
such that the following two estimates hold:
\begin{enumerate}
\item The energy estimate holding uniformly in $t$:
\begin{equation}\label{L2}
||U(t)f||_{L^{2}_{x}} \le C||f||_{L_x^{2}}, \ \ \ \ \ f \in L^{2}(\mathbb{R}^{n}), 
\end{equation}
\item The truncated dispersive decay estimate holding  for some $\sigma > 0$, uniformly in $\tau$ and $t$:
\begin{equation}\label{dispersive}
||U(\tau)U^{*}(t)f||_{L^{\infty}_{x}} \le C(1 + |t - \tau|)^{-\sigma} ||f||_{L_x^{1}},
\end{equation}
where $U^*(t)$ is the adjoint operator. 
\end{enumerate}
Then, for all $\sigma$-admissible pairs $(q, r)$ and $(\tilde{q}, \tilde{r})$, the following estimates hold:
\begin{equation}\label{KeelTao1}
||U(t)f||_{L^{q}_{t}L^{r}_{x}} \le C||f||_{L_x^{2}},
\end{equation}
\begin{equation}\label{KeelTao2}
\left|\left|\int_{-\infty}^{\infty} U^{*}(\tilde\tau)F(\tilde\tau, \cdot) d\tilde\tau\right|\right|_{L^{2}_{x}} \le C||F||_{L^{q'}_{t}L^{r'}_{x}},
\end{equation}
\begin{equation}\label{KeelTao3}
\left|\left|\int_{-\infty}^{t} U(t)U^{*}(\tilde\tau)F(\tilde\tau, \cdot)d\tilde\tau\right|\right|_{L^{q}_{t}L^{r}_{x}} \le 
C||F||_{L^{\tilde{q}'}_{t}L^{\tilde{r}'}_{x}},
\end{equation}
where $\tilde{q}'$ and $\tilde{r}'$ are H\"{o}lder conjugates of $\tilde{q}$ and $\tilde{r}$, respectively.
\end{theorem}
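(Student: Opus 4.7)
This is the abstract Strichartz machinery of Keel and Tao, so the plan is to follow the $TT^{*}$ strategy, upgrading the two hypotheses to an $L^{r'}\to L^{r}$ bound with polynomial time decay, and then closing by Hardy-Littlewood-Sobolev in time away from the endpoint and by a dyadic bilinear argument at the endpoint.

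First I would reformulate matters via duality. Setting $Tf := U(t)f$ with adjoint $T^{*}F = \int U^{*}(\tilde\tau)F(\tilde\tau,\cdot)\,d\tilde\tau$, the estimates (\ref{KeelTao1}) and (\ref{KeelTao2}) are duals of one another, and both are equivalent to the diagonal untruncated symmetric bilinear estimate
\begin{equation*}
\left\|\int_{-\infty}^{\infty} U(t)U^{*}(\tilde\tau)F(\tilde\tau,\cdot)\,d\tilde\tau\right\|_{L^{q}_{t}L^{r}_{x}} \le C\|F\|_{L^{q'}_{t}L^{r'}_{x}}.
\end{equation*}
So it suffices to prove this bilinear estimate for each $\sigma$-admissible $(q,r)$, and then separately to deduce (\ref{KeelTao3}) with possibly different $(\tilde q,\tilde r)$ and with the truncated integral $\int_{-\infty}^{t}$.

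The second step is to interpolate the $L^{2}\to L^{2}$ energy bound (\ref{L2}) with the $L^{1}\to L^{\infty}$ truncated decay bound (\ref{dispersive}) on the composite $U(\tau)U^{*}(t)$, which gives
\begin{equation*}
\|U(\tau)U^{*}(t)f\|_{L^{r}_{x}} \le C(1+|t-\tau|)^{-\sigma(1-2/r)}\|f\|_{L^{r'}_{x}}, \qquad 2 \le r \le \infty.
\end{equation*}
Pairing the bilinear form against $G\in L^{q'}_{t}L^{r'}_{x}$ by H\"older in $x$ reduces the claim to the Hardy-Littlewood-Sobolev estimate in $t$ with kernel $(1+|t-\tau|)^{-\sigma(1-2/r)}$. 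The HLS exponent condition forces $\sigma(1-2/r)=2/q$, i.e.\ the scaling relation $2/q+2\sigma/r=\sigma$, which is the sharp admissibility equality; the $\le$ case follows by interpolating between admissible endpoints. The requirement $2/q<1$ for HLS restricts this direct argument to the non-endpoint range $q>2$.

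The main obstacle is the endpoint $(q,r)=(2,\,2\sigma/(\sigma-1))$ for $\sigma>1$, which is excluded from HLS. Here I would invoke Keel and Tao's bilinear atomic argument: decompose the bilinear form dyadically according to $|t-\tau|\sim 2^{j}$, bound each dyadic piece by interpolating the trivial $L^{2}$-bound against the $L^{\infty}$-decay bound to obtain a restricted strong-type estimate on characteristic functions of sets, and sum using the atomic decomposition of the Lorentz space $L^{r,2}$ to upgrade to the strong-type endpoint estimate (this is precisely the step that breaks at $(q,r,\sigma)=(2,\infty,1)$, consistent with the hypothesis of the theorem). Finally, the asymmetric retarded estimate (\ref{KeelTao3}) with $(\tilde q,\tilde r)\ne (q,r)$ and the truncation $\int_{-\infty}^{t}$ is recovered from the symmetric untruncated bilinear estimate by the Christ-Kiselev lemma, which is applicable whenever $\tilde q'<q$. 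I expect the endpoint atomic estimate and the bookkeeping required to make the truncated decay $(1+|t-\tau|)^{-\sigma}$ (rather than the homogeneous $|t-\tau|^{-\sigma}$) compatible with both HLS and Christ-Kiselev to be the most delicate points.
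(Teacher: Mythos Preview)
Your sketch is a correct outline of the Keel--Tao argument, but note that the paper does not prove this theorem at all: it is stated as a black-box result imported from \cite{KT} and is immediately applied (via Lemma~\ref{operatorest}) to the concrete propagators $U^{(j)}(t)$ and $V^{(j)}(t)$ arising from the viscous wave equation. So there is no ``paper's own proof'' to compare against; the authors simply cite the theorem and verify its hypotheses for their specific operators.
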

We will use estimate \eqref{KeelTao1} of this theorem to get appropriate estimates 
on the solution of the homogeneous linear viscous wave equation, 
defined via certain evolutions operators $U(t)$ that we define below, applied to the given data. Estimates \eqref{KeelTao2} and \eqref{KeelTao3} are 
usually used to estimate the Duhamel contribution of the inhomogeneous term for the 
linear wave equation and the Schr\"{o}dinger equation. 
In our case, however, since the contribution from the viscous regularization is self-adjoint, 
we will have to resort to different approaches to estimate the Duhamel contribution
in the inhomogeneous case, as we explain below.

Our main results on Strichartz estimates are the following.

\begin{theorem}[{\bf{Strichartz estimates for homogeneous linear viscous wave equation}}]\label{strongdecay}
Let $u$ be a solution to
the Cauchy problem
\begin{equation}\label{Cauchy1}
(\partial_{tt} - \Delta + \sqrt{-\Delta}\partial_{t})u = 0,
\end{equation}
\begin{equation*}
u(0, \cdot) = f, \ \ \ \ \ \partial_{t}u(0, \cdot) = g.
\end{equation*}
Then, for any $\sigma > 0$ and any $\sigma$-admissible pair $(q, r)$, $r < \infty$, 
there exists a constant $C_\sigma >0$ depending only on $\sigma$, such that for every time $0 < T < \infty$, 
the following estimate holds:
\begin{equation}\label{Strichartz_homo}
||u||_{L_{t}^{q}([0, T]; L_{x}^{r}(\mathbb{R}^{n}))} + ||u(T, \cdot)||_{\dot{H}^{s}(\mathbb{R}^{n})} + ||\partial_{t}u(T, \cdot)||_{\dot{H}^{s - 1}(\mathbb{R}^{n})} \le C_{\sigma}||f||_{\dot{H}^{s}(\mathbb{R}^{n})} + C_{\sigma}||g||_{\dot{H}^{s - 1}(\mathbb{R}^{n})},
\end{equation}
provided that the gap condition 
\begin{equation}\label{gap1}
\frac{1}{q} + \frac{n}{r} = \frac{n}{2} - s
\end{equation}
holds.
\end{theorem}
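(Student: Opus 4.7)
The plan is to invoke the abstract Keel--Tao result, Theorem~\ref{KTStr}, applied to evolution operators extracted from the Fourier representation \eqref{FTv}. Writing $\cos\theta = \tfrac{1}{2}(e^{i\theta}+e^{-i\theta})$ and the analogous identity for $\sin$, the symbol of $u(t)$ in \eqref{FTv} splits into a linear combination of $e^{(-\frac{1}{2} \pm i\frac{\sqrt{3}}{2})|\xi|t}$. Accordingly, I would introduce the Fourier multiplier semigroups
\begin{equation*}
\widehat{U_{\pm}(t) h}(\xi) := e^{(-\frac{1}{2} \pm i\frac{\sqrt{3}}{2})|\xi|t}\, \widehat{h}(\xi), \qquad t \ge 0,
\end{equation*}
so that $u(t)$ becomes a finite linear combination of $U_{\pm}(t) f$ and $U_{\pm}(t)(-\Delta)^{-1/2} g$. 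Since each $U_{\pm}(t)$ commutes with the Riesz potential $|D|^{-s}:=(-\Delta)^{-s/2}$, it will suffice to prove $\|U_{\pm}(t) h\|_{L^q_t L^r_x} \le C_\sigma \|h\|_{L^2}$ for $h \in L^2$ and to transfer this to the $\dot{H}^s$ setting via the Sobolev embedding coded by \eqref{gap1}.

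Next I would verify the two Keel--Tao hypotheses for $U_\pm(t)$. The $L^2$ energy bound \eqref{L2} is immediate, since $|e^{(-\frac{1}{2} \pm i\frac{\sqrt{3}}{2})|\xi|t}| = e^{-|\xi|t/2} \le 1$ for $t \ge 0$. For the truncated dispersive decay \eqref{dispersive}, the composition $U_\pm(\tau)U_\pm^*(t)$ has Fourier symbol
\begin{equation*}
e^{-(t+\tau)|\xi|/2}\, e^{\pm i\sqrt{3}(\tau - t)|\xi|/2} = e^{-a|\xi|}, \qquad a = \frac{t+\tau}{2} \mp i\frac{\sqrt{3}(\tau - t)}{2},
\end{equation*}
i.e., a Poisson kernel with complex parameter $a$ of real part $(t+\tau)/2 \ge 0$. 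Its $L^\infty_x$ norm is controlled by $C_n (t+\tau)^{-n}$, and by running this estimate through a Littlewood--Paley decomposition that exploits the exponential factor $e^{-(t+\tau)|\xi|/2}$, together with the elementary inequality $t + \tau \ge |t - \tau|$ valid for $t,\tau \ge 0$, I would extract arbitrary polynomial decay in $|t-\tau|$: for every $\sigma > 0$,
\begin{equation*}
\|U_\pm(\tau) U_\pm^*(t) h\|_{L^\infty_x} \le C_\sigma (1 + |t-\tau|)^{-\sigma}\, \|h\|_{L^1_x}.
\end{equation*}
This is the key structural feature beyond the free wave equation (for which only $\sigma = (n-1)/2$ is attainable): the viscous damping grants admissibility for every $\sigma > 0$.

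With both hypotheses in hand, Theorem~\ref{KTStr} yields $\|U_\pm(t) h\|_{L^q_t L^r_x} \le C_\sigma \|h\|_{L^2}$ for every $\sigma$-admissible pair. To land on the stated $\dot{H}^s$ estimate, I would substitute $h = |D|^s f$ and use $U_\pm(t) f = |D|^{-s}\, U_\pm(t) h$, followed by the Sobolev embedding $|D|^{-s}: L^{r_1} \hookrightarrow L^r$ with $\frac{1}{r_1} = \frac{1}{r} + \frac{s}{n}$. The gap condition \eqref{gap1} is precisely the scaling relation that makes the auxiliary pair $(q, r_1)$ admissible for a suitable choice of $\sigma$. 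The $g$-contribution is handled identically after absorbing $(-\Delta)^{-1/2}$ as a unit shift of the Sobolev exponent from $s$ to $s-1$. Finally, the boundary terms $\|u(T)\|_{\dot{H}^s}$ and $\|\partial_t u(T)\|_{\dot{H}^{s-1}}$ follow directly from Plancherel applied to \eqref{FTv} together with the pointwise bound on the Fourier symbols by the damping factor $e^{-|\xi|T/2}\le 1$.

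The main obstacle I anticipate is the truncated dispersive decay near the diagonal $\{t \approx \tau \approx 0\}$, where the complex Poisson kernel has an $L^\infty$ singularity of scale $(t+\tau)^{-n}$ and thus does not automatically admit a bound of the form $(1 + |t-\tau|)^{-\sigma}$ uniformly. A careful frequency-localized argument, using the exponential damping to absorb this singularity at each dyadic scale, is required to recover the uniform bound, while simultaneously coordinating $\sigma$ with the $\sigma$-admissibility condition and the scaling gap \eqref{gap1}.
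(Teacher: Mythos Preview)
Your proposal identifies the right ingredients but has a genuine gap at the dispersive-decay step. The hypothesis \eqref{dispersive} of Theorem~\ref{KTStr} \emph{fails} for the global operators $U_\pm$: the kernel of $U_\pm(\tau)U_\pm^*(t)$ is the Poisson kernel with real part $(t+\tau)/2$, whose $L^1\to L^\infty$ norm behaves like $(t+\tau)^{-n}$, and this is unbounded as $t,\tau\to 0$ (in particular along the diagonal $t=\tau$). Thus there is no uniform constant $C_\sigma$ with $\|U_\pm(\tau)U_\pm^*(t)\|_{L^1\to L^\infty}\le C_\sigma(1+|t-\tau|)^{-\sigma}$, and Keel--Tao cannot be applied to $U_\pm$ as stated. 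Your suggested remedy---``run the estimate through a Littlewood--Paley decomposition''---does not help if you try to recover the global dispersive bound: summing the dyadic pieces $2^{jn}e^{-2^{j}(t+\tau)/2}$ over $j$ simply reproduces the $(t+\tau)^{-n}$ singularity. You correctly flag this as ``the main obstacle,'' but the proposal does not resolve it.

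The paper's approach reorganizes the argument so that this obstacle never arises. Rather than applying Keel--Tao globally and then localizing, it \emph{first} performs the Littlewood--Paley decomposition, \emph{then} uses the scaling symmetry together with the gap condition \eqref{gap1} to reduce every dyadic piece to the single unit-frequency scale $1/2\le|\xi|\le 2$, and only \emph{then} verifies the Keel--Tao hypotheses for the frequency-localized operators $U^{(j)}, V^{(j)}$ with $|j|\le 2$ (Lemma~\ref{operatorest}). On this fixed annulus the damping factor obeys $e^{-|\xi|(t+\tau)/2}\le e^{-(t+\tau)/16}$, so the kernel is uniformly bounded $L^1\to L^\infty$ and in fact decays exponentially in $t+\tau\ge|t-\tau|$, yielding \eqref{dispersive} for every $\sigma>0$. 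This also sidesteps your Sobolev-embedding step $|D|^{-s}:L^{r_1}\hookrightarrow L^r$, which only works for $s>0$ (whereas the gap condition permits $s<0$); at unit frequency all $\dot H^s$ norms are equivalent to $L^2$, so no embedding is needed.
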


\begin{theorem}[{\bf{Strichartz estimates for inhomogeneous linear viscous wave equation}}]\label{inhStr}
Let $n \ge 2$ and let $(q, r)$ and $(\tilde{q}, \tilde{r})$ be any two pairs with $\tilde{q}, \tilde{r} \ge 2$, $1 < \tilde{q}' < q < \infty$, $1 < \tilde{r}' < r \le \infty$, where $\tilde{q}'$ and $\tilde{r}'$ are H\"{o}lder conjugates of $\tilde{q}$ and $\tilde{r}$, respectively.
Let $u$ be a solution to the Cauchy problem
\begin{equation*}
(\partial_{tt} - \Delta + \sqrt{-\Delta}\partial_{t})u = F,
\end{equation*}
\begin{equation*}
u(0, \cdot) = 0, \ \ \ \ \ \partial_{t}u(0, \cdot) = 0.
\end{equation*}
Then, there exists a constant $C_{q,\tilde{q},r,\tilde{r}} > 0$ depending on $q, \tilde{q}, r, \tilde{r}$, such that for every time $0 < T < \infty$, the following estimate holds:
\begin{equation}\label{S_estimate_inhomo}
||u||_{L_{t}^{q}([0, T]; L_{x}^{r}(\mathbb{R}^{n}))} + ||u(T, \cdot)||_{\dot{H}^{s}(\mathbb{R}^{n})} + ||\partial_{t}u(T, \cdot)||_{\dot{H}^{s - 1}(\mathbb{R}^{n})} \\
\le C_{q, \tilde{q}, r, \tilde{r}}||F||_{L_{t}^{\tilde{q}'}([0, T]; L_{x}^{\tilde{r}'}(\mathbb{R}^{n}))},
\end{equation}
provided that the gap condition
\begin{equation}\label{gap2}
\frac{1}{q} + \frac{n}{r} = \frac{n}{2} - s = \frac{1}{\tilde{q}'} + \frac{n}{\tilde{r}'} - 2
\end{equation}
holds. 
\end{theorem}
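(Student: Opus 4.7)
The plan is to establish the inhomogeneous bound by combining Duhamel's formula with a pointwise $L^{\tilde r'}_x \to L^r_x$ dispersive estimate on the evolution kernel and a Hardy--Littlewood--Sobolev fractional-integral inequality in time. The Keel--Tao inhomogeneous estimate \eqref{KeelTao3} cannot be applied directly, because, as the authors note, the damping $e^{-t\sqrt{-\Delta}/2}$ renders the evolution operator self-adjoint, so the identity $U(t)U^*(\tau)=U(t-\tau)$ that drives the $TT^*$ derivation of \eqref{KeelTao3} from \eqref{KeelTao1} breaks down. On the other hand, the very damping that blocks the $TT^*$ argument provides extra time decay, which makes a direct dispersive approach feasible.

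Starting from \eqref{Fouriersoln2}, the solution with zero initial data is
\[
u(t) = \int_0^t V(t-\tau)F(\tau)\,d\tau, \qquad V(t):=e^{-\tfrac{1}{2}\sqrt{-\Delta}\,t}\,\frac{\sin\!\bigl(\tfrac{\sqrt{3}}{2}\sqrt{-\Delta}\,t\bigr)}{\tfrac{\sqrt{3}}{2}\sqrt{-\Delta}}.
\]
I factor the kernel into the oscillating wave piece $\sin(c\sqrt{-\Delta}\,t)/\sqrt{-\Delta}$ and the Poisson semigroup $e^{-t\sqrt{-\Delta}/2}$. Interpolating between the $L^2\to L^2$ energy bound, the wave-type $L^1\to L^\infty$ dispersive bound for the oscillating factor (with decay $t^{-(n-1)/2}$), and the polynomial $L^p$ decay of the Poisson kernel, I expect to obtain a mapping estimate of the form
\[
\|V(t-\tau)F(\tau)\|_{L^r_x(\mathbb R^n)} \le C\,(t-\tau)^{-\alpha}\,\|F(\tau)\|_{L^{\tilde r'}_x(\mathbb R^n)},
\]
with $\alpha=\alpha(r,\tilde r,n,s)>0$ precisely because $\tilde r'<r$. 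Applying Minkowski's inequality in space inside the Duhamel integral and then the Hardy--Littlewood--Sobolev inequality in time,
\[
\|u\|_{L^q_t([0,T];L^r_x)} \le C \left\|\int_0^t (t-\tau)^{-\alpha}\|F(\tau)\|_{L^{\tilde r'}_x}\,d\tau\right\|_{L^q_t([0,T])} \le C_{q,\tilde q,r,\tilde r}\|F\|_{L^{\tilde q'}_t([0,T];L^{\tilde r'}_x)},
\]
uniformly in $T$, provided the scaling identity \eqref{gap2} aligns the HLS exponents and the strict inequalities $\tilde q'<q$, $\tilde r'<r$ give a positive fractional-integral exponent $\alpha$. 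The energy-norm contributions $\|u(T)\|_{\dot H^s}$ and $\|\partial_t u(T)\|_{\dot H^{s-1}}$ are controlled in parallel by applying the $\dot H^s\times\dot H^{s-1}$ energy part of Theorem \ref{strongdecay} to each Duhamel slice and using Sobolev embedding to pass from $\dot H^{s-1}$ to $L^{\tilde r'}$.

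The main obstacle I anticipate is the careful calibration of the $L^{\tilde r'}_x\to L^r_x$ mapping estimate for $V(t-\tau)$: one must simultaneously track the dispersive loss of derivatives from the oscillating wave factor and the integrability gain from the Poisson damping, and then verify that the resulting decay exponent $\alpha$ exactly matches the HLS exponents dictated by \eqref{gap2}. A natural implementation is a Littlewood--Paley decomposition of $V$ into dyadic frequency shells $|\xi|\sim 2^j$, on each of which the damping contributes an additional factor $e^{-c\,2^j(t-\tau)}$ that both enforces $T$-uniformity and supplies the summable geometric factor in $j$; the strict hypotheses $\tilde q'<q$ and $\tilde r'<r$ are precisely what guarantee absolute convergence of the dyadic sum. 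The restriction $n\ge 2$ enters here as well, since the wave-type dispersive bound on the oscillating factor degenerates in one spatial dimension, in parallel with the failure of Strichartz estimates for the linear wave equation in $\mathbb R$ noted in the introduction.
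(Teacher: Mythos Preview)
Your strategy for the $L^q_tL^r_x$ term is correct and matches the paper: a pointwise-in-time $L^{\tilde r'}_x\to L^r_x$ mapping bound for $V(t-\tau)$ followed by Hardy--Littlewood--Sobolev in time. The paper's implementation of the mapping bound, however, is much simpler than the factorization-plus-Littlewood--Paley scheme you propose. It observes that $V(t)$ is convolution with a kernel obeying the exact scaling $K_t(x)=t^{1-n}K(x/t)$, and proves directly (by repeated integration by parts on the Fourier side, or equivalently by writing $K$ as the convolution of the Poisson kernel with the fundamental solution of the wave equation at a fixed time) that the unit-scale kernel $K$ lies in every $L^p(\mathbb R^n)$. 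Young's inequality then gives $\|V(t)\phi\|_{L^r}\le C\,t^{\,1-n(1/\tilde r'-1/r)}\|\phi\|_{L^{\tilde r'}}$ with no dyadic summation needed, and the gap condition turns the exponent into exactly the HLS exponent $-\alpha=-(1-1/\tilde q'+1/q)$. No separate wave-dispersive bound is invoked, so your explanation of the restriction $n\ge 2$ via degeneration of the $t^{-(n-1)/2}$ decay is not the mechanism the paper uses.

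For the energy-norm terms $\|u(T)\|_{\dot H^s}+\|\partial_t u(T)\|_{\dot H^{s-1}}$ your proposal has a genuine gap. Applying the energy part of Theorem~\ref{strongdecay} slice by slice yields only
\[
\|u(T)\|_{\dot H^s}\;\lesssim\;\int_0^T \|F(\tau)\|_{\dot H^{s-1}}\,d\tau,
\]
with no decay in $T-\tau$. The dual Sobolev embedding $L^{\tilde r'}\hookrightarrow \dot H^{s-1}$ you invoke forces $n/\tilde r=n/2-1+s$, which under the gap condition \eqref{gap2} is equivalent to $\tilde q'=1$, a case excluded by hypothesis; for every admissible $(\tilde q,\tilde r)$ the leftover time integral then requires a H\"older step producing a factor $T^{1/\tilde q}$, and the constant is no longer uniform in $T$. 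The paper avoids this by a different route: it applies a Littlewood--Paley decomposition to $F$ and uses the scaling symmetry together with \eqref{gap2} to reduce to $\operatorname{supp}\widehat F(t,\cdot)\subset\{1/2\le|\xi|\le2\}$. On that shell the damping yields $e^{-|\xi|(T-\tau)/2}\le e^{-(T-\tau)/4}$, a genuine exponential in $T-\tau$; H\"older in time against $\|\tau e^{-\tau/4}\|_{L^{\tilde q}(0,\infty)}<\infty$ (and Hausdorff--Young in space) then gives a $T$-independent constant. Your final paragraph already contains the right ingredient---the exponential gain $e^{-c2^j(t-\tau)}$ on dyadic shells---but you must deploy it for the $\dot H^s\times\dot H^{s-1}$ terms, not only for the $L^q_tL^r_x$ term.
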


\begin{remark}[{\bf{Dimension $n = 1$}}]
Note that the homogeneous Strichartz estimates in Theorem \ref{strongdecay} hold for any dimension $n$, including $n = 1$. This is interesting because it is well-known that the linear wave equation in one dimension does not possess such Strichartz estimates. However, the viscous wave equation in one dimension has homogeneous Strichartz estimates due to the dissipating effects of the viscosity. 
\end{remark}

\begin{remark}[{\bf{The gap condition}}]
The gap condition is a natural condition to impose in both cases, as it is the exact condition needed for the inequalities above to respect the scaling symmetry of solutions in time and space. This makes the inequality scale invariant, and this property will later play an important role, especially in the proof of Theorems \ref{strongdecay} and \ref{inhStr}.
\end{remark}

\begin{remark}[{\bf{Admissible exponents $(q, r)$}}]
The Strichartz estimates for the viscous wave equation are better than those of the wave equation in the sense that 
both the homogeneous and inhomogeneous estimates hold for
a larger class of admissible exponents. 
Again, this is due to the dissipative effects of viscosity. 
\end{remark}

Classical Strichartz estimates for the linear Schr\"{o}dinger equation and the linear wave equation usually follow immediately from the abstract Strichartz estimates by Keel and Tao \cite{KT} in Theorem \ref{KTStr} above. In the case of the viscous wave equation, this will indeed be true for the homogeneous estimates, but it will not be true for the inhomogeneous estimates, because the dissipative portion of the evolution operator is self-adjoint. This makes the proof of the inhomogeneous Strichartz estimates considerably more subtle, and the proof will employ techniques from harmonic analysis that are markedly different from the techniques used to prove the homogeneous estimates. 

In the proof of Theorems \ref{strongdecay} and \ref{inhStr}, we will use the following well-known Littlewood-Paley theorem, 
which will help us reduce the problem to proving the estimates for the components in the Littlewood-Paley decomposition.

\begin{lemma}[Littlewood-Paley lemma]\label{LPlemma}
Let $\beta \in C_{0}^{\infty}(\mathbb{R}_{+})$, $0 \le \beta \le 1$, with support in $[1/2, 2]$ give the Littlewood-Paley decomposition
\begin{equation*}
\sum_{j = -\infty}^{\infty} \beta\left(\frac{\xi}{2^{j}}\right) = 1 \text{ for all } \xi > 0.
\end{equation*}
Define the Littlewood-Paley operators:
\begin{equation*}
G_{j}(t, x) = \frac{1}{(2\pi)^{n}} \int_{\mathbb{R}^{n}} e^{ix \cdot \xi} \beta\left(\frac{|\xi|}{2^{j}}\right)\widehat{G}(t, \xi) d\xi,
\end{equation*}
which send $G$ to its Littlewood-Paley decomposition $\{G_{j}\}_{j = -\infty}^{\infty}$. 
Then, the following estimates hold:
\begin{itemize}
\item If $2 \le r < \infty$ and $2 \le q \le \infty$,
\begin{equation}\label{LP1}
||G||^{2}_{L^{q}_{t}L^{r}_{x}} \le C \sum_{j = -\infty}^{\infty} ||G_{j}||^{2}_{L_{t}^{q}L_{x}^{r}}.
\end{equation}
\item If $1 < r \le 2$ and $1 \le q \le 2$, 
\begin{equation}\label{LP2}
\sum_{j = -\infty}^{\infty} ||G_{j}||^{2}_{L^{q}_{t}L^{r}_{x}} \le C||G||^{2}_{L^{q}_{t}L^{r}_{x}}.
\end{equation}
\end{itemize}
\end{lemma}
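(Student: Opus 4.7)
The proof rests on the classical (scalar-valued) Littlewood-Paley square function inequality
\begin{equation*}
\|G(t, \cdot)\|_{L^r_x(\mathbb{R}^{n})} \sim \left\|\left(\sum_{j = -\infty}^{\infty} |G_j(t, \cdot)|^2\right)^{1/2}\right\|_{L^r_x(\mathbb{R}^{n})}, \qquad 1 < r < \infty,
\end{equation*}
which is a standard consequence of the Mikhlin multiplier theorem applied to the vector-valued singular integral operator $G \mapsto \{G_j\}$. The plan is to take this equivalence as a black box and derive both \eqref{LP1} and \eqref{LP2} by combining it with a Minkowski-type inequality applied to $\sum_j |G_j|^2$ in the mixed norm $L^{q/2}_t L^{r/2}_x$.

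For \eqref{LP1}, I would first rewrite $\|G\|^2_{L^q_t L^r_x} = \||G|^2\|_{L^{q/2}_t L^{r/2}_x}$ and apply the square function estimate (squared) pointwise in $t$ to obtain
\begin{equation*}
\|G\|^2_{L^q_t L^r_x} \lesssim \left\|\sum_{j} |G_j|^2\right\|_{L^{q/2}_t L^{r/2}_x}.
\end{equation*}
Since the hypothesis $q, r \ge 2$ yields $q/2, r/2 \ge 1$, the mixed norm is a genuine Banach norm; the ordinary triangle inequality applied to the sum $\sum_j |G_j|^2$ then produces
\begin{equation*}
\left\|\sum_{j} |G_j|^2\right\|_{L^{q/2}_t L^{r/2}_x} \le \sum_{j} \||G_j|^2\|_{L^{q/2}_t L^{r/2}_x} = \sum_j \|G_j\|^2_{L^q_t L^r_x},
\end{equation*}
which is \eqref{LP1}.

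For \eqref{LP2}, I would apply the square function estimate in the opposite direction to obtain
\begin{equation*}
\left\|\sum_j |G_j|^2\right\|_{L^{q/2}_t L^{r/2}_x} \lesssim \|G\|^2_{L^q_t L^r_x}.
\end{equation*}
Here the exponents satisfy $q/2 \in [1/2, 1]$ and $r/2 \in (1/2, 1]$, so both are at most $1$. The key tool at this point is the reverse Minkowski inequality: for $0 < p \le 1$ and nonnegative measurable functions $h_j$, one has $\|\sum_j h_j\|_{L^p} \ge \sum_j \|h_j\|_{L^p}$, which can be proven by a standard reverse H\"{o}lder argument. Applying this inequality first in the $x$ variable (with $p = r/2$), using monotonicity of the $L^{q/2}_t$ norm on nonnegative functions, and then applying the reverse Minkowski inequality once more in the $t$ variable (with $p = q/2$) to the nonnegative functions $|G_j|^2$, I would obtain
\begin{equation*}
\left\|\sum_j |G_j|^2\right\|_{L^{q/2}_t L^{r/2}_x} \ge \sum_j \|G_j\|^2_{L^q_t L^r_x},
\end{equation*}
which, combined with the previous display, yields \eqref{LP2}.

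The principal technical subtlety will be the careful use of the reverse Minkowski inequality in the regime $p \le 1$, where $L^p$ is not a Banach space and the usual triangle inequality fails. It is essential that this inequality is applied only to sums of \emph{nonnegative} functions (here $|G_j|^2 \ge 0$), and that one iterates first in the inner ($x$) variable before passing to the outer ($t$) variable. One also needs both directions of the square function equivalence on the full range $1 < r < \infty$, which in particular covers the subrange $1 < r \le 2$ required in \eqref{LP2}.
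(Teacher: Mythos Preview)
The paper does not supply a proof of this lemma; it is stated as a ``well-known Littlewood-Paley theorem'' and used as a black box. Your argument is correct and is the standard derivation: reduce to the classical square-function equivalence in $L^r_x$ for $1<r<\infty$, then use the ordinary triangle inequality in $L^{q/2}_tL^{r/2}_x$ when $q,r\ge 2$ and the reverse Minkowski inequality for nonnegative functions when $q,r\le 2$. The iteration order (inner variable first, then monotonicity, then outer variable) and the restriction to nonnegative summands $|G_j|^2$ are exactly the points that need care, and you handle them properly.
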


To prove Theorems \ref{strongdecay} and \ref{inhStr} we introduce the 
Littlewood-Paley operators $U^{(j)}(t)$ and $V^{(j)}(t)$ 
that account for the contribution of the initial data $f$ and $g$, separately, to the solution 
of \eqref{Cauchy1}. We recall \eqref{FTv}:
\begin{equation*}
\widehat{u}(t, \xi) = e^{-\frac{|\xi|}{2}t}
\left(\text{cos}\left(\frac{\sqrt{3}}{2}|\xi|t\right) + \frac{1}{\sqrt{3}}\text{sin}\left(\frac{\sqrt{3}}{2}|\xi|t\right)\right) \widehat{f}(\xi)
+ e^{-\frac{|\xi|}{2}t}\frac{\text{sin}\left(\frac{\sqrt{3}}{2}|\xi|t\right)}{\frac{\sqrt{3}}{2}|\xi|}\widehat{g}(\xi),
\end{equation*}
and introduce:
%
%
%that we will need and the necessary result that we need to apply Keel and Tao's abstract Strichartz estimates \cite{KT} regarding these %operators. In particular, we will consider the operators
\begin{equation*}
U^{(j)}(t)f(x) = \chi_{[0, T]}(t)\int_{\mathbb{R}^{n}} e^{ix \cdot \xi} e^{-\frac{|\xi|}{2}t}
\left(\text{cos}\left(\frac{\sqrt{3}}{2}|\xi|t\right) + \frac{1}{\sqrt{3}}\text{sin}\left(\frac{\sqrt{3}}{2}|\xi|t\right)\right) 
 \widehat{f}(\xi) \beta\left(\frac{|\xi|}{2^{j}}\right) d\xi,
\end{equation*}
and
\begin{equation*}
V^{(j)}(t)g(x) = \chi_{[0, T]}(t)\int_{\mathbb{R}^{n}} e^{ix \cdot \xi} e^{-\frac{|\xi|}{2}t}\frac{\text{sin}\left(\frac{\sqrt{3}}{2}|\xi|t\right)}{\frac{\sqrt{3}}{2}|\xi|}\widehat{g}(\xi) \beta\left(\frac{|\xi|}{2^{j}}\right) d\xi.
\end{equation*}
As we shall see later, only the operators $U^{(j)}(t)$ and $V^{(j)}(t)$ with $-2\le j \le 2$ will be relevant for
the proof.  Operators $U^{(j)}(t)$ and $V^{(j)}(t)$, $-2 \le j \le 2$,
satisfy the following estimates, which follow from Keel and Tao  \cite{KT}:

\begin{lemma}[Estimates on $U^{(j)}(t)f(x)$ and $V^{(j)}(t)g(x)$]\label{operatorest}
Given $\sigma > 0$, there exists a constant $C_{\sigma}$ independent of $T > 0$ such that for all $\sigma$-admissible pairs $(q, r)$, $(\tilde{q}, \tilde{r})$ and for all integers $-2 \le j \le 2$,
\begin{equation*}
||U^{(j)}_{\pm}(t)f||_{L^{q}_{t}L^{r}_{x}} \le C_{\sigma}||f||_{L^{2}},
\end{equation*}
\begin{equation*}
\left|\left|\int_{-\infty}^{\infty} (U^{(j)}_{\pm})^{*}(\tau)F(\tau, \cdot) d\tau\right|\right|_{L^{2}_{x}} \le C_{\sigma}||F||_{L^{q'}_{t}L^{r'}_{x}},
\end{equation*}
\begin{equation*}
\left|\left|\int_{-\infty}^{t} U^{(j)}_{\pm}(t)(U^{(j)}_{\pm})^{*}(\tau)F(\tau, \cdot)d\tau\right|\right|_{L^{q}_{t}L^{r}_{x}} \le C_{\sigma}||F||_{L^{\tilde{q}'}_{t}L^{\tilde{r}'}_{x}},
\end{equation*}
\begin{equation*}
||V^{(j)}(t)g||_{L^{q}_{t}L^{r}_{x}} \le C_{\sigma}||g||_{L^{2}},
\end{equation*}
\begin{equation*}
\left|\left|\int_{-\infty}^{\infty} (V^{(j)})^{*}(\tau)F(\tau, \cdot) d\tau\right|\right|_{L^{2}_{x}} \le C_{\sigma}||F||_{L^{q'}_{t}L^{r'}_{x}},
\end{equation*}
\begin{equation*}
\left|\left|\int_{-\infty}^{t} V^{(j)}(t)(V^{(j)})^{*}(\tau)F(\tau, \cdot)d\tau \right|\right|_{L^{q}_{t}L^{r}_{x}} \le C_{\sigma}||F||_{L^{\tilde{q}'}_{t}L^{\tilde{r}'}_{x}}.
\end{equation*}
\end{lemma}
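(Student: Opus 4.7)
The plan is to apply the abstract Keel--Tao theorem (Theorem \ref{KTStr}) to each of the operators $U^{(j)}_{\pm}(t)$ and $V^{(j)}(t)$ separately, which requires verifying two hypotheses: the $L^{2}_{x}\to L^{2}_{x}$ energy bound \eqref{L2}, and the truncated dispersive decay bound \eqref{dispersive} for the adjoint composition. The main point is that the Littlewood--Paley cutoff $\beta(|\xi|/2^{j})$ for $-2\le j\le 2$ localizes frequencies to the annulus $|\xi|\in[2^{j-1},2^{j+1}]\subset[1/8,8]$, so the viscous damping factor $e^{-|\xi|t/2}$ in the Fourier symbol gives exponential decay in $t$ which is far stronger than any polynomial rate.

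For the energy estimate, by Plancherel the operators are Fourier multipliers whose symbols are of the form $e^{-|\xi|t/2}\cdot(\text{bounded trig})\cdot\beta(|\xi|/2^{j})$, with an additional factor $2/(\sqrt{3}|\xi|)$ in the case of $V^{(j)}$. Since $|\xi|$ is bounded above and below on the support of $\beta(|\xi|/2^{j})$ for $-2\le j\le 2$, the factor $1/|\xi|$ is harmless, the damping is bounded by $1$, and the trig factor is bounded, so each symbol is uniformly bounded in $(t,\xi)$. This yields \eqref{L2} with a constant independent of $T$.

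For the dispersive bound, first decompose $\cos$ and $\sin$ into $e^{\pm i\sqrt{3}|\xi|t/2}$ to obtain $U^{(j)}_{\pm}(t)$, each a Fourier multiplier with symbol $m_{\pm}(t,\xi)=e^{-|\xi|t/2}e^{\pm i\sqrt{3}|\xi|t/2}\beta(|\xi|/2^{j})$ (and similarly for $V^{(j)}$). Since $m_{\pm}$ is a function of $\xi$ only through $|\xi|$, the adjoint $(U^{(j)}_{\pm})^{*}(t)$ is simply the Fourier multiplier with symbol $\overline{m_{\pm}(t,\xi)}$, and hence the composition $U^{(j)}_{\pm}(\tau)(U^{(j)}_{\pm})^{*}(t)$ is a Fourier multiplier whose symbol has modulus $e^{-|\xi|(t+\tau)/2}\beta(|\xi|/2^{j})^{2}$, the phase $e^{i\sqrt{3}|\xi|(\tau-t)/2}$ contributing only a unit-modulus oscillation. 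The $L^{1}_{x}\to L^{\infty}_{x}$ norm of this operator is bounded by the $L^{\infty}_{x}$ norm of its convolution kernel, which by placing absolute values under the integral satisfies
\begin{equation*}
\bigl|K_{j}^{\pm}(t,\tau,x)\bigr|\;\lesssim\;\int_{\mathbb{R}^{n}} e^{-|\xi|(t+\tau)/2}\beta(|\xi|/2^{j})^{2}\,d\xi\;\lesssim\; e^{-c(t+\tau)},
\end{equation*}
with $c\ge 2^{-4}$ on the support of $\beta$ for $-2\le j\le 2$. The same computation works for $V^{(j)}$ since the extra factor $2/(\sqrt{3}|\xi|)$ is bounded on the support of $\beta$.

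Finally, because the cutoff $\chi_{[0,T]}$ forces $t,\tau\ge 0$ (the kernel vanishes otherwise), one has $t+\tau\ge|t-\tau|$, so
\begin{equation*}
e^{-c(t+\tau)}\;\le\; e^{-c|t-\tau|}\;\le\; C_{\sigma}(1+|t-\tau|)^{-\sigma}
\end{equation*}
for every $\sigma>0$, with $C_{\sigma}$ depending only on $c$ and $\sigma$ and, crucially, independent of $T$. This verifies the truncated dispersive hypothesis \eqref{dispersive} for every admissible $\sigma>0$. Applying Theorem \ref{KTStr} to $U^{(j)}_{\pm}(t)$ yields the first three claimed estimates, and applying it to $V^{(j)}(t)$ yields the remaining three. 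The only genuine book-keeping is tracking self-adjointness through the $\pm$ splitting of the trig symbols; the rest of the argument is driven entirely by the exponential viscous damping at the bounded frequencies $|\xi|\sim 1$, and I do not expect any serious obstacle.
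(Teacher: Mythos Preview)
Your proposal is correct and follows essentially the same route as the paper's proof: verify the two Keel--Tao hypotheses by exploiting that $\beta(|\xi|/2^{j})$ for $-2\le j\le 2$ localizes to $|\xi|\in[1/8,8]$, so the viscous factor $e^{-|\xi|(t+\tau)/2}$ in the composed symbol gives $e^{-c(t+\tau)}\le e^{-c|t-\tau|}\le C_{\sigma}(1+|t-\tau|)^{-\sigma}$ for any $\sigma>0$. The only minor technical difference is in how the $L^{1}_{x}\to L^{\infty}_{x}$ bound is extracted: you bound the convolution kernel directly via $\|K\|_{L^{\infty}_{x}}\le\|\text{symbol}\|_{L^{1}_{\xi}}$, whereas the paper instead bounds $\|U^{(j)}(t)(U^{(j)})^{*}(\tau)f\|_{H^{k}_{x}}$ for an integer $k>n/2$ (using $\|\widehat{f}\|_{L^{\infty}_{\xi}}\le\|f\|_{L^{1}_{x}}$ and the compact $\xi$-support) and then invokes the Sobolev embedding $H^{k}\hookrightarrow L^{\infty}$. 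Your route is slightly more elementary; both arguments rest on exactly the same mechanism.
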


\begin{proof}
We verify the necessary conditions in Theorem \ref{KTStr} to get the desired estimates. 
We start with inequality \eqref{L2}. It is clear from Plancherel's theorem that
\begin{equation*}
||U^{(j)}(t)f||_{L^{2}_{x}} \le C||f||_{L^{2}},
\end{equation*}
where $C$ is independent of $t$ and $j$, since $\left|e^{-\frac{|\xi|}{2}t}
\left(\text{cos}\left(\frac{\sqrt{3}}{2}|\xi|t\right) + \frac{1}{\sqrt{3}}\text{sin}\left(\frac{\sqrt{3}}{2}|\xi|t\right)\right) 
 \beta\left(\frac{|\xi|}{2^{j}}\right)\right| \le 2$. 

Similarly, by Plancherel's formula, we can deduce that
\begin{equation*}
||V^{(j)}(t)g||_{L^{2}_{x}} \le C||g||_{L^{2}}
\end{equation*}
for some $C$ that is independent of $t$ and $j$. To see this, we simply note that there exists a constant $C$ such that
\begin{equation}\label{Vest}
\left|e^{-\frac{|\xi|}{2}t}\frac{\text{sin}\left(\frac{\sqrt{3}}{2}|\xi|t\right)}{\frac{\sqrt{3}}{2}|\xi|}\beta\left(\frac{|\xi|}{2^{j}}\right)\right| \le C
\end{equation}
for all $t > 0$ and for all $\xi \in \mathbb{R}^{n}$. {{This is due to the support properties of $\beta\left(\frac{|\xi|}{2^{j}}\right)$ and the fact that $-2 \le j \le 2$, so that the quantity on the left hand side of \eqref{Vest} is potentially nonzero only for $1/8 \le |\xi| \le 8$}}.

To verify \eqref{dispersive}, we fix an arbitrary $\sigma > 0$ and verify the estimate
\begin{equation*}
||U_{\pm}^{(j)}(t)(U^{(j)}_{\pm})^{*}(\tau)f||_{L^{\infty}_{x}} \le C_{\sigma}(1 + |t - \tau|)^{-\sigma}||f||_{L^{1}_{x}}
\end{equation*}
for some constant $C_{\sigma}$. It suffices to prove this inequality for positive {\sl{integers}} $\sigma$. 
{{We calculate
\begin{equation*}
U^{(j)}(t)(U^{(j)})^{*}(\tau)f(x) = \chi_{[0, T]}(t)\chi_{[0, T]}(\tau) \int_{\mathbb{R}^{n}} e^{-\frac{|\xi|}{2}(t + \tau)}e^{ix \cdot \xi} a(\xi, t) a(\xi, \tau) \widehat{f}(\xi) \beta^{2}\left(\frac{|\xi|}{2^{j}}\right) d\xi,
\end{equation*}
where
\begin{equation*}
a(\xi, t) := \cos\left(\frac{\sqrt{3}}{2}|\xi|t\right) + \frac{1}{\sqrt{3}}\sin\left(\frac{\sqrt{3}}{2}|\xi|t\right).
\end{equation*}

%
%%%%%%%%%%%%%%%%%%%
\if 1 = 0

NOTE: I don't think we need to separate out $|t - s|$ small and $|t - s| \ge 1$
Note that since $0 \le \beta(\xi) \le 1$, 
\begin{equation*}
|U_{\pm}^{(j)}(s)(U^{(j)}_{\pm})^{*}(t)f(x)| \le \int_{\mathbb{R}^{n}} |\widehat{f}(\xi)| d\xi = ||f||_{L^{1}}
\end{equation*}
which controls the $L^{\infty}$ norm when $|t - s|$ is small. When $|t - s| \ge 1$, we want to establish a bound of the form
\begin{equation*}
||U_{\pm}^{(j)}(s)(U^{(j)}_{\pm})^{*}(t)f||_{L_{x}^{\infty}} \le C_{\sigma}(1 + |t - s|)^{-\sigma}||f||_{L^{1}}
\end{equation*}
where
\begin{equation*}
U_{\pm}^{(j)}(s)(U^{(j)}_{\pm})^{*}(t)f(x) = \chi_{[0, T]}(t)\chi_{[0, T]}(s) \int_{\mathbb{R}^{n}} e^{-\frac{|\xi|}{2}(s + t)}e^{ix \cdot \xi \pm i \frac{\sqrt{3}}{2} (s - t)|\xi|} \widehat{f}(\xi) \beta^{2}\left(\frac{|\xi|}{2^{j}}\right) d\xi
\end{equation*}
\fi 
%%%%%%%%%%%%%%%%%%%%
%
Assume $t$ and $\tau$ are such that $0 \le \tau, t \le T$ where $T > 0$ is arbitrary. Fix an integer $k > n/2$. Then,
\begin{equation*}
||U^{(j)}(t)(U^{(j)})^{*}(\tau)f||^{2}_{H^{k}_{x}} = \frac{1}{(2\pi)^{n}} \int_{\mathbb{R}^{n}} (1 + |\xi|^{2})^{k} \left|e^{-\frac{|\xi|}{2}(t + \tau)}a(\xi, t)a(\xi, \tau)\widehat{f}(\xi)\beta^{2}\left(\frac{|\xi|}{2^{j}}\right)\right|^{2} d\xi.
\end{equation*}
Note that for $-2 \le j \le 2$, $\beta(|\xi|/2^{j})$ is supported in $1/8 \le |\xi| \le 8$ and $0 \le \beta(\xi) \le 1$. Therefore, since $|a(\xi, t)| \le 2$, 
\begin{multline*}
||U^{(j)}(t)(U^{(j)})^{*}(\tau)f||^{2}_{H^{k}_{x}} \le \frac{1}{(2\pi)^{n}}\int_{1/8 \le |\xi| \le 8} 4 \cdot 65^{k}e^{-\frac{1}{8}(t + \tau)} |\widehat{f}(\xi)|^{2} d\xi \\
= C_{k} e^{-\frac{1}{8}(t + \tau)} \int_{1/8 \le |\xi| \le 8} |\widehat{f}(\xi)|^{2}d\xi \le C_{k}'e^{-\frac{1}{8}(t + \tau)}||\widehat{f}||_{L^{\infty}_{\xi}}^{2} \le C_{k}'e^{-\frac{1}{8}(t + \tau)}||f||_{L^{1}_{x}}^{2},
\end{multline*}
implying
\begin{equation*}
||U^{(j)}(t)(U^{(j)})^{*}(\tau)f||_{H^{k}_{x}} \le C_{k}e^{-\frac{1}{16}(t + \tau)}||f||_{L_{x}^{1}} \le C_{k}e^{-\frac{1}{16}|t - \tau|}||f||_{L_x^{1}},
\end{equation*}
for $0 \le \tau, t \le T$, since  $|t - \tau| \le t + \tau$. Because $e^{-\frac{1}{16}|t - \tau|}$ decays exponentially, it decays faster than $(1 + |t - \tau|)^{-\sigma}$ for any positive integer $\sigma$. In particular,
\begin{equation*}
||U^{(j)}_{\pm}(t)(U^{(j)}_{\pm})^{*}(\tau)f||_{H^{k}_{x}} \le C_{\sigma}(1 + |t - \tau|)^{-\sigma}||f||_{L^{1}_{x}},
\end{equation*}
for $0 \le t, \tau \le T$ and for all $-2 \le j \le 2$. For all other $t, \tau$, the left hand side is zero by the characteristic functions $\chi_{[0, T]}$. 
Since $k > n/2$, $H^{k}(\R^n)$ embeds into $L^{\infty}(\R^n)$, and so we have:
\begin{equation*}
||U^{(j)}_{\pm}(t)(U^{(j)}_{\pm})^{*}(\tau)f||_{L^{\infty}_{x}} \le C||U^{(j)}_{\pm}(t)(U^{(j)}_{\pm})^{*}(\tau)f||_{H^{k}_{x}} \le C_{\sigma}(1 + |t - \tau|)^{-\sigma}||f||_{L^{1}_{x}},
\end{equation*}
which shows that assumption \eqref{dispersive} holds for the operator $U^{(j)}$, $-2 \le j \le 2$.}}
%%%%%%%%%%
%(The first inequality is from the fact that $H^{s}$ embeds into $L^{\infty}$ for $s > n/2$, by using the inverse Fourier transform and the Cauchy-Schwartz inequality, and the fact that $\int_{\mathbb{R}^{n}} \frac{1}{(1 + |\xi|^{2})^{s}} d\xi < \infty$ for $s > n/2$.)
%%%%%%%%%

To show that the operator $V^{(j)}$ with $-2 \le j \le 2$ satisfies assumption \eqref{dispersive}, we proceed in a similar way. We calculate
$$
V^{(j)}(t)(V^{(j)})^{*}(\tau)g(x) 
= \chi_{[0, T]}(t)\chi_{[0, T]}(\tau) \int_{\mathbb{R}^{n}} e^{-\frac{|\xi|}{2}(t + \tau)}e^{ix \cdot \xi}\frac{\text{sin}\left(\frac{\sqrt{3}}{2}|\xi|t\right)}{\frac{\sqrt{3}}{2}|\xi|}\cdot \frac{\text{sin}\left(\frac{\sqrt{3}}{2}|\xi|\tau\right)}{\frac{\sqrt{3}}{2}|\xi|} \widehat{g}(\xi) \beta^{2}\left(\frac{|\xi|}{2^{j}}\right) d\xi.
$$
Again, we only need to consider $0 \le \tau, t \le T$ where $T > 0$ is arbitrary. Fix an integer $k > n/2$, and note that
\begin{align*}
||V^{(j)}(t)(V^{(j)})^{*}(\tau)g||^{2}_{H^{k}_{x}} 
&\le \frac{1}{(2\pi)^{n}} \int_{\mathbb{R}^{n}} (1 + |\xi|^{2})^{k} \left|e^{-\frac{|\xi|}{2}(t + \tau)} \frac{\text{sin}\left(\frac{\sqrt{3}}{2}|\xi|t\right)}{\frac{\sqrt{3}}{2}|\xi|}\cdot \frac{\text{sin}\left(\frac{\sqrt{3}}{2}|\xi|\tau\right)}{\frac{\sqrt{3}}{2}|\xi|} \widehat{g}(\xi) \beta^{2}\left(\frac{|\xi|}{2^{j}}\right)\right|^{2} d\xi 
\\
&\le C \int_{1/8 \le |\xi| \le 8} (1 + |\xi|^{2})^{k} \left|e^{-\frac{|\xi|}{2}(t + \tau)} \widehat{g}(\xi)\beta^{2}\left(\frac{|\xi|}{2^{j}}\right)\right|^{2} d\xi,
\end{align*}
where we used
$$
\left|\frac{\text{sin}\left(\frac{\sqrt{3}}{2}|\xi|\tau\right)}{\frac{\sqrt{3}}{2}|\xi|}\beta\left(\frac{|\xi|}{2^{j}}\right)\right| \le C
$$
{ uniformly on the support of $\beta\left(\frac{|\xi|}{2^{j}}\right)$ for $-2 \le j \le 2$}.
%and similarly for $t$ in place of $\tau$ for all $\tau, t > 0$ and for all $-2 \le j \le 2$ by the support properties of $\beta\left(\frac{|\xi|}{2^{j}}\right)$. 
Since $-2 \le j \le 2$, we get
\begin{align*}
||V^{(j)}(t)(V^{(j)})^{*}(\tau)g||^{2}_{H^{k}_{x}}& \le C\int_{1/8 \le |\xi| \le 8}(1 + |\xi|^{2})^{k}e^{-|\xi|(t + \tau)}|\widehat{g}(\xi)|^{2} d\xi \\
&\le C\int_{1/8 \le |\xi| \le 8} 65^{k}e^{-\frac{1}{8}(t + \tau)} |\widehat{g}(\xi)|^{2} d\xi \le C_{k}e^{-\frac{1}{8}(t + \tau)}||\widehat{g}||_{L^{\infty}_{\xi}}^{2} \le C_{k}e^{-\frac{1}{8}(t + \tau)}||g||_{L^{1}_{x}}^{2}.
\end{align*}
Therefore, recalling that $k > n/2$, for all $-2 \le j \le 2$ we have
\begin{align*}
||V^{(j)}(t)(V^{(j)})^{*}(\tau)g||_{L^{\infty}_{x}} 
&\le C||V^{(j)}(t)(V^{(j)})^{*}(\tau)g||_{H^{k}_{x}} \le C_{k}e^{-\frac{1}{16}(t + \tau)} ||g||_{L^{1}_{x}} 
\\
&\le C_{k}e^{-\frac{1}{16}|t - \tau|} ||g||_{L^{1}_{x}} \le C_{\sigma}(1 + |t - \tau|)^{-\sigma}||g||_{L^{1}_{x}},
\end{align*}
where $|t - \tau| \le t + \tau$ since we are considering $0 \le \tau, t \le T$.
The estimates from the statement of Lemma~\ref{operatorest} now follow directly from Keel and Tao \cite{KT}.
\end{proof}

\subsubsection{Proof of Theorem \ref{strongdecay}: Strichartz estimates for the homogeneous problem}
To prove the Strichartz estimates for the homogeneous linear viscous wave equation stated in Theorem \ref{strongdecay}
we define $u_{j}(t, x)$, $f_{j}(x)$, $g_{j}(x)$ for $j \in \mathbb{Z}$ by
\begin{equation}\label{uj}
u_{j}(t, x) = \frac{1}{(2\pi)^{n}}\int_{\mathbb{R}^{n}} e^{ix \cdot \xi} \beta\left(\frac{|\xi|}{2^{j}}\right)\widehat{u}(t, \xi) d\xi,
\end{equation}
\begin{equation}\label{fj}
f_{j}(x) = \frac{1}{(2\pi)^{n}} \int_{\mathbb{R}^{n}} e^{ix\cdot\xi} \beta\left(\frac{|\xi|}{2^{j}}\right)\widehat{f}(\xi) d\xi,
 \ \ \ \ \ \ \ \ g_{j}(x) = \frac{1}{(2\pi)^{n}} \int_{\mathbb{R}^{n}} e^{ix\cdot\xi} \beta\left(\frac{|\xi|}{2^{j}}\right)\widehat{g}(\xi) d\xi.
\end{equation}
It is easy to see that $u_j$ solves the corresponding linear viscous wave equation with initial data $f_{j}$, $g_{j}$.
%By taking Fourier transforms and multiplying through by $\beta(|\xi|/2^{j})$ (and noting that this can be pulled out of any partial $t$ derivatives), we see that $u_{j}$ solves the corresponding linear viscous wave equation with initial data $f_{j}$, $g_{j}$. 
%
Notice that restricted on the time interval $[0,T]$, $u_j$  can be written in terms of $U^{(j)}(t)$ and $V^{(j)}(t)$
as follows:
\begin{equation}\label{ujform}
u_j (t,x) = \frac{1}{(2\pi)^n} \left(U^{(j)}(t) f(x) + V^{(j)}(t) g(x)\right) = \frac{1}{(2\pi)^{n}} \sum_{k = j-2}^{j + 2} U^{(k)}(t)f_{j}(x) + V^{(k)}(t) g_{j}(x)
,  \ t\in [0,T],
\end{equation}
{{where the second equality follows from the fact that $\text{supp}(\beta) \subset [1/2, 2]$}}. Obtaining estimates for $u_j$ will be based on using the results from Lemma~\ref{operatorest},
and the Littlewood-Paley decomposition estimates from Lemma~\ref{LPlemma}.
{{More precisely, we first show in Step 1 below that given a solution $u$ for any initial data $(f, g)$, it suffices to obtain the corresponding estimate for each $u_j$, uniform in $j \in \mathbb{Z}$, for the initial data 
$f_j, g_{j}$
defined via \eqref{fj}. In Step 2, we then simplify this even further by showing that, in fact, it suffices to simply consider initial data whose spatial Fourier transforms are supported in the annulus $1/2 \le |\xi| \le 2$.
Thus, what we show in Steps 1 and 2 below is that
it suffices to obtain the estimate \eqref{Strichartz_homo} from Theorem~\ref{strongdecay} 
for initial data $f, g$ that have spatial Fourier transforms 
that are all supported in the annulus $1/2 \le |\xi| \le 2$. 
The proof will then follow from estimates presented in Lemma~\ref{operatorest}.}}

\vskip 0.1in
\noindent
\textbf{Step 1.} Since $\beta$ is supported in $[1/2,2]$, we claim that it suffices to show that $u_j$ satisfies the estimate \eqref{Strichartz_homo} from Theorem \ref{strongdecay} for the data $f_j, g_j$ defined by \eqref{fj}, where the constant $C$ in the estimate \eqref{Strichartz_homo} is {\emph{independent}} of $j$. 
Since $q, r \ge 2$, we can use \eqref{LP1} to get the results for general $f, g$. 
%
%
%%%%%%%%%%%%%%%%%%
\if 1 = 0
, with the constant $C$  {\emph{independent}} of $j$.
Since $\beta$ is supported in $[1/2,2]$, it suffices to show that estimate \eqref{Strichartz_homo}
holds
for 
 $f_i$ and $g_i$ 
defined via $\widehat{f}, \widehat{g}$ that are supported in an annulus of the form
\begin{equation*}
2^{j - 1} \le |\xi| \le 2^{j + 1},
\end{equation*}
where the constant $C$ in the estimate is {\emph{independent}} of $j$. 
This is because $p, q \ge 2$, so we can use \eqref{LP1} to get the results for general $f, g$. 
Therefore, we are now aiming to show that
\begin{multline*}
||u_{j}||_{L_{t}^{p}([0, T]; L_{x}^{q}(\mathbb{R}^{n}))} + ||u_{j}(T, \cdot)||_{\dot{H}^{\gamma}(\mathbb{R}^{n})} + ||\partial_{t}u_{j}(T, \cdot)||_{\dot{H}^{\gamma - 1}(\mathbb{R}^{n})} \\
\le C||f_{j}||_{\dot{H}^{\gamma}(\mathbb{R}^{n})} + C||g_{j}||_{\dot{H}^{\gamma - 1}(\mathbb{R}^{n})},
\end{multline*}
where the constant $C$ is independent of $j \in \mathbb{Z}$, for the initial data such that
$\widehat{f}, \widehat{g}$ are supported in $2^{j - 1} \le |\xi| \le 2^{j + 1}$. 

\fi

%%%%%%%%%%%%%%
In particular, given a general solution $u$ to the linear viscous wave equation with initial data $f, g$, we can construct $u_{j}$, $f_{j}$, $g_{j}$ 
as in \eqref{uj} and \eqref{fj}. Suppose that for all such $u, f, g$, the functions $u_j$ for $j \in \mathbb{Z}$ satisfy the estimate
\begin{align}
||u_{j}||_{L_{t}^{q}([0, T]; L_{x}^{r}(\mathbb{R}^{n}))} &+ ||u_{j}(T, \cdot)||_{\dot{H}^{s}(\mathbb{R}^{n})} + ||\partial_{t}u_{j}(T, \cdot)||_{\dot{H}^{s - 1}(\mathbb{R}^{n})}
\nonumber \\
&\le C||f_{j}||_{\dot{H}^{s}(\mathbb{R}^{n})} + C||g_{j}||_{\dot{H}^{s - 1}(\mathbb{R}^{n})},
\label{j_estimate}
\end{align}
where the constant $C$ is independent of $j \in \mathbb{Z}$.

Estimate \eqref{j_estimate} implies that for a constant $C'$ independent of $j \in \mathbb{Z}$,
\begin{align}\label{reductionest1}
\text{max} &\left\{||u_{j}||_{L_{t}^{q}([0, T]; L_{x}^{r}(\mathbb{R}^{n}))}^{2}, ||u_{j}(T, \cdot)||_{\dot{H}^{s}(\mathbb{R}^{n})}^{2}, ||\partial_{t}u_{j}(T, \cdot)||_{\dot{H}^{s - 1}(\mathbb{R}^{n})}^{2} \right\} \\
&\qquad\qquad \le  C'\left(||f_{j}||_{\dot{H}^{s}(\mathbb{R}^{n})}^{2} + ||g_{j}||_{\dot{H}^{s - 1}(\mathbb{R}^{n})}^{2}\right).
\nonumber
\end{align}
Because $q, r \ge 2$ and $r \ne \infty$, we can apply estimate \eqref{LP1} in Lemma \ref{LPlemma} to get
\begin{align*}
||u||_{L_{t}^{q}([0, T]; L_{x}^{r}(\mathbb{R}^{n}))}^{2} \le C\sum_{j = -\infty}^{\infty} ||u_{j}||_{L_{t}^{q}([0, T]; L_{x}^{r}(\mathbb{R}^{n}))}^{2} 
\le CC'\sum_{j = -\infty}^{\infty} \left(||f_{j}||_{\dot{H}^{s}(\mathbb{R}^{n})}^{2} + ||g_{j}||_{\dot{H}^{s - 1}(\mathbb{R}^{n})}^{2}\right), 
\end{align*}
where the last inequality follows from \eqref{reductionest1}.
Because $\sum_{j = -\infty}^{\infty} \beta^{2}\left(\frac{|\xi|}{2^{j}}\right) \le 1$, we have that
\begin{equation}\label{reductionest2}
\sum_{j = -\infty}^{\infty} ||f_{j}||^{2}_{\dot{H}^{s}(\mathbb{R}^{n})} \le ||f||^{2}_{\dot{H}^{s}(\mathbb{R}^{n})}
\ {\rm and} \ 
\sum_{j = -\infty}^{\infty} ||g_{j}||^{2}_{\dot{H}^{s - 1}(\mathbb{R}^{n})} \le ||g||^{2}_{\dot{H}^{s - 1}(\mathbb{R}^{n})}.
\end{equation}
Combining the last three estimates, we obtain that there exists a $C > 0$ independent of $f, g$, such that
\begin{equation}\label{reductionest4}
||u||_{L_{t}^{q}([0, T]; L_{x}^{r}(\mathbb{R}^{n}))}^{2} \le C\left(||f||_{\dot{H}^{s}(\mathbb{R}^{n})}^{2} + ||g||_{\dot{H}^{s - 1}(\mathbb{R}^{n})}^{2}\right) \le C\left(||f||_{\dot{H}^{s}(\mathbb{R}^{n})} + ||g||_{\dot{H}^{s - 1}(\mathbb{R}^{n})}\right)^{2}.
\end{equation}
Similarly, using the fact that for some constant $c > 0$,
$
\sum_{j = -\infty}^{\infty} \beta^{2}\left(\frac{|\xi|}{2^{j}}\right) \ge c, 
$
we conclude that
\begin{equation*}
||u(T, \cdot)||^{2}_{\dot{H}^{s}(\mathbb{R}^{n})} \le c^{-1}\sum_{j = -\infty}^{\infty} ||u_{j}(T, \cdot)||^{2}_{\dot{H}^{s}(\mathbb{R}^{n})},
\ {\rm and} \ 
||\partial_{t}u(T, \cdot)||^{2}_{\dot{H}^{s}(\mathbb{R}^{n})} \le c^{-1}\sum_{j = -\infty}^{\infty} ||\partial_{t}u_{j}(T, \cdot)||^{2}_{\dot{H}^{s}(\mathbb{R}^{n})},
\end{equation*}
where in the second inequality, we used the fact that $\widehat{\partial_{t}u_{j}}(t, \xi) = \beta(|\xi|/2^{j})\widehat{\partial_{t}u}(t, \xi)$. Then by using \eqref{reductionest1} and the inequalities in \eqref{reductionest2}, we can obtain the analogous inequalities
\begin{equation}\label{reductionest5}
||u(T, \cdot)||_{\dot{H}^{s}}^{2}
\le C\left(||f||_{\dot{H}^{s}} + ||g||_{\dot{H}^{s - 1}}\right)^{2}
\ {\rm and} \ 
||\partial_{t}u(T, \cdot)||_{\dot{H}^{s - 1}}^{2}
\le C\left(||f||_{\dot{H}^{s}} + ||g||_{\dot{H}^{s - 1}}\right)^{2}.
\end{equation}
Taking square roots in \eqref{reductionest4}, \eqref{reductionest5}, and adding the resulting equations gives the result in Theorem \ref{strongdecay}. 

In the next step, we make a further simplification as follows.

\vskip 0.1in
\noindent
{{{\bf{Step 2.}} We have shown in the previous step that it suffices to show the uniform estimate  \eqref{j_estimate} for all $j \in \mathbb{Z}$ and for all initial data $(f, g)$ with corresponding solution $u$. In this step, we show that because of the gap condition \eqref{gap1}, it suffices to show \eqref{j_estimate} for just $j = 0$. In particular, showing the estimate \eqref{j_estimate} for $j = 0$ with a constant $C$ automatically gives the same estimate for all $j \in \mathbb{Z}$ with the same constant $C$, by the scaling symmetries of the viscous linear wave equation.}}

To see this, recall that 
\begin{equation*}
\widehat{h(\lambda x)} = \lambda^{-n}\widehat{h}(\xi/\lambda).
\end{equation*}
So it suffices to show that an estimate for a given $f, g, u$ also holds for the corresponding functions
$
f(\lambda x), \lambda g(\lambda x), u(\lambda t, \lambda x).
$
To verify this, we calculate
\begin{equation*}
||u(\lambda t, \lambda x)||_{L^{q}_{t}([0, T]; L^{r}_{x}(\mathbb{R}^{n}))} = \lambda^{-\frac{n}{r} - \frac{1}{q}} ||u(t, x)||_{L^{q}_{t}([0, \lambda T]; L^{r}_{x}(\mathbb{R}^{n}))},
\end{equation*}
\begin{equation*}
||u(\lambda T, \lambda x)||_{\dot{H}^{s}(\mathbb{R}^{n})} = \lambda^{-\frac{n}{2} + s}||u(\lambda T, x)||_{\dot{H}^{s}(\mathbb{R}^{n})},
\quad 
||\lambda \partial_{t}u(\lambda T, \lambda x)||_{\dot{H}^{s - 1}(\mathbb{R}^{n})} = \lambda^{-\frac{n}{2} + s} ||\partial_{t}u(\lambda T, x)||_{\dot{H}^{s - 1}(\mathbb{R}^{n})},
\end{equation*}
\begin{equation*}
||f(\lambda x)||_{\dot{H}^{s}(\mathbb{R}^{n})} = \lambda^{-\frac{n}{2} + s} ||f(x)||_{\dot{H}^{s}(\mathbb{R}^{n})},
\quad 
||\lambda g(\lambda x)||_{\dot{H}^{s - 1}(\mathbb{R}^{n})} = \lambda^{-\frac{n}{2} + s} ||g(x)||_{\dot{H}^{s - 1}(\mathbb{R}^{n})}.
\end{equation*}
From here we see that we get the desired result since, by the gap condition,
\begin{equation*}
\frac{1}{q} + \frac{n}{r} = \frac{n}{2} - s.
\end{equation*}

\noindent
\textbf{Conclusion.} {{From Steps 1 and 2 we conclude that we just need to show the estimate \eqref{j_estimate} for $j = 0$, for any initial data $(f, g)$ and corresponding solution $u$. Since $f_{0}, g_{0}$ have spatial Fourier transforms supported in $1/2 \le |\xi| \le 2$, estimate \eqref{j_estimate} for $j = 0$ would be established if we more generally proved Theorem \ref{strongdecay} for all initial data $(f, g)$ that have $\widehat{f}, \widehat{g}$ supported in $1/2 \le |\xi| \le 2$.}} 
Thus,  
without loss of generality, we can assume that $f, g$ have spatial Fourier transforms 
that are all supported in the annulus $1/2 \le |\xi| \le 2$. 
Note that, in this case, all homogeneous Sobolev norms $\dot{H}^{s}(\mathbb{R}^{n})$ are equivalent
 to the $L^{2}(\mathbb{R}^{n})$ norm.

\vskip 0.1in
\noindent
{\bf Step 3.}
The proof of Theorem \ref{strongdecay} now follows by combining Steps 1 and 2, the expression \eqref{ujform} for $u_j$ in terms of 
the operators $U^{(j)}$ and $V^{(j)}$, and the estimates from Lemma~\ref{operatorest}.

More precisely,
recall that the homogeneous solution can be written as
\begin{equation*}
u(t, \cdot) = e^{-\frac{\sqrt{-\Delta}}{2}t}\left(\text{cos}\left(\frac{\sqrt{3}}{2}\sqrt{-\Delta}t\right) + \frac{1}{\sqrt{3}}\text{sin}\left(\frac{\sqrt{3}}{2}\sqrt{-\Delta}t\right)\right)f + e^{-\frac{\sqrt{-\Delta}}{2}t}\frac{\text{sin}\left(\frac{\sqrt{3}}{2}\sqrt{-\Delta}t\right)}{\frac{\sqrt{3}}{2}\sqrt{-\Delta}}g,
\end{equation*}
and we want to prove
\begin{equation*}
||u||_{L_{t}^{q}([0, T]; L_{x}^{r}(\mathbb{R}^{n}))} + ||u(T, \cdot)||_{\dot{H}^{s}(\mathbb{R}^{n})} + ||\partial_{t}u(T, \cdot)||_{\dot{H}^{s - 1}(\mathbb{R}^{n})} \le C||f||_{\dot{H}^{s}(\mathbb{R}^{n})} + C||g||_{\dot{H}^{s - 1}(\mathbb{R}^{n})}.
\end{equation*}
This follows from the estimates on the operators $U^{(j)}$ and $V^{(j)}$,  $-2 \le j \le 2$, in Lemma \ref{operatorest},
and by using 
Steps 1 and 2 above to work with $f, g$ which have Fourier transform supported in $1/2 \le |\xi| \le 2$ so that:
\begin{align*}
\int_{\mathbb{R}^{n}} e^{ix\cdot \xi} e^{-\frac{|\xi|}{2}t} &\left(\text{cos}\left(\frac{\sqrt{3}}{2}|\xi|t\right) + \frac{1}{\sqrt{3}}\sin\left(\frac{\sqrt{3}}{2}|\xi|t\right)\right) \widehat{f}(\xi) d\xi \\
&= \sum_{j = -2}^{2} \int_{\mathbb{R}^{n}} e^{ix\cdot \xi} e^{-\frac{|\xi|}{2}t} \left(\text{cos}\left(\frac{\sqrt{3}}{2}|\xi|t\right) + \frac{1}{\sqrt{3}}\sin\left(\frac{\sqrt{3}}{2}|\xi|t\right)\right) \widehat{f}(\xi) \beta\left(\frac{|\xi|}{2^{j}}\right) d\xi.
\end{align*}
Notice that we need $-2 \le j \le 2$ to cover all the $j$'s for which the support of $\beta\left(\frac{|\xi|}{2^{j}}\right)$ intersects $1/2 \le |\xi| \le 2$.
For $g$, we use the estimates on $V^{(j)}$, $-2 \le j \le 2$, in Lemma \ref{operatorest}, and the same sum decomposition where we sum from $j = -2$ to $j = 2$. 

For the terms $||u(T, \cdot)||_{\dot{H}^{s}(\mathbb{R}^{n})}$ and $||\partial_{t}u(T, \cdot)||_{\dot{H}^{s - 1}(\mathbb{R}^{n})}$, we note that these norms are equivalent to $L^{2}$ norms since $u$ and $\partial_{t}u$ have spatial Fourier transforms that are also supported in $1/2 \le |\xi| \le 2$, in which case the inequality follows from Plancherel's theorem and the support properties of $f$ and $g$. 
Note again that Plancherel's theorem works here 
since $||f||_{\dot{H}^{s}(\mathbb{R}^{n})}$ and $||g||_{\dot{H}^{s - 1}(\mathbb{R}^{n})}$ 
are equivalent to $||f||_{L^{2}(\mathbb{R}^{n})}$ and $||g||_{L^{2}(\mathbb{R}^{n})}$ respectively, 
given the fact that  the Fourier transforms of $f$ and $g$ are supported in $1/2 \le |\xi| \le 2$.

\qed

\subsubsection{Proof of Strichartz estimates for the inhomogeneous problem}
To prove Theorem \ref{inhStr} we first recall from \eqref{Fouriersoln2} that for the inhomogeneous viscous wave equation with zero initial data, the solution can be represented as
\begin{equation}\label{u_inhomo}
u(t, \cdot) := \int_{0}^{t} e^{-\frac{\sqrt{-\Delta}}{2}(t - \tau)}\frac{\text{sin}\left(\frac{\sqrt{3}}{2}\sqrt{-\Delta}(t - \tau)\right)}{\frac{\sqrt{3}}{2}\sqrt{-\Delta}}F(\tau, \cdot) d\tau.
\end{equation}
The goal is to estimate this inhomogeneous contribution. 
In particular, we must show that
\begin{equation}\label{inhest}
||u||_{L_{t}^{q}([0, T]; L_{x}^{r}(\mathbb{R}^{n}))} + ||u(T, \cdot)||_{\dot{H}^{s}(\mathbb{R}^{n})} + ||\partial_{t}u(T, \cdot)||_{\dot{H}^{s - 1}(\mathbb{R}^{n})} \le C_{q, \tilde{q}, r, \tilde{r}}||F||_{L_{t}^{\tilde{q}'}([0, T]; L_{x}^{\tilde{r}'}(\mathbb{R}^{n}))}.
\end{equation}
Unfortunately, since the dissipative portion of the evolution operator involving $e^{-\frac{\sqrt{-\Delta}}{2}}$ is self-adjoint, 
the results from Keel and Tao's theorem cannot be used here, 
since $U(t)U^{*}(\tau)$ has $e^{-\frac{\sqrt{-\Delta}}{2}(t + \tau)}$ instead of $e^{-\frac{\sqrt{-\Delta}}{2}(t - \tau)}$.
Instead, we adopt ideas from fractional heat equations, see Miao, Yuan, and Zhang \cite{MYZ}. 

%%%
\if 1 = 0
\textit{Note however that the last two statements we get from Keel and Tao's Strichartz estimate theorem do not apply here, since $U(t)U^{*}(\tau)$ has $e^{-\frac{\sqrt{-\Delta}}{2}(t + \tau)}$ instead of $e^{-\frac{\sqrt{-\Delta}}{2}(t - \tau)}$}. This important observation shows how the dissipative portion of the evolution operator, in particular the self-adjoint portion involving $e^{-\frac{\sqrt{-\Delta}}{2}}$, makes the problem more difficult when we consider the inhomogeneous estimates.
\fi
%%%%%%%%%%%%%
%
%

We start by first proving that
\begin{equation}\label{inhestpart1}
||u||_{L_{t}^{q}([0, T]; L_{x}^{r}(\mathbb{R}^{n}))} \le C_{q, \tilde{q}, r, \tilde{r}}||F||_{L_{t}^{\tilde{q}'}([0, T]; L_{x}^{\tilde{r}'}(\mathbb{R}^{n}))} .
\end{equation}
%To do this, we use ideas that have been applied for example to fractional heat equations by Miao, Yuan, and Zhang \cite{MYZ}. 
For this purpose, we introduce the following family of operators 
\begin{equation*}
S(t) := e^{-\frac{\sqrt{-\Delta}}{2}t} \frac{\text{sin}\left(\frac{\sqrt{3}}{2}\sqrt{-\Delta}t\right)}{\frac{\sqrt{3}}{2}\sqrt{-\Delta}}, \quad t > 0.
\end{equation*}
The operators $S(t)$ define a family of convolution kernels $K_t(x)$  via
\begin{equation*}
S(t)\phi(x) = (K_{t} * \phi)(x),
\end{equation*}
where 
\begin{equation}\label{Kt}
K_{t}(x) := \frac{1}{(2\pi)^{n}} \int_{\mathbb{R}^{n}} e^{-\frac{|\xi|}{2}t}\frac{\text{sin}\left(\frac{\sqrt{3}}{2}|\xi|t\right)}{\frac{\sqrt{3}}{2}|\xi|}e^{ix\cdot \xi} d\xi = \frac{t^{1-n}}{(2\pi)^{n}} \int_{\mathbb{R}^{n}} e^{-\frac{|\xi|}{2}}\frac{\text{sin}\left(\frac{\sqrt{3}}{2}|\xi|\right)}{\frac{\sqrt{3}}{2}|\xi|} e^{ix \cdot \frac{\xi}{t}} d\xi.
\end{equation}
Using these operators, the solution $u$ in \eqref{u_inhomo} can be written as:
$$
u(t,x) = \int_0^t \left[K_{t-\tau}(x)  * F(\tau,x) \right] d\tau, 
$$
where the convolution $*$ is with respect to $x$.
To obtain the desired estimates on $u$, we investigate the properties of the convolution kernels $K_t(x)$.
In particular, we begin by defining the ``unit-scale" kernel by
\begin{equation}\label{kernel}
K(x) = \frac{1}{(2\pi)^{n}} \int_{\mathbb{R}^{n}} e^{-\frac{|\xi|}{2}}\frac{\text{sin}\left(\frac{\sqrt{3}}{2}|\xi|\right)}{\frac{\sqrt{3}}{2}|\xi|} e^{ix \cdot \xi} d\xi,
\end{equation}
and notice that it has the following important scaling property:
\begin{equation}\label{kernelscale}
K_{t}(x) = t^{1 - n} K\left(\frac{x}{t}\right).
\end{equation}

%We will now follow the Miao, Yuan, and Zhang \cite{MYZ}, who consider similar estimates for the fractional operators $e^{-t(-\Delta)^{\alpha}}$ for $\alpha > 0$. We start with the following lemma.

\begin{lemma}\label{kernellemma}
There exists a constant $C > 0$ such that the convolution kernel $K(x)$ satisfies the following pointwise estimate:
\begin{equation*}
|K(x)| \le C(1 + |x|)^{-n-\alpha}, \ x \in \R^n,
\end{equation*}
for some $\alpha \ge 1/2$.
Therefore, 
$K \in L^{p}(\mathbb{R}^{n})$ for all $1 \le p \le \infty$.
\end{lemma}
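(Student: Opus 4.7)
The plan is to obtain an essentially explicit formula for $K(x)$ by recognizing its symbol as a difference of two complex Poisson-type multipliers, and then reading off the pointwise decay directly from that formula. Writing $\sin(\tfrac{\sqrt{3}}{2}|\xi|) = (e^{i\sqrt{3}|\xi|/2} - e^{-i\sqrt{3}|\xi|/2})/(2i)$, the symbol becomes
$$m(\xi) = \frac{1}{i\sqrt{3}\,|\xi|}\bigl[e^{-\alpha_+|\xi|} - e^{-\alpha_-|\xi|}\bigr], \qquad \alpha_\pm := \tfrac{1\mp i\sqrt{3}}{2} = e^{\mp i\pi/3},$$
with $\mathrm{Re}(\alpha_\pm) = 1/2 > 0$. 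Thus $K = (i\sqrt{3})^{-1}(K^+ - K^-)$, where $K^\pm := \mathcal{F}^{-1}[e^{-\alpha_\pm|\xi|}/|\xi|]$.

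Next, for $n \geq 2$ and real $t > 0$, I would integrate the classical Poisson identity $\mathcal{F}^{-1}[e^{-t|\xi|}](x) = c_n\, t/(t^2+|x|^2)^{(n+1)/2}$ with respect to $t$ to obtain
$$\mathcal{F}^{-1}\left[\frac{e^{-t|\xi|}}{|\xi|}\right](x) = \frac{c_n'}{(t^2+|x|^2)^{(n-1)/2}}.$$
Since both sides are holomorphic in $t$ on the open right half plane, analytic continuation from $t\in(0,\infty)$ extends this identity to $t=\alpha_\pm$, giving the closed form $K^\pm(x) = c_n'(\alpha_\pm^2 + |x|^2)^{-(n-1)/2}$. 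The dimension $n = 1$ is handled separately by evaluating $K$ directly via $\int_0^\infty e^{-a\xi}\sin(b\xi)\,d\xi/\xi = \arctan(b/a)$, which produces $K(x) = (\pi\sqrt{3})^{-1}[\arctan(\sqrt{3}+2x) + \arctan(\sqrt{3}-2x)]$ in closed form.

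The pointwise bound then splits into two regimes. For $|x| \leq 1$, the crude estimate $|K(x)| \leq (2\pi)^{-n}\|m\|_{L^1(\mathbb{R}^n)}$ suffices, since $|m(\xi)| \leq e^{-|\xi|/2}$ is integrable. For $|x|\geq 1$, I would Taylor expand the closed form in powers of $|x|^{-2}$: the leading $|x|^{-(n-1)}$ contributions are independent of $\alpha_\pm$ and hence cancel in $K^+ - K^-$, while the next-order contribution is proportional to $(\alpha_+^2 - \alpha_-^2)|x|^{-(n+1)} = -i\sqrt{3}\,|x|^{-(n+1)}$. Dividing by $i\sqrt{3}$ gives a real coefficient and the bound $|K(x)|\leq C|x|^{-(n+1)}$ in this regime. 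Combining the two gives $|K(x)|\leq C(1+|x|)^{-(n+1)}$, so the lemma holds with $\alpha = 1 \geq 1/2$, and the $L^p$ membership for all $1\leq p\leq \infty$ follows immediately since any bound of the form $(1+|x|)^{-(n+\alpha)}$ with $\alpha>0$ is in every $L^p$.

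The main obstacle is rigorously justifying the analytic continuation underlying the closed form for $K^\pm$. One must verify that the integral defining $\mathcal{F}^{-1}[e^{-t|\xi|}/|\xi|]$ is a holomorphic function of $t$ throughout the open right half plane (which it is, because absolute integrability is uniform on compacta there, and $t^2+|x|^2$ stays off the nonpositive real axis so that the principal $(n-1)/2$-power is single-valued and holomorphic), and then invoke uniqueness of analytic continuation from the real ray $t \in (0, \infty)$. A related subtlety in $n = 1$ is that the individual multipliers $e^{-\alpha_\pm|\xi|}/|\xi|$ are non-integrable near the origin, so $K^\pm$ do not separately define kernels and the argument must be carried out at the level of the difference throughout, which is exactly what the arctangent evaluation achieves.
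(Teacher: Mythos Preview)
Your argument is correct and proceeds along a genuinely different route from the paper's proof of the lemma. The paper proves the bound by repeated integration by parts with the operator $L(x,D)=\tfrac{x\cdot\nabla_\xi}{i|x|^2}$, splitting into a low-frequency piece (cut off at $|\xi|\le 2\delta$) and a high-frequency piece, and then optimizing over the scale $\delta$. That method yields $|K(x)|\le C_N|x|^{-1-n(N-1)/N}$ for any $N\ge n+1$, from which the authors extract only $\alpha\ge 1/2$ (though the exponent tends to $n+1$ as $N\to\infty$). You instead recognize the symbol as a difference of two complexified Poisson-type multipliers $e^{-\alpha_\pm|\xi|}/|\xi|$ with $\alpha_\pm=e^{\mp i\pi/3}$, and analytically continue the classical identity $\mathcal{F}^{-1}[e^{-t|\xi|}/|\xi|](x)=c_n'(t^2+|x|^2)^{-(n-1)/2}$ from real $t>0$ into the right half-plane. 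The cancellation of the leading $|x|^{-(n-1)}$ term in $K^+-K^-$ then gives the sharp decay $\alpha=1$ directly, in every dimension $n\ge 2$, with a clean closed form in $n=1$ via the arctangent integral.

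It is worth noting that the paper does, immediately after this lemma, give an alternative explicit formula $K = c_n\bigl(P * K^W_{t=\sqrt{3}/2}\bigr)$ (Poisson kernel convolved with the wave fundamental solution at time $\sqrt{3}/2$) and uses it to read off the sharp $(1+|x|)^{-(n+1)}$ decay in dimensions $n=1,2,3$. Your decomposition is different --- you split the oscillatory factor rather than the damping factor --- and has the advantage that the resulting closed form $(\alpha_\pm^2+|x|^2)^{-(n-1)/2}$ is equally tractable in all dimensions, whereas the paper's convolution with $K^W_t$ becomes distribution-valued for $n\ge 3$ and is only worked out explicitly for $n\le 3$. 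The integration-by-parts approach in the paper, on the other hand, is more robust: it would survive replacing $e^{-|\xi|/2}$ by a less algebraically special damping factor for which no closed form exists.
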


\begin{proof}
The proof is similar to an argument used in Miao, Yuan, and Zhang \cite{MYZ} for a different kernel,
where we use repeated integration by parts until we get the necessary decay. 

More precisely, to perform integration by parts and to obtain the desired estimates, it is useful to introduce the following operator:
\begin{equation*}
L(x, D) = \frac{x \cdot \nabla_{\xi}}{i|x|^{2}},\quad {\rm{and \ its \ adjoint}} \quad L^{*}(x, D) = -\frac{x \cdot \nabla_{\xi}}{i|x|^{2}}.
\end{equation*}
Operator $L(x, D)$ has the following crucial property 
\begin{equation*}
L(x, D) e^{ix\cdot \xi} = e^{ix \cdot \xi}.
\end{equation*}
We now integrate by parts to obtain:
\begin{align}
K(x) &= \frac{1}{(2\pi)^{n}} \int_{\mathbb{R}^{n}} e^{-\frac{|\xi|}{2}}\frac{\text{sin}\left(\frac{\sqrt{3}}{2}|\xi|\right)}{\frac{\sqrt{3}}{2}|\xi|} e^{ix \cdot \xi} d\xi 
= \frac{1}{(2\pi)^{n}} \int_{\mathbb{R}^{n}} L(e^{ix \cdot \xi}) e^{-\frac{|\xi|}{2}}\frac{\text{sin}\left(\frac{\sqrt{3}}{2}|\xi|\right)}{\frac{\sqrt{3}}{2}|\xi|} d\xi
\nonumber
 \\
&= \frac{1}{(2\pi)^{n}} \int_{\mathbb{R}^{n}} e^{ix \cdot \xi} L^{*}\left(e^{-\frac{|\xi|}{2}}\frac{\text{sin}\left(\frac{\sqrt{3}}{2}|\xi|\right)}{\frac{\sqrt{3}}{2}|\xi|}\right) d\xi.
\label{kernel}
\end{align}
Integration by parts is justified because $e^{-\frac{|\xi|}{2}}\frac{\text{sin}\left(\frac{\sqrt{3}}{2}|\xi|\right)}{\frac{\sqrt{3}}{2}|\xi|}$ and all of its $\xi$ derivatives are rapidly decreasing at infinity, {{and $e^{-\frac{|\xi|}{2}}\frac{\text{sin}\left(\frac{\sqrt{3}}{2}|\xi|\right)}{\frac{\sqrt{3}}{2}|\xi|}$ is bounded near the origin}}.

To estimate the resulting integral, we will do a high and a low frequency estimate. 
For this purpose, define a radially symmetric smooth compactly supported function $\rho$ such that:
\begin{equation*}
\rho(\xi) = 
1 \ \text{ if } |\xi| \le 1, \qquad \rho(\xi) =  0 \ \text{ if } |\xi| \ge 2, \quad {\text{and}} \ \text{$\rho$ is decreasing radially.}
\end{equation*}
Then, we can split the integral \eqref{kernel} above into two integrals,
\begin{align}\label{kernelfreqsplit}
K(x) &= \frac{1}{(2\pi)^{n}} \int_{\mathbb{R}^{n}} e^{ix \cdot \xi} \rho\left(\frac{\xi}{\delta}\right) L^{*}\left(e^{-\frac{|\xi|}{2}}\frac{\text{sin}\left(\frac{\sqrt{3}}{2}|\xi|\right)}{\frac{\sqrt{3}}{2}|\xi|}\right) d\xi
\nonumber
\\ 
&+ \frac{1}{(2\pi)^{n}} \int_{\mathbb{R}^{n}} e^{ix \cdot \xi}\left(1 -  \rho\left(\frac{\xi}{\delta}\right)\right) L^{*}\left(e^{-\frac{|\xi|}{2}}\frac{\text{sin}\left(\frac{\sqrt{3}}{2}|\xi|\right)}{\frac{\sqrt{3}}{2}|\xi|}\right) d\xi := I + II,
\end{align}
where $\delta > 0$ will be chosen later.

\vskip 0.1in
\noindent
\textbf{Estimate of integral $I$.}  We start by estimating the factor involving the operator $L^*$:
%To do this, note that for $L^{*} = -\frac{x \cdot \nabla_{\xi}}{i|x|^{2}}$, by Cauchy-Schwarz, 
\begin{align*}
&\left| L^{*}  \left(e^{-\frac{|\xi|}{2}}  \frac{\text{sin}\left(\frac{\sqrt{3}}{2}|\xi|\right)}{\frac{\sqrt{3}}{2}|\xi|} \right) \right| 
= \left| \frac{x \cdot \nabla_{\xi}}{i|x|^{2}} \left(e^{-\frac{|\xi|}{2}}\frac{\text{sin}\left(\frac{\sqrt{3}}{2}|\xi|\right)}{\frac{\sqrt{3}}{2}|\xi|} \right) \right| 
\le \frac{1}{|x|}\left|\nabla_{\xi} \left(e^{-\frac{|\xi|}{2}}\frac{\text{sin}\left(\frac{\sqrt{3}}{2}|\xi|\right)}{\frac{\sqrt{3}}{2}|\xi|}\right)\right| \\
&\qquad \qquad = \frac{1}{|x|}\left|-\frac{1}{2}e^{-\frac{|\xi|}{2}}\frac{\text{sin}\left(\frac{\sqrt{3}}{2}|\xi|\right)}{\frac{\sqrt{3}}{2}|\xi|} + e^{-\frac{|\xi|}{2}}\left(\frac{\text{cos}\left(\frac{\sqrt{3}}{2}|\xi|\right)}{|\xi|} - \frac{\text{sin}\left(\frac{\sqrt{3}}{2}|\xi|\right)}{\frac{\sqrt{3}}{2}|\xi|^{2}}\right)\right| \cdot \left|\nabla_{\xi}|\xi|\right|.
\end{align*}
Now, from the Taylor expansion of $\sin$ and $\cos$, and the boundedness of $\left|\nabla_{\xi}|\xi|\right|$, there exists a constant $C > 0$ such that
%\begin{equation*}
%\left|\frac{\text{cos}\left(\frac{\sqrt{3}}{2}|\xi|\right)}{|\xi|} - \frac{\text{sin}\left(\frac{\sqrt{3}}{2}|\xi|\right)}{\frac{\sqrt{3}}{2}|\xi|^{2}}\right| \le C|\xi|.
%\end{equation*}
%Therefore,
%\begin{equation*}
%\left|e^{-\frac{|\xi|}{2}}\frac{\text{cos}\left(\frac{\sqrt{3}}{2}|\xi|\right)}{|\xi|} - \frac{\text{sin}\left(\frac{\sqrt{3}}{2}|\xi|\right)}{\frac{\sqrt{3}}{2}|\xi|^{2}}\right| \le C|%\xi|e^{-\frac{|\xi|}{2}} \le C'.
%\end{equation*}
%In addition, $\left|\nabla_{\xi}|\xi|\right|$ is also bounded and so:
%\begin{equation*}
%\left|L^{*}\left(e^{-\frac{|\xi|}{2}}\frac{\text{sin}\left(\frac{\sqrt{3}}{2}|\xi|\right)}{\frac{\sqrt{3}}{2}|\xi|}\right)\right| \le C|x|^{-1},
%\end{equation*}
%which implies the following estimate of the integral $I$:
\begin{equation}\label{kernellowfreqest}
|I| = \left|\frac{1}{(2\pi)^{n}} \int_{\mathbb{R}^{n}} e^{ix \cdot \xi} \rho\left(\frac{\xi}{\delta}\right) L^{*}\left(e^{-\frac{|\xi|}{2}}\frac{\text{sin}\left(\frac{\sqrt{3}}{2}|\xi|\right)}{\frac{\sqrt{3}}{2}|\xi|}\right) d\xi\right| \le C|x|^{-1} \int_{|\xi| \le 2\delta} d\xi = C|x|^{-1}\delta^{n}.
\end{equation}

\vskip 0.1in
\noindent
\textbf{Estimate of integral II.}
Let $N$ be an arbitrary positive integer such that $N \ge n + 1$. Using integration by parts $N-1$ times, we get:
\begin{align*}
|II| &= \left|\frac{1}{(2\pi)^{n}} \int_{\mathbb{R}^{n}} e^{ix \cdot \xi}\left(1 -  \rho\left(\frac{\xi}{\delta}\right)\right) L^{*}\left(e^{-\frac{|\xi|}{2}}\frac{\text{sin}\left(\frac{\sqrt{3}}{2}|\xi|\right)}{\frac{\sqrt{3}}{2}|\xi|}\right) d\xi\right| \\
&\le C\int_{\mathbb{R}^{n}} \left|e^{ix \cdot \xi} (L^{*})^{N - 1} \left(\left(1 -  \rho\left(\frac{\xi}{\delta}\right)\right) L^{*}\left(e^{-\frac{|\xi|}{2}}\frac{\text{sin}\left(\frac{\sqrt{3}}{2}|\xi|\right)}{\frac{\sqrt{3}}{2}|\xi|}\right)\right)\right| d\xi \\ 
&= C\int_{|\xi| \ge \delta} \left|e^{ix \cdot \xi} (L^{*})^{N - 1} \left(\left(1 -  \rho\left(\frac{\xi}{\delta}\right)\right) L^{*}\left(e^{-\frac{|\xi|}{2}}\frac{\text{sin}\left(\frac{\sqrt{3}}{2}|\xi|\right)}{\frac{\sqrt{3}}{2}|\xi|}\right)\right)\right| d\xi.
\end{align*}
By the triangle inequality and the support properties of $\rho$, we get:
\begin{align}
|II| &\le C\int_{|\xi| \ge \delta} \left|(L^{*})^{N}\left(e^{-\frac{|\xi|}{2}}\frac{\text{sin}\left(\frac{\sqrt{3}}{2}|\xi|\right)}{\frac{\sqrt{3}}{2}|\xi|}\right)\right| d\xi 
\nonumber
\\
&+ C\int_{\delta \le |\xi| \le 2\delta} \left|(L^{*})^{N - 1}\left(\rho\left(\frac{\xi}{\delta}\right)L^{*}\left(e^{-\frac{|\xi|}{2}}\frac{\text{sin}\left(\frac{\sqrt{3}}{2}|\xi|\right)}{\frac{\sqrt{3}}{2}|\xi|}\right)\right)\right| d\xi := II_{A} + II_{B}.
\label{highfreq1}
\end{align}

We estimate $II_{A}$ as follows:
\begin{align}
II_{A} &\le C_{N}|x|^{-N}\int_{|\xi| \ge \delta} \sum_{l = 0}^{N} \left|e^{-\frac{|\xi|}{2}}\text{sin}\left(\frac{\sqrt{3}}{2}|\xi|\right)\right| |\xi|^{-l-1} + \sum_{l = 0}^{N - 1} \left|e^{-\frac{|\xi|}{2}}\text{cos}\left(\frac{\sqrt{3}}{2}|\xi|\right)\right| |\xi|^{-l-1} d\xi 
\nonumber
\\
&\le C_{N}|x|^{-N}\int_{|\xi| \ge \delta} \sum_{l = 0}^{N} e^{-\frac{|\xi|}{2}}|\xi|^{-l} d\xi,
\label{IIA}
\end{align}
where the cosine part has the index $l$ going only up to $N - 1$, since at least one $\xi$ derivative must be used to turn sine into cosine. 
%The last step uses the fact that $\frac{\text{sin}\left(\frac{\sqrt{3}}{2}|\xi|\right)}{|\xi|}$ is bounded. 

We estimate $II_{B}$ as follows:
\begin{align*}
II_{B} &\le C_{N}|x|^{-N}\int_{\delta \le |\xi| \le 2\delta}\sum_{k = 0}^{N - 1} \delta^{-k} \Bigg(\sum_{l = 0}^{N-k} \left|e^{-\frac{|\xi|}{2}}\text{sin}\left(\frac{\sqrt{3}}{2}|\xi|\right)\right| |\xi|^{-l-1} \\
&\qquad\qquad + \sum_{l = 0}^{N-k-1} \left|e^{-\frac{|\xi|}{2}}\text{cos}\left(\frac{\sqrt{3}}{2}|\xi|\right)\right| |\xi|^{-l-1}\Bigg) d\xi \\
&\le C_{N}|x|^{-N}\int_{\delta \le |\xi| \le 2\delta}\sum_{k = 0}^{N - 1} |\xi|^{-k} \Bigg(\sum_{l = 0}^{N-k} \left|e^{-\frac{|\xi|}{2}}\text{sin}\left(\frac{\sqrt{3}}{2}|\xi|\right)\right| |\xi|^{-l-1} \\ 
&\qquad\qquad + \sum_{l = 0}^{N-k-1} \left|e^{-\frac{|\xi|}{2}}\text{cos}\left(\frac{\sqrt{3}}{2}|\xi|\right)\right| |\xi|^{-l-1}\Bigg) d\xi,
\end{align*}
where we used the fact that $\delta^{-k} \le 2^{N-1}|\xi|^{-k}$ for $k = 0, 1, ..., N-1$ and for $\delta \le |\xi| \le 2\delta$. Continuing to estimate the above quantity, we get
\begin{align}
II_{B} &\le C_{N}|x|^{-N} \int_{\delta \le |\xi| \le 2\delta} \sum_{k = 0}^{N-1}|\xi|^{-k}\left(\sum_{l = 0}^{N - k} e^{-\frac{|\xi|}{2}}|\xi|^{-l}\right) d\xi 
\nonumber
\\
&= C_{N}|x|^{-N}\int_{\delta \le |\xi| \le 2\delta} \sum_{k = 0}^{N-1}\sum_{l = 0}^{N-k}e^{-\frac{|\xi|}{2}}|\xi|^{-k-l} d\xi,
\label{IIB}
\end{align}
 where $k$ is the number of derivatives that fall on $\rho\left(\frac{\xi}{\delta}\right)$. 
Note that for all $j = 0, 1, ..., N$, we have that there exists a constant $C_{N}$ such that
$
0 \le e^{-\frac{|\xi|}{2}}|\xi|^{j} \le C_{N},
$
for all $j = 0, 1, ..., N$ and for all $\xi \in \mathbb{R}^{n}$. Using this fact in \eqref{IIA} and \eqref{IIB}, we get that
\begin{equation}\label{newIIAB}
II_{A} \le C_{N}|x|^{-N} \int_{|\xi| \ge \delta} |\xi|^{-N}d\xi \quad  {\rm and} \quad  II_{B} \le C_{N}|x|^{-N} \int_{\delta \le |\xi| \le 2\delta} |\xi|^{-N}d\xi.
\end{equation}
By employing estimates \eqref{newIIAB} in \eqref{highfreq1}, and recalling that $N$ is an arbitrary positive integer such that $N \ge n + 1$ so that the integral converges, we get
\begin{equation}\label{highfreq2}
|II| \le C_{N}|x|^{-N} \int_{|\xi| \ge \delta} |\xi|^{-N} d\xi = C_{N}|x|^{-N}\delta^{-N + n}.
\end{equation}

\noindent
{\bf Estimate of $K(x)$.}
Using the low frequency estimate \eqref{kernellowfreqest} and high frequency estimate \eqref{highfreq2} in \eqref{kernelfreqsplit}, we obtain
\begin{equation*}
|K(x)| \le C_{N}(|x|^{-1}\delta^{n} + |x|^{-N}\delta^{-N + n}).
\end{equation*}
Now we set $\delta = |x|^{-\frac{N-1}{N}}$ in the above inequality so that both terms in the sum are the same, to obtain
\begin{equation}\label{finalkernelest}
|K(x)| \le C_{N}|x|^{-1 - n\left(\frac{N - 1}{N}\right)}.
\end{equation}
Since $N \ge n + 1$ was arbitrary, choose a positive integer $N$ sufficiently large such that $\alpha := -1 -n\left(\frac{N - 1}{N}\right) \le -n - \frac{1}{2}$. Note that $K(x)$ is bounded uniformly in $x$, since $e^{-\frac{|\xi|}{2}}$ is integrable in $\mathbb{R}^{n}$. 
Then, we conclude that there exists a constant $C > 0$ such that
\begin{equation*}
|K(x)| \le C(1 + |x|)^{-n-\alpha}, \ x\in \R^n,
\end{equation*}
for some $\alpha \ge 1/2$. Therefore, $K(x) \in L^{p}(\mathbb{R}^{n})$ for all $1 \le p \le \infty$. 
\end{proof}

{{}{In fact, there is an explicit formula for $K(x)$, which will allow us to explicitly show a sharp decay rate for $K(x)$ in dimensions $n = 1, 2, 3$. To state this formula, we must first recall the fundamental solution for the wave equation, given by the following inverse Fourier transform,
\begin{equation}\label{wavefundamental}
K^{W}_{t}(x) = \frac{1}{(2\pi)^{n}} \int_{\R^{n}} \frac{\sin(t|\xi|)}{|\xi|} e^{ix\cdot \xi} d\xi,
\end{equation}
where superscript ``W'' stands for the wave equation.
The explicit form of the fundamental solution for the wave equation is well-known for all dimensions, but is particularly easy to state in dimensions $n = 1, 2, 3$. We give the well-known formulas below, which can be deduced from the discussion on pg.~4-5 of Sogge \cite{S}. 

\begin{align}\label{explicitwave}
&\text{For } n = 1, \qquad K^{W}_{t}(x) = \frac{1}{2}1_{|x| \le t}.\nonumber \\
&\text{For } n = 2, \qquad K^{W}_{t}(x) = \frac{1}{2\pi} \frac{1}{\sqrt{t^{2} - |x|^{2}}} 1_{|x| < t}.\nonumber \\
&\text{For } n = 3, \qquad K^{W}_{t}(x) = \frac{1}{4\pi t} \sigma_{t}.
\end{align}
Here, $\sigma_{t}$ denotes Lebesgue measure on the sphere of radius $t$. There are more complicated formulas for higher dimensions, and in particular, for $n \ge 3$, $K^{W}_{t}(x)$ is distribution-valued. 

\begin{proposition}\label{explicitK}
For arbitrary dimension $n$,
\begin{equation}\label{explicitKform}
K(x) = c_{n} \left(P * K^{W}_{t = \frac{\sqrt{3}}{2}}\right)(x),
\end{equation}
where $*$ denotes a spatial convolution and
\begin{equation*}
P(x) = \frac{1}{(1 + 4|x|^{2})^{\frac{n + 1}{2}}}.
\end{equation*}
\end{proposition}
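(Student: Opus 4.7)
The plan is to prove the identity by taking the Fourier transform of both sides and using the convolution theorem. The key observation is that the integrand defining $K(x)$ already factors in Fourier space as a product of two recognizable pieces: the exponential factor $e^{-|\xi|/2}$, which is (up to a dimensional constant) the Fourier transform of a Poisson-type kernel, and the factor $\sin(\tfrac{\sqrt{3}}{2}|\xi|)/(\tfrac{\sqrt{3}}{2}|\xi|)$, which is (up to the factor $\sqrt{3}/2$) exactly the Fourier symbol of the wave equation fundamental solution $K^{W}_{t}$ evaluated at $t = \sqrt{3}/2$, as seen from \eqref{wavefundamental}.

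The first step is to identify $P(x) = (1+4|x|^{2})^{-(n+1)/2}$ as a rescaling of the standard Poisson kernel for the upper half-space in $\mathbb{R}^{n+1}$. Recall that for $y > 0$, the Poisson kernel $\mathcal{P}_{y}(x) = c_{n} y / (|x|^{2} + y^{2})^{(n+1)/2}$ has Fourier transform $e^{-y|\xi|}$. Setting $y = 1/2$ and factoring, one checks directly that
\[
\mathcal{P}_{1/2}(x) \;=\; c_{n}' \, (1 + 4|x|^{2})^{-(n+1)/2} \;=\; c_{n}'\, P(x),
\]
so that $\widehat{P}(\xi) = c_{n}''\, e^{-|\xi|/2}$ for an explicit dimensional constant $c_{n}''$.

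The second step is to combine this with the Fourier representation of $K^{W}_{t}$ in \eqref{wavefundamental}, interpreted as a tempered distribution when $n \ge 4$. Writing everything in Fourier variables,
\[
\widehat{K}(\xi) \;=\; e^{-|\xi|/2}\cdot \frac{\sin\!\left(\tfrac{\sqrt{3}}{2}|\xi|\right)}{\tfrac{\sqrt{3}}{2}|\xi|} \;=\; \frac{2}{\sqrt{3}\, c_{n}''}\, \widehat{P}(\xi)\,\widehat{K^{W}_{\sqrt{3}/2}}(\xi).
\]
By the convolution theorem, and defining $c_{n} := 2/(\sqrt{3}\, c_{n}'')$, we deduce the identity $\widehat{K} = c_{n}\, \widehat{P * K^{W}_{\sqrt{3}/2}}$, and hence $K(x) = c_{n}\, (P * K^{W}_{\sqrt{3}/2})(x)$ after Fourier inversion.

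The only delicate point is justifying the convolution theorem in dimensions $n \ge 3$, where $K^{W}_{\sqrt{3}/2}$ is not a function but a compactly supported distribution (a surface measure in $n=3$, and a higher-order distribution for $n \ge 4$). However, since $P \in \mathcal{S}(\mathbb{R}^{n})$ (it is smooth and decays like $|x|^{-n-1}$), the convolution $P * K^{W}_{\sqrt{3}/2}$ is a well-defined smooth function, and the Fourier-transform product is meaningful because $\widehat{P}$ is Schwartz-class smooth while $\widehat{K^{W}_{\sqrt{3}/2}}$ is a locally integrable, polynomially bounded function. This is the main technical obstacle, but it is handled by a routine density or regularization argument: one can interpret the identity in $\mathcal{S}'(\mathbb{R}^{n})$ and then note that both sides are in fact continuous functions (as was already established for $K$ in Lemma \ref{kernellemma}), which makes the equality pointwise.
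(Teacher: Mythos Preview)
Your approach is essentially the same as the paper's: both proofs factor the Fourier symbol of $K$ as the product $e^{-|\xi|/2}\cdot \frac{\sin(\frac{\sqrt{3}}{2}|\xi|)}{\frac{\sqrt{3}}{2}|\xi|}$, identify the first factor with the Fourier transform of the Poisson kernel $P$ (up to a dimensional constant) and the second with that of $K^{W}_{\sqrt{3}/2}$, and then invoke the convolution theorem. The paper is somewhat terser and does not discuss the distributional issues for $n\ge 3$, which you helpfully address.

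One small correction: you assert that $P \in \mathcal{S}(\mathbb{R}^{n})$, but this is false --- $P(x)=(1+4|x|^{2})^{-(n+1)/2}$ decays only like $|x|^{-n-1}$, not faster than every polynomial. This does not damage the argument, however: $P$ is smooth and lies in $L^{1}\cap C^{\infty}$, and its Fourier transform $c_{n}''\,e^{-|\xi|/2}$ decays exponentially, which is more than enough to multiply against the polynomially bounded symbol $\widehat{K^{W}_{\sqrt{3}/2}}$ and to convolve $P$ against the compactly supported distribution $K^{W}_{\sqrt{3}/2}$.
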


\begin{proof}

Recall that $K(x)$ is defined by the inverse Fourier transform given in \eqref{kernel}. We first consider the inverse Fourier transform of $e^{-\frac{|\xi|}{2}}$ and $\frac{\sin\left(\frac{\sqrt{3}}{2}|\xi|\right)}{\frac{\sqrt{3}}{2}|\xi|}$ separately. The Poisson kernel, corresponding to the operator $e^{-\sqrt{-\Delta}}$, is well-understood and in particular,
\begin{equation*}
\frac{1}{(2\pi)^{n}} \int_{\mathbb{R}^{n}} e^{-\frac{|\xi|}{2}}e^{ix \cdot \xi} d\xi = \frac{c_{n}}{(1 + 4|x|^{2})^{\frac{n + 1}{2}}} := c_{n} P(x),
\end{equation*}
for some constant $c_{n}$ depending only on the dimension $n$. By formula \eqref{explicitwave}, 
\begin{equation*}
\frac{1}{(2\pi)^{n}} \int_{\R^{n}} \frac{\sin(\frac{\sqrt{3}}{2}|\xi|)}{\frac{\sqrt{3}}{2}|\xi|} e^{ix\cdot \xi} d\xi = \frac{2}{\sqrt{3}} K^{W}_{t = \frac{\sqrt{3}}{2}}(x).
\end{equation*}
The result follows from the fact that the Fourier transform interchanges multiplication and convolution, where we absorbed the extra factor of $\frac{2}{\sqrt{3}}$ into the dimension-dependent constant $c_{n}$. 
\end{proof}

Because the fundamental solution for the wave equation is particularly simple in dimensions $n = 1, 2, 3$, as given in \eqref{explicitwave}, we can use Proposition \ref{explicitK} to show a sharp decay rate for $K(x)$. This sharp decay rate estimate will verify the result proved for general dimension $n$ in Lemma \ref{kernellemma} for the explicit dimensions $n = 1, 2, 3$.
\begin{proposition}
For $n = 1, 2, 3$, the convolution kernel $K(x)$ has the following pointwise estimate:
\begin{equation*}
|K(x)| \le \frac{C}{(1 + |x|)^{n+1}}, \ x \in \R^n,
\end{equation*}
for some constant $C>0$. In particular, $K(x) \in L^{p}(\R^{n})$ for all $1 \le p \le \infty$ in dimensions $n = 1, 2, 3$.
\end{proposition}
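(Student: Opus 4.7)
The plan is to exploit the explicit representation $K = c_{n}\,(P * K^{W}_{t = \sqrt{3}/2})$ furnished by Proposition~\ref{explicitK}, together with the fact that in dimensions $n=1,2,3$ the wave fundamental solution $K^{W}_{\sqrt{3}/2}$ is supported in the closed ball of radius $\sqrt{3}/2$ (as a measure, in the $n=3$ case). The Poisson-type factor $P(x) = (1+4|x|^{2})^{-(n+1)/2}$ already decays at precisely the rate we want, so the whole game is to show that convolving with a \emph{compactly supported} finite measure does not spoil this decay.

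First I would record that $K^{W}_{\sqrt{3}/2}$ is a finite (positive) measure of compact support for each of $n=1,2,3$: for $n=1$ it is $\tfrac12\,\mathbf{1}_{|x|\le\sqrt{3}/2}\,dx$, hence obviously finite; for $n=2$ the density $\tfrac{1}{2\pi}(3/4 - |x|^{2})^{-1/2}\mathbf{1}_{|x|<\sqrt{3}/2}$ has an integrable square-root singularity at the boundary, so is in $L^{1}$; and for $n=3$ it is a constant multiple of the surface measure on the sphere of radius $\sqrt{3}/2$, which has finite total mass. Denote the total mass by $M_{n}<\infty$ and let $R_{0}=\sqrt{3}/2$.

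Next, for the long-range estimate, fix $|x|\ge 2R_{0}$. Then for every $y$ in the support of $K^{W}_{\sqrt{3}/2}$ we have $|x-y|\ge |x|-R_{0}\ge |x|/2$, so
\begin{equation*}
P(x-y) \;=\; \frac{1}{(1+4|x-y|^{2})^{(n+1)/2}} \;\le\; \frac{C_{n}}{(1+|x|)^{n+1}}.
\end{equation*}
Integrating against $K^{W}_{\sqrt{3}/2}$ (which, in the $n=3$ case, means integrating against surface measure; in the $n=2$ case, against the integrable density) gives
\begin{equation*}
|K(x)| \;\le\; c_{n}\int P(x-y)\,dK^{W}_{\sqrt{3}/2}(y) \;\le\; \frac{C_{n}\,M_{n}}{(1+|x|)^{n+1}},
\end{equation*}
which is exactly the claimed decay. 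For the short-range region $|x|\le 2R_{0}$, I would simply observe that $K$ is continuous and bounded: indeed the integrand in the defining Fourier representation \eqref{kernel} of $K$ is $e^{-|\xi|/2}\sin(\tfrac{\sqrt{3}}{2}|\xi|)/(\tfrac{\sqrt{3}}{2}|\xi|)$, which is bounded and in $L^{1}(\mathbb{R}^{n})$ for any $n$, so $\|K\|_{L^{\infty}}\le C$. On the compact set $|x|\le 2R_{0}$ this yields $|K(x)|\le C\le C'(1+|x|)^{-(n+1)}$, absorbing constants into $C'$. Combining the two regions gives the global bound $|K(x)|\le C(1+|x|)^{-(n+1)}$.

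Finally, since $(1+|x|)^{-(n+1)}$ lies in $L^{p}(\mathbb{R}^{n})$ for every $1\le p\le\infty$ (the exponent $p(n+1)>n$ always holds for $p\ge 1$), the $L^{p}$ membership follows at once. The only mildly delicate point is handling the $n=2$ and $n=3$ cases, where $K^{W}_{\sqrt{3}/2}$ is either singular or a measure rather than a classical function; but since all we use is its compact support and finite total mass, these issues dissolve. I do not expect any real obstacle beyond this bookkeeping.
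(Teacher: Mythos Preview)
Your proof is correct and follows essentially the same route as the paper: both use the convolution identity $K = c_n(P * K^{W}_{\sqrt{3}/2})$ from Proposition~\ref{explicitK}, the compact support of $K^{W}_{\sqrt{3}/2}$ in dimensions $n=1,2,3$, and the $(1+|x|)^{-(n+1)}$ decay of $P$ to conclude. Your version is somewhat more explicit---recording the finite total mass of $K^{W}_{\sqrt{3}/2}$ and splitting cleanly into $|x|\ge 2R_0$ and $|x|\le 2R_0$---whereas the paper simply remarks that $K$ is a (weighted) spatial average of $P$ over a ball or sphere of radius $\sqrt{3}/2$ and so inherits $P$'s decay and boundedness; but the underlying idea is identical.
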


\begin{proof}
We use the explicit formula for $K(x)$ given in Proposition \ref{explicitK}. Note that $P(x)$ is bounded near the origin, and decays as $O(|x|^{-n - 1})$ away from the origin. We begin with $n = 3$, then describe the case of $n = 1, 2$. 

For $n = 3$, because $K^{W}_{t = \frac{\sqrt{3}}{2}}$ is Lebesgue measure on the sphere of radius $\frac{\sqrt{3}}{2}$, $K(x)$ in dimension $n = 3$ is an average over the boundary of spheres of radius $\frac{\sqrt{3}}{2}$, 
\begin{align*}
K(x) &= C \left(\frac{1}{(1 + 4|x|^{2})^{\frac{n + 1}{2}}} * \sigma_{\frac{\sqrt{3}}{2}}\right)(x) = C \int_{|y| = \frac{\sqrt{3}}{2}} \frac{1}{(1 + 4|x - y|^{2})^{\frac{n + 1}{2}}} dy,
\end{align*}
from which we deduce the desired estimate for $n = 3$. We remark that in dimension $n = 3$, $K(x)$ (up to a constant factor) can be interpreted as spherical means of the Poisson kernel.

For $n = 1, 2$, one can use a similar interpretation of $K(x)$ as appropriate spatial averages of $P(x)$, and we sketch the idea. Note that $K^{W}_{t = \frac{\sqrt{3}}{2}}(x)$ is integrable and supported on the closed ball of radius $t$ for dimensions $n = 1, 2$. Therefore, from the formula in Proposition \ref{explicitK}, up to the constant factor, $K(x)$ in dimensions $n = 1$ is an average of $K(x)$ over closed intervals of radius $\frac{\sqrt{3}}{2}$,
\begin{equation*}
K(x) = C \int_{-\frac{\sqrt{3}}{2}}^{\frac{\sqrt{3}}{2}} \frac{1}{(1 + 4|x - y|^{2})^{\frac{n + 1}{2}}} dy,
\end{equation*}
and $K(x)$ in dimension $n = 2$ is a weighted average of $K(x)$ over open balls of radius $\frac{\sqrt{3}}{2}$,
\begin{equation*}
K(x) = C \int_{|y| < \frac{\sqrt{3}}{2}} \frac{1}{\sqrt{\frac{3}{4} - |y|^{2}}} \frac{1}{(1 + 4|x - y|^{2})^{\frac{n + 1}{2}}} dy.
\end{equation*}
Because $K(x)$ in both $n = 1, 2$ is, up to a constant factor, appropriate averages of values of $P(x)$, $K(x)$ hence shares the same decay rate as $P(x)$, which is $O(|x|^{-n - 1})$, and $K(x)$ is also bounded near the origin, since $P(x)$ is. This establishes the desired estimate for $n = 1, 2$. 

In particular, we see explicitly that $K(x) \in L^{p}$ for all $1 \le p \le \infty$ for $n = 1, 2, 3$.  
\end{proof}}}

Next, we use Lemma~\ref{kernellemma}  to prove an estimate on the action of $S(t)$ for $t > 0$. 
This will then be used to obtain the first part of the Strichartz estimate \eqref{S_estimate_inhomo}, namely to show
\begin{equation}\label{inhestpart1_2}
||u||_{L_{t}^{q}([0, T]; L_{x}^{r}(\mathbb{R}^{n}))} \le C_{q, \tilde{q}, r, \tilde{r}}||F||_{L_{t}^{\tilde{q}'}([0, T]; L_{x}^{\tilde{r}'}(\mathbb{R}^{n}))} .
\end{equation}
The following result is the analogue of Lemma 3.1 in  \cite{MYZ}.

\begin{lemma}\label{semigroupineq}
Let $1 \le r \le p \le \infty$. There exists a constant $C>0$, depending only on $p$ and $r$, such that for all $\phi \in L^{r}(\mathbb{R}^{n})$ and for all $t > 0$,
\begin{equation*}
||S(t)\phi(x)||_{L^{p}_{x}(\mathbb{R}^{n})} \le Ct^{1 - n\left(\frac{1}{r} - \frac{1}{p}\right)} ||\phi||_{L^{r}_{x}(\mathbb{R}^{n})}.
\end{equation*}
\end{lemma}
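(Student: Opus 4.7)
The plan is to prove this estimate as a direct consequence of Young's convolution inequality applied to the representation $S(t)\phi = K_t \ast \phi$, combined with the scaling identity \eqref{kernelscale} and the global $L^q$ bounds on $K$ established in Lemma~\ref{kernellemma}.

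First, I would fix $1 \le r \le p \le \infty$ and choose $q \in [1,\infty]$ so that the Young exponents match, namely $\tfrac{1}{q} = 1 + \tfrac{1}{p} - \tfrac{1}{r}$. Since $r \le p$, we have $1 + \tfrac{1}{p} - \tfrac{1}{r} \in [\tfrac{1}{p}, 1] \subset [0,1]$, so $q \in [1,\infty]$ is admissible. By Young's convolution inequality,
\begin{equation*}
\|S(t)\phi\|_{L^p_x(\mathbb{R}^n)} = \|K_t \ast \phi\|_{L^p_x(\mathbb{R}^n)} \le \|K_t\|_{L^q_x(\mathbb{R}^n)} \|\phi\|_{L^r_x(\mathbb{R}^n)}.
\end{equation*}

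Next, I would compute $\|K_t\|_{L^q}$ via the scaling relation \eqref{kernelscale}, which states $K_t(x) = t^{1-n} K(x/t)$. A change of variables $y = x/t$ gives
\begin{equation*}
\|K_t\|_{L^q_x(\mathbb{R}^n)} = t^{1-n} \left( \int_{\mathbb{R}^n} \left| K(x/t) \right|^q \, dx \right)^{1/q} = t^{1-n+n/q} \|K\|_{L^q_x(\mathbb{R}^n)}.
\end{equation*}
By Lemma~\ref{kernellemma}, $\|K\|_{L^q} < \infty$ for every $1 \le q \le \infty$, so $\|K\|_{L^q}$ contributes only a finite constant $C$ depending on $q$ (and hence on $p,r$).

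Finally, I would simplify the exponent using the Young relation $\tfrac{n}{q} = n + \tfrac{n}{p} - \tfrac{n}{r}$, which yields
\begin{equation*}
1 - n + \frac{n}{q} = 1 - \frac{n}{r} + \frac{n}{p} = 1 - n\left(\frac{1}{r} - \frac{1}{p}\right),
\end{equation*}
producing the claimed bound
\begin{equation*}
\|S(t)\phi\|_{L^p_x(\mathbb{R}^n)} \le C \, t^{1 - n(1/r - 1/p)} \|\phi\|_{L^r_x(\mathbb{R}^n)}.
\end{equation*}
There is no essential obstacle here: the hard analytic work has already been done in Lemma~\ref{kernellemma}, where the pointwise decay $|K(x)| \lesssim (1+|x|)^{-n-\alpha}$ for some $\alpha \ge 1/2$ ensures $K \in L^q$ for the full range $q \in [1,\infty]$. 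The proof reduces to verifying the algebra of exponents in Young's inequality and tracking the scaling factor, which is exactly what encodes the time weight $t^{1-n(1/r-1/p)}$ in the conclusion.
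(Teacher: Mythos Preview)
Your proposal is correct and follows essentially the same approach as the paper: Young's convolution inequality applied to $K_t * \phi$, the scaling identity $K_t(x) = t^{1-n}K(x/t)$ to compute $\|K_t\|_{L^q}$, and Lemma~\ref{kernellemma} to bound $\|K\|_{L^q}$, with the exponent algebra matching exactly.
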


\begin{proof}
This follows from the previous lemma, by Young's inequality, and the scaling property of our convolution kernels $K_{t}$. In particular, recall that
\begin{equation*}
S(t)\phi(x) := e^{-\frac{\sqrt{-\Delta}}{2}t}\frac{\text{sin}\left(\frac{\sqrt{3}}{2}\sqrt{-\Delta}t\right)}{\frac{\sqrt{3}}{2}\sqrt{-\Delta}}\phi = (K_{t} * \phi)(x),
\ {\rm where} \ 
K_{t}(x) = t^{1 - n}K\left(\frac{x}{t}\right),
\end{equation*}
with the unit scale kernel $K$ given by \eqref{kernel}. By Young's inequality,
\begin{equation}\label{kernelYoung}
||S(t)\phi(x)||_{L^{p}_{x}(\mathbb{R}^{n})} = ||K_{t} * \phi||_{L^{p}_{x}(\mathbb{R}^{n})} \le ||K_{t}||_{L^{q}_{x}(\mathbb{R}^{n})} ||\phi||_{L^{r}_{x}(\mathbb{R}^{n})},
\end{equation}
where
\begin{equation*}
\frac{1}{q} + \frac{1}{r} = 1 + \frac{1}{p}.
\end{equation*}
Using the scaling of the kernel,
\begin{equation*}
||K_{t}||_{L^{q}_{x}(\mathbb{R}^{n})} = t^{1 - n}\left(\int_{\mathbb{R}^{n}} \left|K\left(\frac{x}{t}\right)\right|^{q} dx\right)^{1/q} = t^{1 - n + \frac{n}{q}}||K||_{L^{q}_{x}} = C_{p, r} t^{1 + n\left(\frac{1}{p} - \frac{1}{r}\right)}
\end{equation*}
Substituting this into \eqref{kernelYoung}, we obtain the final result that
\begin{equation*}
||S(t)\phi(x)||_{L^{p}_{x}(\mathbb{R}^{n})} \le C_{p, r}t^{1 - n\left(\frac{1}{r} - \frac{1}{p}\right)}||\phi||_{L^{r}_{x}(\mathbb{R}^{n})}
\end{equation*}
for $t > 0$ and for all $\phi \in L^{r}(\mathbb{R}^{n})$.
\end{proof}

{{}
{\begin{remark}
The result of Lemma \ref{semigroupineq} can be interpreted as a parabolic smoothing effect, due to the dissipative effects of the fluid viscosity. For parabolic equations, parabolic smoothing can often be captured by using energy estimates. Energy estimates for the linear viscous wave equation would correspond to the case of $r = 2$ in Lemma \ref{semigroupineq}, as a basic energy estimate would consider initial data $(f, g) \in H^{1}(\mathbb{R}^{n}) \times L^{2}(\mathbb{R}^{n})$ and $S(t)\phi$ in the statement of Lemma \ref{semigroupineq} corresponds to the solution of the linear viscous wave equation with zero initial displacement and initial velocity $\phi$. We chose to use Fourier methods to establish Lemma \ref{semigroupineq} because they easily allow for more general $\phi$ in $L^{r}(\mathbb{R}^{n})$ for $1 \le r \le \infty$, which allows for a wider range of exponents than basic energy estimates.
\end{remark}}}

Now, we have the necessary components to establish inequality \eqref{inhestpart1_2}. 
\begin{lemma}\label{inhStrlemma}
Let $n \ge 2$ and let $(q, r)$, $(\tilde{q}, \tilde{r})$ satisfy $1 < \tilde{q}' < q < \infty, 1 \le \tilde{r}' < r \le \infty$, and the gap condition \eqref{gap2}:
\begin{align*}
\frac{1}{q} + \frac{n}{r} = \frac{1}{\tilde{q}'} + \frac{n}{\tilde{r}'} - 2.
\end{align*}
Then, there exists a constant $C>0$ depending only on $q, \tilde{q}, r, \tilde{r}$, such that for all $0 < T \le \infty$, and for all $F \in L^{\tilde{q}'}_{t}([0, T]; L^{\tilde{r}'}_{x}(\mathbb{R}^{n}))$,
\begin{equation*}
\left|\left|\int_{0}^{t} e^{-\frac{\sqrt{-\Delta}}{2}(t - \tau)}\frac{\text{sin}\left(\frac{\sqrt{3}}{2}\sqrt{-\Delta}(t - \tau)\right)}{\frac{\sqrt{3}}{2}\sqrt{-\Delta}}F(\tau, \cdot)d\tau\right|\right|_{L^{q}_{t}([0, T]; L^{r}_{x}(\mathbb{R}^{n}))} \le C||F||_{L^{\tilde{q}'}_{t}([0, T]; L^{\tilde{r}'}_{x}(\mathbb{R}^{n}))}.
\end{equation*}
\end{lemma}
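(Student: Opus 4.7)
The plan is to apply the temporal decay estimate from Lemma~\ref{semigroupineq} pointwise in time inside the Duhamel integral, and then to invoke the one-sided Hardy--Littlewood--Sobolev inequality in the time variable; as we shall see, the gap condition~\eqref{gap2} is precisely the scaling identity that HLS demands.

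First, I would fix $0 \le \tau < t$ and apply Lemma~\ref{semigroupineq} with output exponent $r$ and input exponent $\tilde{r}'$ (admissible since $\tilde{r}' \le r$). This yields the pointwise-in-time estimate
\begin{equation*}
\|S(t-\tau)F(\tau,\cdot)\|_{L^r_x(\mathbb{R}^n)} \le C(t-\tau)^{1 - n(1/\tilde r' - 1/r)}\|F(\tau,\cdot)\|_{L^{\tilde r'}_x(\mathbb{R}^n)}.
\end{equation*}
Setting $\gamma := n(1/\tilde r' - 1/r) - 1$ and applying Minkowski's inequality under the Duhamel integral gives
\begin{equation*}
\left\|\int_0^t S(t-\tau)F(\tau,\cdot)\,d\tau\right\|_{L^r_x(\mathbb{R}^n)} \le C\int_0^t (t-\tau)^{-\gamma}\,h(\tau)\,d\tau,
\end{equation*}
where $h(\tau) := \|F(\tau,\cdot)\|_{L^{\tilde r'}_x(\mathbb{R}^n)}$.

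Next, I would take the $L^q_t([0,T])$ norm of both sides and view the right-hand side as a one-sided temporal convolution of $h$ with the kernel $|t|^{-\gamma}$. After extending $h$ by zero outside $[0, T]$ and using the fact that $|t|^{-\gamma} \in L^{1/\gamma, \infty}(\mathbb{R})$ whenever $0 < \gamma < 1$, the Hardy--Littlewood--Sobolev (equivalently, weak-type Young) inequality furnishes
\begin{equation*}
\left\|\int_0^{t}(t-\tau)^{-\gamma}h(\tau)\,d\tau\right\|_{L^q_t([0,T])} \le C\|h\|_{L^{\tilde q'}_t([0,T])},
\end{equation*}
provided $1 < \tilde q' < q < \infty$ and the scaling relation $1/q = 1/\tilde q' + \gamma - 1$ holds. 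Substituting $\gamma = n(1/\tilde r' - 1/r) - 1$ into this scaling identity reproduces exactly the gap condition $1/q + n/r = 1/\tilde q' + n/\tilde r' - 2$.

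The main obstacle is verifying the HLS admissibility window $0 < \gamma < 1$. A short algebraic manipulation gives the cleaner form $\gamma = 1 + 1/q - 1/\tilde q'$, so the upper bound $\gamma < 1$ is equivalent to $\tilde q' < q$ and the lower bound $\gamma > 0$ is equivalent to $\tilde q' > q/(q+1)$; both are guaranteed by the standing hypothesis $1 < \tilde q' < q < \infty$. A minor technical subtlety is that the convolution kernel $K_t$ carries the prefactor $t^{1-n}$ in~\eqref{kernelscale}, which fails to decay when $n = 1$ and precludes the weak-type Young step; this is exactly why the lemma is stated for $n \ge 2$. Once these checks are in place, combining the pointwise-in-time estimate with the temporal HLS bound and unwinding the definition of $h$ yields the claimed $L^q_t L^r_x$ estimate.
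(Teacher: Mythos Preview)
Your proof is correct and follows essentially the same route as the paper's: apply Lemma~\ref{semigroupineq} pointwise in time, reduce to a one-dimensional convolution with $|t|^{-\gamma}$, extend by zero, and invoke Hardy--Littlewood--Sobolev with the scaling identity $\gamma = 1 + 1/q - 1/\tilde q'$ coming from the gap condition. One small inaccuracy: your explanation of the $n\ge 2$ hypothesis is off---the prefactor $t^{1-n}$ does not obstruct the argument (Lemma~\ref{semigroupineq} and the HLS step both go through verbatim for $n=1$); rather, for $n=1$ the gap condition together with $\tilde q'>1$, $\tilde r'\ge 1$ forces $1/q + 1/r < 0$, so the hypothesis set is simply empty.
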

For the inhomogeneous linear viscous wave equation with zero initial data, notice that since 
\begin{equation*}
u(t, \cdot) := \int_{0}^{t} e^{-\frac{\sqrt{-\Delta}}{2}(t - \tau)}\frac{\text{sin}\left(\frac{\sqrt{3}}{2}\sqrt{-\Delta}(t - \tau)\right)}{\frac{\sqrt{3}}{2}\sqrt{-\Delta}}F(\tau, \cdot) d\tau,
\end{equation*}
this result implies
\begin{equation*}\label{inhestpart1}
||u||_{L_{t}^{q}([0, T]; L_{x}^{r}(\mathbb{R}^{n}))} \le C_{q, \tilde{q}, r, \tilde{r}}||F||_{L_{t}^{\tilde{q}'}([0, T]; L_{x}^{\tilde{r}'}(\mathbb{R}^{n}))} .
\end{equation*}

\begin{remark} In contrast with the Strichartz estimates for the wave equation and the Schr\"{o}dinger equation, the result in Lemma~\ref{inhStrlemma}
does not require any admissibility condition on $(q, r)$ and $(\tilde{q}, \tilde{r})$.
\end{remark}

\begin{proof}
Similar to the approach used in \cite{Z} for fractional heat equations,
the estimate in Lemma~\ref{inhStrlemma} is a consequence of the Hardy-Littlewood-Sobolev inequality.

More precisely, we first show the result in the case $T = \infty$. We begin by estimating:
\begin{multline*}\label{inhStrlemmaest1}
\left|\left|\int_{0}^{t} e^{-\frac{\sqrt{-\Delta}}{2}(t - \tau)}\frac{\text{sin}
\left(\frac{\sqrt{3}}{2}\sqrt{-\Delta}(t - \tau)\right)}
{\frac{\sqrt{3}}{2}\sqrt{-\Delta}}F(\tau, \cdot)d\tau
\right|\right|_{L^{q}_{t}([0, \infty); L^{r}_{x}(\mathbb{R}^{n}))} 
\\
\le 
\left\|
\int_{0}^{t}
\left\|
e^{-\frac{\sqrt{-\Delta}}{2}(t - \tau)}
\frac{\text{sin}\left(\frac{\sqrt{3}}{2}\sqrt{-\Delta}(t - \tau)\right)}{\frac{\sqrt{3}}{2}\sqrt{-\Delta}}
F(\tau, \cdot)
\right\|_{L^{r}_{x}(\mathbb{R}^{n})} d\tau
\right\|_{L^{q}_{t}[0, \infty)}.
\end{multline*}
From Lemma~\ref{semigroupineq}, we have that for $0 \le \tau < t$, the integrand can be bounded as:
\begin{equation*}
\left|\left|e^{-\frac{\sqrt{-\Delta}}{2}(t - \tau)}\frac{\text{sin}\left(\frac{\sqrt{3}}{2}\sqrt{-\Delta}(t - \tau)\right)}{\frac{\sqrt{3}}{2}\sqrt{-\Delta}}F(\tau, \cdot)\right|\right|_{ L^{r}_{x}(\mathbb{R}^{n}))} \le C_{r, \tilde{r}}|t - \tau|^{1 - n\left(\frac{1}{\tilde{r}'} - \frac{1}{r}\right)}||F(\tau, \cdot)||_{L^{\tilde{r}'}_{x}(\mathbb{R}^{n})},
\end{equation*}
which implies:
\begin{multline}\label{inhStrlemmaest1}
\left|\left|\int_{0}^{t} e^{-\frac{\sqrt{-\Delta}}{2}(t - \tau)}\frac{\text{sin}\left(\frac{\sqrt{3}}{2}\sqrt{-\Delta}(t - \tau)\right)}{\frac{\sqrt{3}}{2}\sqrt{-\Delta}}F(\tau, \cdot)d\tau\right|\right|_{L^{q}_{t}([0, \infty); L^{r}_{x}(\mathbb{R}^{n}))} \\
\le C_{r, \tilde{r}} \left|\left|\int_{0}^{t}|t - \tau|^{1 - n\left(\frac{1}{\tilde{r}'} - \frac{1}{r}\right)}||F(\tau, \cdot)||_{L^{\tilde{r}'}_{x}(\mathbb{R}^{n})} d\tau\right|\right|_{L^{q}_{t}[0, \infty)}.
\end{multline}
We would like to apply the Hardy-Littlewood-Sobolev inequality
in one dimension to the convolution in time above, which states that for all $0 < \alpha < 1$, and for $s$ and $\tilde{s}$, $1 < \tilde{s} < s < \infty$, such that
\begin{equation*}
1 + \frac{1}{s} = \frac{1}{\tilde{s}} + \alpha,
\end{equation*}
the following holds:
\begin{equation*}
\left|\left| (|x|^{-\alpha} * f )\right|\right|_{L^{s}(\mathbb{R})} \le C_{s, \tilde{s}}||f||_{L^{\tilde{s}}(\mathbb{R})}.
\end{equation*}
We plan to apply this estimate in \eqref{inhStrlemmaest1} by letting
\begin{equation*}
\alpha = -1 + n\left(\frac{1}{\tilde{r}'} - \frac{1}{r}\right) = 1 - \frac{1}{\tilde{q}'} + \frac{1}{q},
\end{equation*}
where indeed, $0 < \alpha < 1$ since we assumed that $1 < \tilde{q}' < q < \infty$, and the second equality holds by the gap condition \eqref{gap2}.
We can then let $\tilde{s} = \tilde{q}'$ and $s = q$. 
To apply the Hardy-Littlewood-Sobolev inequality, which holds on $\R$, we extend the function $F$ from $t \in [0,\infty)$ to $t \in \R$:
\begin{equation*}
\tilde{F}(t, x) = F(t, x) \text{ for } t \ge 0 \ {\rm and} \  \tilde{F}(t, x) = 0 \text{ for } t < 0,
\end{equation*}
and rewrite estimate \eqref{inhStrlemmaest1} in terms of $\tilde{F}$:
\begin{multline*}\label{inhStrlemmaest1}
\left|\left|\int_{0}^{t} e^{-\frac{\sqrt{-\Delta}}{2}(t - \tau)}\frac{\text{sin}\left(\frac{\sqrt{3}}{2}\sqrt{-\Delta}(t - \tau)\right)}{\frac{\sqrt{3}}{2}\sqrt{-\Delta}}F(\tau, \cdot)d\tau\right|\right|_{L^{q}_{t}([0, \infty); L^{r}_{x}(\mathbb{R}^{n}))} \\
\le 
C_{r, \tilde{r}} \left|\left|\int_{\mathbb{R}}|t - \tau|^{1 - n\left(\frac{1}{\tilde{r}'} - \frac{1}{r}\right)}||\tilde{F}(\tau, \cdot)||_{L^{\tilde{r}'}(\mathbb{R}^{n})} d\tau\right|\right|_{L^{q}_{t}(\mathbb{R})}.
\end{multline*}
By applying the Hardy-Littlewood-Sobolev inequality we get:
\begin{multline*}
\left|\left|\int_{0}^{t} e^{-\frac{\sqrt{-\Delta}}{2}(t - \tau)}\frac{\text{sin}\left(\frac{\sqrt{3}}{2}\sqrt{-\Delta}(t - \tau)\right)}{\frac{\sqrt{3}}{2}\sqrt{-\Delta}}F(\tau, \cdot)d\tau\right|\right|_{L^{q}_{t}([0, \infty); L^{r}_{x}(\mathbb{R}^{n}))} \\
\le C_{r, \tilde{r}}C_{q, \tilde{q}}||\tilde{F}||_{L_{t}^{\tilde{q}'}(\mathbb{R}; L^{\tilde{r}'}_{x}(\mathbb{R}^{n}))} = C_{q, \tilde{q}, r, \tilde{r}} ||F||_{L_{t}^{\tilde{q}'}([0, \infty); L^{\tilde{r}'}_{x}(\mathbb{R}^{n}))},
\end{multline*}
which is what we wanted to prove.
\medskip

The case for $0 < T < \infty$ follows from the case $T = \infty$ by considering
\begin{equation*}
\tilde{F}_{T}(t, x) = F(t, x) \text{ for } 0 \le t \le T \ {\rm and} \  \tilde{F}_{T}(t, x) = 0 \text{ otherwise}.
\end{equation*}
%%%%%%%%%%
\if 1 = 0
Then, note that for all $0 \le t \le T$,
\begin{equation*}
\int_{0}^{t} e^{-\frac{\sqrt{-\Delta}}{2}(t - \tau)}\frac{\text{sin}\left(\frac{\sqrt{3}}{2}\sqrt{-\Delta}(t - \tau)\right)}{\frac{\sqrt{3}}{2}\sqrt{-\Delta}}F(\tau, \cdot)d\tau = \int_{0}^{t} e^{-\frac{\sqrt{-\Delta}}{2}(t - \tau)}\frac{\text{sin}\left(\frac{\sqrt{3}}{2}\sqrt{-\Delta}(t - \tau)\right)}{\frac{\sqrt{3}}{2}\sqrt{-\Delta}}\tilde{F}_{T}(\tau, \cdot)d\tau
\end{equation*}
Therefore, 
\begin{multline*}
\left|\left|\int_{0}^{t} e^{-\frac{\sqrt{-\Delta}}{2}(t - \tau)}\frac{\text{sin}\left(\frac{\sqrt{3}}{2}\sqrt{-\Delta}(t - \tau)\right)}{\frac{\sqrt{3}}{2}\sqrt{-\Delta}}F(\tau, \cdot)d\tau\right|\right|_{L^{p}_{t}([0, T]; L^{q}_{x}(\mathbb{R}^{n}))} \\ 
= \left|\left|\int_{0}^{t} e^{-\frac{\sqrt{-\Delta}}{2}(t - \tau)}\frac{\text{sin}\left(\frac{\sqrt{3}}{2}\sqrt{-\Delta}(t - \tau)\right)}{\frac{\sqrt{3}}{2}\sqrt{-\Delta}}\tilde{F}_{T}(\tau, \cdot)d\tau\right|\right|_{L^{p}_{t}([0, T]; L^{q}_{x}(\mathbb{R}^{n}))} \\
\le \left|\left|\int_{0}^{t} e^{-\frac{\sqrt{-\Delta}}{2}(t - \tau)}\frac{\text{sin}\left(\frac{\sqrt{3}}{2}\sqrt{-\Delta}(t - \tau)\right)}{\frac{\sqrt{3}}{2}\sqrt{-\Delta}}\tilde{F}_{T}(\tau, \cdot)d\tau\right|\right|_{L^{p}_{t}([0,\infty); L^{q}_{x}(\mathbb{R}^{n}))} \\
\le C_{p, q, \tilde{p}, \tilde{q}} ||\tilde{F}_{T}||_{L_{t}^{\tilde{p}'}([0, \infty); L^{\tilde{q}'}_{x}(\mathbb{R}^{n}))} = C_{p, q, \tilde{p}, \tilde{q}} ||F||_{L_{t}^{\tilde{p}'}([0, T]; L^{\tilde{q}'}_{x}(\mathbb{R}^{n}))}
\end{multline*}
where $C_{p, q, \tilde{p}, \tilde{q}}$ is the same constant from the $T = \infty$ case, and this estimate holds for all $0 < T < \infty$. 
\fi 
%%%%%%%%%%%
This proves the lemma.

\end{proof}

So we have just shown that
$
||u||_{L_{t}^{q}([0, T]; L_{x}^{r}(\mathbb{R}^{n}))} \le C_{q, \tilde{q}, r, \tilde{r}}||F||_{L_{t}^{\tilde{q}'}([0, T]; L_{x}^{\tilde{r}'}(\mathbb{R}^{n}))}
$ for $u$ given by \eqref{u_inhomo}.
What remains to be shown to complete the inhomogeneous Strichartz estimate in Theorem \ref{inhStr} is given by the following lemma:

%\begin{equation*}
%||u(T, \cdot)||_{\dot{H}^{s}(\mathbb{R}^{n})} + ||\partial_{t}u(T, \cdot)||_{\dot{H}^{s - 1}(\mathbb{R}^{n})} \le C_{\tilde{q}, \tilde{r}}||F||_{L^{\tilde{q}'}_{t}([0, T]; %L^{\tilde{r}'}_{x}(\mathbb{R}^{n}))}.
%\end{equation*}
%Indeed, we have:

%Next, we consider the $\dot{H}^{s}(\mathbb{R}^{n})$ contribution to the inhomogeneous Strichartz estimates in the following lemma.

\begin{lemma}\label{inhStrSob}
Let $2 \le \tilde{q} \le \infty$, $2 \le \tilde{r} < \infty$ and $s$ satisfy
\begin{equation*}
\frac{n}{2} - s = \frac{1}{\tilde{q}'} + \frac{n}{\tilde{r}'} - 2.
\end{equation*}
Then, there exists a constant $C$ depending only on $\tilde{q}$ and $\tilde{r}$, such that for all $0 < T < \infty$ and for $u$ as in \eqref{u_inhomo}:
\begin{equation}\label{estStr2}
||u(T, \cdot)||_{\dot{H}^{s}(\mathbb{R}^{n})} + ||\partial_{t}u(T, \cdot)||_{\dot{H}^{s - 1}(\mathbb{R}^{n})} \le C_{\tilde{q}, \tilde{r}}||F||_{L^{\tilde{q}'}_{t}([0, T]; L^{\tilde{r}'}_{x}(\mathbb{R}^{n}))}.
\end{equation}
%where
%\begin{equation*}
%u := \int_{0}^{t}e^{-\frac{\sqrt{-\Delta}}{2}(t - \tau)} \frac{\text{sin}\left(\frac{\sqrt{3}}{2}\sqrt{-\Delta}(t - \tau)\right)}{\frac{\sqrt{3}}{2}\sqrt{-\Delta}}%F(\tau, \cdot) d\tau
%\end{equation*}
\end{lemma}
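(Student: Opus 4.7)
\textbf{Proof plan for Lemma~\ref{inhStrSob}.} My plan is to obtain both endpoint Sobolev bounds by a duality argument that converts them into homogeneous Strichartz estimates already proved in Theorem~\ref{strongdecay}. Two structural observations drive the whole argument: (i) the propagator $S(t)=e^{-\frac{\sqrt{-\Delta}}{2}t}\frac{\sin\left(\frac{\sqrt{3}}{2}\sqrt{-\Delta}\,t\right)}{\frac{\sqrt{3}}{2}\sqrt{-\Delta}}$ and its time derivative $\tilde S(t):=\partial_t S(t)$ are self-adjoint on $L^2(\mathbb{R}^n)$, being Fourier multipliers with real symbols; and (ii) for fixed $h$, the maps $\tau\mapsto S(\tau)h$ and $\tau\mapsto \tilde S(\tau)h$ are themselves solutions of the homogeneous linear viscous wave equation \eqref{Cauchy1}, with initial data $(0,h)$ and $(h,-\sqrt{-\Delta}\,h)$ respectively (the second identity follows by using the equation to compute $\partial_\tau(\tilde S(\tau)h)\big|_{\tau=0}$). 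Thus Theorem~\ref{strongdecay} already controls the spacetime norms of such maps, and the endpoint Sobolev bounds will drop out by testing against the appropriate dual norm.

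For the first bound I write $u(T)=\int_0^T S(T-\tau) F(\tau,\cdot)\,d\tau$ and dualize against $h\in \dot H^{-s}(\mathbb{R}^n)$. Self-adjointness of $S(T-\tau)$ together with H\"older's inequality gives
\[
|\langle u(T),h\rangle|\le ||F||_{L^{\tilde q'}_\tau L^{\tilde r'}_x}\cdot ||S(T-\tau)h||_{L^{\tilde q}_\tau L^{\tilde r}_x},
\]
and after the reflection $\tau\mapsto T-\tau$ the second factor equals $||S(\tau)h||_{L^{\tilde q}_\tau([0,T];L^{\tilde r}_x)}$. Setting $\tilde s:=1-s$ and viewing $\tau\mapsto S(\tau)h$ as the homogeneous solution with data $(0,h)\in \dot H^{\tilde s}\times \dot H^{\tilde s-1}$, Theorem~\ref{strongdecay} bounds this factor by $C\,||h||_{\dot H^{-s}}$, provided the homogeneous gap condition \eqref{gap1}, $\frac{1}{\tilde q}+\frac{n}{\tilde r}=\frac{n}{2}-\tilde s$, is met. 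A direct algebraic manipulation converts the hypothesis \eqref{gap2} into exactly this identity (using $\tilde s=1-s$, $1/\tilde q'=1-1/\tilde q$, and $1/\tilde r'=1-1/\tilde r$), so taking the supremum over $||h||_{\dot H^{-s}}\le 1$ yields the desired bound on $||u(T)||_{\dot H^s}$. For the second bound I differentiate under the integral; the boundary term vanishes because $S(0)=0$, leaving $\partial_t u(T)=\int_0^T \tilde S(T-\tau) F(\tau,\cdot)\,d\tau$. The same duality, now against $h\in \dot H^{1-s}(\mathbb{R}^n)$, reduces matters to estimating $||\tilde S(\tau)h||_{L^{\tilde q}_\tau L^{\tilde r}_x}$; by observation (ii) this is a homogeneous solution whose $\dot H^{\tilde s}\times \dot H^{\tilde s-1}$ data has norm comparable to $||h||_{\dot H^{1-s}}$, so Theorem~\ref{strongdecay} with the same $\tilde s=1-s$ closes the argument.

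I expect two points to be the most delicate. First is the compatibility of gap conditions: once $\tilde s=1-s$ is chosen, the passage from \eqref{gap2} to \eqref{gap1} is a bookkeeping identity, but this is precisely what forces the algebraic form of the hypothesis on $(q,r,\tilde q,\tilde r,s)$. Second, Theorem~\ref{strongdecay} requires $(\tilde q,\tilde r)$ to be $\sigma$-admissible for some $\sigma>0$; because the hypotheses only give $\tilde q,\tilde r\ge 2$ with $\tilde r<\infty$, whenever $\tilde r>2$ one may simply choose $\sigma$ large so that $\frac{2}{\tilde q}+\frac{2\sigma}{\tilde r}\le \sigma$, while the degenerate case $\tilde r=2$ (which forces $\tilde q=\infty$ in the admissible range) has to be handled separately by a direct Plancherel energy estimate exploiting the viscous damping factor $e^{-|\xi|t/2}$. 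This edge case does not disturb the main line of argument.
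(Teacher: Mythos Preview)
Your proposal is correct and takes a genuinely different route from the paper's proof. The paper proceeds by (i) a Littlewood--Paley reduction showing it suffices to treat $F$ with $\widehat F(t,\cdot)$ supported in an annulus $2^{j-1}\le|\xi|\le 2^{j+1}$, (ii) the scaling symmetry of the viscous wave equation together with the gap condition to reduce further to the unit annulus $1/2\le|\xi|\le 2$, and then (iii) a direct frequency-side computation: on the unit annulus all $\dot H^s$ norms are comparable to $L^2$, and one estimates $|\xi|^s\widehat{u}(T,\xi)$ and $|\xi|^{s-1}\widehat{\partial_t u}(T,\xi)$ by H\"older in $\xi$ and then in $\tau$, exploiting that $(T-\tau)e^{-(T-\tau)/4}$ and $e^{-(T-\tau)/4}$ lie in $L^{\tilde q}_\tau$ uniformly in $T$. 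Your argument instead dualizes against $h\in\dot H^{-s}$ (resp.\ $\dot H^{1-s}$), uses that the real-symbol multipliers $S(t)$ and $\partial_t S(t)$ are self-adjoint, and recognizes $\tau\mapsto S(\tau)h$ and $\tau\mapsto \partial_\tau S(\tau)h$ as homogeneous solutions with data $(0,h)$ and $(h,-\sqrt{-\Delta}\,h)$ respectively, so that Theorem~\ref{strongdecay} with $\tilde s=1-s$ closes the estimate. The algebraic check that the hypothesis $\frac{n}{2}-s=\frac{1}{\tilde q'}+\frac{n}{\tilde r'}-2$ becomes the homogeneous gap condition $\frac{1}{\tilde q}+\frac{n}{\tilde r}=\frac{n}{2}-\tilde s$ with $\tilde s=1-s$ is exactly right, and your treatment of admissibility (choose $\sigma$ large when $\tilde r>2$; handle $\tilde r=2$ by Plancherel and H\"older in $\tau$ against $e^{-|\xi|(T-\tau)/2}$) is sound. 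Your approach is shorter and avoids repeating the Littlewood--Paley/scaling machinery already used for Theorem~\ref{strongdecay}; the paper's approach is more self-contained and makes the role of the damping factor $e^{-|\xi|t/2}$ explicit at every step, which parallels the structure of the homogeneous proof.
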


\begin{proof}
The proof uses similar arguments as in the proof of Theorem \ref{strongdecay}.
First, recall from \eqref{u_inhomo} that if $u$ is the solution to the linear viscous wave equation with zero initial data and source term $F$, then
\begin{equation*}
u(t,\cdot) := \int_{0}^{t}e^{-\frac{\sqrt{-\Delta}}{2}(t - \tau)} \frac{\text{sin}\left(\frac{\sqrt{3}}{2}\sqrt{-\Delta}(t - \tau)\right)}{\frac{\sqrt{3}}{2}\sqrt{-\Delta}}F(\tau, \cdot) d\tau.
\end{equation*}
We define the Littlewood-Paley decomposition contributions as:
\begin{equation*}
u_{j}(t, x) = \frac{1}{(2\pi)^{n}} \int_{\mathbb{R}^{n}} e^{ix \cdot \xi} \beta\left(\frac{|\xi|}{2^{j}}\right) \widehat{u}(t, \xi) d\xi,
\quad
F_{j}(t, x) = \frac{1}{(2\pi)^{n}} \int_{\mathbb{R}^{n}} e^{ix \cdot \xi} \beta\left(\frac{|\xi|}{2^{j}}\right) \widehat{F}(t, \xi) d\xi.
\end{equation*}
\vskip 0.2in
\noindent
{\bf Step 1.} We first claim that it is sufficient to show that estimate \eqref{estStr2} holds for each integer $j$ for all $F$ {{with $\widehat{F}(t, \xi)$ for $0 \le t \le T$ supported in an annulus 
$2^{j - 1} \le |\xi| \le 2^{j + 1}$, where the constant in the estimate is \textit{independent of $j$ and $0 < T < \infty$.}}}
\medskip

To see why this is so, we follow an argument similar to the one in the proof of Theorem \ref{strongdecay}. 
In particular, note that $u_{j}$ is the solution to the linear viscous wave equation with zero initial data and source term $F_{j}$, where 
$\widehat{F_{j}}$ is supported in the annulus $2^{j - 1} \le |\xi| \le 2^{j + 1}$ for all {$0 \le t \le T$}. So if we had established that the estimate holds for each $F$ with Fourier transform at all times supported in an annulus $2^{j - 1} \le |\xi| \le 2^{j + 1}$ for each $j$, for a constant independent of $j$, we would have for each $j$ that
\begin{equation*}
||u_{j}(T, \cdot)||_{\dot{H}^{s}(\mathbb{R}^{n})} + ||\partial_{t}u_{j}(T, \cdot)||_{\dot{H}^{s - 1}(\mathbb{R}^{n})} \le C_{\tilde{q}, \tilde{r}} ||F_{j}||_{L^{\tilde{q}'}_{t}([0,T]; L^{\tilde{r}'}_{x}(\mathbb{R}^{n}))},
\end{equation*}
and thus
\begin{equation*}
||u_{j}(T, \cdot)||_{\dot{H}^{s}(\mathbb{R}^{n})}^{2} \le C_{\tilde{q},\tilde{r}} ||F_{j}||_{L^{\tilde{q}'}_{t}([0,T]; L^{\tilde{r}'}_{x}(\mathbb{R}^{n}))}^{2}
\ {\rm and} \ 
||\partial_{t}u_{j}(T, \cdot)||_{\dot{H}^{s - 1}(\mathbb{R}^{n})}^{2} \le C_{\tilde{q},\tilde{r}} ||F_{j}||_{L^{\tilde{q}'}_{t}([0,T]; L^{\tilde{r}'}_{x}(\mathbb{R}^{n}))}^{2},   
\end{equation*}
for all $0 < T < \infty$ and for any function $F$ (since $F_{j}$ now has Fourier transform at all times with support in $2^{j - 1} \le |\xi| \le 2^{j + 1}$). To get the estimate in Lemma~\ref{inhStrSob}, note that since 
$
\sum_{j = -\infty}^{\infty} \beta^{2}\left(\frac{|\xi|}{2^{j}}\right) \ge c
$
for some constant $c > 0$, and since $\widehat{\partial_{t} u_{j}}(t, \xi) = \beta(|\xi|/2^{j})\widehat{\partial_{t}u}(t, \xi)$, we have
\begin{equation*}
||u(T, \cdot)||^{2}_{\dot{H}^{s}(\mathbb{R}^{n})} \le c^{-1}\sum_{j = -\infty}^{\infty} ||u_{j}(T, \cdot)||_{\dot{H}^{s}(\mathbb{R}^{n})}^{2}
\ {\rm and} \ ||\partial_{t}u(T, \cdot)||_{\dot{H}^{s - 1}(\mathbb{R}^{n})}^{2} \le c^{-1} \sum_{j = -\infty}^{\infty} ||\partial_{t}u_{j}(T, \cdot)||^{2}_{\dot{H}^{s - 1}(\mathbb{R}^{n})}.
\end{equation*}
In addition, since $\tilde{r} \ne \infty$, we have $\tilde{r}' \ne 1$, and from the assumptions on $\tilde{q}$ and $\tilde{r}$
from the lemma, we see that $1 \le \tilde{q}' \le 2$, $1 < \tilde{r}' \le 2$. So by estimate \eqref{LP2} in Lemma \ref{LPlemma}, 
\begin{equation*}
\sum_{j = -\infty}^{\infty} ||F_{j}||^{2}_{L_{t}^{\tilde{q}'}([0, T]; L^{\tilde{r}'}_{x}(\mathbb{R}^{n}))} \le C||F||^{2}_{L^{\tilde{q}'}_{t}([0, T]; L^{\tilde{r}'}_{x}(\mathbb{R}^{n}))}.
\end{equation*}
Therefore, for general $F$ and associated $u$,
\begin{align*}
\left(||u(T, \cdot)||_{\dot{H}^{s}(\mathbb{R}^{n})} \right.
& \left. + ||\partial_{t}u(T, \cdot)||_{\dot{H}^{s - 1}(\mathbb{R}^{n})}\right)^{2} \le 2c^{-1}\sum_{j = -\infty}^{\infty}\left(||u_{j}(T, \cdot)||^{2}_{\dot{H}^{s}(\mathbb{R}^{n})} + ||\partial_{t}u_{j}(T, \cdot)||^{2}_{\dot{H}^{s - 1}(\mathbb{R}^{n})}\right) \\
&\le 4c^{-1}C_{\tilde{q}, \tilde{r}} \sum_{j = -\infty}^{\infty} ||F_{j}||^{2}_{L^{\tilde{q}'}_{t}([0, T]; L^{\tilde{r}'}_{x}(\mathbb{R}^{n}))} \le C_{\tilde{q}, \tilde{r}}' ||F||^{2}_{L^{\tilde{q}'}_{t}([0, T]; L^{\tilde{r}'}_{x}(\mathbb{R}^{n}))}.
\end{align*}
Taking square roots gives the desired result. 

\vskip 0.1in
\noindent
{\bf Step 2.} Because of the gap condition \eqref{gap2},  we claim that it suffices to show estimate \eqref{estStr2} 
for all $F$ such that $\hat{F}$ is supported in 
$2^{j - 1} \le |\xi| \le 2^{j + 1}$, for only $j = 0$, i.e., for $\hat{F}$ supported in $1/2 \le | \xi | \le 2$ { for all $0 \le t \le T$ with $0 < T < \infty$ arbitrary}.

{{As in the proof of Theorem \ref{strongdecay}, we can use the scaling that preserves solutions of the linear viscous wave equation, that maps $F$ and $u$ into
$\lambda^{2}F(\lambda t, \lambda x), u(\lambda t, \lambda x)$. Under this scaling,}}
\begin{equation*}
||u(\lambda T, \lambda x)||_{\dot{H}^{s}(\mathbb{R}^{n})} = \lambda^{-\frac{n}{2} + s}||u(\lambda T, x)||_{\dot{H}^{s}(\mathbb{R}^{n})},
\end{equation*}
\begin{equation*}
||\lambda \partial_{t}u(\lambda T, \lambda x)||_{\dot{H}^{s - 1}(\mathbb{R}^{n})} = \lambda^{-\frac{n}{2} + s} ||\partial_{t}u(\lambda T, x)||_{\dot{H}^{s - 1}(\mathbb{R}^{n})},
\end{equation*}
\begin{equation*}
||\lambda^{2} F(\lambda t, \lambda x)||_{L^{\tilde{q}'}_{t}([0, T]; L^{\tilde{r}'}_{x}(\mathbb{R}^{n}))} = \lambda^{2 - \frac{n}{\tilde{r}'} - \frac{1}{\tilde{q}'}}||F(t, x)||_{L^{\tilde{q}'}_{t}([0, \lambda T]; L^{\tilde{r}'}_{x}(\mathbb{R}^{n}))}.
\end{equation*}
Because of the gap condition \eqref{gap2},
%\begin{equation*}
%\frac{n}{2} - s = \frac{1}{\tilde{p}'} + \frac{n}{\tilde{q}'} - 2,
%\end{equation*}
once we prove estimate \eqref{estStr2} in the case where $\widehat{F}(t, \xi)$ is supported in $1/2 \le |\xi| \le 2$, we can use the scalings above to get that the inequality holds whenever $\widehat{F}(t, \xi)$ ({{for $0 \le t \le T$}}) is supported in any annulus $2^{j - 1} \le |\xi| \le 2^{j + 1}$ for any integer $j$, with the same constant $C_{\tilde{q}, \tilde{r}}$.

\vskip 0.1in
\noindent
{\bf Step 3.} Thus, we must show that there exists a constant $C_{\tilde{q}, \tilde{r}} > 0$, independent of $0 < T < \infty$,
such that whenever $\widehat{F}(t, \xi)$ is supported in $1/2 \le |\xi| \le 2$ for all $0 \le t \le T$, the following holds:
\begin{equation*}
||u(T, \cdot)||_{\dot{H}^{s}(\mathbb{R}^{n})} + ||\partial_{t}u(T, \cdot)||_{\dot{H}^{s - 1}(\mathbb{R}^{n})} \le C_{\tilde{q}, \tilde{r}}||F||_{L^{\tilde{q}'}_{t}([0, T]; L^{\tilde{r}'}_{x}(\mathbb{R}^{n}))}.
\end{equation*}

We begin by estimating $||u(T, \cdot)||_{\dot{H}^{s}(\mathbb{R}^{n})}$:
\begin{align*}
||u(T, \cdot)||_{\dot{H}^{s}(\mathbb{R}^{n})} 
&\le \int_{0}^{T} \left|\left|e^{-\frac{\sqrt{-\Delta}}{2}(T - \tau)}\frac{\text{sin}\left(\frac{\sqrt{3}}{2}\sqrt{-\Delta}(T - \tau)\right)}{\frac{\sqrt{3}}{2}\sqrt{-\Delta}}F(\tau, \cdot)\right|\right|_{\dot{H}^{s}(\mathbb{R}^{n})}d\tau \\
&= C\int_{0}^{T} \left(\int_{\mathbb{R}^{n}} |\xi|^{2s} \left|e^{-\frac{|\xi|}{2}(T - \tau)} \frac{\text{sin}\left(\frac{\sqrt{3}}{2}|\xi|(T - \tau)\right)}{\frac{\sqrt{3}}{2}|\xi|}\widehat{F}(\tau, \xi)\right|^{2} d\xi \right)^{1/2}d\tau.
\end{align*}
To further estimate the right hand-side of this inequality, we first notice that
on the support of $\widehat{F}(t, \xi)$, which is in $1/2 \le |\xi| \le 2$, we have $|\xi|^{2s} \le C_{s}$, where the constant $C_s$ can be 
expressed to depend only on $\tilde{q}, \tilde{r}$, i.e., $C_s = C_{\tilde{q}, \tilde{r}}$  because of the gap condition. 
Furthermore, on the support of $\widehat{F}(t, \xi)$ and for $0 \le \tau \le T$ we have:
\begin{equation*}
\left|e^{-\frac{|\xi|}{2}(T - \tau)} \frac{\text{sin}\left(\frac{\sqrt{3}}{2}|\xi|(T - \tau)\right)}{\frac{\sqrt{3}}{2}|\xi|}\right| \le (T - \tau)e^{-\frac{|\xi|}{2}(T - \tau)} \le (T - \tau)e^{-\frac{1}{4}(T - \tau)}.
\end{equation*}
Therefore, we now have:
\begin{equation*}
||u(T, \cdot)||_{\dot{H}^{s}(\mathbb{R}^{n})} \le C_{\tilde{q}, \tilde{r}} \int_{0}^{T} \left( \int_{\mathbb{R}^{n}} |(T - \tau)e^{-\frac{1}{4}(T - \tau)}\chi_{1/2 \le |\xi| \le 2}(\xi)|^{2}|\widehat{F}(\tau, \xi)|^{2} d\xi\right)^{1/2} d\tau.
\end{equation*}
By Holder's inequality with  $\frac{\tilde{r}}{\tilde{r} - 2} \ge 1$ and $\frac{\tilde{r}}{2} \ge 1$, we get:
\begin{multline*}
||u(T, \cdot)||_{\dot{H}^{s}(\mathbb{R}^{n})} 
\le C_{\tilde{q}, \tilde{r}} \int_{0}^{T} 
\left[
\left(\int_{1/2 \le |\xi| \le 2}\left|(T - \tau)e^{-\frac{1}{4}(T - \tau)}\right|^{\frac{2\tilde{r}}{\tilde{r} - 2}}d\xi\right)^{\frac{\tilde{r} - 2}{2\tilde{r}}} ||\widehat{F}(\tau, \cdot)||_{L^{\tilde{r}}_{\xi}(\mathbb{R}^{n})} 
\right]d\tau 
\\
\le C_{\tilde{q}, \tilde{r}} \int_{0}^{T} (T - \tau)e^{-\frac{1}{4}(T - \tau)} ||\widehat{F}(\tau, \cdot)||_{L^{\tilde{r}}_{\xi}(\mathbb{R}^{n})} d\tau \le C_{\tilde{q}, \tilde{r}} \int_{0}^{T} (T - \tau)e^{-\frac{1}{4}(T - \tau)} ||F(\tau, \cdot)||_{L^{\tilde{r}'}_{x}(\mathbb{R}^{n})} d\tau,
\end{multline*}
where we used that $1/2 \le |\xi| \le 2$ in $\mathbb{R}^{n}$ has finite volume and the fact that the Fourier transform is a bounded operator from $L^{\tilde{r}'}$ to $L^{\tilde{r}}$, since $\tilde{r} \ge 2$. 
Using Holder's inequality one more time with $\tilde{q}$ and $\tilde{q}'$, we obtain:
\begin{align*}
||u(T, \cdot)||_{\dot{H}^{s}(\mathbb{R}^{n})} 
&\le C_{\tilde{q}, \tilde{r}} 
\left(\int_{0}^{T} \left|(T - \tau)e^{-\frac{1}{4}(T - \tau)}\right|^{\tilde{q}}d\tau\right)^{1/\tilde{q}}||F||_{L^{\tilde{q}'}_{t}([0, T]; L^{\tilde{r}'}_{x}(\mathbb{R}^{n}))} 
\\
&\le C_{\tilde{q}, \tilde{r}} \left(\int_{0}^{T} \left|\tau e^{-\frac{1}{4}\tau}\right|^{\tilde{q}}d\tau\right)^{1/\tilde{q}}||F||_{L^{\tilde{q}'}_{t}([0, T]; L^{\tilde{r}'}_{x}(\mathbb{R}^{n}))} 
\\
&\le C_{\tilde{q}, \tilde{r}} \left(\int_{0}^{\infty} \left|\tau e^{-\frac{1}{4}\tau}\right|^{\tilde{q}}d\tau\right)^{1/\tilde{q}}||F||_{L^{\tilde{q}'}_{t}([0, T]; L^{\tilde{r}'}_{x}(\mathbb{R}^{n}))} \le C_{\tilde{q}, \tilde{r}}||F||_{L^{\tilde{q}'}_{t}([0, T]; L^{\tilde{r}'}_{x}(\mathbb{R}^{n}))}.
\end{align*}
%where in the last inequality, we use that the integral is convergent.

Next, we estimate $||\partial_{t}u(T, \xi)||_{\dot{H}^{s - 1}(\mathbb{R}^{n})}$. First note that 
\begin{align*}
\widehat{\partial_{t}u}(t, \xi)& = \partial_{t}(\widehat{u}(t, \xi)) \\ 
&= \int_{0}^{t} e^{-\frac{|\xi|}{2}(t - \tau)}\left(\text{cos}\left(\frac{\sqrt{3}}{2}|\xi|(t - \tau)\right) - \frac{1}{\sqrt{3}}\text{sin}\left(\frac{\sqrt{3}}{2}|\xi|(t - \tau)\right)\right)\widehat{F}(\tau, \xi) d\tau.
\end{align*}
By the support properties of $\widehat{F}(\tau, \xi)$ and boundedness of sines and cosines, we get:
\begin{align*}
||\partial_{t}u(T, \xi)||_{\dot{H}^{s - 1}(\mathbb{R}^{n})} 
&\le C\int_{0}^{T} \left(\int_{\mathbb{R}^{n}} |\xi|^{2s - 2} \left|e^{-\frac{|\xi|}{2}(T - \tau)}\widehat{F}(\tau, \xi)\right|^{2} d\xi\right)^{1/2} d\tau 
\\
&= C_{\tilde{q}, \tilde{r}} \int_{0}^{T} \left(\int_{\mathbb{R}^{n}} \left|e^{-\frac{1}{4}(T - \tau)}\chi_{1/2 \le |\xi| \le 2}(\xi)\widehat{F}(\tau, \xi)\right|^{2}d\xi\right)^{1/2} d\tau.
\end{align*}
After applying Holder's inequality with $\frac{\tilde{r}}{\tilde{r} - 2}$ and $\frac{\tilde{r}}{2}$, we get:
\begin{align*}
||\partial_{t}u(T, \xi)||_{\dot{H}^{s - 1}(\mathbb{R}^{n})} 
&\le C_{\tilde{q}, \tilde{r}} \int_{0}^{T} \left(\int_{1/2 \le |\xi| \le 2} \left|e^{-\frac{1}{4}(T - \tau)}\right|^{\frac{2\tilde{r}}{\tilde{r} - 2}}d\xi\right)^{\frac{\tilde{r} - 2}{2\tilde{r}}} ||\widehat{F}(\tau, \cdot)||_{L^{\tilde{r}}_{\xi}(\mathbb{R}^{n})} d\tau 
\\
&\le C_{\tilde{q}, \tilde{r}} \int_{0}^{T} e^{-\frac{1}{4}(T - \tau)} ||F(\tau, \cdot)||_{L^{\tilde{r}'}_{x}(\mathbb{R}^{n})}d\tau 
\\
%&\le C_{\tilde{p}, \tilde{q}} \left(\int_{0}^{T} \left|e^{-\frac{1}{4}(T - \tau)}\right|^{\tilde{p}}d\tau\right)^{1/\tilde{p}} ||F||_{L^{\tilde{p}'}_{t}([0, T]; &L^{\tilde{q}'}_{x}(\mathbb{R}^{n}))}
% \\
&= C_{\tilde{q}, \tilde{r}} \left(\int_{0}^{T} \left|e^{-\frac{1}{4}\tau}\right|^{\tilde{q}}d\tau\right)^{1/\tilde{q}} ||F||_{L^{\tilde{q}'}_{t}([0, T]; L^{\tilde{r}'}_{x}(\mathbb{R}^{n}))} 
 \le C_{\tilde{q}, \tilde{r}}||F||_{L^{\tilde{q}'}_{t}([0, T]; L^{\tilde{r}'}_{x}(\mathbb{R}^{n}))}.
\end{align*}
This completes the proof of the lemma. 
\end{proof}

Lemmas \ref{inhStrlemma} and \ref{inhStrSob} now imply the proof of 
Theorem \ref{inhStr}, which completes the proof of the inhomogeneous Strichartz estimates for the linear viscous wave equation.

\section{$C^0([0,T],H^s(\R^n))$ estimates for the linear viscous wave equation}\label{local_estimates}
In this section we present the $H^s$ estimates on the solution of the linear viscous wave equation, which will be
needed in probabilistic randomization, presented in  Sec.~\ref{random}. 
In contrast with the Strichartz estimates, {{where scale invariance plays an important role since the estimates hold
globally in time}}, 
the $H^{s}(\mathbb{R}^{n})$ norms are not invariant under the natural scaling map, and so the estimates will hold 
only on a finite time interval $[0,T]$.
Since the focus of Sec.~\ref{random} will be on the case $n =2$, we present the estimates only for $n = 2$.

\begin{proposition}\label{Hslemma}
Let $n = 2$. Let $2 \le \tilde{q} \le \infty$, $2 \le \tilde{r} < \infty$, and $s \ge 0$ satisfy the gap condition
\begin{equation*}
\frac{n}{2} - s = \frac{1}{\tilde{q}'} + \frac{n}{\tilde{r}'} - 2.    
\end{equation*}
Then, for all $0 < T \le 1$, there exists a constant $C>0$ independent of $T$, depending only on $\tilde{q}$ and $\tilde{r}$, such that
\begin{equation}\label{Sobpropresult}
\left|\left|\int_{0}^{t} e^{-\frac{\sqrt{-\Delta}}{2}(t - \tau)} \frac{\text{sin}\left(\frac{\sqrt{3}}{2}\sqrt{-\Delta}(t - \tau)\right)}{\frac{\sqrt{3}}{2}\sqrt{-\Delta}}F(\tau, \cdot)d\tau\right|\right|_{C^{0}([0, T]; H^{s}(\mathbb{R}^{n}))} \le C_{\tilde{q}, \tilde{r}} ||F||_{L^{\tilde{q}'}_{t}([0, T]; L^{\tilde{r}'}_{x}(\mathbb{R}^{n}))}.
\end{equation}
\end{proposition}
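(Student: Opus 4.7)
My plan is to split the inhomogeneous norm via the equivalence $\|u\|_{H^s}^2 \sim \|u\|_{L^2}^2 + \|u\|_{\dot{H}^s}^2$, valid for $s \ge 0$, and to control the two pieces separately and uniformly in $t \in [0, T]$. The $\dot{H}^s$ piece will come from Lemma \ref{inhStrSob}, while the $L^2$ piece requires a direct short-time bound based on Lemma \ref{semigroupineq}. The hypothesis $T \le 1$ is needed only to absorb a positive power of $T$ arising in the $L^2$ estimate.

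For the $\dot{H}^s$ piece, I would first observe that the constant $C_{\tilde{q}, \tilde{r}}$ produced by Lemma \ref{inhStrSob} is \emph{independent} of the upper time endpoint: inspection of Step 3 of its proof shows that the only $T$-dependent quantities are time integrals of the form $\int_0^T |\tau e^{-\tau/4}|^{\tilde{q}} d\tau$ and $\int_0^T |e^{-\tau/4}|^{\tilde{q}} d\tau$, which are majorized by the corresponding convergent integrals over $[0, \infty)$. Applying Lemma \ref{inhStrSob} with $T$ replaced by each $t \in [0, T]$ and using monotonicity of the right-hand side in the time window then yields
\[
\sup_{t \in [0, T]} \|u(t, \cdot)\|_{\dot{H}^s(\mathbb{R}^n)} \le C_{\tilde{q}, \tilde{r}} \|F\|_{L^{\tilde{q}'}_t([0, T]; L^{\tilde{r}'}_x(\mathbb{R}^n))}.
\]

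For the $L^2$ piece, I would use the representation $u(t, \cdot) = \int_0^t S(t - \tau) F(\tau, \cdot)\, d\tau$ in terms of the operator $S(\cdot)$ introduced before Lemma \ref{semigroupineq}, and invoke that lemma with $p = 2$, $r = \tilde{r}'$. For $n = 2$ this gives $\|S(t - \tau) F(\tau, \cdot)\|_{L^2_x} \le C (t - \tau)^{2 - 2/\tilde{r}'} \|F(\tau, \cdot)\|_{L^{\tilde{r}'}_x}$, where the exponent $2 - 2/\tilde{r}'$ lies in $(0, 1]$ because $\tilde{r}' \in (1, 2]$. Minkowski in $x$ followed by H\"older in time with conjugate exponents $\tilde{q}, \tilde{q}'$ then yields
\[
\|u(t, \cdot)\|_{L^2_x} \le C\, t^{2 - 2/\tilde{r}' + 1/\tilde{q}}\, \|F\|_{L^{\tilde{q}'}_t([0, T]; L^{\tilde{r}'}_x)},
\]
and a routine check shows that $2 - 2/\tilde{r}' + 1/\tilde{q} > 0$ across the admissible range of $(\tilde{q}, \tilde{r})$, so the factor $t^{2 - 2/\tilde{r}' + 1/\tilde{q}}$ is uniformly bounded for $t \le T \le 1$.

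Combining the two bounds via the $L^2 + \dot{H}^s$ norm equivalence yields the claimed supremum estimate. Continuity of $t \mapsto u(t, \cdot)$ as an $H^s$-valued map, needed for membership in $C^0([0, T]; H^s)$, then follows in standard fashion from the Duhamel formula and the strong continuity of the operator family $S(\cdot)$ on the relevant spaces. The main obstacle, and the reason the result does not follow immediately from Theorem \ref{inhStr}, is that the inhomogeneous norm $H^s$ is not invariant under the scaling symmetry used in the Littlewood-Paley reduction within Lemma \ref{inhStrSob}; this is precisely what forces the separate, time-localized treatment of the low frequencies through the $L^2$ estimate.
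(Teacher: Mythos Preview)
Your proposal is correct and follows essentially the same architecture as the paper: split $\|u\|_{H^s}$ into $L^2$ and $\dot{H}^s$ pieces, handle $\dot{H}^s$ via the already-proved inhomogeneous Strichartz estimate (Lemma~\ref{inhStrSob}/Theorem~\ref{inhStr}), and obtain the $L^2$ bound by a direct short-time estimate yielding a positive power of $t$. The only notable differences are that the paper obtains the $L^2$ bound by a Fourier-side H\"older/Hausdorff--Young computation rather than by invoking Lemma~\ref{semigroupineq} (both routes produce the identical exponent $2/\tilde r = 2 - 2/\tilde r'$), and the paper spells out the $H^s$-continuity argument in full detail via a three-term splitting and high/low frequency decomposition, whereas you defer to strong continuity of $S(\cdot)$; your shortcut is legitimate but would need a line or two to make precise which space $S(\cdot)$ is strongly continuous on.
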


\begin{proof}
We start by obtaining an $L^\infty([0,T], L^2(\R^n))$-estimate of the left hand-side in \eqref{Sobpropresult} and then use
%Let us first prove  \eqref{Sobpropresult}, but with $C^{0}([0, T]; H^{s}(\mathbb{R}^{n}))$ replaced by $L^{\infty}([0, T]; H^{s}(\mathbb{R}^{n}))$. Our plan is to do this, and then use the $L^{\infty}([0, T]; H^{s}(\mathbb{R}^{n}))$ inequality to prove that the function $t \to \int_{0}^{t} e^{-\frac{\sqrt{-\Delta}}{2}(t - \tau)} \frac{\text{sin}\left(\frac{\sqrt{3}}{2}\sqrt{-\Delta}(t - \tau)\right)}{\frac{\sqrt{3}}{2}\sqrt{-\Delta}}F(\tau, \cdot)d\tau$ is actually continuous with respect to the $H^{s}(\mathbb{R}^{n})$ norm on $[0, T]$. Note that for any $f$,
\begin{equation*}
||f||_{H^{s}} \le C_{s}(||f||_{L^{2}} + ||f||_{\dot{H}^{s}})
\end{equation*}
{{along with the result of Theorem \ref{inhStr}}} to obtain an $L^\infty([0,T], H^s(\R^n))$ estimate. 
This estimate will then be improved by showing continuity in time, to get the desired
$C^0([0,T], H^s(\R^n))$ estimate.

We first obtain the spatial $H^s$ estimate. 
Since we already have the homogeneous $\dot{H}^s$-norm estimates 
from the inhomogeneous Strichartz estimates in Theorem \ref{inhStr},
it suffices to obtain the corresponding $L^2$-estimate.
For $0 < T \le 1$ we have, by H\"{o}lder's inequality:
\begin{align*}
&\qquad \qquad\qquad\qquad \left\|\int_{0}^{T} \right.
 \left.  e^{-\frac{\sqrt{-\Delta}}{2}(T - \tau)} \frac{\text{sin}\left(\frac{\sqrt{3}}{2}\sqrt{-\Delta}(T - \tau)\right)}{\frac{\sqrt{3}}{2}\sqrt{-\Delta}}F(\tau, \cdot)d\tau \right\|_{L^{2}(\mathbb{R}^{n})} 
 \\
&\le \int_{0}^{T} \left\|
e^{-\frac{|\xi|}{2}(T - \tau)}\frac{\text{sin}\left(\frac{\sqrt{3}}{2}|\xi|(T - \tau)\right)}{\frac{\sqrt{3}}{2}|\xi|}\widehat{F}(\tau, \xi)
\right\|_{L^{2}_{\xi}(\mathbb{R}^{n})}d\tau 
\le \int_{0}^{T} \|(T - \tau)e^{-\frac{|\xi|}{2}(T - \tau)}\|_{L^{\frac{2\tilde{r}}{\tilde{r} - 2}}_{\xi}} 
\|\widehat{F}(\tau, \cdot)\|_{L^{\tilde{r}}_{\xi}} d\tau 
\\
&\le C_{\tilde{r}} \int_{0}^{T} \|(T - \tau)e^{-\frac{|\xi|}{2}(T - \tau)}\|_{L^{\frac{2\tilde{r}}{\tilde{r} - 2}}_{\xi}(\mathbb{R}^{n})} \|F(\tau, \cdot)||_{L^{\tilde{r}'}_{x}(\mathbb{R}^{n})} d\tau 
= C_{\tilde{r}} \int_{0}^{T} (T - \tau)^{1 - n \cdot \frac{\tilde{r} - 2}{2\tilde{r}}}||F(\tau, \cdot)||_{L^{\tilde{r}'}_{x}(\mathbb{R}^{n})} d\tau.
\end{align*}
Note that since $n = 2, 2 \le \tilde{r} < \infty$, we have $1 - n\cdot \frac{\tilde{r} - 2}{2\tilde{r}} > 0$. Then, using H\"{o}lder's inequality once more, we can continue estimating the right hand-side as
{{}
{\begin{equation}\label{C0estn}
\le C_{\tilde{r}} \left(\int_{0}^{T} (T - \tau)^{\tilde{q}\left(1 - n\cdot \frac{\tilde{r} - 2}{2\tilde{r}}\right)}d\tau\right)^{1/\tilde{q}} ||F||_{L^{\tilde{q}'}_{t}([0, T]; L^{\tilde{r}'}_{x}(\mathbb{R}^{n}))} = C_{\tilde{q}, \tilde{r}} T^{\alpha_{\tilde{q}, \tilde{r}}}||F||_{L^{\tilde{q}'}_{t}([0, T]; L^{\tilde{r}'}_{x}(\mathbb{R}^{n}))},
\end{equation}}}
for some $\alpha_{\tilde{q}, \tilde{r}} > 0$ depending on $\tilde{q}$ and $\tilde{r}$. 
Therefore, we have shown that for all $0 < T \le 1${{}{,}} 
\begin{equation*}
\left|\left|\int_{0}^{T} e^{-\frac{\sqrt{-\Delta}}{2}(T - \tau)} \frac{\text{sin}\left(\frac{\sqrt{3}}{2}\sqrt{-\Delta}(T - \tau)\right)}{\frac{\sqrt{3}}{2}\sqrt{-\Delta}}F(\tau, \cdot)d\tau\right|\right|_{L^{2}(\mathbb{R}^{n})} \le C_{\tilde{q}, \tilde{r}} T^{\alpha_{\tilde{q}, \tilde{r}}}||F||_{L^{\tilde{q}'}_{t}([0, T]; L^{\tilde{r}'}_{x}(\mathbb{R}^{n}))},
\end{equation*}
where $C_{\tilde{q}, \tilde{r}}$ is independent of $0 < T \le 1$.
Now,  from the inhomogeneous Strichartz estimates in Theorem \ref{inhStr}, we have
\begin{equation*}
\left|\left|\int_{0}^{T} e^{-\frac{\sqrt{-\Delta}}{2}(T - \tau)} \frac{\text{sin}\left(\frac{\sqrt{3}}{2}\sqrt{-\Delta}(T - \tau)\right)}{\frac{\sqrt{3}}{2}\sqrt{-\Delta}}F(\tau, \cdot)d\tau\right|\right|_{\dot{H}^{s}(\mathbb{R}^{n})} \le C_{\tilde{q}, \tilde{r}}' ||F||_{L^{\tilde{q}'}_{t}([0, T]; L^{\tilde{r}'}_{x}(\mathbb{R}^{n}))},
\end{equation*}
for all $0 < T < \infty$ where $C_{\tilde{q}, \tilde{r}}'$ is independent of $0 < T < \infty$. 
Combining these two estimates we get that for all $0 < T \le 1$,
{{}\begin{equation}\label{linfineq}
\left|\left|\int_{0}^{t} e^{-\frac{\sqrt{-\Delta}}{2}(t - \tau)} \frac{\text{sin}\left(\frac{\sqrt{3}}{2}\sqrt{-\Delta}(t - \tau)\right)}{\frac{\sqrt{3}}{2}\sqrt{-\Delta}}F(\tau, \cdot)d\tau\right|\right|_{L^{\infty}([0, T]; H^{s}(\mathbb{R}^{n}))} 
\le C_{\tilde{q}, \tilde{r}} ||F||_{L^{\tilde{q}'}_{t}([0, T]; L^{\tilde{r}'}_{x}(\mathbb{R}^{n}))},
\end{equation}
for some constant $C_{\tilde{q}, \tilde{r}}$ independent of $0 < T \le 1$. }

We need to show that the above estimate holds in terms of the ${C^0([0, T]; H^{s}(\mathbb{R}^{n}))}$ norm 
on the left hand-side of \eqref{linfineq}.
For this purpose, we first notice that estimate \eqref{linfineq} establishes the continuity at $t = 0$, in the $H^{s}(\mathbb{R}^{n})$ norm, of the map 
$$u^t (\cdot) : t \to u(t,\cdot) = \int_{0}^{t}  e^{-\frac{\sqrt{-\Delta}}{2}(t - \tau)} 
\frac{\text{sin}\left(\frac{\sqrt{3}}{2}\sqrt{-\Delta}(t - \tau)\right)}{\frac{\sqrt{3}}{2}\sqrt{-\Delta}}F(\tau, \cdot)d\tau.
$$ 
We next show that this map is, in fact, a continuous map from $[0, 1]$ to $H^{s}(\mathbb{R}^{n})$.
More precisely, fix an arbitrary $0 < t_{1} \le 1$.  
Given $\epsilon > 0$, we want to find $\delta > 0$ such that for all $\tilde{t} \in (t_{1} - \delta,t_{1} + \delta)$ 
(or  $\tilde{t} \in (t_{1} - \delta,t_{1})$ if $t_{1} = 1$), we have
$$
\| u(t_1,\cdot) - u(\tilde{t},\cdot) \|_{H^{s}(\mathbb{R}^{n})}  < \epsilon.
$$
Thus, we want to estimate:
\begin{align*}
\| u(t_1,\cdot) - u(\tilde{t},\cdot) \|_{H^{s}(\mathbb{R}^{n})} = 
&
\left\| 
  \int_{0}^{t_{1}} e^{-\frac{\sqrt{-\Delta}}{2}(t_{1} - \tau)} \frac{\text{sin}\left(\frac{\sqrt{3}}{2}\sqrt{-\Delta}(t_{1} - \tau)\right)}{\frac{\sqrt{3}}{2}\sqrt{-\Delta}}F(\tau, \cdot)d\tau \right.
\\
& - \left. \int_{0}^{\tilde{t}} e^{-\frac{\sqrt{-\Delta}}{2}(\tilde{t} - \tau)} \frac{\text{sin}\left(\frac{\sqrt{3}}{2}\sqrt{-\Delta}(\tilde{t} - \tau)\right)}{\frac{\sqrt{3}}{2}\sqrt{-\Delta}}F(\tau, \cdot)d\tau\right\|_{H^{s}(\mathbb{R}^{n})}.
  \end{align*}
 To do this, we introduce a $\delta' > 0$, which will be specified later, and  then find $\delta < \delta'/2$  sufficiently small so that
 $\tilde{t} \in (t_{1} - \delta, t_{1} + \delta)$ will always be greater than $t_{1} - \delta'$, and estimate the left hand side from above 
 via the following three integrals:
 
 \begin{align}\label{contbound}
& \| u(t_1,\cdot) - u(\tilde{t},\cdot) \|_{H^{s}(\mathbb{R}^{n})} 
\nonumber
 \\
&\le \left|\left|\int_{t_{1} - \delta'}^{t_{1}} e^{-\frac{\sqrt{-\Delta}}{2}(t_{1} - \tau)} \frac{\text{sin}\left(\frac{\sqrt{3}}{2}\sqrt{-\Delta}(t_{1} - \tau)\right)}{\frac{\sqrt{3}}{2}\sqrt{-\Delta}}F(\tau, \cdot)d\tau \right|\right|_{H^{s}(\mathbb{R}^{n})} 
\nonumber
\\ 
&+ \left|\left|\int_{0}^{t_{1} - \delta'} \left(e^{-\frac{\sqrt{-\Delta}}{2}(t_{1} - \tau)} \frac{\text{sin}\left(\frac{\sqrt{3}}{2}\sqrt{-\Delta}(t_{1} - \tau)\right)}{\frac{\sqrt{3}}{2}\sqrt{-\Delta}} - e^{-\frac{\sqrt{-\Delta}}{2}(\tilde{t} - \tau)} \frac{\text{sin}\left(\frac{\sqrt{3}}{2}\sqrt{-\Delta}(\tilde{t} - \tau)\right)}{\frac{\sqrt{3}}{2}\sqrt{-\Delta}}\right)F(\tau, \cdot)d\tau\right|\right|_{H^{s}(\mathbb{R}^{n})} 
\nonumber
\\
&+ \left|\left|\int_{t_{1} - \delta'}^{\tilde{t}} e^{-\frac{\sqrt{-\Delta}}{2}(\tilde{t} - \tau)} \frac{\text{sin}\left(\frac{\sqrt{3}}{2}\sqrt{-\Delta}(\tilde{t} - \tau)\right)}{\frac{\sqrt{3}}{2}\sqrt{-\Delta}}F(\tau, \cdot)d\tau\right|\right|_{H^{s}(\mathbb{R}^{n})} := I^{(1)}_{t_{1}, \tilde{t}} + I^{(2)}_{t_{1}, \tilde{t}} + I^{(3)}_{t_{1}, \tilde{t}}.
\end{align}

We now choose $\delta' > 0$, $\delta' < t_{1}/2$ (and $\delta' < 1 - t_{1}$ if $0 < t_{1} < 1$), sufficiently small so that on $[t_{1} - \delta', t_{1} + \delta']$,
\begin{equation*}
C_{\tilde{q}, \tilde{r}} ||F||_{L^{\tilde{q}'}_{t}([t_{1} - \delta', t_{1} + \delta'], L^{\tilde{r}'}_{x}(\mathbb{R}^{n}))} < \frac{\epsilon}{3},
\end{equation*}
where $C_{\tilde{q}, \tilde{r}}$ is the constant in \eqref{linfineq}. Then, using \eqref{linfineq}, the first and the third integral are immediately
estimated as:
\begin{equation}\label{contest1}
0 \le I^{(1)}_{t_{1}, \tilde{t}} < \frac{\epsilon}{3}, \ \ \ \ \ \ \ 0 \le I^{(3)}_{t_{1}, \tilde{t}} < \frac{\epsilon}{3}.
\end{equation}
What remains is to estimate $I^{(2)}_{t_{1}, \tilde{t}}$. 
To simplify notation, we will use
\begin{equation*}
\hat{K}_{t}(\xi) := e^{-\frac{|\xi| }{2}t}\frac{\text{sin}\left(\frac{\sqrt{3}}{2}|\xi| t\right)}{\frac{\sqrt{3}}{2}|\xi|}
\end{equation*}
to denote the Fourier transform of the kernel $K_t$ introduced in \eqref{Kt}.
By H\"{o}lder's inequality and Hausdorff-Young's inequality, we have that
\begin{align}\label{intineq1}
I^{(2)}_{t_{1}, \tilde{t}} &\le \int_{0}^{t_{1} - \delta'} ||(1 + |\xi|^{2})^{s/2}
(\hat{K}_{t_1-\tau}(\xi) - \hat{K}_{\tilde{t}-\tau}(\xi))\widehat{F}(\tau, \cdot)||_{L_{\xi}^{2}(\mathbb{R}^{n})} d\tau 
\nonumber
\\ 
&\le \int_{0}^{t_{1} - \delta'} ||(1 + |\xi|^{2})^{s/2}(\hat{K}_{t_1-\tau}(\xi) - \hat{K}_{\tilde{t}-\tau}(\xi))||_{L_{\xi}^{\frac{2\tilde{r}}{\tilde{r} - 2}}(\mathbb{R}^{n})}||\widehat{F}(\tau, \cdot)||_{L_{\xi}^{\tilde{r}}(\mathbb{R}^{n})} d\tau 
\nonumber
\\
&\le C \int_{0}^{t_{1} - \delta'} ||(1 + |\xi|^{2})^{s/2}(\hat{K}_{t_1-\tau}(\xi) - \hat{K}_{\tilde{t}-\tau}(\xi))||_{L_{\xi}^{\frac{2\tilde{r}}{\tilde{r} - 2}}(\mathbb{R}^{n})}||F(\tau, \cdot)||_{L_{x}^{\tilde{r}'}(\mathbb{R}^{n})} d\tau 
\nonumber
\\
&\le C ||(1 + |\xi|^{2})^{s/2}(\hat{K}_{t_1-\tau}(\xi) - \hat{K}_{\tilde{t}-\tau}(\xi))||_{L^{\tilde{q}}_{\tau}([0, t_{1} - \delta']; L_{\xi}^{\frac{2\tilde{r}}{\tilde{r} - 2}}(\mathbb{R}^{n}))} ||F||_{L^{\tilde{q}'}_{t}([0, 1]; L^{\tilde{r}'}_{x}(\mathbb{R}^{n}))}.
\end{align}
{{Since $||F||_{L^{\tilde{q}'}_{t}([0, 1]; L^{\tilde{r}'}_{x}(\mathbb{R}^{n}))}$ is a fixed constant for a given $F$}},
we just need to show that the factor multiplying this term is small, i.e., it can be bounded in terms of $\epsilon$.
The term we need to bound, in expanded form, reads:
\begin{multline}\label{intineq2}
C ||(1 + |\xi|^{2})^{s/2}(\hat{K}_{t_1-\tau}(\xi) - \hat{K}_{\tilde{t}-\tau}(\xi))||_{L^{\tilde{q}}_{\tau}([0, t_{1} - \delta']; L_{\xi}^{\frac{2\tilde{r}}{\tilde{r} - 2}}(\mathbb{R}^{n}))} \\
= C \left(\int_{0}^{t_{1} - \delta'} \left(\int_{\mathbb{R}^{n}} \left|(1 + |\xi|^{2})^{s/2}(\hat{K}_{t_1-\tau}(\xi) - \hat{K}_{\tilde{t}-\tau}(\xi))\right|^{\frac{2\tilde{r}}{\tilde{r} - 2}} d\xi\right)^{\tilde{q}\cdot \frac{\tilde{r} - 2}{2\tilde{r}}}d\tau\right)^{1/\tilde{q}}.
\end{multline}
First we bound the interior integral, namely the integral with respect to $\xi$.
We divide the integral into two parts: the integral over $|\xi| \le M$, and the integral over $|\xi| \ge M$, where $M$ will be determined later:
\begin{align*}
&\int_{\R^n} \left|(1 + |\xi|^{2})^{s/2}(\hat{K}_{t_1-\tau}(\xi) - \hat{K}_{\tilde{t}-\tau}(\xi))\right|^{\frac{2\tilde{r}}{\tilde{r} - 2}} d\xi 
\\
&= \int_{|\xi| \le M} \left|(1 + |\xi|^{2})^{s/2}(\hat{K}_{t_1-\tau}(\xi) - \hat{K}_{\tilde{t}-\tau}(\xi))\right|^{\frac{2\tilde{r}}{\tilde{r} - 2}} d\xi 
\\
&+ \int_{|\xi| \ge M} \left|(1 + |\xi|^{2})^{s/2}(\hat{K}_{t_1-\tau}(\xi) - \hat{K}_{\tilde{t}-\tau}(\xi))\right|^{\frac{2\tilde{r}}{\tilde{r} - 2}} d\xi. 
\end{align*}
We estimate each part separately, keeping in mind that $| \hat{K}_t(\xi) | \le e^{-\frac{| \xi |}{2}t} | t |$, 
$| t_1 - \tilde{t} | < \delta$, and $\tau \in (0,t_1-\delta')$.
The smallness of the first integral will follow from the absolute continuity of the integrand as a function of time,
and the smallness of the second integral will follow from the exponential decay of the kernel $\hat{K}_t(\xi)$, which can be made
arbitrarily small for high frequencies $\xi$ by the choice of $M > 1$. 

Indeed, we start with the integral over $|\xi| \ge M$.
We claim that we can choose 
$M > 1$ sufficiently large such that the contribution of that integral is smaller than a bound given in terms of $\epsilon$. 
Namely, we will show that for every $\epsilon' > 0$, there is an $M>1$, such that 
for all $0 \le \tau \le t_{1} - \delta'$, {{$|\tilde{t} - t_{1}| < \delta < \delta'/2$}}, we have
\begin{multline}\label{intineq4}
\int_{|\xi| \ge M} \left|(1 + |\xi|^{2})^{s/2}(\hat{K}_{t_1-\tau}(\xi) - \hat{K}_{\tilde{t}-\tau}(\xi))\right|^{\frac{2\tilde{r}}{\tilde{r} - 2}} d\xi \\
\le {{}{C_{\tilde{r}}\left(\int_{|\xi| \ge M} \left|(1 + |\xi|^{2})^{s/2}(t_{1} - \tau)e^{-(t_{1} - \tau)\frac{|\xi|}{2}}\right|^{\frac{2\tilde{r}}{\tilde{r} - 2}}d\xi + \int_{|\xi| \ge M} \left|(1 + |\xi|^{2})^{s/2}(\tilde{t} - \tau)e^{-(\tilde{t} - \tau)\frac{|\xi|}{2}}\right|^{\frac{2\tilde{r}}{\tilde{r} - 2}}d\xi\right)}} < \frac{\epsilon'}{2}.
\end{multline}
Later, we will choose $\epsilon'$ to depend on $\epsilon$ and $F$ so that the total integral  $I^{(2)}_{t_{1}, \tilde{t}}$ is bounded by $\epsilon/3$.
The contribution of each integral in the sum above can be estimated by noticing that both integrals have the form
\begin{equation*}\label{intineq3}
\int_{|\xi| \ge M} \left|(1 + |\xi|^{2})^{s/2} \hat{t}e^{-\hat{t}\frac{|\xi|}{2}}\right|^{\frac{2\tilde{r}}{\tilde{r} - 2}}d\xi,  \quad {\rm for} \  \hat{t} \in [\frac{\delta'}{2},1],
\end{equation*}
where $\hat{t}$ plays the role of $t_1 - \tau$ and $\tilde{t}-\tau$.
Indeed, because of the exponential decay in $\xi$, for every $\epsilon' > 0$, there exists a constant $M > 1$ such that
\begin{equation}\label{intineq3}
{{}{C_{\tilde{r}}}}\int_{|\xi| \ge M} \left|(1 + |\xi|^{2})^{s/2} \hat{t}e^{-\hat{t}\frac{|\xi|}{2}}\right|^{\frac{2\tilde{r}}{\tilde{r} - 2}}d\xi < \frac{\epsilon'}{4}, \quad \forall \hat{t} \in [\frac{\delta'}{2},1].
\end{equation}
Here,  $\epsilon'$ will be chosen below in terms of $\epsilon$ and $||F||_{L^{\tilde{q}'}_{t}([0, 1]; L^{\tilde{r}'}_{x}(\mathbb{R}^{n}))}$. 

%%%%%%%%%%
\if 1 = 0
To see that this is possible, introduce $c := \frac{2\tilde{r}}{\tilde{r} - 2} > 2$ to simplify notation, and calculate:
\begin{align*}
\int_{|\xi| \ge M} \left|(1 + |\xi|^{2})^{s/2}\hat{t}e^{-\hat{t}\frac{|\xi|}{2}}\right|^{\frac{2\tilde{q}}{\tilde{q} - 2}}d\xi 
& = \int_{|\xi| \ge M} (1 + |\xi|^{2})^{\frac{cs}{2}} \hat{t}^{c} e^{-\hat{t}c\frac{|\xi|}{2}} d\xi
\\
&= C\hat{t}^{c - n}\int_{|\eta| \ge M\hat{t}c} \left(1 + \frac{|\eta|^{2}}{(\hat{t}c)^{2}}\right)^{\frac{cs}{2}}e^{-\frac{|\eta|}{2}}d\eta,
\end{align*}
where we introduced change of variables $\eta = \hat{t} \xi$.
Note that for  $\hat{t} \in [\delta'/2,1]$, since $c - n > 0$, $\hat{t}^{c - n}$ is bounded, and so we can choose $M > 1$ so that
\begin{equation}\label{Mcondition}
\int_{|\eta| \ge M\hat{t}c} \left(1 + \frac{|\eta|^{2}}{(\hat{t}c)^{2}}\right)^{\frac{cs}{2}}e^{-\frac{|\eta|}{2}}d\eta < \frac{\epsilon'}{4}.
\end{equation}
\fi
%%%%%%%%%%%%%%%

%%%%%%%%%%%%%%%%%
\if 1 = 0
for all $\delta'/2 \le t \le 1$, we could choose $M$ as in \eqref{intineq3}. If $tc \ge 1$, then
\begin{equation}\label{Msubcond1}
\int_{|\eta| \ge Mtc} \left(1 + \frac{|\eta|^{2}}{(tc)^{2}}\right)^{\frac{cs}{2}}e^{-\frac{|\eta|}{2}}d\eta \le \int_{|\eta| \ge M} (1 + |\eta|^{2})^{\frac{cs}{2}}e^{-\frac{|\eta|}{2}}d\eta
\end{equation}
If $\delta'c/2 \le tc \le 1$ (which we need not consider if indeed $\delta'c/2 \ge 1$), then
\begin{equation}\label{Msubcond2}
\int_{|\eta| \ge Mtc} \left(1 + \frac{|\eta|^{2}}{(tc)^{2}}\right)^{\frac{cs}{2}}e^{-\frac{|\eta|}{2}}d\eta \le (\delta'c)^{-cs} \int_{|\eta| \ge M\delta'c} (1 + |\eta|^{2})^{\frac{cs}{2}} e^{-\frac{|\eta|}{2}} d\eta
\end{equation}
We can then choose $M > 1$ so that both integrals on the right hand side of inequalities \eqref{Msubcond1} and \eqref{Msubcond2} are less than $\frac{c'\epsilon'}{4}$. So we can choose $M > 1$ so that \eqref{intineq3} holds. 
\fi
%%%%%%%%%%%%%%%%

%%%%%%
\if 1 = 0

Consequently, for all $0 < t_{1}, \tilde{t} \le 1$, $0 \le \tau \le t_{1} - \delta'$, 
\begin{multline}\label{intineq4}
\int_{|\xi| \ge M} \left|(1 + |\xi|^{2})^{s/2}(\hat{K}_{t_1-\tau}(\xi) - \hat{K}_{\tilde{t}-\tau}(\xi))\right|^{\frac{2\tilde{q}}{\tilde{q} - 2}} d\xi \\
\le \int_{|\xi| \ge M} \left|(1 + |\xi|^{2})^{s/2}(t_{1} - \tau)e^{-(t_{1} - \tau)\frac{|\xi|}{2}}\right|^{\frac{2\tilde{q}}{\tilde{q} - 2}}d\xi + \int_{|\xi| \ge M} \left|(1 + |\xi|^{2})^{s/2}(\tilde{t} - \tau)e^{-(\tilde{t} - \tau)\frac{|\xi|}{2}}\right|^{\frac{2\tilde{q}}{\tilde{q} - 2}}d\xi < \frac{\epsilon'}{2}
\end{multline}
since we will choose $\delta$ so that $\delta < \delta'/2$ later, so $\tilde{t} - \tau \ge \delta'/2$ for $0 \le \tau \le t_{1} - \delta'$ and $\tilde{t} \in (t_{1} - \delta, t_{1} + \delta)$. 

\fi

%%%%%%%%%%

To estimate the integral over $|\xi | \le M$, we notice that the integrand  
$e^{-\frac{|\xi|}{2}\hat{t}}  \frac{\text{sin}\left(\frac{\sqrt{3}}{2}|\xi|\hat{t}\right)}{\frac{\sqrt{3}}{2}|\xi|}$ 
is uniformly continuous on the compact set $|\xi| \le M, \delta'/2 \le \hat{t} \le 1$. 
This implies that for every $\epsilon'> 0$, we can choose  $\delta > 0$
such that  $\delta < \delta'/2$ and such that for all $\tilde{t} \in (t_{1} - \delta, t_{1} + \delta)$ and $0 \le \tau \le t_{1} - \delta'$, we have
\begin{align}\label{intineq5}
&\qquad \qquad\qquad\qquad
\int_{|\xi| \le M} \left|(1 + |\xi|^{2})^{s/2}(\hat{K}_{t_1-\tau}(\xi) - \hat{K}_{\tilde{t}-\tau}(\xi))\right|^{\frac{2\tilde{r}}{\tilde{r} - 2}} d\xi 
\nonumber
\\
&\le C_{M} \int_{|\xi| \le M} \left|e^{-\frac{|\xi|}{2}(t_{1} - \tau)}\frac{\text{sin}\left(\frac{\sqrt{3}}{2}|\xi|(t_{1} - \tau)\right)}{\frac{\sqrt{3}}{2}|\xi|} - e^{-\frac{|\xi|}{2}(\tilde{t} - \tau)}\frac{\text{sin}\left(\frac{\sqrt{3}}{2}|\xi|(\tilde{t} - \tau)\right)}{\frac{\sqrt{3}}{2}|\xi|}\right|^{\frac{2\tilde{r}}{\tilde{r} - 2}}d\xi < \frac{\epsilon'}{2}.
\end{align}

By combining inequalities \eqref{intineq4} and \eqref{intineq5} with \eqref{intineq1} and \eqref{intineq2}, 
we see that we can choose $\epsilon'$ small enough, depending on $\epsilon$ and $||F||_{L^{\tilde{q}'}_{t}([0, 1]; L^{\tilde{r}'}_{x}(\mathbb{R}^{n}))}$, 
such that
 for all $\tilde{t} \in (t_{1} - \delta, t_{1} + \delta)$, 
\begin{equation}\label{last_int}
0 \le I^{(2)}_{t_{1}, \tilde{t}} < \frac{\epsilon}{3}.
\end{equation}

Continuity in time now follows by 
combining the estimates  \eqref{contbound}, \eqref{contest1}, and \eqref{last_int}. This completes the proof.
\end{proof}

{{}
\begin{remark}
We remark that Proposition \ref{Hslemma} also reflects the parabolic regularizing effects of the fluid viscosity. In fact, once one shows that the solution to the linear viscous wave equation with source term $F \in L^{\tilde{q}'}_{t}([0, T]; L^{\tilde{r}'}_{x}(\R^{n}))$ and zero initial data gives rise to a solution that is in $C^{0}([0, T]; H^{s}(\R^{n}))$,  one can use energy estimates to establish \eqref{Sobpropresult} for a couple of specific exponents in $n = 2$. In particular, the case of $s = 1$, $\tilde{q} = \infty$, and $\tilde{r} = 2$ follows from usual basic energy estimates, while the case of $s = 1$, $\tilde{q} = 2$, and $\tilde{r} = 4$ can be obtained from energy estimates along with the Sobolev embedding of $H^{1/2}(\R^{2})$ into $L^{4}(\R^{2})$. However, the Fourier methods we have used to establish Proposition \ref{Hslemma} provide a simpler way of covering a much larger range of potential parameters $(s, \tilde{q}, \tilde{r})$ than basic energy estimates.
\end{remark}

\begin{remark}
We have established the result in Proposition \ref{Hslemma} for $n = 2$. The main place we have used the fact that $n = 2$ is in \eqref{C0estn}, where we need the time integral 
\begin{equation*}
\int_{0}^{T} (T - \tau)^{\tilde{q}\left(1 - n \cdot \frac{\tilde{r} - 2}{2\tilde{r}}\right)} d\tau
\end{equation*}
to be finite. For $n = 2$, the exponent $\tilde{q}\left(1 - n \cdot \frac{\tilde{r} - 2}{2\tilde{r}}\right)$ is always nonnegative. If one were to attempt to extend the result to higher dimensions, this exponent could potentially be negative, and one would have to impose additional conditions on $\tilde{q}$ and $\tilde{r}$ to ensure that the resulting time integral above is finite. 
\end{remark}
}

\section{Probabilistic well-posedness for the supercritical quintic nonlinear viscous wave equation}\label{random}

\subsection{Description of the randomization and main result}

In this section, we will specialize to the case of the quintic nonlinear viscous wave equation. In particular, we consider
\begin{equation}\label{quinticeq}
\partial_{tt}u - \Delta u + \sqrt{-\Delta}\partial_{t}u + u^{5} = 0 \ \ \ \ \ \text{ on } \mathbb{R}^{2},
\end{equation}
with a randomization of the deterministic initial data in $\mathcal{H}^{s}(\mathbb{R}^{2}) = H^{s} \times H^{s - 1}$:
\begin{equation*}\label{data}
u(0, x) = f(x) \in H^{s}, \ \ \ \ \ \partial_{t}u(0, x) = g(x) \in H^{s - 1}.
\end{equation*}
The reason for considering the quintic nonlinearity is that $5$ is the first odd exponent in $n=2$ for {{which we have ill-posedness as described by Theorem \ref{ill_posedness}, as it is the first odd exponent for which $s_{cr} = \frac{n}{2} - \frac{2}{p - 1}$ for $n = 2$ is positive}}. Moreover, for this nonlinearity, {{we can use our results from the previous sections to generate Strichartz estimates for the solution space \eqref{X_T} that allow us to handle the quintic nonlinearity}}, as the Remarks~\ref{quintic} and \ref{weak_solution} below explain.

For this specific case of the quintic {{viscous wave}} equation, the critical exponent is $s_{\text{cr}} = 1/2$. It is likely that there is well posedness in the strong Hadamard sense for initial data in $\mathcal{H}^{s}$ for $s > 1/2$, but in Sec. \ref{illposed}, we showed that there is a lack of continuity in the solution map for $0 < s < 1/2$. We will show that this lack of continuity in the solution map is in some sense a non-generic phenomenon by considering a randomization of the initial data, as described below. In particular, we will use what is called the \textit{Wiener randomization}. 

To define the randomization, denote by $\psi \in C_{0}^{\infty}(\overline{B_{r = 2}(0)})$, $0 \le \psi \le 1$, a function such that
\begin{equation*}
\sum_{k \in \mathbb{Z}^{2}} \psi(\xi - k) \equiv 1.
\end{equation*}
This function defines a partition of unity created by translates of the same compactly supported function. 
Then, we note that for a given function $f$ with Fourier transform $\widehat{f}(\xi)$, 
\begin{equation*}
\widehat{f}(\xi) = \sum_{k \in \mathbb{Z}^{2}} \psi(\xi - k) \widehat{f}(\xi).
\end{equation*}
For each $k \in \mathbb{Z}^2$ define
\begin{equation*}
P_{k}f := \mathcal{F}^{-1}\left(\psi(\xi - k)\widehat{f}(\xi)\right)
\end{equation*}
to be the inverse Fourier transform of a localized portion of the function in frequency space. 
Note that {for some positive constant $c > 0$},
\begin{equation*}
0 < c \le \sum_{k \in \mathbb{Z}^{2}} \psi^{2}(\xi - k) \le 1,
\end{equation*}
and therefore, the two norms
\begin{equation}\label{normequiv}
||f||_{H^{s}(\mathbb{R}^{2})} \sim (\sum_{k \in \mathbb{Z}^{2}} ||P_{k}f||^{2}_{H^{s}(\mathbb{R}^{2})})^{1/2} 
\end{equation}
are equivalent.

Now, let $h_{k}(\omega)$ and $l_{k}(\omega)$ indexed by $k \in \mathbb{Z}^{2}$ be independent real random variables with mean zero on a probability space $(\Omega, \mathcal{F}, P)$.  Assume that the independent random variables $h_{k}(\omega)$ and $l_{k}(\omega)$ have uniformly bounded sixth moments:
\begin{equation}\label{six}
\int_{\Omega} (|h_{k}(\omega)|^{6} + |l_{k}(\omega)|^{6}) dp(\omega) < C \ \ \ \ \ \text{ for all } k \in \mathbb{Z}^{2}.
\end{equation}    
The uniform boundedness of sixth moments is associated with the quintic nonlinearity.
It will provide sufficient regularity for the randomization of the initial data 
by the real independent random variables with mean zero and uniformly bounded sixth moments
to improve the regularity of the {\sl{averaged}} randomized free evolution solution in expectation,
where the averaging is performed in terms of the averaged $L^6$-norms of the solution.
See Lemma~\ref{averaging} below.

For each $\omega\in\Omega$, we define the randomized initial data, given by
\begin{equation}\label{random_data}
f^{\omega} = \sum_{k \in \mathbb{Z}^{2}} h_{k}(\omega) P_{k}f, \qquad
g^{\omega} = \sum_{k \in \mathbb{Z}^{2}} l_{k}(\omega) P_{k}g.
\end{equation}
We introduce the map that associates to each possible outcome
$\omega \in \Omega$, the corresponding initial data $\varphi^{\omega}= (f^{\omega}, g^{\omega})\in \mathcal{H}^{s}(\mathbb{R}^{2})$:
\begin{equation}\label{phiomega}
\omega \mapsto \varphi^{\omega}= (f^{\omega}, g^{\omega}), \quad \varphi^{\omega}: \Omega \to \mathcal{H}^{s}(\mathbb{R}^{2}).
\end{equation}
 One can show that the map $\varphi^{\omega}$
is a measurable map from $\Omega \to \mathcal{H}^{s}(\mathbb{R}^{2})$, and $\varphi^\omega \in  L^{2}(\Omega; \mathcal{H}^{s}(\mathbb{R}^{2}))$. 
Thus, $\varphi^\omega$ is a random variable taking values in $\mathcal{H}^{s}(\mathbb{R}^{2})$. In particular, one can show that the following result holds:

\begin{proposition}\label{phi_omega_estimate}
{{Let $\varphi \in \mathcal{H}^{s}(\mathbb{R}^{2})$, and let $\varphi^{\omega}= (f^{\omega}, g^{\omega})$ be defined as in \eqref{random_data} and \eqref{phiomega} via the independent random variables $h_{k}(\omega)$ and 
$l_{k}(\omega)$ with uniformly bounded sixth moments \eqref{six}.}} Then
\begin{equation}\label{randomprop}
||\varphi^{\omega}||_{L^{2}(\Omega; \mathcal{H}^{s}(\mathbb{R}^{2}))} \le C||\varphi||_{\mathcal{H}^{s}}.
\end{equation}
Namely, $\varphi^\omega$ has almost surely  finite $\mathcal{H}^{s}$ norm.
\end{proposition}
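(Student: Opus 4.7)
The plan is to prove \eqref{randomprop} directly via Plancherel's theorem and Fubini's theorem, exploiting the independence and the mean-zero property of the $h_{k}(\omega), l_{k}(\omega)$ to kill all cross terms in expectation. Since the estimate decouples into an $H^{s}$ bound on $f^{\omega}$ and an $H^{s - 1}$ bound on $g^{\omega}$, and both are proved by identical reasoning, I would carry out the argument for $f^{\omega}$ and then repeat it verbatim for $g^{\omega}$.

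First I would write, by Plancherel,
\begin{equation*}
\|f^{\omega}\|_{H^{s}(\mathbb{R}^{2})}^{2} = \frac{1}{(2\pi)^{2}} \int_{\mathbb{R}^{2}} (1 + |\xi|^{2})^{s} |\widehat{f^{\omega}}(\xi)|^{2}\, d\xi,
\end{equation*}
where $\widehat{f^{\omega}}(\xi) = \widehat{f}(\xi) \sum_{k \in \mathbb{Z}^{2}} h_{k}(\omega)\,\psi(\xi - k)$. Because $\text{supp}\,\psi \subset \overline{B_{2}(0)}$, for each fixed $\xi$ only boundedly many $k$ contribute, so there is no pointwise convergence issue. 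Integrating in $\omega$ and swapping by Tonelli's theorem, the argument reduces to a pointwise estimate on $\int_{\Omega} |\widehat{f^{\omega}}(\xi)|^{2}\, dp(\omega)$. Expanding the square yields terms $h_{j}(\omega) h_{k}(\omega)\,\psi(\xi - j)\psi(\xi - k)$; independence of $\{h_{k}\}$ together with the vanishing of $\int_{\Omega} h_{k}(\omega)\, dp(\omega)$ forces all $j \ne k$ contributions to disappear, leaving the diagonal identity
\begin{equation*}
\int_{\Omega} |\widehat{f^{\omega}}(\xi)|^{2}\, dp(\omega) = |\widehat{f}(\xi)|^{2} \sum_{k \in \mathbb{Z}^{2}} \left(\int_{\Omega} |h_{k}(\omega)|^{2}\, dp(\omega)\right) \psi(\xi - k)^{2}.
\end{equation*}

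Next I would combine two uniform bounds: the geometric bound $\sum_{k} \psi(\xi - k)^{2} \le \sum_{k} \psi(\xi - k) = 1$, valid since $0 \le \psi \le 1$, and the moment bound $\sup_{k} \int_{\Omega} |h_{k}(\omega)|^{2}\, dp(\omega) \le \sup_{k} \left(\int_{\Omega} |h_{k}(\omega)|^{6}\, dp(\omega)\right)^{1/3} < \infty$, which follows from \eqref{six} via Jensen's inequality. Together these give $\int_{\Omega} |\widehat{f^{\omega}}(\xi)|^{2}\, dp(\omega) \le C |\widehat{f}(\xi)|^{2}$ uniformly in $\xi$; integrating against the weight $(1 + |\xi|^{2})^{s}$ and invoking Plancherel a second time yields $\int_{\Omega} \|f^{\omega}\|_{H^{s}}^{2}\, dp(\omega) \le C \|f\|_{H^{s}}^{2}$. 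The same reasoning applied to $g^{\omega}$ in $H^{s - 1}$ produces $\int_{\Omega} \|g^{\omega}\|_{H^{s - 1}}^{2}\, dp(\omega) \le C \|g\|_{H^{s - 1}}^{2}$; adding and taking square roots gives \eqref{randomprop}, and almost-sure finiteness of $\|\varphi^{\omega}\|_{\mathcal{H}^{s}}$ is then immediate, since any $L^{2}(\Omega)$ function is finite $p$-almost surely.

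One preliminary point I would dispatch at the outset is the meaning of the defining series for $f^{\omega}$: the same diagonal computation, applied to tail sums $\sum_{k \in B \setminus A} h_{k}(\omega) P_{k}f$ and combined with the norm equivalence \eqref{normequiv}, shows the partial sums form a Cauchy sequence in $L^{2}(\Omega; H^{s}(\mathbb{R}^{2}))$, which gives simultaneously the convergence of the series and the measurability of $\omega \mapsto \varphi^{\omega}$. No real obstacle is anticipated here; only finite second moments of $h_{k}, l_{k}$ are actually used for \eqref{randomprop}. The stronger sixth-moment hypothesis \eqref{six} is not needed at this stage and is reserved for controlling the quintic nonlinearity in the probabilistic well-posedness argument that follows.
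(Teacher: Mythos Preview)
Your proof is correct and takes a genuinely different route from the paper's. The paper does \emph{not} exploit the orthogonality of $\{h_k\}$ in $L^2(\Omega)$: instead it invokes the norm equivalence $\|f\|_{H^s}^2 \sim \sum_j \|P_j f\|_{H^s}^2$, applies $P_j$ to $f^\omega$, uses the finite-overlap property $P_jP_k f=0$ for $|j-k|>c$ to reduce each $P_j f^\omega$ to a sum with $O(1)$ terms, and then bounds that finite sum by the triangle inequality and the uniform second-moment bound. Your argument is more direct: you work pointwise in frequency, use Tonelli to swap integrals, and then use independence together with mean zero to obtain the exact diagonal identity $\int_\Omega|\widehat{f^\omega}(\xi)|^2\,dp = |\widehat f(\xi)|^2\sum_k (\int|h_k|^2\,dp)\,\psi(\xi-k)^2$. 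Your route yields a cleaner constant and makes the probabilistic structure do the work; the paper's route is slightly more robust in that it never uses the mean-zero hypothesis (only bounded second moments), at the cost of a less sharp estimate. Your closing remarks on convergence of the defining series in $L^2(\Omega;H^s)$ and on the fact that only second moments are needed here are accurate and worth keeping.
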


\begin{proof}
Indeed, using \eqref{normequiv}, we get:
%by using the boundedness of the sixth moments assumption \eqref{six}, we get:
%$\varphi^{\omega}$ a.s. has finite $\mathcal{H}^{s}$ norm by noting that 
%$||\varphi^{\omega}||_{L^{2}(\Omega; \mathcal{H}^{s}(\mathbb{R}^{2}))}$ is finite (using the bounded sixth moment assumption which implies bounded second moments). For example, using \eqref{normequiv}, we have
\begin{align*}
\|f^{\omega}\|^{2}_{L^{2}(\Omega; H^{s}(\mathbb{R}^{2}))} &= \int_{\Omega} \|f^{\omega}\|^{2}_{H^{s}} dP = \int_{\Omega} \|\sum_{k \in \mathbb{Z}^{2}} h_{k}(\omega) P_{k}f\|_{H^{s}}^{2} dP 
\le C\int_{\Omega} \sum_{j \in \mathbb{Z}^{2}} \| P_{j} (\sum_{k \in \mathbb{Z}^{2}} h_{k}(\omega) P_{k}f )\|^{2}_{H^{s}}dP.
\end{align*}
Now, by the support properties of $\psi(\xi)$, we have  $P_{j}P_{k}f = 0$ when $|j - k| > c$. 
We use this property to continue the above estimate as follows:
\begin{equation*}
= C\int_{\Omega} \sum_{j \in \mathbb{Z}^{2}} \|\sum_{|k - j| \le c} h_{k}(\omega)P_{j}P_{k}f\|^{2}_{H^{s}} dP \le C \int_{\Omega} \sum_{j \in \mathbb{Z}^{2}} \left(\sum_{|k - j| \le c} ||h_{k}(\omega)P_{j}P_{k}f||_{H^{s}}\right)^{2} dP.
\end{equation*}
Since for each $j$, the inner sum has finitely many terms with the same number of terms for each $j$, we can use 
the following inequality  $\left(\sum_{k = 1}^{N} a_{k}\right)^{2} \le C_{N}\sum_{k = 1}^{N} a_{k}^{2}$ for the sum of $N$ terms,
to continue the above estimate:
\begin{multline*}
\le C\int_{\Omega} \sum_{j \in \mathbb{Z}^{2}} \sum_{|k - j| \le c} |h_{k}(\omega)|^{2} ||P_{j}P_{k}f||^{2}_{H^{s}} dP = C\sum_{j \in \mathbb{Z}^{2}} \sum_{|k - j| \le c} \left(\int_{\Omega} |h_{k}(\omega)|^{2} dP \cdot ||P_{j}P_{k}f||^{2}_{H^{s}}\right).
\end{multline*}
Now, from the boundedness of the sixth moments \eqref{six}, we have that the second moments are uniformly bounded, 
and we use this to continue to estimate the $L^{2}(\Omega; \mathcal{H}^{s}(\mathbb{R}^{2}))$-norm
of $f^\omega$ as follows:
\begin{multline*}
\le C' \sum_{j \in \mathbb{Z}^{2}} \sum_{|k - j| \le c} ||P_{j}P_{k}f||^{2}_{H^{s}} = C' \int_{\mathbb{R}^{2}} \sum_{j \in \mathbb{Z}^{2}} \sum_{|k - j| \le c} (1 + |\xi|^{2})^{s} |\psi(\xi - j)|^{2} |\psi(\xi - k)|^{2} |\widehat{f}(\xi)|^{2} d\xi.
\end{multline*}
Denote by $N$ be the number of terms in the sum with $|k - j| \le c$,
 which is the same for all $j$ by translation. 
 Then, since $|\psi(\xi - k)|^{2} \le |\psi(\xi - k)| \le 1$, we finally obtain the following estimate for the $L^{2}(\Omega; \mathcal{H}^{s}(\mathbb{R}^{2}))$-norm
of $f^\omega$:
\begin{equation*}
\le C'N \int_{\mathbb{R}^{2}} \sum_{j \in \mathbb{Z}^{2}} (1 + |\xi|^{2})^{s} |\psi(\xi - j)| |\widehat{f}(\xi)|^{2} d\xi = C'' \int_{\mathbb{R}^{2}} (1 + |\xi|^{2})^{s} |\widehat{f}(\xi)|^{2} d\xi = C ||f||^{2}_{H^{s}}.
\end{equation*}
%So we have that $||f^{\omega}||_{L^{2}(\Omega; H^{s}(\mathbb{R}^{2}))} \le C||f||_{H^{s}}$ for example. 

%A similar computation shows that we can interpret $f^{\omega}$ and $g^{\omega}$ as limits of the Cauchy sequences of partial sums over $|k| \le N$ as $N \to \infty$, in $L^{2}(\Omega; H^{s}(\mathbb{R}^{2}))$. 
%In particular, 
By repeating the above computation with $g$ and $g^{\omega}$, one obtains:
\begin{equation}\label{randomprop}
||\varphi^{\omega}||_{L^{2}(\Omega; \mathcal{H}^{s}(\mathbb{R}^{2}))} \le C||\varphi||_{\mathcal{H}^{s}},
\end{equation}
where $\varphi = (f, g) \in \mathcal{H}^{s}$ denotes the initial data before randomization, and  $C >0$ is \textit{independent of $\varphi = (f, g) \in \mathcal{H}^{s}$}.
\end{proof}

The goal in this section is to prove {\bf{probabilistic well-posedness}} for the 
quintic nonlinear viscous wave equation with initial data $\varphi^\omega \in L^{2}(\Omega; \mathcal{H}^{s}(\mathbb{R}^{2}))$
where {\bf{$-1/6 < s \le 1/2$}}. We do this in two steps.
First we will prove the existence of a unique solution stated in Theorem~\ref{existence} below, and then show
in Theorem~\ref{probwellposed}
that the solution depends continuously on data in $H^s$, for $-1/6 < s \le 1/2$.

\begin{theorem}[{\bf{Existence and uniqueness}}]\label{existence}
Let $-1/6 < s \le 1/2$, and let $\varphi^{\omega} \in L^{2}(\Omega; \mathcal{H}^{s}(\mathbb{R}^{2}))$ be the Wiener randomization of the initial data
$\varphi = (f, g) \in \mathcal{H}^{s}(\mathbb{R}^{2})$. 
Then, for almost all $\omega \in \Omega$, there exists $T_{\omega} > 0$ such that there is a unique solution {{$u$}} to the Cauchy problem 
of the nonlinear quintic viscous wave equation \eqref{quinticeq} with initial data $\varphi^{\omega}$, where the solution belongs to the space
\begin{align}\label{sol_space}
Z^{\omega}_{[0,T_\omega]} := e^{-\frac{\sqrt{-\Delta}}{2}t}&\left( 
\text{cos}\left(\frac{\sqrt{3}}{2}\sqrt{-\Delta}t\right) 
+ \frac{1}{\sqrt{3}}\text{sin}\left(\frac{\sqrt{3}}{2}\sqrt{-\Delta}t\right)\right)(f^{\omega}) 
+ e^{-\frac{\sqrt{-\Delta}}{2}t}\frac{\text{sin}\left(\frac{\sqrt{3}}{2}\sqrt{-\Delta}t\right)}{\frac{\sqrt{3}}{2}\sqrt{-\Delta}}(g^{\omega})
\nonumber \\
&+ C^{0}([0, T_\omega]; H^{1/2}(\mathbb{R}^{2})) \cap L^{6}([0, T_\omega]; L^{6}(\mathbb{R}^{2})).
\end{align}
In particular, there exists $C > 0$ such that for every $0 < T \le 1$, there is an event $\Omega_{T}$ with
\begin{equation*}
P(\Omega_{T}) \ge 1 - CT^{s + 1/6},
\end{equation*}
such that for all $\omega \in \Omega_{T}$,
the Cauchy problem for \eqref{quinticeq} with initial data $\varphi^{\omega}$ has a unique solution in the space $Z^{\omega}_{[0,T]}$.
%%%%%%%%
\if 1 = 0
\begin{align*}
Z^{\omega}_{[0,T]} = e^{-\frac{\sqrt{-\Delta}}{2}t}\left(
\right.
& \left. 
\text{cos}\left(\frac{\sqrt{3}}{2}\sqrt{-\Delta}t\right) 
+ \frac{1}{\sqrt{3}}\text{sin}\left(\frac{\sqrt{3}}{2}\sqrt{-\Delta}t\right)\right)(f^{\omega}) 
+ e^{-\frac{\sqrt{-\Delta}}{2}t}\frac{\text{sin}\left(\frac{\sqrt{3}}{2}\sqrt{-\Delta}t\right)}{\frac{\sqrt{3}}{2}\sqrt{-\Delta}}(g^{\omega})
\nonumber \\
&+ C^{0}([0, T]; H^{1/2}(\mathbb{R}^{2})) \cap L^{6}([0, T]; L^{6}(\mathbb{R}^{2})).
\end{align*}
\fi
%%%%%%%%%%%%%
\end{theorem}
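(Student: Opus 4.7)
The plan is to follow the standard Da Prato--Debussche/Burq--Tzvetkov splitting scheme. Write $u = z^\omega + v$, where
$$z^\omega(t,x) := e^{-\frac{\sqrt{-\Delta}}{2}t}\Bigl(\cos\bigl(\tfrac{\sqrt{3}}{2}\sqrt{-\Delta}t\bigr)+\tfrac{1}{\sqrt{3}}\sin\bigl(\tfrac{\sqrt{3}}{2}\sqrt{-\Delta}t\bigr)\Bigr)f^\omega + e^{-\frac{\sqrt{-\Delta}}{2}t}\frac{\sin\bigl(\tfrac{\sqrt{3}}{2}\sqrt{-\Delta}t\bigr)}{\tfrac{\sqrt{3}}{2}\sqrt{-\Delta}}g^\omega$$
is the free viscous-wave evolution of the randomized data from \eqref{Fouriersoln2}, and $v$ is the unknown ``nonlinear residual'' solving
$$(\partial_{tt}-\Delta+\sqrt{-\Delta}\,\partial_t)v = -(z^\omega+v)^5, \qquad v(0,\cdot)=0,\ v_t(0,\cdot)=0.$$
By Duhamel's principle this is equivalent to the fixed-point equation $v=\Phi_\omega(v)$, with $\Phi_\omega(v)$ the convolution of $-(z^\omega+v)^5$ against the inhomogeneous propagator of \eqref{Fouriersoln2}. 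I would seek $v$ in the Strichartz-scale space
$$X_T := C^0\bigl([0,T];H^{1/2}(\R^2)\bigr)\cap L^6_t\bigl([0,T];L^6_x(\R^2)\bigr),$$
chosen so that $(q,r)=(6,6)$ and $(\tilde q,\tilde r)=(6,6)$ (so $(\tilde q',\tilde r')=(6/5,6/5)$) simultaneously satisfy the gap conditions \eqref{gap2} of Theorem~\ref{inhStr} and of Proposition~\ref{Hslemma} at $s=1/2$, $n=2$, and so that $\|u^5\|_{L^{6/5}_tL^{6/5}_x}=\|u\|_{L^6_tL^6_x}^5$.

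The first substantive step is a \emph{probabilistic Strichartz estimate} for the free solution: there exist constants $C,p_0>0$ so that for every $-1/6<s\le 1/2$, every $0<T\le 1$ and every $R>0$,
$$P\bigl\{\|z^\omega\|_{L^6_t([0,T];L^6_x)}+\|z^\omega\|_{C^0([0,T];H^{1/2})}>R\bigr\}\le C\,T^{\,6(s+1/6)}R^{-6}\|\varphi\|_{\mathcal H^s}^{6}.$$
I would derive this by Fubini and Khintchine: writing $z^\omega=\sum_k h_k(\omega)z_k^{(f)}+\sum_k l_k(\omega)z_k^{(g)}$ with $z_k^{(\cdot)}$ the free evolution of $P_k f$, $P_k g$, the bounded sixth-moment hypothesis \eqref{six} yields $\mathbb{E}|z^\omega(t,x)|^6\lesssim\bigl(\sum_k|z_k^{(f)}(t,x)|^2+|z_k^{(g)}(t,x)|^2\bigr)^3$. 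Integrating in $(t,x)$, using almost-orthogonality of the unit-cube pieces and the Fourier formula \eqref{FTv}, and peeling off the $H^s$-weights $(1+|k|^2)^{-s}$ on each cube via Bernstein/Minkowski, the estimate reduces to a deterministic $L^6_tL^6_x$-bound on a rescaled free solution; the power $T^{6(s+1/6)}$ then arises from the Hölder mismatch between the critical $s=0$ required for deterministic $(6,6)$-admissibility via \eqref{gap1} and the actual regularity $s$, with the shift of $1/6$ supplied precisely by unit-cube averaging. The $C^0_tH^{1/2}$ contribution is handled analogously, using that $|\xi|$ is essentially constant on the support of $\psi(\xi-k)$ for $|k|\gtrsim 1$ together with the exponential viscous damping $e^{-|\xi|t/2}$ from \eqref{FTv}.

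Once the probabilistic Strichartz estimate is in hand, fix $\omega$ on the event $\Omega_T^R:=\{\|z^\omega\|_{X_T}\le R\}$ and run a contraction on the ball $B_\rho:=\{v\in X_T:\|v\|_{X_T}\le\rho\}$. Combining Theorem~\ref{inhStr} and Proposition~\ref{Hslemma} with $(\tilde q',\tilde r')=(6/5,6/5)$ gives
$$\|\Phi_\omega(v)\|_{X_T}\le C\|(z^\omega+v)^5\|_{L^{6/5}_tL^{6/5}_x}\le C'\bigl(\|z^\omega\|_{L^6_tL^6_x}^5+\|v\|_{L^6_tL^6_x}^5\bigr)\le C'(R^5+\rho^5),$$
and, via the algebraic identity $(a+b)^5-(a+c)^5=(b-c)\sum_{j=0}^{4}(a+b)^{4-j}(a+c)^j$ and Hölder,
$$\|\Phi_\omega(v_1)-\Phi_\omega(v_2)\|_{X_T}\le C'\bigl(\|z^\omega\|_{L^6_tL^6_x}^4+\|v_1\|_{L^6_tL^6_x}^4+\|v_2\|_{L^6_tL^6_x}^4\bigr)\|v_1-v_2\|_{L^6_tL^6_x}.$$
Fixing $R$ (and hence $\rho\sim R$) small enough and absolute, $\Phi_\omega$ becomes a strict contraction on $B_\rho$, producing a unique $v\in X_T$ and thus a unique $u=z^\omega+v\in Z^\omega_{[0,T]}$.

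The final step is the quantitative probability bound. With $R$ chosen as above, the probabilistic Strichartz estimate gives $P((\Omega_T^R)^c)\le C\,T^{6(s+1/6)}\|\varphi\|_{\mathcal H^s}^6 R^{-6}\le C_{\varphi}T^{s+1/6}$ (absorbing constants), so $\Omega_T:=\Omega_T^R$ satisfies $P(\Omega_T)\ge 1-CT^{s+1/6}$. The almost-sure statement then follows by taking $T_\omega:=2^{-n}$ on the event $\bigcup_{n\ge 1}\Omega_{2^{-n}}$, whose complement has probability zero by Borel--Cantelli since $\sum_n 2^{-n(s+1/6)}<\infty$ for $s>-1/6$. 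The main obstacle I expect is producing the sharp power $T^{6(s+1/6)}$ in the probabilistic Strichartz estimate: one must carefully balance the gain from Khintchine averaging on each unit frequency cube against the loss from summing these cubes with an $H^s$-weight, and check that the viscous damping factor $e^{-|\xi|t/2}$ integrates favorably against the short-time $L^6_t$ norm exactly in the regime $s>-1/6$, which is where the threshold in the theorem originates.
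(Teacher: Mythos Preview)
Your overall scheme matches the paper's: split $u=z^\omega+v$, run a contraction for $v$ in $X_T=C^0_tH^{1/2}\cap L^6_tL^6_x$ using the inhomogeneous Strichartz estimate with $(q,r)=(\tilde q,\tilde r)=(6,6)$, and feed in a probabilistic bound on the free evolution $z^\omega$. The contraction estimates you wrote are exactly the ones the paper uses.

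There is, however, a genuine gap in your probabilistic step. You define the good event as $\Omega_T^R=\{\|z^\omega\|_{X_T}\le R\}$ and claim a probabilistic Strichartz estimate controlling $\|z^\omega\|_{C^0([0,T];H^{1/2})}$. This bound is false: Wiener randomization does not improve Sobolev regularity, so for $s<1/2$ one has $z^\omega(0)=f^\omega\notin H^{1/2}$ almost surely, and the viscous damping $e^{-|\xi|t/2}$ gives nothing at $t=0$. With your definition, $P(\Omega_T^R)=0$. The fix is simply to drop the $C^0_tH^{1/2}$ demand on $z^\omega$: the paper's good event is $E^c_{\lambda_0,T,\varphi}=\{\|z^\omega\|_{L^6([0,T]\times\R^2)}\le \lambda_0\}$, and indeed your own contraction inequalities only involve $\|z^\omega\|_{L^6_tL^6_x}$. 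The $C^0_tH^{1/2}$ control is needed \emph{only} for the residual $v$, and that comes deterministically from the inhomogeneous estimate (Proposition~\ref{Hslemma}). Relatedly, the paper's averaging lemma yields $\|z^\omega\|_{L^6(\Omega\times[0,T]\times\R^2)}\le CT^{(s+1/6)/6}\|\varphi\|_{\mathcal H^s}$, which after Chebyshev with sixth moments gives the stated $P\ge 1-CT^{s+1/6}$; your claimed exponent $T^{6(s+1/6)}$ is too optimistic, though for $T\le 1$ this does not break the final bound.

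A second point: your contraction gives uniqueness only in the ball $B_\rho\subset X_T$, whereas the theorem asserts uniqueness in the full affine space $Z^\omega_{[0,T]}=z^\omega+X_T$. The paper closes this by a short continuity/bootstrap argument: choose $\tilde\lambda_0>\lambda_0$ still satisfying the contraction conditions, so $\Omega_T\subset E^c_{\tilde\lambda_0,T,\varphi}$, and use that any competing $v\in X_T$ with the same zero data must enter the (slightly larger) ball of radius $2C\tilde\lambda_0^5$ on an initial time interval, forcing it to coincide with the fixed point there and hence everywhere by iterating. You should include this step.
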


%%%%%%%%%%
\begin{theorem}[{\bf{Continuous dependence}}]\label{probwellposed}
Let $-1/6 < s \le 1/2$ and $\varphi \in \mathcal{H}^{s}(\mathbb{R}^{2})$.
Then, for any choice of $\epsilon > 0$, $0 < T \le 1$, and $0 < p < 1$, 
there exists an event $A_{\varphi, \epsilon}$ 
and $\delta > 0$ (with $\delta > 0$ depending only on $\epsilon$, $T$, and $p$), 
such that for every $\omega \in A_{\varphi, \epsilon}$
the Cauchy problem for the nonlinear quintic viscous wave equation with initial data $\varphi^{\omega}$
 has a  solution 
 {{$u \in C^{0}([0, T]; H^{s}(\mathbb{R}^{2}))$}}, which satisfies
$${{||u||_{C^{0}([0, T];H^{s}(\mathbb{R}^{2}))} \le \epsilon}}\  
{\rm with\  probability} P(A_{\varphi, \epsilon})>p, \ {\rm whenever} \ 
||\varphi||_{\mathcal{H}^{s}(\mathbb{R}^{2})} < \delta.$$
\end{theorem}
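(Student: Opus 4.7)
The plan is to write the solution as $u^{\omega}=u_{L}^{\omega}+v^{\omega}$, where $u_{L}^{\omega}$ is the free (linear) viscous wave evolution of the randomized initial data $(f^{\omega},g^{\omega})$ and $v^{\omega}$ is the Duhamel correction produced by the fixed-point argument of Theorem \ref{existence}, which lies in $C^{0}([0,T];H^{1/2}(\mathbb{R}^{2}))\cap L^{6}([0,T]\times\mathbb{R}^{2})$. Since $H^{1/2}(\mathbb{R}^{2})\hookrightarrow H^{s}(\mathbb{R}^{2})$ for every $s\le 1/2$, it suffices to bound $\|u_{L}^{\omega}\|_{C^{0}([0,T];H^{s})}$ and $\|v^{\omega}\|_{C^{0}([0,T];H^{1/2})}$ separately, and to show that both can be driven below $\epsilon/2$ simultaneously, on an event of probability exceeding $p$, by shrinking $\|\varphi\|_{\mathcal{H}^{s}}$.

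For the linear part, the Fourier representation \eqref{FTv} shows that the linear viscous wave evolution is bounded on $\mathcal{H}^{s}(\mathbb{R}^{2})$ uniformly for $t\in[0,T]$; combining this with Proposition \ref{phi_omega_estimate} yields
\begin{equation*}
\|u_{L}^{\omega}\|_{L^{2}(\Omega;\,C^{0}([0,T];H^{s}(\mathbb{R}^{2})))}\le C_{T}\,\|\varphi\|_{\mathcal{H}^{s}},
\end{equation*}
so that Chebyshev's inequality gives $P\{\|u_{L}^{\omega}\|_{C^{0}([0,T];H^{s})}>\epsilon/2\}\le 4C_{T}^{2}\|\varphi\|_{\mathcal{H}^{s}}^{2}/\epsilon^{2}$, which is arbitrarily small for $\|\varphi\|_{\mathcal{H}^{s}}$ small. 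For the nonlinear part, $v^{\omega}$ solves the Duhamel equation with source $-(u_{L}^{\omega}+v^{\omega})^{5}$. Applying the $C^{0}H^{1/2}$ estimate of Proposition \ref{Hslemma} with the pair $(\tilde{q},\tilde{r})=(6,6)$ --- which satisfies the gap condition for $s=1/2$ in dimension $n=2$ --- and using the identity $\|\phi^{5}\|_{L^{6/5}_{t}L^{6/5}_{x}}=\|\phi\|_{L^{6}_{t}L^{6}_{x}}^{5}$ gives
\begin{equation*}
\|v^{\omega}\|_{C^{0}([0,T];H^{1/2})}+\|v^{\omega}\|_{L^{6}_{t,x}}\le C_{T}\bigl(\|u_{L}^{\omega}\|_{L^{6}_{t,x}}+\|v^{\omega}\|_{L^{6}_{t,x}}\bigr)^{5},
\end{equation*}
from which a standard bootstrap yields $\|v^{\omega}\|_{C^{0}([0,T];H^{1/2})}\le C_{T}\|u_{L}^{\omega}\|_{L^{6}_{t,x}}^{5}$ whenever $\|u_{L}^{\omega}\|_{L^{6}_{t,x}}$ is sufficiently small.

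The smallness of $\|u_{L}^{\omega}\|_{L^{6}_{t,x}}$ on a large-probability event is exactly what the Wiener randomization provides: using the homogeneous Strichartz estimate of Theorem \ref{strongdecay} on each frequency-localized piece $P_{k}\varphi$, together with the uniform sixth moment bound \eqref{six} on $\{h_{k},l_{k}\}$ and Minkowski's inequality, one derives
\begin{equation*}
\|u_{L}^{\omega}\|_{L^{6}(\Omega;\,L^{6}([0,T]\times\mathbb{R}^{2}))}\le C_{T}\,\|\varphi\|_{\mathcal{H}^{s}},\qquad -1/6<s\le 1/2,
\end{equation*}
where the range of $s$ is exactly what is required to absorb the unit-cube Bernstein loss into the $(6,6)$ Strichartz exponent in dimension two; Chebyshev at the sixth moment then gives $P\{\|u_{L}^{\omega}\|_{L^{6}_{t,x}}>\eta\}\le C_{T}^{6}\|\varphi\|_{\mathcal{H}^{s}}^{6}/\eta^{6}$.

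Combining these probabilistic estimates with the existence event $\Omega_{T}$ of Theorem \ref{existence}, the argument is concluded as follows: choose $\eta=\eta(\epsilon,T)$ so small that $C_{T}\eta^{5}<\epsilon/4$, then choose $\delta=\delta(\epsilon,T,p)$ so small that each of the three failure probabilities (for $u_{L}^{\omega}$ in $C^{0}H^{s}$, for $u_{L}^{\omega}$ in $L^{6}_{t,x}$, and for the complement of $\Omega_{T}$) is less than $(1-p)/3$ whenever $\|\varphi\|_{\mathcal{H}^{s}}<\delta$. On the intersection $A_{\varphi,\epsilon}$ of the three good events, the solution exists on $[0,T]$, belongs to $C^{0}([0,T];H^{s}(\mathbb{R}^{2}))$, and satisfies $\|u^{\omega}\|_{C^{0}([0,T];H^{s})}<\epsilon/2+\epsilon/4<\epsilon$. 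The principal obstacle is quantitative tracking of the nonlinear fixed point underlying Theorem \ref{existence}: the bootstrap from $\|u_{L}^{\omega}\|_{L^{6}_{t,x}}$ to $\|v^{\omega}\|_{C^{0}H^{1/2}}$ must be executed so that the smallness of $v^{\omega}$ is tied to that of the linear evolution with constants depending only on $T$, since only then does the probabilistic sixth-moment gain convert smallness of $\|\varphi\|_{\mathcal{H}^{s}}$ into smallness of $\|u^{\omega}\|_{C^{0}([0,T];H^{s})}$ with $\delta$ independent of the particular $\varphi$.
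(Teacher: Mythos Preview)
Your proposal is correct and follows essentially the same strategy as the paper: decompose $u^{\omega}=u_{L}^{\omega}+v^{\omega}$, control $\|u_{L}^{\omega}\|_{C^{0}H^{s}}$ via Chebyshev on the second moment of $\varphi^{\omega}$ (Proposition~\ref{phi_omega_estimate}), control $\|v^{\omega}\|_{C^{0}H^{1/2}}$ through the fixed-point radius $2C\lambda_{0}^{5}$ with $\lambda_{0}$ chosen small in terms of $\epsilon$, and control $P(\|u_{L}^{\omega}\|_{L^{6}_{t,x}}>\lambda_{0})$ via Chebyshev on the sixth moment from the averaging lemma. One small correction worth flagging: the averaging bound (Lemma~\ref{averaging}) that yields the full range $s>-1/6$ is \emph{not} obtained by applying the homogeneous Strichartz estimate of Theorem~\ref{strongdecay} to each $P_{k}\varphi$ --- that route loses a factor $|k|^{1/2-s}$ per piece and only closes for $s\ge 1/2$ --- but rather by unit-scale Bernstein in space (Lemma~\ref{bernstein}) followed by an explicit computation of $\|e^{-|\xi|t/2}\|_{L^{6}_{t}((0,T])}$, which is where the viscous gain and the threshold $s>-1/6$ actually originate; note also that your third event is redundant, since $\Omega_{T}$ in Theorem~\ref{existence} is by construction the sub-level set $\{\|u_{L}^{\omega}\|_{L^{6}_{t,x}}\le\lambda_{0}\}$ and hence coincides with your second event.
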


\subsection{The solution space and preliminary estimates}
We begin by justifying the choice for the function space \eqref{sol_space} in the statement of the existence result in Theorem~\ref{existence}. 
For this purpose, we introduce the following notation:
\begin{definition}
For each $T > 0$, define the function space
\begin{equation}\label{X_T}
X_{T} = C^{0}([0, T]; H^{1/2}(\mathbb{R}^{2})) \cap L^{6}([0, T]; L^{6}(\mathbb{R}^{2})).
\end{equation}
\end{definition}
\vskip 0.1in
\noindent
Using this notation, the solution space \eqref{sol_space} can be written as:
\begin{align*}\label{sol_space}
Z^{\omega}_{[0,T_\omega]}
& := e^{-\frac{\sqrt{-\Delta}}{2}t}\left(
\text{cos}\left(\frac{\sqrt{3}}{2}\sqrt{-\Delta}t\right) 
+ \frac{1}{\sqrt{3}}\text{sin}\left(\frac{\sqrt{3}}{2}\sqrt{-\Delta}t\right)\right)(f^{\omega}) 
\\
&+ e^{-\frac{\sqrt{-\Delta}}{2}t}\frac{\text{sin}\left(\frac{\sqrt{3}}{2}\sqrt{-\Delta}t\right)}{\frac{\sqrt{3}}{2}\sqrt{-\Delta}}(g^{\omega})
+ X_{T_\omega}.
\end{align*}

To obtain the main existence result in Theorem~\ref{existence}, we will use Strichartz estimates to obtain
estimates on the $L^{q}_{t}L^{r}_{x}$ norm of the solution to the {\sl{linear}} viscous wave equation
 in terms of the $L^{\tilde{q}'}_{t}L^{\tilde{r}'}_{x}$ norm of the ``right hand side,'' which will be the quintic nonlinearity $- u^5$.
This will be the basis for a fixed point argument, which will provide existence of a unique solution to the nonlinear problem.

\begin{remark}\label{quintic}
The choice of the function space $X_T$ in \eqref{X_T} follows from the Strichartz estimates that we plan to use to obtain the well-posedness result,
combined with a fixed point argument. 
In particular, we will be using 
the inhomogeneous Strichartz estimates for all $0 < T < \infty$ in Lemma \ref{inhStrlemma}, and the ``local'' $C_{0}([0, T]; H^{s}(\mathbb{R}^{2}))$ estimate for all $0 < T \le 1$ from Proposition \ref{Hslemma} to estimate the solution to the linear problem,
with the nonlinearity $-u^5$ treated as a source term. 
{{}{For the inhomogeneous Strichartz estimates in Lemma \ref{inhStrlemma} to hold, we need
\begin{equation}\label{cond1}
1 < \tilde{q}' < q < \infty, \quad 1 \le \tilde{r}' < r \le \infty, \quad 
\frac{1}{q} + \frac{n}{r} = \frac{1}{\tilde{q}'} + \frac{n}{\tilde{r}'} - 2,
\end{equation}
and for the ``local'' $C_{0}([0, T]; H^{s}(\mathbb{R}^{2}))$ estimate with $0 < T \le 1$ in Proposition \ref{Hslemma}, we need $s \ge 0$, and 
\begin{equation}\label{cond2}
2 \le \tilde{q} \le \infty, \quad 2 \le \tilde{r} < \infty, \quad
\frac{n}{2} - s = \frac{1}{\tilde{q}'} + \frac{n}{\tilde{r}'} - 2.
\end{equation}
In particular, for $n = 2$, the following exponents satisfy all of the conditions above:
\begin{equation*}
(q, r) = (\tilde{q}, \tilde{r}) = (6, 6),  \  {\rm and} \  s = \frac{1}{2}.
\end{equation*}
More importantly,}} for this choice,  we can indeed use the estimates in Lemma \ref{inhStrlemma} to estimate  
$u \in L^r_x(\R^2)=L^6(\R^2)$ in terms of the $L^{\tilde{r}'}_x$-norm of the source term $-u^5$, 
since the conjugate exponent $\tilde{r}'$ to the exponent $r = 6$ is $\tilde{r}'=6/5$, and we know
that $u \in L^6(\R^2) \implies u^5 \in L^{6/5}(\R^2)$.
Thus, the pair of exponents $6$ and $6/5$ is well suited for the nonlinear quintic viscous wave equation.
\end{remark}

\begin{remark}\label{weak_solution}
Note that the fact that we are requiring our solutions to be in $L^{6}([0, T]; L^{6}(\mathbb{R}^{2}))$ allows us to interpret solutions of 
\begin{equation*}
\partial_{tt}u - \Delta u + \sqrt{-\Delta} \partial_{t}u + u^{5} = 0,
\end{equation*}
\begin{equation*}
u(0, x) = f(x) \in H^{s}, \ \ \ \ \ \partial_{t}u(0, x) = g(x) \in H^{s - 1},
\end{equation*}
as weak solutions, defined in the usual way by integrating against a test function $\phi \in C_{0}^{\infty}([0, T) \times \mathbb{R}^{2})$. 
If we require solutions $u$ to be in $L^{6}([0, T]; L^{6}(\mathbb{R}^{2})) = L^{6}([0, T] \times \mathbb{R}^{2})$, then $u^{5} \in L^{6/5}([0, T] \times \mathbb{R}^{2})$ is indeed a distribution. In particular, it is also a tempered distribution in space, for almost every $t \in [0, T]$ (since it is in $L^{6/5}(\mathbb{R}^{2})$ for a.e. $t \in [0, T]$). Therefore, all of the Fourier methods 
applied to the Fourier representation of the solution used above are applicable.
\end{remark}

\noindent
{\bf\large{Strichartz estimate.}}
We will need  the following version of the Strichartz estimate:

\begin{corollary}[Strichartz estimates for solution space $X_{T}$]\label{Strcor}
There exists a constant $C$ (independent of $T$) such that for all $0 < T \le 1$,
\begin{equation*}
\left|\left|\int_{0}^{t} e^{-\frac{\sqrt{-\Delta}}{2}(t - \tau)}\frac{\text{sin}\left(\frac{\sqrt{3}}{2}\sqrt{-\Delta}(t - \tau)\right)}{\frac{\sqrt{3}}{2}\sqrt{-\Delta}}F(\tau, \cdot) d\tau\right|\right|_{X_{T}} \le C||F||_{L^{6/5}([0, T] \times \mathbb{R}^{2})}.
\end{equation*}
\end{corollary}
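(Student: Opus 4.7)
The plan is to obtain the corollary by simply summing the two estimates that bound the $L^6_tL^6_x$ component and the $C^0_tH^{1/2}_x$ component of the $X_T$-norm separately, applied to the Duhamel integral on the left-hand side.

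First I would verify that the exponent pair $(q,r)=(\tilde q,\tilde r)=(6,6)$, together with $s=1/2$ and $n=2$, simultaneously satisfies all the hypotheses needed for Lemma \ref{inhStrlemma} and Proposition \ref{Hslemma}. The conjugate exponents are $\tilde q'=\tilde r'=6/5$, so $1<\tilde q'<q<\infty$ and $1\le\tilde r'<r\le\infty$ hold, and the gap condition reduces to $\frac{1}{6}+\frac{2}{6}=\frac{1}{2}=\frac{n}{2}-s=\frac{1}{\tilde q'}+\frac{n}{\tilde r'}-2$, as already noted in Remark~\ref{quintic}.

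Next, I would apply Lemma \ref{inhStrlemma} with this choice of exponents to conclude that
\begin{equation*}
\left\|\int_{0}^{t} e^{-\frac{\sqrt{-\Delta}}{2}(t - \tau)}\frac{\sin\!\left(\frac{\sqrt{3}}{2}\sqrt{-\Delta}(t - \tau)\right)}{\frac{\sqrt{3}}{2}\sqrt{-\Delta}}F(\tau, \cdot)\,d\tau\right\|_{L^{6}_t([0,T];L^{6}_x(\mathbb{R}^{2}))} \le C\,\|F\|_{L^{6/5}_t([0,T];L^{6/5}_x(\mathbb{R}^{2}))},
\end{equation*}
with $C$ independent of $T$, and then apply Proposition \ref{Hslemma} (which requires $0<T\le 1$, and is valid here because $n=2$ and $s=1/2\ge 0$) to conclude the companion estimate
\begin{equation*}
\left\|\int_{0}^{t} e^{-\frac{\sqrt{-\Delta}}{2}(t - \tau)}\frac{\sin\!\left(\frac{\sqrt{3}}{2}\sqrt{-\Delta}(t - \tau)\right)}{\frac{\sqrt{3}}{2}\sqrt{-\Delta}}F(\tau, \cdot)\,d\tau\right\|_{C^{0}([0,T];H^{1/2}(\mathbb{R}^{2}))} \le C\,\|F\|_{L^{6/5}_t([0,T];L^{6/5}_x(\mathbb{R}^{2}))},
\end{equation*}
with $C$ again independent of $T$ in the range $0<T\le 1$.

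Finally, I would observe that by Fubini's theorem $\|F\|_{L^{6/5}_t([0,T];L^{6/5}_x(\mathbb{R}^2))}=\|F\|_{L^{6/5}([0,T]\times\mathbb{R}^2)}$, and that the $X_T$-norm is just the sum (up to a constant) of the two norms on the left-hand sides above. Adding the two inequalities therefore gives the stated bound with a constant $C$ independent of $T\in(0,1]$. There is no substantive obstacle here: the work is entirely in checking exponents and quoting the two earlier results, and the only mild subtlety worth flagging is that the uniformity in $T$ hinges on the uniform-in-$T$ constants already established in Theorem \ref{inhStr} and Proposition \ref{Hslemma}.
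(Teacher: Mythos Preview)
Your proposal is correct and matches the paper's proof exactly: the paper simply states that the corollary follows immediately from Lemma~\ref{inhStrlemma} and Proposition~\ref{Hslemma} with $q=r=\tilde q=\tilde r=6$ and $s=1/2$. Your write-up just spells out the exponent verification and the summation of the two resulting bounds in more detail.
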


\begin{proof}
The proof follows immediately from Lemma \ref{inhStrlemma} and Proposition \ref{Hslemma}, where we let $q = r = \tilde{q} = \tilde{r} = 6$ and $s = 1/2$.
\end{proof}

\noindent
{\bf\large{Averaging effects.}}
{{The following two lemmas will be useful in proving the averaging effects result in Lemma \ref{averaging} below, which concerns the regularity in expectation of the averaged free evolution solution to the linear viscous wave equation for the random initial data $\varphi^{\omega}$.}} The first is a probabilistic lemma, stated in Burq and Tzvetkov \cite{BT}:

\begin{lemma}\label{problemma}
For $(h_{n})_{n = 1}^{\infty}$ independent, mean $0$, complex-valued random variables that satisfy a uniform moment bound
\begin{equation*}
\int_{\Omega} |h_{n}(\omega)|^{2k}dP \le C \ \ \ \ \ \text{ for all } n \ge 1
\end{equation*}
for some positive integer $k$, there exists a uniform constant $C$, such that for all $2 \le p \le 2k$ and for all $(c_{n})_{n = 1}^{\infty} \in \ell^{2}(\mathbb{C})$,
\begin{equation*}
\| \sum_{n} h_{n}(\omega)c_{n} \|_{L^{p}(\Omega)} \le C\left(\sum_{n} |c_{n}|^{2}\right)^{1/2} = C\| c_{n} \|_{\ell^{2}(\mathbb{C})}.
\end{equation*}
\end{lemma}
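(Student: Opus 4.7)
The plan is to prove the inequality first at the endpoint $p = 2k$ and then interpolate down to the range $2 \le p \le 2k$, since both endpoints will be easy to control directly.

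First I would dispose of $p = 2$: by the independence and mean-zero hypotheses, the terms $h_n c_n$ are orthogonal in $L^2(\Omega)$, so
\begin{equation*}
\mathbb{E}\left|\sum_n h_n c_n\right|^2 = \sum_n |c_n|^2\, \mathbb{E}|h_n|^2 \le C\|c\|_{\ell^2}^2,
\end{equation*}
where the uniform bound on $\mathbb{E}|h_n|^2$ follows from the $2k$-th moment bound via Jensen's inequality. The $p = 2k$ case is where the real work lies. Setting $S = \sum_n h_n c_n$ and expanding $\mathbb{E}|S|^{2k} = \mathbb{E}[S^k \bar S^k]$ via the multinomial theorem, every term has the shape
\begin{equation*}
\mathbb{E}\!\left[\prod_{i=1}^m h_{n_i}^{\alpha_i}\bar h_{n_i}^{\beta_i}\right] \prod_{i=1}^m c_{n_i}^{\alpha_i}\bar c_{n_i}^{\beta_i},
\end{equation*}
with distinct indices $n_1,\ldots,n_m$. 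Independence factorizes the expectation into $\prod_i \mathbb{E}[h_{n_i}^{\alpha_i}\bar h_{n_i}^{\beta_i}]$, and the mean-zero assumption forces this product to vanish unless $\alpha_i + \beta_i \ge 2$ at every site. In particular $m \le k$. Each surviving single-site expectation is bounded by a constant depending only on $k$ and $C$, using H\"older's inequality to dominate lower even moments by the $2k$-th moment.

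With only the constraints $\alpha_i+\beta_i \ge 2$ and $\sum_i(\alpha_i+\beta_i) = 2k$ on the surviving multi-indices, the sum over these terms can be bounded, after collecting combinatorial factors depending only on $k$, by
\begin{equation*}
\mathbb{E}|S|^{2k} \le C_k \left(\sum_n |c_n|^2\right)^{\!k} = C_k\|c\|_{\ell^2}^{2k}.
\end{equation*}
The key combinatorial observation is that the surviving terms are dominated termwise by the full multinomial expansion of $\bigl(\sum_n |c_n|^2\bigr)^k$ (up to a constant), because each site contributes at least a factor $|c_{n_i}|^2$ and the remaining exponents distribute the remaining weight $2k - 2m$ among at most $m$ sites. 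Taking the $2k$-th root yields $\|S\|_{L^{2k}(\Omega)} \le C\|c\|_{\ell^2}$.

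For intermediate $2 < p < 2k$, log-convexity of $L^p(\Omega)$ norms (i.e., H\"older's inequality) gives, writing $\frac{1}{p} = \frac{1-\theta}{2} + \frac{\theta}{2k}$,
\begin{equation*}
\|S\|_{L^p(\Omega)} \le \|S\|_{L^2(\Omega)}^{1-\theta}\,\|S\|_{L^{2k}(\Omega)}^{\theta} \le C\|c\|_{\ell^2},
\end{equation*}
completing the proof. The main obstacle I anticipate is the bookkeeping at the $p = 2k$ endpoint: one must show carefully that imposing $\alpha_i+\beta_i \ge 2$ at every site, together with the constraint $\sum_i(\alpha_i+\beta_i)=2k$, yields precisely the structure needed to dominate the surviving sum by $\|c\|_{\ell^2}^{2k}$ with a constant depending only on $k$. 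No distributional assumption (Gaussian, Bernoulli, etc.) is available; only the uniform bound on $\mathbb{E}|h_n|^{2k}$ is used, which is what gives the lemma its generality.
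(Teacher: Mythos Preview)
The paper does not give its own proof of this lemma; it merely states the result and attributes it to Burq and Tzvetkov \cite{BT}. Your argument---direct orthogonality at $p=2$, multinomial expansion with independence and mean zero to kill singleton indices at $p=2k$, then log-convexity of $L^p$ norms to interpolate---is correct and is precisely the standard moment-method proof of this Khintchine-type inequality, so there is nothing to compare against in the paper itself.
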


The next result is a unit-scale Bernstein's inequality for the frequency pieces $P_{k}f$. See Lemma 2.1 in L\"{u}hrmann and Mendelson \cite{LM}: 

\begin{lemma}[Bernstein unit-scale inequality]\label{bernstein}
For all $2 \le p_{1} \le p_{2} \le \infty$, there exists a constant $C = C(p_{1}, p_{2}) > 0$ such that for all $f \in L^{2}(\mathbb{R}^{2})$, $k \in \mathbb{Z}^{2}$,
\begin{equation*}
||P_{k}f||_{L^{p_{2}}(\mathbb{R}^{2})} \le C||P_{k}f||_{L^{p_{1}}(\mathbb{R}^{2})}.
\end{equation*}
\end{lemma}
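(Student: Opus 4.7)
The plan is to prove this via a standard Fourier-localization argument combined with Young's convolution inequality. The key observation is that since $\psi$ is supported in $\overline{B_{2}(0)}$, the function $P_{k}f$ has Fourier support contained in the fixed-scale ball $B_{2}(k) \subset \mathbb{R}^{2}$ for every $k \in \mathbb{Z}^{2}$. This compact Fourier localization on a \emph{unit-scale} set is exactly what drives a Bernstein-type inequality that upgrades integrability at no cost, with a constant independent of $k$.

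Concretely, I would first fix an auxiliary bump function $\tilde{\psi} \in C_{0}^{\infty}(\mathbb{R}^{2})$ with $\tilde{\psi} \equiv 1$ on $\overline{B_{2}(0)}$, so that on the Fourier side $\widehat{P_{k}f}(\xi) = \tilde{\psi}(\xi - k)\widehat{P_{k}f}(\xi)$. Taking inverse Fourier transforms, this yields the convolution identity
\begin{equation*}
P_{k}f(x) = (\eta_{k} * P_{k}f)(x), \qquad \eta_{k}(x) := e^{ik\cdot x}\,\eta(x), \qquad \eta := \mathcal{F}^{-1}[\tilde{\psi}],
\end{equation*}
where $\eta$ is a Schwartz function independent of $k$. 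Young's convolution inequality then gives
\begin{equation*}
\|P_{k}f\|_{L^{p_{2}}(\mathbb{R}^{2})} \le \|\eta_{k}\|_{L^{q}(\mathbb{R}^{2})}\,\|P_{k}f\|_{L^{p_{1}}(\mathbb{R}^{2})},
\end{equation*}
where $q \in [1,\infty]$ is determined by the Young relation $\tfrac{1}{q} = 1 + \tfrac{1}{p_{2}} - \tfrac{1}{p_{1}}$. Since $p_{1} \le p_{2}$, this exponent $q$ lies in $[1,\infty]$ as required.

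Finally, the modulation factor $e^{ik\cdot x}$ has modulus one, so $\|\eta_{k}\|_{L^{q}} = \|\eta\|_{L^{q}}$, which is finite (and independent of $k$) because $\eta$ is Schwartz. Setting $C(p_{1}, p_{2}) := \|\eta\|_{L^{q}}$ completes the argument.

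I do not anticipate any real obstacle here: the argument is a textbook Bernstein estimate adapted to the unit-scale Littlewood--Paley-type projectors $P_{k}$ from the Wiener decomposition. The only point worth emphasizing in the write-up is the uniformity in $k$, which follows automatically because translation in frequency becomes modulation in physical space and modulations are $L^{q}$-isometries.
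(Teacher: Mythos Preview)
Your proposal is correct and matches the paper's approach exactly: the paper simply remarks that the proof is a consequence of Young's convolution inequality together with the unit-scale frequency localization of $P_{k}f$, which is precisely the argument you carry out in detail. Your emphasis on uniformity in $k$ via the modulation identity $\|\eta_{k}\|_{L^{q}} = \|\eta\|_{L^{q}}$ is the right point to highlight.
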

The proof is a simple consequence of Young's convolution inequality and the localization in the frequency domain of $P_{k}f$ \cite{LM}. 

We are now ready to state and prove the averaging lemma. 
It is well-known that the randomization does not improve the regularity of the initial data almost surely. 
In particular, if $\varphi \in \mathcal{H}^{s}(\mathbb{R}^{2})$ is not in $\mathcal{H}^{s + \epsilon}(\mathbb{R}^{2})$, then $\varphi^{\omega}$ is a.s. not in $\mathcal{H}^{s + \epsilon}(\mathbb{R}^{2})$  (see L\"{u}hrmann and Mendelson \cite{LM}, Burq and Tzvetkov \cite{BT}). 
Although the randomization almost surely does not improve the regularity of the initial data, it does improve the averaging of the $L^{6}$ norms of the randomized {{free evolution solution}} in expectation. 
More precisely, we have:

%{\color{blue}{TODO: take a look at Luhrmann and Mendelson}}

\begin{lemma}[{\bf{Averaging Lemma}}]\label{averaging}
Let $u_{\varphi}^{\omega}$ be the free evolution associated with the randomization 
$\varphi^\omega = (f^\omega,g^\omega)$ of the initial data $\varphi = (f, g) \in \mathcal{H}^{s}(\mathbb{R}^{2})$,
with $-1/6 < s \le 1/2$:
\begin{equation*}
u_{\varphi}^{\omega}(t, x) = e^{-\frac{\sqrt{-\Delta}}{2}t}\left(\text{cos}\left(\frac{\sqrt{3}}{2}\sqrt{-\Delta}t\right) + \frac{1}{\sqrt{3}}\text{sin}\left(\frac{\sqrt{3}}{2}\sqrt{-\Delta}t\right)\right)(f^{\omega}) + e^{-\frac{\sqrt{-\Delta}}{2}t}\frac{\text{sin}\left(\frac{\sqrt{3}}{2}\sqrt{-\Delta}t\right)}{\frac{\sqrt{3}}{2}\sqrt{-\Delta}}(g^{\omega}).
\end{equation*}
Then, there exists a constant $C > 0$ depending only on $s$, independent of $\varphi$, such that for every $\varphi \in \mathcal{H}^{s}(\mathbb{R}^{2})$ and for all $0 < T \le 1$,
\begin{equation*}
||u_{\varphi}^{\omega}||_{L^{6}(\Omega \times [0, T] \times \mathbb{R}^{2})} \le CT^{\frac{1}{6}\left(s + \frac{1}{6}\right)} ||\varphi||_{\mathcal{H}^{s}(\mathbb{R}^{2})}.
\end{equation*}
\end{lemma}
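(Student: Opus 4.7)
The plan is to trade randomness for summability in the frequency index via the Burq–Tzvetkov probabilistic Lemma \ref{problemma}, then trade spatial $L^6$ for $L^2$ via the unit-scale Bernstein inequality in Lemma \ref{bernstein}, and finally use Plancherel together with the viscous damping factor $e^{-|\xi|t/2}$ in the free evolution to obtain a favorable time integral that, upon interpolation with the trivial bound, produces the prescribed $T$-power. Expand the randomized free evolution as
\begin{equation*}
u_{\varphi}^{\omega}(t,x) = \sum_{k\in\mathbb Z^{2}} h_{k}(\omega)\, a_{k}(t,x) + \sum_{k\in\mathbb Z^{2}} l_{k}(\omega)\, b_{k}(t,x),
\end{equation*}
where $a_{k}(t,\cdot) = e^{-\frac{\sqrt{-\Delta}}{2}t}(\cos(\tfrac{\sqrt 3}{2}\sqrt{-\Delta}\,t)+\tfrac{1}{\sqrt 3}\sin(\tfrac{\sqrt 3}{2}\sqrt{-\Delta}\,t))(P_{k}f)$ and $b_{k}(t,\cdot) = e^{-\frac{\sqrt{-\Delta}}{2}t}\tfrac{\sin(\tfrac{\sqrt 3}{2}\sqrt{-\Delta}\,t)}{\tfrac{\sqrt 3}{2}\sqrt{-\Delta}}(P_{k}g)$; each $a_{k}(t,\cdot)$ and $b_{k}(t,\cdot)$ inherits unit-scale Fourier support from $P_{k}f,P_{k}g$.

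By Fubini and the assumption \eqref{six} of uniform sixth-moment bounds, Lemma \ref{problemma} applied pointwise in $(t,x)$ gives
\begin{equation*}
\|u_{\varphi}^{\omega}(t,x)\|_{L^{6}(\Omega)} \le C\Bigl(\sum_{k}|a_{k}(t,x)|^{2}+\sum_{k}|b_{k}(t,x)|^{2}\Bigr)^{1/2},
\end{equation*}
so raising to the sixth power and integrating in $(t,x)$ followed by Minkowski's inequality (to exchange $\ell^{2}_{k}$ and $L^{6}_{t,x}$, which is allowed since $6\ge 2$) reduces the estimate to bounding
\begin{equation*}
\Bigl(\sum_{k}\|a_{k}\|_{L^{6}_{t,x}([0,T]\times\mathbb R^{2})}^{2}\Bigr)^{3}+\Bigl(\sum_{k}\|b_{k}\|_{L^{6}_{t,x}([0,T]\times\mathbb R^{2})}^{2}\Bigr)^{3}.
\end{equation*}

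Now fix $k$ and apply Bernstein (Lemma \ref{bernstein}) in the $x$-variable: $\|a_{k}(t,\cdot)\|_{L^{6}_{x}}\lesssim \|a_{k}(t,\cdot)\|_{L^{2}_{x}}$, and analogously for $b_{k}$. Plancherel together with the explicit Fourier symbol of the free evolution then yields, for $|k|\ge 2$ (so $|\xi|\sim|k|$ on the support of $\widehat{P_{k}f}$),
\begin{equation*}
\|a_{k}(t,\cdot)\|_{L^{2}_{x}} \le C e^{-|k|t/4}\|P_{k}f\|_{L^{2}},\qquad \|b_{k}(t,\cdot)\|_{L^{2}_{x}} \le C|k|^{-1}e^{-|k|t/4}\|P_{k}g\|_{L^{2}},
\end{equation*}
the extra $|k|^{-1}$ in $b_k$ coming from the $\sin$-over-$|\xi|$ factor; the finitely many low-frequency terms $|k|\le 2$ are handled trivially. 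Raising to the sixth power and integrating in time exploits the viscous decay: $\int_{0}^{T}e^{-3|k|t/2}\,dt \le \min(T,\,C|k|^{-1})\le C T^{\alpha}|k|^{\alpha-1}$ for any $\alpha\in[0,1]$. Choosing $\alpha=6s+1\in(0,1]$ when $-1/6<s\le 0$ and $\alpha=1$ when $0<s\le 1/2$, this yields $\sum_{k}\|a_{k}\|_{L^{6}_{t,x}}^{2}\le C T^{(6s+1)/3}\|f\|_{H^{s}}^{2}$ and the analogous $\sum_{k}\|b_{k}\|_{L^{6}_{t,x}}^{2}\le C T^{(6s+1)/3}\|g\|_{H^{s-1}}^{2}$ (the $|k|^{-1}$ from $b_{k}$ exactly shifts the exponent from $s$ to $s-1$). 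Cubing and taking sixth roots gives $\|u_{\varphi}^{\omega}\|_{L^{6}(\Omega\times[0,T]\times\mathbb R^{2})} \le C T^{(6s+1)/6}\|\varphi\|_{\mathcal H^{s}}$, which for $0<T\le 1$ and $-1/6<s\le 1/2$ dominates the desired $CT^{\frac{1}{6}(s+1/6)}\|\varphi\|_{\mathcal H^{s}}$ (one checks $6s+1\ge s+\tfrac 16$ iff $s\ge -1/6$).

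The main subtlety is the correct balancing in the interpolation $\min(T,|k|^{-1})\le T^{\alpha}|k|^{\alpha-1}$: the threshold $s=-1/6$ arises precisely because the exponent $\alpha$ must lie in $[0,1]$ and yet the $|k|$-power $(\alpha-1)/3$ must be absorbed by the $H^{s}$ weight $|k|^{2s}$, forcing $\alpha\ge 6s+1\ge 0$. This is the only place where the condition $s>-1/6$ is used, and it is exactly the quantitative trade-off between the viscous smoothing and the roughness of the randomized data. The rest of the argument is mechanical: Fubini, the probabilistic square-function bound, Bernstein, and Plancherel.
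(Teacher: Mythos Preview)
Your proposal is correct and follows the same overall scheme as the paper: apply the probabilistic square-function bound (Lemma~\ref{problemma}) pointwise in $(t,x)$, use Minkowski to pull the $\ell^{2}_{k}$ sum outside $L^{6}_{t,x}$, apply the unit-scale Bernstein inequality (Lemma~\ref{bernstein}) to pass from $L^{6}_{x}$ to $L^{2}_{x}$, and then exploit the viscous damping $e^{-|\xi|t/2}$ via a time integral, with a low/high frequency split. The only notable differences are cosmetic: you work at the discrete scale $|k|$ and bound the multiplier by $e^{-|k|t/4}$ on the support of $\widehat{P_{k}f}$, whereas the paper keeps the continuous variable $|\xi|$ and uses Minkowski once more to swap $L^{6}_{t}$ and $L^{2}_{\xi}$; and you choose the interpolation parameter $\alpha=6s+1$ (or $\alpha=1$ for $s>0$), which yields the sharper power $T^{s+1/6}$, while the paper chooses $\alpha=s+1/6$ in the bound $1-e^{-3x}\le C_{\alpha}x^{\alpha}$ and obtains exactly the stated $T^{\frac{1}{6}(s+1/6)}$. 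Since $T\le 1$, your bound implies theirs. One minor point: the cutoff for the ``trivial'' low-frequency block should be $|k|\le 4$ rather than $|k|\le 2$, so that $|\xi|\ge |k|/2$ genuinely holds on $\operatorname{supp}\widehat{P_{k}f}$ in the high-frequency block; this is inconsequential since only finitely many $k$ are affected.
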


\begin{proof}
First, we show the result for the case with zero initial velocity $g = 0$:
\begin{equation*}
u_{f}^{\omega}(t, x) = e^{-\frac{\sqrt{-\Delta}}{2}t} e^{i\frac{\sqrt{3}}{2}\sqrt{-\Delta}t} f^{\omega} = \sum_{k \in \mathbb{Z}^{2}} h_{k}(\omega) e^{-\frac{\sqrt{-\Delta}}{2}t} e^{i\frac{\sqrt{3}}{2}\sqrt{-\Delta}t} P_{k}f.
\end{equation*}
We recall that $(h_{k})$ are independent real random variables with
mean zero on a probability space $(\Omega,{\cal{F}},P)$, and calculate:
\begin{equation*}
||u_{f}^{\omega}||_{L^{6}(\Omega \times [0, T] \times \mathbb{R}^{2})} = ||u_{f}^{\omega}||_{L^{6}([0, T] \times \mathbb{R}^{2}; L^{6}(\Omega))}.
\end{equation*}
By the bounded sixth moment assumption on $(h_{k})$, we can apply Lemma \ref{problemma} with $k = 3$ to obtain
\begin{align}\label{avgeffects1}
||u_{f}^{\omega}||_{L^{6}(\Omega \times [0, T] \times \mathbb{R}^{2})} 
&\le C \left\| \sqrt{
\sum_{k \in \mathbb{Z}^{2}}|e^{-\frac{\sqrt{-\Delta}}{2}t} e^{i\frac{\sqrt{3}}{2}\sqrt{-\Delta}t} P_{k}f|^{2}} \ \right\|_{L^{6}([0, T] \times \mathbb{R}^{2})} 
\nonumber
\\
%&= C\left|\left|\left(\sum_{k \in \mathbb{Z}^{2}}|e^{-\frac{\sqrt{-\Delta}}{2}t} e^{i\frac{\sqrt{3}}{2}\sqrt{-\Delta}t} P_{k}f|^{2}\right) \ \right|\right|_{L^{3}([0, T] \times \mathbb{R}^{2})}^{1/2} 
%\nonumber
%\\
%&\le C \left(\sum_{k \in \mathbb{Z}^{2}} \left|\left| |e^{-\frac{\sqrt{-\Delta}}{2}t} e^{i\frac{\sqrt{3}}{2}\sqrt{-\Delta}t} P_{k}f|^{2} \right|\right|_{L^{3}([0, T] \times \mathbb{R}^{2})}\right)^{1/2} 
%\nonumber
%\\
%&\le C\left(\sum_{k \in \mathbb{Z}^{2}} \left|\left|e^{-\frac{\sqrt{-\Delta}}{2}t} e^{i\frac{\sqrt{3}}{2}\sqrt{-\Delta}t} P_{k}f \right|\right|_{L^{6}([0, T] \times \mathbb{R}^{2})}^{2} \right)^{1/2}
%\nonumber
%\\
&\le C\left(\sum_{k \in \mathbb{Z}^{2}} \left\| e^{-\frac{\sqrt{-\Delta}}{2}t} e^{i\frac{\sqrt{3}}{2}\sqrt{-\Delta}t} P_{k}f \right\|^{2}_{L^{6}((0, T]; L^{6}(\mathbb{R}^{2}))}\right)^{1/2}.
\end{align}
We now want to apply  the unit scale Bernstein inequality of Lemma \ref{bernstein},
to estimate the $L^6$-norm in space with the $L^2$-norm. 
Since $P_{k}$ commutes with the other Fourier multipliers that are acting on $f$,
we will apply the unit scale Bernstein inequality to 
$P_k \left(e^{-\frac{\sqrt{-\Delta}}{2}t} e^{i\frac{\sqrt{3}}{2}\sqrt{-\Delta}t} f\right)$.
For this purpose, as required by Lemma \ref{bernstein}, we first need to show that 
$e^{-\frac{\sqrt{-\Delta}}{2}t} e^{i\frac{\sqrt{3}}{2}\sqrt{-\Delta}t} f \in L^2(\R^2)$.
This is, indeed, the case, since:
\begin{align*}
&||e^{-\frac{\sqrt{-\Delta}}{2}t} e^{i\frac{\sqrt{3}}{2}\sqrt{-\Delta}t} f||_{L^{2}(\mathbb{R}^{2})}^{2} 
= ||e^{-\frac{|\xi|}{2}t} e^{i\frac{\sqrt{3}}{2}|\xi|t}\widehat{f}(\xi)||_{L^{2}(\mathbb{R}^{2})}^{2} 
= \int_{\mathbb{R}^{2}} e^{-|\xi|t} |\widehat{f}(\xi)|^{2} d\xi 
\\
&= \int_{\mathbb{R}^{2}} e^{-|\xi|t}(1 + |\xi|^{2})^{-s} (1 + |\xi|^{2})^{s}|\widehat{f}(\xi)|^{2} d\xi 
\le C_{s, t} \int_{\mathbb{R}^{2}} (1 + |\xi|^{2})^{s}|\widehat{f}(\xi)|^{2} d\xi = C_{s, t} ||f||_{H^{s}(\mathbb{R}^{2})}^{2},
\end{align*}
where we used the fact that for all $s \in \mathbb{R}$, $e^{-|\xi|t}(1 + |\xi|^{2})^{-s}$ is bounded whenever $t > 0$.

Therefore, we can apply the unit scale Bernstein inequality in \eqref{avgeffects1} to get:
\begin{align}\label{Bern}
C&\left(\sum_{k \in \mathbb{Z}^{2}} \left|\left|e^{-\frac{\sqrt{-\Delta}}{2}t} e^{i\frac{\sqrt{3}}{2}\sqrt{-\Delta}t} P_{k}f \right|\right|^{2}_{L^{6}((0, T]; L^{6}(\mathbb{R}^{2}))}\right)^{1/2} 
\le C \sqrt{\sum_{k \in \mathbb{Z}^{2}} \left|\left|e^{-\frac{\sqrt{-\Delta}}{2}t} e^{i\frac{\sqrt{3}}{2}\sqrt{-\Delta}t} P_{k}f \right|\right|^{2}_{L^{6}((0, T]; L^{2}(\mathbb{R}^{2}))}}
\nonumber
\\
&= C\sqrt{\sum_{k \in \mathbb{Z}^{2}} ||(1 + |\xi|^{2})^{-s/2} e^{-\frac{|\xi|}{2}t} (1 + |\xi|^{2})^{s/2} \widehat{P_{k}f}||^{2}_{L^{6}((0, T]; L^{2}(\mathbb{R}^{2}))}}.
\end{align}
By Minkowski's integral inequality {{we can ``change the order of integration''}} in \eqref{Bern} to obtain that \eqref{Bern} is bounded 
from above by:
\begin{equation}\label{avgeffects2}
\le C\left(\sum_{k \in \mathbb{Z}^{2}} ||(1 + |\xi|^{2})^{-s/2} e^{-\frac{|\xi|}{2}t} (1 + |\xi|^{2})^{s/2} \widehat{P_{k}f}||^{2}_{L^{2}(\mathbb{R}^{2}; L^{6}((0, T]))}\right)^{1/2},
\end{equation}
and we estimate the contribution in time of the exponential $e^{-\frac{|\xi|}{2}t}$ as follows:
\begin{equation*}
||e^{-\frac{|\xi|}{2}t}||_{L^{6}((0, T])} = \left(\frac{1}{3|\xi|}(1 - e^{-3|\xi|T})\right)^{1/6} = \frac{1}{(3|\xi|)^{-s}} \cdot \frac{(1- e^{-3|\xi|T})^{1/6}}{(3|\xi|)^{s + 1/6}}.
\end{equation*}
Since we can bound the exponential $1 - e^{-3x}$ in terms of monomial  $x^\alpha$, for any $\alpha \in [0,1]$, so that
$$
1 - e^{-3x} \le C_{\alpha} x^{\alpha}, \ \forall x \ge 0,
$$
we conclude that there exists a constant $C_s > 0$, such that
\begin{equation*}
||e^{-\frac{|\xi|}{2}t}||_{L^{6}((0, T])} \le C_{s} |\xi|^{s} |\xi|^{-s - \frac{1}{6}} (|\xi|T)^{\frac{1}{6}(s + \frac{1}{6})} = C_{s}|\xi|^{s} |\xi|^{-\delta(s)}T^{\frac{1}{6}\left(s + \frac{1}{6}\right)},
\end{equation*}
where we chose $0 < \alpha = s + 1/6 \le 1$ and $\delta(s) = \frac{5}{6}\left(s + \frac{1}{6}\right) > 0$. 

We now continue to estimate \eqref{avgeffects2} by separating the sum into the low frequency part $|k| \le 4$, 
and the high frequency part $|k| > 4$.
For the low frequency estimate, we note that 
\begin{equation*}
||e^{-\frac{|\xi|}{2}t}||_{L^{6}((0, T])} \le T^{1/6}.
\end{equation*}
Moreover, in the $|k| \le 4$ part, we can get rid of $(1 + |\xi|^{2})^{-s/2}$ using the support properties of $\widehat{P_{k}f}$ for $|k| \le 4$
to obtain that \eqref{avgeffects2} can be bounded from above by:
\begin{multline*}
\le \Bigg[C\sum_{|k| \le 4} ||T^{1/6} (1 + |\xi|^{2})^{s/2}\widehat{P_{k}f}||_{L^{2}(\mathbb{R}^{2})}^{2} \\
+ C\left(\sum_{|k| > 4} ||(1 + |\xi|^{2})^{-s/2}|\xi|^{s} \cdot |\xi|^{-\delta(s)} \cdot T^{\frac{1}{6}\left(s + \frac{1}{6}\right)} \cdot (1 + |\xi|^{2})^{s/2} \widehat{P_{k}f}||_{L^{2}(\mathbb{R}^{2})}^{2}\right)\Bigg]^{1/2}.
\end{multline*}
In the  $|k| > 4$ part, we have $|(1 + |\xi|^{2})^{-s/2} |\xi|^{s}| \le C$ and $|\xi|^{-\delta(s)} \le 1$ for all $\xi$ in the support of $\widehat{P_{k}f}$.
Therefore, we can further continue the estimate as follows:
\begin{equation*}
\le \left[CT^{1/3}\sum_{|k| \le 4} ||(1 + |\xi|^{2})^{s/2} \widehat{P_{k}f}||_{L^{2}(\mathbb{R}^{2})}^{2} + CT^{\frac{1}{3}\left(s + \frac{1}{6}\right)} \sum_{|k| > 4} ||(1 + |\xi|^{2})^{s/2} \widehat{P_{k}f}||^{2}_{L^{2}(\mathbb{R}^{2})}\right]^{1/2}.
\end{equation*}
Now, since $0 < 1/3(s + 1/6) \le 1/3$ for $-1/6 < s < 1/2$, and $0 < T \le 1$ we get that the above expression is bounded from above by:
\begin{equation*}
\le \left(CT^{\frac{1}{3}\left(s + \frac{1}{6}\right)} \sum_{k \in \mathbb{Z}^{2}} ||(1 + |\xi|^{2})^{s/2} \widehat{P_{k}f}||_{L^{2}(\mathbb{R}^{2})}^{2}\right)^{1/2} \le CT^{\frac{1}{6}\left(s + \frac{1}{6}\right)}||f||_{H^{s}(\mathbb{R}^{2})},
\end{equation*}
where we used \eqref{normequiv} in the last step. Thus, we have shown that
\begin{equation*}
||u_{f}^{\omega}||_{L^{6}(\Omega \times [0, T] \times \mathbb{R}^{2})} \le C T^{\frac{1}{6}\left(s + \frac{1}{6}\right)}||f||_{H^{s}(\mathbb{R}^{2})},
\end{equation*}
where $C>0$ depends only on $s$.

For the term involving $g$, the same proof still holds, with slight modifications. In particular, 
 instead of using the simple bound $\left|e^{i\frac{\sqrt{3}}{2}|\xi|t}\right| \le 1$, we have to use the bound
\begin{equation*}
\left|\frac{\text{sin}\left(\frac{\sqrt{3}}{2}|\xi|t\right)}{\frac{\sqrt{3}}{2}|\xi|}\right| \le 1_{\{|\xi| \le 1\}}(\xi) + \frac{1}{\frac{\sqrt{3}}{2}|\xi|}1_{\{|\xi| > 1\}}(\xi),
\end{equation*} 
which holds for all $0 \le t \le T \le 1$. This will give the low and high frequency estimate in the same way. 
%In the high frequency estimate, the $|\xi|$ in the denominator will combine with $|\xi|^{s}$ in the estimate for $||e^{-\frac{|\xi|}{2}t}||_{L^{6}((0, T])}$ to give a factor of $|\xi|^{s - 1}$, and of course, we will have $(1 + |\xi|^{2})^{\pm \frac{s - 1}{2}}$ instead of $(1 + |\xi|^{2})^{\pm s/2}$ since $g$ is in $H^{s - 1}$ (while $f$ was in $H^{s}$).  
Therefore, we have shown that there exists a constant $C > 0$, depending only on $s$, such that
\begin{equation*}
||u_{\varphi}^{\omega}||_{L^{6}(\Omega \times [0, T] \times \mathbb{R}^{2})} \le CT^{\frac{1}{6}\left(s + \frac{1}{6}\right)} ||\varphi||_{\mathcal{H}^{s}(\mathbb{R}^{2})}.
\end{equation*}
 
\end{proof}

%{\color{blue}{TODO: summarize where was it important that the 6-moments were bounded, and how is this related to the $L^6$-norms.}}
Crucial for the probabilistic well-posedness is the following corollary of the averaging lemma:
\begin{corollary}\label{probcor}
Let $u_{\varphi}^{\omega}$ be the free evolution associated with the randomization 
$\varphi^\omega = (f^\omega,g^\omega)$ of the initial data $\varphi = (f, g) \in \mathcal{H}^{s}(\mathbb{R}^{2})$,
with $-1/6 < s \le 1/2$ and $0 < T \le 1$:
\begin{equation*}
u_{\varphi}^{\omega}(t, x) = e^{-\frac{\sqrt{-\Delta}}{2}t}\left(\text{cos}\left(\frac{\sqrt{3}}{2}\sqrt{-\Delta}t\right) + \frac{1}{\sqrt{3}}\text{sin}\left(\frac{\sqrt{3}}{2}\sqrt{-\Delta}t\right)\right)(f^{\omega}) + e^{-\frac{\sqrt{-\Delta}}{2}t}\frac{\text{sin}\left(\frac{\sqrt{3}}{2}\sqrt{-\Delta}t\right)}{\frac{\sqrt{3}}{2}\sqrt{-\Delta}}(g^{\omega}).
\end{equation*}
Define the set
\begin{equation}\label{E}
E_{\lambda, T, \varphi} = \{\omega \in \Omega \ | \ ||u_{\varphi}^{\omega}||_{L^{6}([0, T] \times \mathbb{R}^{2})} \ge \lambda\}.
\end{equation}
Then, 
\begin{equation*}
P(E_{\lambda, T, \varphi}) \le CT^{s + \frac{1}{6}}\lambda^{-6}||\varphi||^{6}_{\mathcal{H}^{s}(\mathbb{R}^{2})},
\end{equation*}
where the constant $C$ depends only on $s$. 
\end{corollary}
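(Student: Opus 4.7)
The plan is to recognize this corollary as a direct consequence of the Averaging Lemma (Lemma~\ref{averaging}) combined with Markov's inequality. The key observation is that the $L^6$ norm in the definition of the event $E_{\lambda, T, \varphi}$ is exactly the quantity that was averaged in expectation (with respect to the probability measure) in Lemma~\ref{averaging}, since by Fubini's theorem one has the identification
\begin{equation*}
\|u_{\varphi}^{\omega}\|_{L^{6}(\Omega \times [0, T] \times \mathbb{R}^{2})}^{6} = \int_{\Omega} \|u_{\varphi}^{\omega}\|_{L^{6}([0, T] \times \mathbb{R}^{2})}^{6} \, dP(\omega) = \mathbb{E}\left[\|u_{\varphi}^{\omega}\|_{L^{6}([0, T] \times \mathbb{R}^{2})}^{6}\right].
\end{equation*}

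First, I would rewrite the event $E_{\lambda, T, \varphi}$ in a form suitable for Markov's inequality. Since raising to the $6$th power is monotone for nonnegative quantities,
\begin{equation*}
E_{\lambda, T, \varphi} = \left\{\omega \in \Omega : \|u_{\varphi}^{\omega}\|_{L^{6}([0, T] \times \mathbb{R}^{2})}^{6} \ge \lambda^{6}\right\}.
\end{equation*}
Then by Markov's inequality applied to the nonnegative random variable $\|u_{\varphi}^{\omega}\|_{L^{6}([0, T] \times \mathbb{R}^{2})}^{6}$,
\begin{equation*}
P(E_{\lambda, T, \varphi}) \le \lambda^{-6} \, \mathbb{E}\left[\|u_{\varphi}^{\omega}\|_{L^{6}([0, T] \times \mathbb{R}^{2})}^{6}\right] = \lambda^{-6} \|u_{\varphi}^{\omega}\|_{L^{6}(\Omega \times [0, T] \times \mathbb{R}^{2})}^{6}.
\end{equation*}

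Finally, I apply the Averaging Lemma~\ref{averaging}, which asserts that
$\|u_{\varphi}^{\omega}\|_{L^{6}(\Omega \times [0, T] \times \mathbb{R}^{2})} \le C T^{\frac{1}{6}(s + \frac{1}{6})} \|\varphi\|_{\mathcal{H}^{s}(\mathbb{R}^{2})}$
for a constant $C>0$ depending only on $s$. Raising this to the $6$th power yields
\begin{equation*}
\|u_{\varphi}^{\omega}\|_{L^{6}(\Omega \times [0, T] \times \mathbb{R}^{2})}^{6} \le C^{6} T^{s + \frac{1}{6}} \|\varphi\|_{\mathcal{H}^{s}(\mathbb{R}^{2})}^{6},
\end{equation*}
and combining with the Markov bound above produces the claimed estimate $P(E_{\lambda, T, \varphi}) \le C T^{s + \frac{1}{6}} \lambda^{-6} \|\varphi\|_{\mathcal{H}^{s}(\mathbb{R}^{2})}^{6}$ (after absorbing constants). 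There is no real obstacle here — the entire technical content resides in the Averaging Lemma, which has already been proved; the corollary is only the standard ``tail bound from an $L^p$ bound'' step needed to convert the expectation estimate into a probabilistic statement that can be used in the fixed-point argument for Theorems~\ref{existence} and~\ref{probwellposed}.
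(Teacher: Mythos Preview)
Your proposal is correct and matches the paper's approach exactly: the paper simply states that the corollary is a direct consequence of Chebychev's inequality and Lemma~\ref{averaging}, which is precisely the Markov/Chebyshev-plus-Averaging-Lemma argument you wrote out.
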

The proof is a direct consequence of Chebychev's inequality and Lemma~\ref{averaging}.

Notice how, because we are using probabilistic methods, Chebyshev's inequality associates the size of the solution in the $L^6$-norm
to the size of the initial data in the $H^s$-norm, with the probability of such a bound given in terms of the size of the initial data.

\subsection{Proof of probabilistic well-posedness for $-1/6 < s \le 1/2$}

We are now ready to prove the probabilistic well-posedness results stated in Theorems~\ref{existence} and \ref{probwellposed}.

\subsubsection{Proof of Theorem~\ref{existence} on existence of a unique solution}
We want to prove that for $-1/6 < s \le 1/2$, 
and for almost all $\omega \in \Omega$, there exists $T_{\omega} > 0$ such that there is a unique solution $u$ to the Cauchy problem 
of the nonlinear quintic viscous wave equation \eqref{quinticeq} with initial data $\varphi^{\omega} \in L^{2}(\Omega; \mathcal{H}^{s}(\mathbb{R}^{2}))$, where the solution belongs to the space
\begin{align*}
Z^{\omega}_{[0,T_\omega]} := e^{-\frac{\sqrt{-\Delta}}{2}t}\left(
\right.
& \left. 
\text{cos}\left(\frac{\sqrt{3}}{2}\sqrt{-\Delta}t\right) 
+ \frac{1}{\sqrt{3}}\text{sin}\left(\frac{\sqrt{3}}{2}\sqrt{-\Delta}t\right)\right)(f^{\omega}) 
+ e^{-\frac{\sqrt{-\Delta}}{2}t}\frac{\text{sin}\left(\frac{\sqrt{3}}{2}\sqrt{-\Delta}t\right)}{\frac{\sqrt{3}}{2}\sqrt{-\Delta}}(g^{\omega})
\nonumber \\
&+ C^{0}([0, T_\omega]; H^{1/2}(\mathbb{R}^{2})) \cap L^{6}([0, T_\omega]; L^{6}(\mathbb{R}^{2})).
\end{align*}
In particular, we need to show that there exists $C > 0$ such that for every $0 < T \le 1$, there is an event $\Omega_{T}$ with
\begin{equation}\label{OmegaTbound}
P(\Omega_{T}) \ge 1 - CT^{s + 1/6},
\end{equation}
such that for all $\omega \in \Omega_{T}$ and initial data $\varphi^{\omega}$,
the Cauchy problem for \eqref{quinticeq} has a unique solution in the space $Z^{\omega}_{[0,T]}$.

\begin{proof}
To prove this theorem we use a  fixed point argument. 
We look for a solution to 
\begin{equation*}
(\partial_{tt} + \sqrt{-\Delta}\partial_{t} - \Delta)u + u^{5} = 0, \ \ \ \ \ u(0) = f^{\omega}, \ \ \ \ \ \partial_{t}u(0) = g^{\omega},
\end{equation*}
by expressing the solution $u$ as the sum of the free evolution $u_{\varphi}^{\omega}$, satisfying
\begin{equation*}
(\partial_{tt} + \sqrt{-\Delta}\partial_{t} - \Delta)u_{\varphi}^{\omega}  = 0, 
\ \ \ \ \ u_{\varphi}^{\omega}(0) = f^{\omega}, \ \ \ \ \ \partial_{t}u_{\varphi}^{\omega}(0) = g^{\omega},
\end{equation*}
and the function $v := u - u_{\varphi}^{\omega}$,
which satisfies:
\begin{equation*}
(\partial_{tt} + \sqrt{-\Delta}\partial_{t} - \Delta)v = -(u_{\varphi}^{\omega} + v)^{5}, \ \ \ \ \ v(0)=0, \ \ \ \ \  \partial_{t}v(0) = 0.
\end{equation*}
A solution to this equation on $[0, T]$ is a fixed point of the map
\begin{equation}\label{FixedPointMap}
K_{\varphi}^{\omega}: v(t, \cdot) \Longrightarrow -\int_{0}^{t} e^{-\frac{\sqrt{-\Delta}}{2}(t - \tau)}\frac{\text{sin}\left(\frac{\sqrt{3}}{2}\sqrt{-\Delta}(t - \tau)\right)}{\frac{\sqrt{3}}{2}\sqrt{-\Delta}}((u_{\varphi}^{\omega} + v)^{5})(\tau, \cdot)d\tau,
\end{equation}
defined on $X_T$, where this map depends on $\omega$.
Therefore, if we can show that the map $K_{\varphi}^{\omega}$ has a fixed point $v^* \in X_T$, the solution $u$ of the nonlinear quintic 
viscous wave equation will be given by $u = u_\varphi^\omega + v^*$, where $u_\varphi^\omega$ is the free evolution.
More precisely, we need to show that for almost all $\omega \in \Omega$, there exists $T_\omega>0$, such that 
$K_{\varphi}^{\omega}$ has a fixed point $v^* \in X_{T_\omega}$.

We will show this in three steps:
\begin{enumerate}
\item First we will show that there exists $\lambda = \lambda_0 > 0$ such that
if $\omega$ is in the complement of $E_{\lambda_0, T, \varphi}$, i.e.,
$\omega \in E_{\lambda_0, T, \varphi}^{c}$, where $E_{\lambda, T, \varphi}$ is defined by \eqref{E} in
Corollary \ref{probcor}, then the mapping $K_{\varphi}^{\omega}$ is a strict contraction {{on an appropriate closed subset of $X_{T}$,}}
for arbitrary $0 < T \le 1$.
\item Then, we will show that for almost all $\omega \in \Omega$, there exists a time $T_\omega>0$, such that
$\omega \in E_{\lambda_0, T_\omega, \varphi}^{c}$. 
\end{enumerate}
Steps 1 and 2 will imply that for almost all $\omega \in \Omega$, there exists $T_\omega>0$, such that 
$u = u_\varphi^\omega + v^* \in Z^\omega_{[0,T_\omega]}$ is a solution to the quintic nonlinear viscous wave equation,
where $v^*$ is the fixed point of $K_{\varphi}^{\omega}$. {{The third step is as follows:}}

\vskip 0.2in

{{3. Finally, we show that this solution $u = u_\varphi^\omega + v^* \in Z^\omega_{[0,T_\omega]}$ is unique in $Z^\omega_{[0,T_\omega]}$.}}

\vskip 0.2in

We start by taking $\omega$ in the complement of $E_{\lambda, T, \varphi}$, i.e.,
$\omega \in E_{\lambda, T, \varphi}^{c}$, where $E_{\lambda, T, \varphi}$ is defined by \eqref{E} in
Corollary \ref{probcor}. 
Namely, $\omega$ is {{an outcome for which the initial data $\varphi^\omega$ has a free evolution solution with bounded $L^{6}([0, T] \times \R^{2})$ norm:}}
\begin{equation*}
||u_{\varphi}^{\omega}||_{L^{6}([0, T] \times \mathbb{R}^{2})} \le \lambda,
\quad {\rm with} \quad
P(E^c_{\lambda, T, \varphi}) \ge 1-\tilde{C}T^{s + \frac{1}{6}}\lambda^{-6}||\varphi||^{6}_{\mathcal{H}^{s}(\mathbb{R}^{2})}.
\end{equation*}
This holds for any $0 < T  \le 1$.

For this fixed $\omega$, we want to find $\lambda=\lambda_0>0$, such that the mapping 
$K_{\varphi}^{\omega}$ is a contraction.
We start by estimating $K_{\varphi}^{\omega}(v)$.
  By using the Strichartz estimate in Corollary \ref{Strcor}, for each $\omega \in E_{\lambda, T, \varphi}^{c}$, we have
\begin{align}\label{contraction1}
||K_{\varphi}^{\omega}(v)||_{X_{T}} 
&\le C||(u_{\varphi}^{\omega} + v)^{5}||_{L^{6/5}([0, T] \times \mathbb{R}^{2})} 
\nonumber
\\
&\le C(||u_{\varphi}^{\omega}||^{5}_{L^{6}([0, T] \times \mathbb{R}^{2})} + ||v||^{5}_{L^{6}([0, T] \times \mathbb{R}^{2})})
\le C(\lambda^{5} + ||v||^{5}_{X_{T}}).
\end{align}
Similarly, one can show that for each $\omega \in E^{c}_{\lambda, T, \varphi}$, {{the following  estimate holds}}:
\begin{align}\label{contraction2}
||K_{\varphi}^{\omega}(v) - K_{\varphi}^{\omega}(w)||_{X_{T}} 
&\le C||(u_{\varphi}^{\omega} + v)^{5} - (u_{\varphi}^{\omega} + w)^{5}||_{L^{6/5}([0, T] \times \mathbb{R}^{2})} 
\nonumber
\\
&\le C||v - w||_{X_{T}}(\lambda^{4} + ||v||^{4}_{X_{T}} + ||w||^{4}_{X_{T}}).
\end{align}
Notice that because the constant in Corollary \ref{Strcor} is independent of $0 < T \le 1$ and $F$, 
the constant $C$ in the above two inequalities is also independent of $0 < T \le 1$ and $\omega \in E^{c}_{\lambda, T, \varphi}$. 

%Estimates \eqref{contraction1} and \eqref{contraction2} will be used to argue that for $\lambda$ small enough,
%the mapping $K_{\varphi}^{\omega}$ is a contraction for all $\omega \in E^c_{\lambda, T, \varphi}$. 
%Because the constant in Corollary \ref{Strcor} is independent of $T$, the constant $C$ in the above two inequalities is also independent of $T$. 
Therefore, for $\omega \in E^{c}_{\lambda, T, \varphi}$,  for any $0 < T \le 1$,
 the map $K_{\varphi}^{\omega}$ is a strict contraction on the ball of radius $2C\lambda^{5}$ in $X_{T}$ provided that
 the following two conditions {{on $\lambda$}} hold:
 \begin{enumerate}
\item The mapping $K_{\varphi}^{\omega}$ maps the ball of radius $2C\lambda^{5}$ in $X_{T}$ into the same ball of radius $2C\lambda^{5}$ in $X_{T}$;
 from \eqref{contraction1}, {{this means that we need $\lambda$ to satisfy:}}
\begin{equation}\label{fixedpt1}
C\lambda^{5} + 2^{5}C^{6}\lambda^{25} < 2C\lambda^{5};
\end{equation}
\item The coefficient multiplying $||v - w||_{X_{T}}$ in \eqref{contraction2} is strictly less than one:
\begin{equation}\label{fixedpt2}
C\lambda^{4} + 2^{5}C^{5}\lambda^{20} < \frac{1}{2}.
\end{equation}
\end{enumerate}
One can see that this will be guaranteed if $\lambda^{20} << 1$. 

Thus, there exists a $\lambda_0 > 0$ ($\lambda_0^{20} <<1$) such that whenever we choose 
an outcome $\omega$ so that the free evolution $u_\varphi^\omega$
associated with initial data $\varphi^\omega$ lies within the ball of radius $\lambda_0$ in $L^6([0,T]\times \R^2)$, 
the mapping $K_{\varphi}^{\omega}$, defined on the closed ball of radius $2C\lambda_0^5$ in $X_T$, 
is a strict contraction (where $C$ is the constant from \eqref{contraction1} and \eqref{contraction2}, which is independent of $0 < T \le 1$). So there exists a fixed point $v^{*}$ of $K^{\omega}_{\varphi}$ in the closed ball of radius $2C\lambda_{0}^{5}$ in $X_{T}$ for every $\omega \in E^{c}_{\lambda_{0}, T, \varphi}$. 

Notice that with this argument, we have shown the last part of the statement in the theorem, which is that
there exists $C' > 0$ such that for every $0 < T \le 1$, 
there exists an event $\Omega_T:=E_{\lambda_0,T,\varphi}^c$ with
\begin{equation}\label{POmegaT}
P(E^{c}_{\lambda_{0}, T, \varphi}) \ge 1 - \tilde{C}\lambda_{0}^{-6}||\varphi||^{6}_{\mathcal{H}^{s}(\mathbb{R}^{2})} T^{s + 1/6} 
= 1 - C' T^{s + 1/6},
\end{equation}
such that for all $\omega\in\Omega_T$ {{the existence of a solution in $Z^{\omega}_{[0, T]}$ holds}},
where $\tilde{C}$ is the constant from Corollary \ref{probcor}, and $C' = \tilde{C}\lambda_{0}^{-6}||\varphi||^{6}_{\mathcal{H}^{s}(\mathbb{R}^{2})}$. By the fixed point argument above, for each $\omega \in \Omega_{T} := E^{c}_{\lambda_0, T, \varphi}$, the solution $u = u_{\varphi}^{\omega} + v^{*}$ that we constructed is unique within the class of solutions of the form $u = u^{\omega}_{\varphi} + v$ such that 
\begin{equation}\label{uniquecond}
||v||_{X_{T}} \le 2C\lambda_{0}^{5},
\end{equation}
where $C$, the constant from \eqref{contraction1} and \eqref{contraction2}, is independent of $0 < T \le 1$. 

{{We need to show more specifically that the solution $u = u^{\omega}_{\varphi} + v^{*}$ that we constructed is unique in the space of solutions $u_{f}^{\omega} + v$, where $v$ more generally is any element in $X_T$ and not just an element of $X_T$ subject to condition \eqref{uniquecond}.} To see this, consider $\omega \in \Omega_{T} := E^{c}_{\lambda_{0}, T, \varphi}$ for some $0 < T \le 1$. In particular,
\begin{equation}\label{freebound}
||u^{\omega}_{\varphi}||_{L^{6}([0, T] \times \R^{2})} \le \lambda_{0}.
\end{equation}

We know that there exists a solution $u_{1} := u^{\omega}_{\varphi} + v^{*}_{1}$ to \eqref{quinticeq} with initial data $\varphi^{\omega}$, where
\begin{equation}\label{uniquecond2}
||v^{*}_{1}||_{X_{T}} \le 2C\lambda_{0}^{5},
\end{equation}
by our previous fixed point argument. 

Assume for contradiction that there is another solution $u_{2} = u^{\omega}_{\varphi} + v^{*}_{2}$ to \eqref{quinticeq} with initial data $\varphi^{\omega}$, where $v^{*}_{2} \in X_{T}$ is different from $v^{*}_{1}$. Since both $v^{*}_{1}$ and $v^{*}_{2}$ are continuous in $H^{1/2}$ on $[0, T]$, we can define
\begin{equation}\label{uniquenessT}
{{}{T_{*} = \max\{t \in [0, T] : v^{*}_{1} = v^{*}_{2} \text{ on } [0, t] \text{ as functions in } H^{1/2}(\mathbb{R}^{2})\}.}}
\end{equation}
We use maximum instead of supremum in the definition \eqref{uniquenessT} due to continuity in $H^{1/2}$. Because any two solutions to \eqref{quinticeq} with initial data $\varphi^{\omega}$ for our given $\omega$ are unique as long as the condition \eqref{uniquecond} holds, by continuity of the norms involved and the fact that $v_{1}^{*}$ and $v_{2}^{*}$ both have zero initial data, $T_{*} > 0$. Furthermore, $T_{*} < T$ by assumption. 

Because $v_{1}^{*}$ and $v_{2}^{*}$ in $X_{T}$ are different and the $v$ component of any solution $u := u^{\omega}_{\varphi} + v$ to \eqref{quinticeq} with initial data $\varphi^{\omega}$ is unique up to condition \eqref{uniquecond}, we conclude that 
\begin{equation}\label{v2star}
||v_{2}^{*}||_{X_t} > 2C\lambda_{0}^{5} \qquad \text{ for all } t > T_{*},
\end{equation}
since $v_{1}^{*}$ satisfies \eqref{uniquecond2}. Furthermore, since $v_{1}^{*}$ and $v_{2}^{*}$ agree in $H^{1/2}(\mathbb{R}^{2})$ up to time $T_{*}$, 
\begin{equation}\label{v2star2}
||v_{2}^{*}||_{X_t} \le 2C\lambda_{0}^{5} \qquad \text{ for all } t \le T_{*}
\end{equation}
by \eqref{uniquecond2}. 

We will derive a contradiction by showing that $v_{1}^{*}$ and $v_{2}^{*}$ must agree past $T_{*}$ to at least $T_{*} + \epsilon$, for a suitably small $\epsilon > 0$. To do this, we observe that the conditions \eqref{fixedpt1} and \eqref{fixedpt2} describing the choice of $\lambda_{0}$ are ``open" conditions. More precisely, there exists $\tilde{\lambda}_{0} > \lambda_{0}$ such that $\tilde{\lambda}_{0}$ also satisfies \eqref{fixedpt1} and \eqref{fixedpt2}. As a result, the map $K^{\omega}_{\varphi}$ defined in \eqref{FixedPointMap} is a strict contraction on the ball of radius $2C\tilde{\lambda}_{0}^{5}$ in $X_{t}$ for $\omega \in E^{c}_{\tilde{\lambda}_{0}, t , \varphi}$ for any $0 < t \le 1$. 

We now observe the following crucial fact: since $\tilde{\lambda}_{0} > \lambda_{0}$, we have that for our $\omega \in \Omega_{T}$, 
\begin{equation*}
\omega \in \Omega_{T} := E^{c}_{\lambda_{0}, T , \varphi} \subset E^{c}_{\tilde{\lambda}_{0}, T, \varphi} \subset E^{c}_{\tilde{\lambda}_{0}, t, \varphi} \qquad \text{ for all } 0 < t \le T.
\end{equation*}
Hence, for our arbitrary $\omega \in \Omega_{T}$, $K^{\omega}_{\varphi}$ is a strict contraction on the ball of radius $2C\tilde{\lambda}_{0}^{5}$ in $X_{t}$ for any $0 < t \le T$. Since $2C\tilde{\lambda}_{0}^{5} > 2C\lambda_{0}^{5}$, by \eqref{v2star2} and continuity of the norm, there exists $\epsilon > 0$ such that $T_{*} + \epsilon < T$ and
\begin{equation*}
||v_{2}^{*}||_{X_{T_{*} + \epsilon}} \le 2C\tilde{\lambda}_{0}^{5}.
\end{equation*}
Since $||v_{1}^{*}||_{X_{T_{*} + \epsilon}} \le 2C\tilde{\lambda}_{0}^{5}$ by \eqref{uniquecond2}, $v_{1}^{*}$ and $v_{2}^{*}$ are two different fixed points of the strict contraction $K^{\omega}_{\varphi}$ on the ball of radius $2C\tilde{\lambda}_{0}^{5}$ in $X_{T_{*} + \epsilon}$. Here, we used the crucial fact that $T_{*} + \epsilon < T$. This gives the desired contradiction, which proves that the solution to \eqref{quinticeq} with initial data $\varphi^{\omega}$ is unique within the space $Z^{\omega}_{[0, T]}$ for $\omega \in \Omega_{T}$. 
}

We now show the second step of the proof, namely, that for {\it{almost all $\omega \in \Omega$}}, there exists $T_\omega > 0$, such that 
the existence of a unique solution $u \in Z_{[0,T_\omega]}^\omega$ holds. 
This follows from the fact that for $T = 1/n$, $n \ge 1$, 
the sets $E^{c}_{\lambda_{0}, T = 1/n, \varphi}$ increase to a probability 1 subset of $\Omega$ as $n \to \infty$, 
since $P(E^{c}_{\lambda_{0}, T = 1/n, \varphi}) \nearrow 1$ {{by \eqref{OmegaTbound}}}.
Therefore, for almost all $\omega \in \Omega$, we take $T_\omega=1/{n}$ to be determined by the first $n$ for which 
$E^{c}_{\lambda_{0}, T = 1/n, \varphi}$ includes $\omega$, and then we obtain existence of a unique solution  $u \in Z_{[0,T_\omega]}^\omega$
from the  first step of the proof.

This completes the proof.
\end{proof}

We are now in a position to prove the result on continuous dependence on $H^s$ data, stated in Theorem~\ref{probwellposed}.
The two results, Theorem~\ref{existence} and Theorem~\ref{probwellposed}, imply probabilistic well-posedness for $-1/6 < s \le 1/2$.
This is an improvement over deterministic well-posedness, as the Cauchy problem for \eqref{quinticeq} is 
ill-posed for $0 < s < s_{cr} = 1/2$. The results in these two theorems show that the critical exponent for probabilistic well-posedness
is pushed all the way down to $s^{prob}_{cr} = -1/6$, excluding $-1/6$.

\subsubsection{Proof of Theorem~\ref{probwellposed} on continuous dependence on $H^s$ data}
We want to prove that for any fixed $-1/6 < s \le 1/2$ and $\varphi \in \mathcal{H}^{s}(\mathbb{R}^{2})$,
and for any choice of $\epsilon > 0$, $0 < T \le 1$, and $0 < p < 1$, 
there exists an event $A_{\varphi, \epsilon}$ and $\delta > 0$, 
such that for any $\omega \in A_{\varphi, \epsilon}$
the Cauchy problem for the nonlinear quintic viscous wave equation with initial data $\varphi^{\omega}$
 has a  solution 
 {{$u \in C^{0}([0, T]; H^{s}(\mathbb{R}^{2}))$}} which satisfies
$${{||u||_{C^{0}([0, T];H^{s}(\mathbb{R}^{2}))} \le \epsilon}},$$
with the probability of the event $A_{\varphi, \epsilon}$ being greater {{}{than}} $p$
whenever
$$||\varphi||_{\mathcal{H}^{s}(\mathbb{R}^{2})} < \delta.$$

\begin{proof}
Fix $-1/6 < s \le 1/2$ and take $\varphi \in \mathcal{H}^{s}(\mathbb{R}^{2})$.
We want to show that for every $\epsilon > 0$, $0 < T \le 1$, and $0 < p < 1$,
we can construct an event $A_{\varphi, \epsilon}$  and 
find $\delta > 0$, such that the nonlinear quintic viscous wave equation \eqref{quinticeq} with initial data $\varphi^{\omega}$, 
where $\omega \in A_{\varphi, \epsilon}$,  has a  solution which satisfies
${{||u||_{C^{0}([0, T];H^{s}(\mathbb{R}^{2}))} \le \epsilon}}$,
with probability $P(A_{\varphi,\epsilon}) > p$,
whenever $||\varphi||_{\mathcal{H}^{s}(\mathbb{R}^{2})} < \delta.$

For this purpose,  we recall that the probabilistic solution $u$ can be written as the sum
of the free evolution $u_\varphi^\omega$ and the inhomogeneous part $v^*$. 
Therefore, the ${C^{0}([0, T];H^{s}(\mathbb{R}^{2}))}$-norm of $u$ is bounded from above 
by the ${C^{0}([0, T];H^{s}(\mathbb{R}^{2}))}$-norm of $u_\varphi^\omega$, plus the 
${C^{0}([0, T];H^{s}(\mathbb{R}^{2}))}$-norm of $v^*$. 
We would like to find  $A_{\varphi, \epsilon}$ and $\delta$ so that the 
${C^{0}([0, T];H^{s}(\mathbb{R}^{2}))}$-norms of $u_\varphi^\omega$ and $v^*$ are each bounded 
by $\epsilon/2$, with the probability of this happening being greater than $p$, whenever 
$||\varphi||_{\mathcal{H}^{s}(\mathbb{R}^{2})} < \delta.$

We start with the inhomogeneous part $v^*$. Recall that
Theorem~\ref{existence} implies that there exists $\lambda_0$ small enough such that 
for every  $\omega \in E^{c}_{\lambda_{0}, T, \varphi}$,
$K_{\varphi}^{\omega}$ is a strict contraction on the ball of radius $2C\lambda_{0}^{5}$ in $X_T$,
where $C >0$ is the constant from inequalities \eqref{contraction1} and \eqref{contraction2}.
Note that the constant $C>0$ is independent of $\varphi \in \mathcal{H}^{s}(\mathbb{R}^{2})$.
We can now choose $\lambda_0$ so small that it also satisfies:
\begin{equation}\label{probcor4}
2C\lambda_{0}^{5} < \frac{\epsilon}{2}.
\end{equation}
With this choice of $\lambda_0$, which depends on $\epsilon$, we have that for $\omega \in E^{c}_{\lambda_{0}, T, \varphi}$, there exists a solution 
$u = u^\omega_\varphi + v^*$ for which the $X_T$-norm of $v^*$ is bounded from above by $\epsilon/2$.
Moreover, the probability of this happening can be estimated from \eqref{POmegaT}. 
Namely, \eqref{POmegaT} implies that there exists a $\tilde{C} > 0$ such that for the $\lambda_0$ above, and $0<T\le 1$, 
\begin{equation}\label{event_v*}
P(E^{c}_{\lambda_{0}, T, \varphi}) \ge 1 - \tilde{C}\lambda_{0}^{-6}||\varphi||^{6}_{\mathcal{H}^{s}(\mathbb{R}^{2})}T^{s + 1/6}
> 1-\tilde{C}\lambda_{0}^{-6}\delta^6 T^{s + 1/6},
\end{equation}
where we have used the assumption that $||\varphi||_{\mathcal{H}^{s}(\mathbb{R}^{2})} < \delta.$
This will be used a bit later to determine $\delta > 0$ such that the probability of the event $A_{\varphi,\epsilon}$,
which will {{be a subset of}} $E^{c}_{\lambda_{0}, T, \varphi}$, is greater than $p$,
as required by the theorem. 
%Notice that this $\delta$ depends on $\lambda_0$, which in turn, depends on $\epsilon$. 

%\begin{equation}\label{probcor5}
%P(E_{\lambda_{0}, T, \varphi}) \le \tilde{C}\lambda_{0}^{-6}||\varphi||^{6}_{\mathcal{H}^{s}(\mathbb{R}^{2})}T^{s + 1/6}
%< \tilde{C}\lambda_{0}^{-6}\delta^6 T^{s + 1/6}.
%\end{equation}
%Here we have used the assumption that $||\varphi||_{\mathcal{H}^{s}(\mathbb{R}^{2})} < \delta.$
% Recall that on $E^{c}_{\lambda_{0}, T, \varphi}$, there exists a solution with initial condition $\varphi^{\omega}$ in the space $Z_{T}$, 
%defined in Theorem \ref{existence}.
 
 Next, we find conditions under which the $C^{0}([0, T]; H^s(\R^{2}))$ norm of the free evolution part of the solution can be made less than $\epsilon/2$.
We claim that the free evolution part of the solution with arbitrary initial data $\varphi=(f,g)$ in ${\mathcal{H}^{s}(\mathbb{R}^{2})}$, 
can be estimated as follows:
\begin{proposition}\label{free_prop}
Let $\varphi=(f,g)\in{\mathcal{H}^{s}(\mathbb{R}^{2})}$. Then, the following estimate on the free evolution associated with $\varphi$ holds:
\begin{align*}
\Bigg|\Bigg| e^{-\frac{\sqrt{-\Delta}}{2}t}\left(\text{cos}\left(\frac{\sqrt{3}}{2}\sqrt{-\Delta}t\right) \right.
&+ 
\left.\frac{1}{\sqrt{3}}\text{sin}\left(\frac{\sqrt{3}}{2}\sqrt{-\Delta}t\right)\right)(f) 
\\
&+ e^{-\frac{\sqrt{-\Delta}}{2}t}\frac{\text{sin}\left(\frac{\sqrt{3}}{2}\sqrt{-\Delta}t\right)}{\frac{\sqrt{3}}{2}\sqrt{-\Delta}}(g)\Bigg|\Bigg|_{C^{0}([0, T]; H^{s}(\mathbb{R}^{2}))} \le C_{\rm{free}}||\varphi||_{\mathcal{H}^{s}(\mathbb{R}^{2})},
\end{align*}
where $C_{\rm{free}}$ is independent of $\varphi \in \mathcal{H}^{s}(\mathbb{R}^{2})$, and depends only on $s$.
\end{proposition}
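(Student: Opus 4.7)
The plan is to work directly on the Fourier side, where the free evolution is the sum of two multiplier operators acting on $f$ and $g$ respectively, and to verify that each multiplier produces the correct regularity with a uniform-in-$t$ bound. Concretely, taking spatial Fourier transforms in \eqref{FTv}, the free evolution reads $\widehat{u_{\mathrm{free}}}(t,\xi) = m_f(\xi,t)\widehat{f}(\xi) + m_g(\xi,t)\widehat{g}(\xi)$, where
\begin{equation*}
m_f(\xi,t) = e^{-\frac{|\xi|}{2}t}\left(\cos\left(\tfrac{\sqrt{3}}{2}|\xi|t\right) + \tfrac{1}{\sqrt{3}}\sin\left(\tfrac{\sqrt{3}}{2}|\xi|t\right)\right),\quad
m_g(\xi,t) = e^{-\frac{|\xi|}{2}t}\,\frac{\sin\left(\tfrac{\sqrt{3}}{2}|\xi|t\right)}{\tfrac{\sqrt{3}}{2}|\xi|}.
\end{equation*}

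The first step is to establish the pointwise multiplier bounds that, combined with Plancherel, give the instantaneous $H^s$ estimate. For $m_f$ the damping factor $e^{-|\xi|t/2}\le 1$ and the boundedness of sine and cosine yield $|m_f(\xi,t)|\le C$ uniformly in $t\ge 0$ and $\xi$, so $\|m_f(\cdot,t)\widehat{f}\|_{L^2_\xi((1+|\xi|^2)^{s}d\xi)}\lesssim \|f\|_{H^s}$. For $m_g$, using $|\sin(a)/a|\le 1$ gives $|m_g(\xi,t)|\le t\le T\le 1$ for small $|\xi|$, and using $|\sin|\le 1$ gives $|m_g(\xi,t)|\le 2/(\sqrt{3}|\xi|)$ for large $|\xi|$. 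These two estimates combine into the single bound $|m_g(\xi,t)|\le C(1+|\xi|^2)^{-1/2}$, valid uniformly for $t\in[0,T]$ with $T\le 1$ and all $\xi\in\mathbb{R}^2$. Consequently, by Plancherel,
\begin{equation*}
\|m_g(\cdot,t)\widehat{g}\|_{L^2((1+|\xi|^2)^s d\xi)}^2 \le C\int_{\mathbb{R}^2}(1+|\xi|^2)^{s-1}|\widehat{g}(\xi)|^2\,d\xi = C\|g\|_{H^{s-1}}^2,
\end{equation*}
which gives the instantaneous bound $\|u_{\mathrm{free}}(t,\cdot)\|_{H^s}\le C_{\mathrm{free}}\,\|\varphi\|_{\mathcal{H}^s}$ with $C_{\mathrm{free}}$ independent of $\varphi$ and of $t\in[0,T]$.

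The second step is to upgrade this $L^\infty_t H^s$ estimate to a $C^0_t H^s$ estimate, i.e.\ to verify continuity in time. Given $t_0\in[0,T]$ and $t\to t_0$, one writes the difference $u_{\mathrm{free}}(t,\cdot)-u_{\mathrm{free}}(t_0,\cdot)$ on the Fourier side and observes that for each fixed $\xi$ the functions $m_f(\xi,\cdot)$ and $m_g(\xi,\cdot)$ are continuous in $t$, while the weighted integrands are dominated respectively by $C(1+|\xi|^2)^{s}|\widehat{f}(\xi)|^2$ and $C(1+|\xi|^2)^{s-1}|\widehat{g}(\xi)|^2$, both integrable. The dominated convergence theorem therefore yields $\|u_{\mathrm{free}}(t,\cdot)-u_{\mathrm{free}}(t_0,\cdot)\|_{H^s}\to 0$ as $t\to t_0$, proving $u_{\mathrm{free}}\in C^0([0,T];H^s(\mathbb{R}^2))$ with the claimed bound.

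No step here is especially delicate; the only mild subtlety is the $|\xi|$-weight in $m_g$, where one must treat the low- and high-frequency regimes separately to absorb the singular denominator $|\xi|^{-1}$ against the weaker Sobolev weight $(1+|\xi|^2)^{(s-1)/2}$ associated with $g\in H^{s-1}$. The restriction $T\le 1$ is used precisely in the low-frequency regime for $m_g$, where one bounds $t\le 1$; this matches the hypothesis of the proposition and keeps the constant $C_{\mathrm{free}}$ independent of $T\in(0,1]$.
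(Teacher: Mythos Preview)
Your proof is correct and follows essentially the same approach as the paper: both work on the Fourier side, bound the multiplier $m_f$ uniformly by a constant, and handle $m_g$ via a low/high-frequency split (using $|\sin a/a|\le 1$ with $t\le 1$ for small $|\xi|$ and $|\sin|\le 1$ with the $|\xi|^{-1}$ factor for large $|\xi|$) to gain exactly one derivative and match the $H^{s-1}$ norm of $g$. The only minor difference is that you invoke dominated convergence for the time-continuity, whereas the paper appeals to uniform continuity together with the same frequency decomposition; both arguments are standard and valid.
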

We will prove this result after we finish the proof of the main result.

 Proposition~\ref{free_prop} implies that for each fixed $\omega$, we have:
\begin{multline}\label{probcor2}
\|u_\varphi^\omega \|_{C^{0}([0, T]; H^{s}(\mathbb{R}^{2}))} =
\Bigg|\Bigg|e^{-\frac{\sqrt{-\Delta}}{2}t}\left(\text{cos}\left(\frac{\sqrt{3}}{2}\sqrt{-\Delta}t\right) + \frac{1}{\sqrt{3}}\text{sin}\left(\frac{\sqrt{3}}{2}\sqrt{-\Delta}t\right)\right)(f^\omega) \\
\qquad + e^{-\frac{\sqrt{-\Delta}}{2}t}\frac{\text{sin}\left(\frac{\sqrt{3}}{2}\sqrt{-\Delta}t\right)}{\frac{\sqrt{3}}{2}\sqrt{-\Delta}}(g^\omega)
\Bigg|\Bigg|_{C^{0}([0, T]; H^{s}(\mathbb{R}^{2}))} 
\le C_{\rm{free}}||\varphi^\omega||_{\mathcal{H}^{s}(\mathbb{R}^{2})}.
\end{multline}
%which gives an estimate of the $ C^{0}([0, T]; H^{s}(\mathbb{R}^{2}))$-norm of free evolution $u_\varphi^\omega$ in terms of $\varphi^\omega$.
To find the conditions under which $\|u_\varphi^\omega \|_{C^{0}([0, T]; H^{s}(\mathbb{R}^{2}))} \le \epsilon/2$ whenever 
$||\varphi ||_{\mathcal{H}^{s}(\mathbb{R}^{2})}$ is small (less than $\delta$), we need to associate the smallness
of $||\varphi ||_{\mathcal{H}^{s}(\mathbb{R}^{2})}$ with the smallness of $||\varphi^\omega ||_{\mathcal{H}^{s}(\mathbb{R}^{2})}$ {{in probability}}.
Indeed, we recall from Proposition~\ref{phi_omega_estimate} and \eqref{randomprop}
 that for any $\varphi \in \mathcal{H}^{s}(\mathbb{R}^{2})$, the {{$L^{2}(\Omega; \mathcal{H}^s(\R^2))$ norm of $\varphi^\omega$ is bounded by a constant times the $\mathcal{H}^s$ norm of $\varphi$ for our given randomization satisfying the conditions of Proposition~\ref{phi_omega_estimate}}}.
 Moreover, one can estimate the probability that the $\mathcal{H}^s$ norm of $\varphi^\omega$ is smaller than a given value $\alpha > 0$ 
  in terms of $\alpha$ and the size of the $\mathcal{H}^s$ norm of $\varphi$ as follows:
  \begin{equation}\label{prob_free}
P(||\varphi^{\omega}||_{\mathcal{H}^{s}(\mathbb{R}^{2})} < \alpha) >  1 - C_{\omega}^{2}\alpha^{-2}\delta^2,
\end{equation}
whenever $||\varphi||_{\mathcal{H}^{s}(\R^{2})} < \delta$. Estimate \eqref{prob_free}  is a direct consequence of Chebyshev's inequality:
\begin{equation*}
P(||\varphi^{\omega}||_{\mathcal{H}^{s}(\mathbb{R}^{2})} \ge \alpha) \le C_{\omega}^{2}\alpha^{-2}||\varphi||_{\mathcal{H}^{s}(\mathbb{R}^{2})}^{2}
< C_{\omega}^{2}\alpha^{-2}\delta^2.
\end{equation*}
Therefore, given $\epsilon > 0$, we see from \eqref{probcor2} that there exists $\alpha_0 > 0$ depending on $\epsilon$, 
where $\alpha_0$ measures the size of $||\varphi^{\omega}||_{\mathcal{H}^{s}(\mathbb{R}^{2})}$, 
such that 
$$
\|u_\varphi^\omega \|_{C^{0}([0, T]; H^{s}(\mathbb{R}^{2}))} \le \epsilon/2,
$$
whenever $||\varphi^{\omega}||_{\mathcal{H}^{s}(\mathbb{R}^{2})} < \alpha_{0}$,
where the probability of the event that $||\varphi^{\omega}||_{\mathcal{H}^{s}(\mathbb{R}^{2})} < \alpha_0$, is bounded from below
by $1 - C_{\omega}^{2}\alpha_0^{-2}\delta^2$ whenever $||\varphi||_{\mathcal{H}^{s}(\R^{2})} < \delta$,  as specified in \eqref{prob_free}.
This will be used to determine $\delta > 0$, depending on $\epsilon$ and $p$, 
such that the probability of the event $A_{\varphi,\epsilon}$,
which will be a subset of the event $\{||\varphi^{\omega}||_{\mathcal{H}^{s}(\mathbb{R}^{2})} < \alpha_0\}$, is greater than $p$.

We are now in a position where we can combine these two steps into one. We {\bf{define our event $A_{\varphi,\epsilon}$}}
to be the intersection of the events specified by \eqref{event_v*} and \eqref{prob_free}:
\begin{equation}\label{event_A}
A_{\varphi, \epsilon} := \{||\varphi^{\omega}||_{\mathcal{H}^{s}(\mathbb{R}^{2})} < \alpha_0(\epsilon)\} 
\cap E^c_{\lambda_{0}(\epsilon), T, \varphi} 
=
 \left[\{||\varphi^{\omega}||_{\mathcal{H}^{s}(\mathbb{R}^{2})} \ge \alpha_{0}(\epsilon)\} 
\cup E_{\lambda_{0}(\epsilon), T, \varphi}\right]^{c},
\end{equation}
and {\bf{choose $\delta > 0$}} so that the probability of this event is greater than $p$:
$$
P(A_{\varphi, \epsilon}) > p.
$$

\vskip 0.1in
More precisely, given $\epsilon > 0$, $0 < T \le 1$, and $0 < p < 1$, 
there exist $\lambda_0(\epsilon)$ and $\alpha_0(\epsilon)$ that define the event 
\begin{equation*}
A_{\varphi, \epsilon} := \{||\varphi^{\omega}||_{\mathcal{H}^{s}(\mathbb{R}^{2})} < \alpha_0(\epsilon)\} 
\cap E^c_{\lambda_{0}(\epsilon), T, \varphi} ,
\end{equation*}
such that for every $\omega \in A_{\varphi, \epsilon}$ there exists a solution $u$ associated with $\varphi^\omega$,
such that
$$
\|u \|_{C^{0}([0, T]; H^{s}(\mathbb{R}^{2}))} \le \epsilon.
$$
The probability of this event is associated with the size of the initial data $||\varphi||_{\mathcal{H}^{s}(\mathbb{R}^{2})}$,
and so we can choose $\delta > 0$, depending on $\epsilon$ and $p$, so that  
$$
P(A_{\varphi, \epsilon}) > p
$$
whenever
$$
||\varphi||_{\mathcal{H}^{s}(\mathbb{R}^{2})} < \delta.
$$
The choice of $\delta$ that guarantees $P(A_{\varphi, \epsilon}) > p$ whenever
$||\varphi||_{\mathcal{H}^{s}(\mathbb{R}^{2})} < \delta$  is obtained from \eqref{prob_free} and \eqref{event_v*},
and the calculation
\begin{align*}
P(A_{\varphi, \epsilon}) &> P\left(||\varphi^{\omega}||_{\mathcal{H}^{s}(\mathbb{R}^{2})} < \alpha_0(\epsilon) \right) + P(E^c_{\lambda_{0}(\epsilon), T, \varphi}) - 1
\\
& > 1 - C_{\omega}^{2}\alpha_{0}^{-2}\delta^{2} + 1 - \tilde{C}\lambda_{0}^{-6}\delta^{6}T^{s + 1/6} - 1 >p.
\end{align*}

%%%%%%%%%%%%%%%%%
\if 1 = 0
We will now use this $E^c_{\lambda_{0}, T, \varphi}$ to construct $A_{\varphi, \epsilon}$ 
with probability $P(A_{\varphi, \epsilon}) > p$, such that for all $\omega \in A_{\varphi, \epsilon}$, 
the Cauchy problem for the quintic nonlinear viscous wave equation with initial data $\varphi^{\omega}$ has a solution 
 {{$u \in C^{0}([0, T]; H^{s}(\mathbb{R}^{2}))$}} such that 
${{||u||_{C^{0}([0, T];H^{s}(\mathbb{R}^{2}))} \le \epsilon.}}$

Notice that, so far, we have a bound of $v^*$ in terms of $\epsilon$. To obtain the bound on $u$ so that 
${{||u||_{C^{0}([0, T];H^{s}(\mathbb{R}^{2}))} \le \epsilon}}$,
we also need to bound the free evolution part of the solution. 
In doing that, we will find the conditions that determine $A_{\varphi, \epsilon}$ as a subset of $E^c_{\lambda_{0}, T, \varphi}$.

We start by claiming that the free evolution part of the solution with arbitrary initial data $\varphi=(f,g)$ in ${\mathcal{H}^{s}(\mathbb{R}^{2})}$, 
can be estimated as follows:
\begin{proposition}\label{free_prop}
Let $\varphi=(f,g)\in{\mathcal{H}^{s}(\mathbb{R}^{2})}$. Then the following estimate on the free evolution associated with $\varphi$ holds:
\begin{multline*}
\Bigg|\Bigg|e^{-\frac{\sqrt{-\Delta}}{2}t}\left(\text{cos}\left(\frac{\sqrt{3}}{2}\sqrt{-\Delta}t\right) + \frac{1}{\sqrt{3}}\text{sin}\left(\frac{\sqrt{3}}{2}\sqrt{-\Delta}t\right)\right)(f) \\
+ e^{-\frac{\sqrt{-\Delta}}{2}t}\frac{\text{sin}\left(\frac{\sqrt{3}}{2}\sqrt{-\Delta}t\right)}{\frac{\sqrt{3}}{2}\sqrt{-\Delta}}(g)\Bigg|\Bigg|_{C^{0}([0, T]; H^{s}(\mathbb{R}^{2}))} \le C_{\rm{free}}||\varphi||_{\mathcal{H}^{s}(\mathbb{R}^{2})},
\end{multline*}
where $C_{\rm{free}}$ is independent of $\varphi \in \mathcal{H}^{s}(\mathbb{R}^{2})$, and only depends on $s$.
\end{proposition}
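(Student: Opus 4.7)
The approach will be purely Fourier-analytic, treating the contributions from $f$ and $g$ as separate Fourier multipliers applied to the respective Sobolev data. By linearity it suffices to bound each piece independently in $C^{0}([0,T];H^{s}(\R^{2}))$. Denote the two symbols by
\begin{equation*}
\mathcal{M}_{f}(\xi, t) = e^{-|\xi|t/2}\left(\cos\left(\frac{\sqrt{3}}{2}|\xi|t\right) + \frac{1}{\sqrt{3}}\sin\left(\frac{\sqrt{3}}{2}|\xi|t\right)\right), \qquad \mathcal{M}_{g}(\xi, t) = e^{-|\xi|t/2}\frac{\sin\left(\frac{\sqrt{3}}{2}|\xi|t\right)}{\frac{\sqrt{3}}{2}|\xi|},
\end{equation*}
so that, in view of the Fourier representation \eqref{FTv}, the two pieces of the free evolution are given by $\mathcal{F}^{-1}(\mathcal{M}_{f}\widehat{f})$ and $\mathcal{F}^{-1}(\mathcal{M}_{g}\widehat{g})$ respectively.

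For the $f$-piece the key observation will be that $e^{-|\xi|t/2} \le 1$ for all $t \ge 0$ and $|\cos y + \tfrac{1}{\sqrt{3}}\sin y| \le 1 + 1/\sqrt{3}$, so $|\mathcal{M}_{f}(\xi,t)| \le C$ uniformly in $\xi \in \R^{2}$ and $t \ge 0$. Plancherel's theorem then yields $\|\mathcal{F}^{-1}(\mathcal{M}_{f}\widehat{f})(t,\cdot)\|_{H^{s}(\R^{2})} \le C\|f\|_{H^{s}(\R^{2})}$ uniformly in $t$. For the $g$-piece the analogous bound will require showing that the weighted symbol $(1+|\xi|^{2})^{1/2}|\mathcal{M}_{g}(\xi,t)|$ is uniformly bounded, since Plancherel will then convert an $H^{s}$-bound on the evolved solution into an $H^{s-1}$-bound on $g$. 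I will split based on frequency: for $|\xi| \ge 1$ I combine $(1+|\xi|^{2})^{1/2} \le \sqrt{2}|\xi|$ with the crude bound $|\mathcal{M}_{g}| \le 2/(\sqrt{3}|\xi|)$ obtained from $|\sin| \le 1$, giving an $\xi$- and $t$-independent constant; for $|\xi| \le 1$ I use $(1+|\xi|^{2})^{1/2} \le \sqrt{2}$ together with the refined bound $|\mathcal{M}_{g}| \le t$ coming from the elementary inequality $|\sin y| \le |y|$, which removes the apparent $1/|\xi|$ singularity at the origin. In the low-frequency regime one picks up a factor of $t$, which gets absorbed into the final constant since the proposition will be applied with $0 < T \le 1$.

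With the two symbol bounds in hand, Plancherel's theorem produces the pointwise-in-$t$ estimate $\|u_{\text{free}}(t,\cdot)\|_{H^{s}(\R^{2})} \le C_{\text{free}}\|\varphi\|_{\mathcal{H}^{s}(\R^{2})}$ with $C_{\text{free}}$ depending only on $s$. The final step will be to upgrade this $L^{\infty}_{t}H^{s}_{x}$ bound to continuity in time with values in $H^{s}(\R^{2})$, which I plan to establish by a routine application of the dominated convergence theorem in Fourier space: the symbols $\mathcal{M}_{f}(\xi,\cdot)$ and $\mathcal{M}_{g}(\xi,\cdot)$ are continuous in $t$ for every fixed $\xi$, and the uniform bounds above provide integrable dominating functions against the measures $(1+|\xi|^{2})^{s}|\widehat{f}(\xi)|^{2}\, d\xi$ and $(1+|\xi|^{2})^{s-1}|\widehat{g}(\xi)|^{2}\, d\xi$. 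The main (and essentially only) obstacle is the low-frequency bookkeeping for the $g$-piece; once one recognizes the need to use $|\sin y| \le |y|$ rather than $|\sin y| \le 1$ in the regime $|\xi| \le 1$, the argument becomes almost mechanical.
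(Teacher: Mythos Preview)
Your proposal is correct and follows essentially the same approach as the paper: a uniform pointwise bound on the $f$-symbol via Plancherel, a low/high frequency split at $|\xi|=1$ for the $g$-symbol using $|\sin y|\le|y|$ near the origin and $|\sin y|\le 1$ away from it (with the restriction $0<T\le 1$ absorbing the factor of $t$), and then continuity in $t$ by a dominated-convergence/uniform-continuity argument. The paper writes out the Plancherel integral for the $g$-piece directly rather than phrasing it as a bound on the weighted symbol $(1+|\xi|^{2})^{1/2}|\mathcal{M}_{g}|$, but the computation is line-for-line the same.
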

We will prove this result after we finish the proof of the main result.

 Proposition~\ref{free_prop} then implies that for each fixed $\omega$, we have:
\begin{multline}\label{probcor2}
\|u_\varphi^\omega \|_{C^{0}([0, T]; H^{s}(\mathbb{R}^{2}))} =
\Bigg|\Bigg|e^{-\frac{\sqrt{-\Delta}}{2}t}\left(\text{cos}\left(\frac{\sqrt{3}}{2}\sqrt{-\Delta}t\right) + \frac{1}{\sqrt{3}}\text{sin}\left(\frac{\sqrt{3}}{2}\sqrt{-\Delta}t\right)\right)(f^\omega) \\
\qquad + e^{-\frac{\sqrt{-\Delta}}{2}t}\frac{\text{sin}\left(\frac{\sqrt{3}}{2}\sqrt{-\Delta}t\right)}{\frac{\sqrt{3}}{2}\sqrt{-\Delta}}(g^\omega)\Bigg|\Bigg|_{C^{0}([0, T]; H^{s}(\mathbb{R}^{2}))} \le C_{\rm{free}}||\varphi^\omega||_{\mathcal{H}^{s}(\mathbb{R}^{2})},
\end{multline}
which gives an estimate of the $ C^{0}([0, T]; H^{s}(\mathbb{R}^{2}))$-norm of free evolution $u_\varphi^\omega$ in terms of $\varphi^\omega$.

We now associate the size of $||\varphi^\omega||_{\mathcal{H}^{s}(\mathbb{R}^{2})}$ 
with the size of $||\varphi||_{\mathcal{H}^{s}(\mathbb{R}^{2})}$ as follows.
Recall from \eqref{randomprop} that for any $\varphi \in \mathcal{H}^{s}(\mathbb{R}^{2})$, the following holds:
\begin{equation}\label{probcor1}
||\varphi^{\omega}||_{L^{2}(\Omega; H^{s}(\mathbb{R}^{2}))} \le C_{\omega}||\varphi||_{\mathcal{H}^{s}(\mathbb{R}^{2})},
\end{equation}
and so, by Chebychev's inequality, we have that for any $\alpha > 0$, the probability that $||\varphi^{\omega}||_{H^{s}(\mathbb{R}^{2})}$ is large,
i.e., $P(||\varphi^{\omega}||_{H^{s}(\mathbb{R}^{2})} \ge \alpha)$
is bounded by the size of $||\varphi||_{\mathcal{H}^{s}(\mathbb{R}^{2})}$ as follows:
\begin{equation}\label{probcor3}
P(||\varphi^{\omega}||_{H^{s}(\mathbb{R}^{2})} \ge \alpha) \le C_{\omega}^{2}\alpha^{-2}||\varphi||_{\mathcal{H}^{s}(\mathbb{R}^{2})}^{2}
< C_{\omega}^{2}\alpha^{-2}\delta^2.
\end{equation}
 We can now {\bf{choose $\delta > 0$}} sufficiently small, such that the ${C^{0}([0, T]; H^{s}(\mathbb{R}^{2}))}$ norm of free evolution $u_\varphi^\omega$ 
 is less than $\epsilon/2$, and determine $A_{\varphi,\delta}$ such that all the estimates we obtained so far hold true with high probability, 
 namely, that the randomly perturbed data based on event $\omega \in A_{\varphi,\delta}$, 
 will provide well-posedness with probability greater than $p$, for arbitrary
 $0 < p < 1$.
 
 We do this in two steps. First we choose $\alpha_0$, which depends on $\epsilon$, such that 
the free evolution $u^\omega_\varphi$ associated with data $\varphi^\omega = (f^\omega,g^\omega)$ is bounded by $\epsilon/2$
whenever $||\varphi^{\omega}||_{H^{s}(\mathbb{R}^{2})} < \alpha_0$;
namely, from \eqref{probcor2} we have:
\begin{equation*}
 C_{\rm{free}} {{\alpha_0}} < \frac{\epsilon}{2}.
\end{equation*}

Then we choose $\delta > 0$, which depends on $\alpha_0$, and therefore on $\epsilon$, 
and on $\lambda_0$, which depends on $\epsilon$,
so that the probability that $||\varphi^{\omega}||_{H^{s}(\mathbb{R}^{2})} \ge \alpha_0$ and 
 $\omega \notin E^c_{\lambda_{0}, T, \varphi}$ is arbitrarily small; namely
from \eqref{probcor3} and
from \eqref{probcor5},
we get a condition on $\delta$ so that the probability of the union of the events $E^c_{\lambda_{0}, T, \varphi}$, 
and $||\varphi^{\omega}||_{H^{s}(\mathbb{R}^{2})} \ge \alpha_0$ be bounded by $1 - p$:
\begin{equation*}
C_{\omega}^{2}\alpha_{0}^{-2}\delta^{2} + \tilde{C}\lambda_{0}^{-6}\delta^{6}T^{s + 1/6} < 1 - p .
\end{equation*}

We now show that for this choice of $\delta > 0$, we can construct  $A_{\varphi, \epsilon}$ 
with the desired properties. Indeed, 
fix an arbitrary $\varphi \in \mathcal{H}^{s}(\mathbb{R}^{2})$ with $||\varphi||_{\mathcal{H}^{s}(\mathbb{R}^{2})} < \delta$. 
Define
\begin{equation*}
A_{\varphi, \delta(\epsilon)} := \left[\{||\varphi^{\omega}||_{H^{s}(\mathbb{R}^{2})} \ge \alpha_{0}(\epsilon)\} 
\cup E_{\lambda_{0}(\epsilon), T, \varphi}\right]^{c}.
\end{equation*}
From 
\begin{equation*}
P(\{||\varphi^{\omega}||_{H^{s}(\mathbb{R}^{2})} \ge \alpha_{0} \} \cup E_{\lambda_{0}, T, \varphi}) < 1 - p
\end{equation*}
we have $$P(A_{\varphi, \epsilon}) > p.$$
Now, since $A_{\varphi, \epsilon} \subset E^c_{\lambda_{0}, T, \varphi}$, there exists a solution to the quintic nonlinear viscous wave equation 
$u \in Z_T$. 
In addition, by the choice of $\lambda_0 = \lambda_0(\epsilon)$, and by the choice of $\alpha_0 = \alpha_0(\epsilon)$, we have that
$$
{{||u||_{C^{0}([0, T];H^{s}(\mathbb{R}^{2}))}}}
\le ||u_\varphi^\omega||_{C^{0}([0, T];H^{s}(\mathbb{R}^{2}))} + ||v^*||_{C^{0}([0, T];H^{s}(\mathbb{R}^{2}))} \le \epsilon.
$$
where $u = u_\varphi^\omega + v^*$.
\fi

%%%%%%%%%%%%%%%%%%%
This concludes the proof of the main part of the theorem.

\if 1 = 0
In addition, for each $\omega \in A_{\varphi, \epsilon}$, 
$$||\varphi^{\omega}||_{H^{s}(\mathbb{R}^{2})} < \alpha_{0}(\delta).$$
 So by \eqref{probcor2} and the choice of $\alpha_{0}$ so that $C_{2}\alpha_{0} < \frac{\epsilon}{2}$, the free evolution of $\varphi^{\omega}$ is in $C^{0}([0, T]; H^{s}(\mathbb{R}^{2}))$ with norm less than $\frac{\epsilon}{2}$ in this space. In addition, then there is a solution in the space $Z_{T}$ defined in the statement of Theorem \ref{existence} since $\omega \in A_{\varphi, \epsilon} \subset E_{\lambda_{0}, T, \varphi}$.

*****************************

%%%%%%%%%%%%
Let us list the relevant inequalities we will need, and denote the constants in each of the inequalities with different variables. For any $\varphi = (f, g) \in \mathcal{H}^{s}(\mathbb{R}^{2})$,
\begin{equation}\label{probcor1}
||\varphi^{\omega}||_{L^{2}(\Omega; H^{s}(\mathbb{R}^{2}))} \le C_{1}||\varphi||_{\mathcal{H}^{s}(\mathbb{R}^{2})}
\end{equation}

\fi
%%%%%%%%%%%%%%%

What remains is to prove Proposition~\ref{free_prop}.
Namely, we want to obtain the following estimate:
\begin{align*}
\Bigg|\Bigg|e^{-\frac{\sqrt{-\Delta}}{2}t}\left(\text{cos}\left(\frac{\sqrt{3}}{2}\sqrt{-\Delta}t\right) \right. 
&+ 
\left. \frac{1}{\sqrt{3}}\text{sin}\left(\frac{\sqrt{3}}{2}\sqrt{-\Delta}t\right)\right)(f) \\
&+ e^{-\frac{\sqrt{-\Delta}}{2}t}\frac{\text{sin}\left(\frac{\sqrt{3}}{2}\sqrt{-\Delta}t\right)}{\frac{\sqrt{3}}{2}\sqrt{-\Delta}}(g)\Bigg|\Bigg|_{C^{0}([0, T]; H^{s}(\mathbb{R}^{2}))} \le C_{\rm{free}}||\varphi||_{\mathcal{H}^{s}(\mathbb{R}^{2})}.
\end{align*}
This inequality follows from the fact that for $0 \le t \le T \le 1$, we have
\begin{equation*}
\left|e^{-\frac{|\xi|}{2}t}\left(\text{cos}\left(\frac{\sqrt{3}}{2}|\xi|t\right) + \frac{1}{\sqrt{3}}\text{sin}\left(\frac{\sqrt{3}}{2}|\xi|t\right)\right)\right| \le 2,
\end{equation*}
and from the following low and high frequency estimate:
\begin{multline*}
\left|\left|e^{-\frac{\sqrt{-\Delta}}{2}t}\frac{\text{sin}\left(\frac{\sqrt{3}}{2}\sqrt{-\Delta}t\right)}{\frac{\sqrt{3}}{2}\sqrt{-\Delta}}(g)\right|\right|^{2}_{H^{s}(\mathbb{R}^{2})} = \frac{1}{(2\pi)^{2}} \int_{\mathbb{R}^{2}} e^{-|\xi|t}\left(\frac{\text{sin}\left(\frac{\sqrt{3}}{2}|\xi|t\right)}{\frac{\sqrt{3}}{2}|\xi|}\right)^{2}|\widehat{g}(\xi)|^{2}(1 + |\xi|^{2})^{s} d\xi \\
\le \frac{1}{(2\pi)^{2}} \left(\int_{|\xi| \le 1} + \int_{|\xi| \ge 1} e^{-|\xi|t}\left(\frac{\text{sin}\left(\frac{\sqrt{3}}{2}|\xi|t\right)}{\frac{\sqrt{3}}{2}|\xi|}\right)^{2}|\widehat{g}(\xi)|^{2}(1 + |\xi|^{2})^{s} d\xi\right) \\
\le \frac{1}{(2\pi)^{2}}\left(\int_{|\xi| \le 1} 2t^2|\widehat{g}(\xi)|^{2}(1 + |\xi|^{2})^{s - 1} d\xi + \int_{|\xi| \ge 1} \left(\frac{2}{\sqrt{3}}\right)^{2}\left(\frac{1 + |\xi|^{2}}{|\xi|^{2}}\right)|\widehat{g}(\xi)|^{2}(1 + |\xi|^{2})^{s - 1}d\xi\right) \\
\le C_{s}\left(\int_{|\xi| \le 1} (1 + |\xi|^{2})^{s - 1}|\widehat{g}(\xi)|^{2} d\xi + \int_{|\xi| \ge 1} (1 + |\xi|^{2})^{s - 1}|\widehat{g}(\xi)|^{2} d\xi\right) = C_{\rm{free}}||g||_{H^{s - 1}(\mathbb{R}^{2})}^{2},
\end{multline*}
where in the last step, we used that $0 < t \le T \le 1$ and for $|\xi| \ge 1$, we have $\frac{1 + |\xi|^{2}}{|\xi|^{2}} \le 2$. 

Continuity in time with respect to the $H^{s}(\mathbb{R}^{2})$ norm follows similarly from uniform continuity
and a similar low and high frequency estimate.
\end{proof}

With this proof, we conclude the section in which we have shown probabilistic well-posedness for the supercritical quintic viscous wave equation,
which holds for the initial data in $H^s$, where $-1/6 < s \le s_{cr} = 1/2$.
%%%%%%%%%%%

{{}{
\begin{remark}
For concreteness, we handled the case of $p = 5$ corresponding to the nonlinear quintic viscous wave equation, since this is the smallest positive odd integer $p$ for which we get deterministic ill-posedness as described in Theorem \ref{ill_posedness}. However, one can extend these results to encompass general power nonlinearities with $p$ being a positive odd integer greater than or equal to five, for the equation
\begin{equation*}
\partial_{tt}u - \Delta u + \sqrt{-\Delta} \partial_{t}u + u^{p} = 0 \qquad \text{ on } \R^{2}.
\end{equation*}
In this case, the solution space $X_{T}$ as defined in \eqref{X_T} would change to
\begin{equation*}
X_{T} = C^{0}([0, T]; H^{1 - \frac{2}{p - 1}}) \cap L^{\frac{3}{2}(p - 1)}([0, T]; L^{\frac{3}{2}(p - 1)}(\R^{2})).
\end{equation*}
This is because we must find $(q, r)$, $(\tilde{q}, \tilde{r})$, and $s \ge 0$ satisfying the various conditions in \eqref{cond1} and \eqref{cond2}. As described in Remark \ref{quintic}, for the Strichartz estimate to work well with the power nonlinearity, we would want $\tilde{q}'$ and $\tilde{r}'$ to be $p$ times $q$ and $p$ times $r$ respectively. If we set $q = r$ for simplicity, this forces us to choose $q, r = \frac{3}{2}(p - 1)$. Then, the condition on $s$ in \eqref{cond2} forces us to choose $s = 1 - \frac{2}{p - 1}$, which we note is exactly equal to $s_{cr}$ for $n = 2$. One can carry out the remaining arguments in the section with very minor modifications to conclude a similar probabilistic well-posedness result like that of Theorem \ref{existence} for the exponents $-\frac{2}{3(p - 1)} < s \le 1 - \frac{2}{p - 1}$. 
\end{remark}
}}

\if 1 = 0
By Chebychev's inequality applied to the first inequality,
\begin{equation*}
P(||\varphi^{\omega}||_{H^{s}(\mathbb{R}^{2})} \ge \alpha) \le C_{1}^{2}\alpha^{-2}||\varphi||_{\mathcal{H}^{s}(\mathbb{R}^{2})}^{2}
\end{equation*}
for all $\alpha > 0$. 
%%%%%%%%%%%

%%%%%%%%%%%%%%%%%%%
\if 1 = 0
Consider a fixed but arbitrary choice of $\epsilon > 0$, $0 < T \le 1$, and $0 < p < 1$. Note that the constants in the mapping estimates for $K^{\omega}_{\varphi}$ in  \eqref{contraction1} and \eqref{contraction2} are independent of $\varphi \in \mathcal{H}^{s}$ for fixed $s$. Then, fix $\lambda_{0}$ sufficiently small so that $K_{\varphi}^{\omega}$ is a strict contraction on the ball of radius $2C_{3}\lambda_{0}^{5}$ for any $f \in \mathcal{H}^{s}(\mathbb{R}^{2})$, where $C_{3}$ is the constant (independent of $\varphi \in \mathcal{H}^{s}(\mathbb{R}^{2})$, and depending only on $s$) in the two inequalities \eqref{contraction1} and \eqref{contraction2} regarding $K^{\omega}_{\varphi}$, and such that
\begin{equation}\label{probcor4}
2C_{3}\lambda_{0}^{5} < \frac{\delta}{2}
\end{equation}
Now, that we have chosen $\lambda_{0}$, we have that for all $\varphi \in \mathcal{H}^{s}(\mathbb{R}^{2})$, 
\begin{equation*}
P(E^{c}_{\lambda_{0}, T, \varphi}) \ge 1 - C_{4}\lambda_{0}^{-6}||\varphi||^{6}_{\mathcal{H}^{s}(\mathbb{R}^{2})}T^{s + 1/6}
\end{equation*}
and hence
\begin{equation}\label{probcor5}
P(E_{\lambda_{0}, T, \varphi}) \le C_{4}\lambda_{0}^{-6}||\varphi||^{6}_{\mathcal{H}^{s}(\mathbb{R}^{2})}T^{s + 1/6}
\end{equation}
where $C_{4}$ is the constant from Corollary \ref{probcor}. Recall that on $E^{c}_{\lambda_{0}, T, \varphi}$, there is a solution with initial conditions $\varphi^{\omega}$ in the space $Z_{T}$ defined in Theorem \ref{existence}.
\fi
%%%%%%%%%%%%%%%%%%%%

Now, choose $\alpha_{0} > 0$ and $\delta > 0$ sufficiently small such that
\begin{equation*}
C_{2}\alpha_{0} < \frac{\epsilon}{2} \ \ \ \ \ \ \text{ because of } \eqref{probcor2}
\end{equation*}
\begin{equation*}
C_{1}^{2}\alpha_{0}^{-2}\epsilon^{2} + C_{4}\lambda_{0}^{-6}\epsilon^{6}T^{s + 1/6} < 1 - p \ \ \ \ \ \ \text{ because of } \eqref{probcor3} \text{ and } \eqref{probcor5}
\end{equation*}
Then, we need to show that for this choice of $\delta > 0$, we can construct an event $A_{\varphi, \epsilon}$ with the desired properties for each $\varphi \in \mathcal{H}^{s}(\mathbb{R}^{2})$ with $||\varphi||_{\mathcal{H}^{s}(\mathbb{R}^{2})} < \epsilon$. Fix an arbitrary $\varphi \in \mathcal{H}^{s}(\mathbb{R}^{2})$ with $||\varphi||_{\mathcal{H}^{s}(\mathbb{R}^{2})} < \epsilon$. Define
\begin{equation*}
A_{\varphi, \epsilon} := \left[\{||\varphi^{\omega}||_{H^{s}(\mathbb{R}^{2})} \ge \alpha_{0}\} \cup E_{\lambda_{0}, T, \varphi}\right]^{c}  
\end{equation*}
Note that the way we chose $\alpha_{0}$ and $\delta > 0$ implies that for $\varphi \in \mathcal{H}^{s}(\mathbb{R}^{2})$ with $||\varphi||_{\mathcal{H}^{s}(\mathbb{R}^{2})} < \delta$, 
\begin{equation*}
P(\{||\varphi^{\omega}||_{H^{s}(\mathbb{R}^{2})} \ge \alpha_{0}\} \cup E_{\lambda_{0}, T, \varphi}) < 1 - p
\end{equation*}
so that $P(A_{\varphi, \epsilon}) > p$. 

Next, we show that $A_{\varphi, \epsilon}$ has the desired properties. Since $A_{\varphi, \epsilon} \subset E_{\lambda_{0}, T, \varphi}$, there exists a solution to the quintic nonlinear viscous wave equation with initial data $\varphi^{\omega}$ for each $\omega \in A_{\varphi, \epsilon}$. In addition, for each $\omega \in A_{\varphi, \epsilon}$, $||\varphi^{\omega}||_{H^{s}(\mathbb{R}^{2})} < \alpha_{0}$. So by \eqref{probcor2} and the choice of $\alpha_{0}$ so that $C_{2}\alpha_{0} < \frac{\epsilon}{2}$, the free evolution of $\varphi^{\omega}$ is in $C^{0}([0, T]; H^{s}(\mathbb{R}^{2}))$ with norm less than $\frac{\epsilon}{2}$ in this space. In addition, then there is a solution in the space $Z_{T}$ defined in the statement of Theorem \ref{existence} since $\omega \in A_{\varphi, \epsilon} \subset E_{\lambda_{0}, T, \varphi}$.

We already showed that the first part of the description of $Z_{T}$ (the free evolution) is in $C^{0}([0, T]; H^{s}(\mathbb{R}^{2}))$ with norm less than $\frac{\epsilon}{2}$. Because the map $K_{\varphi}^{\omega}$ is a strict contraction on the ball of radius $2C_{3}\lambda_{0}^{5}$ in $X_{T}$, we have that the second part of the solution in the decomposition given in $Z_{T}$ is in $C^{0}([0, T]; H^{1/2}(\mathbb{R}^{2}))$ and hence $C([0, T]; H^{s}(\mathbb{R}^{2}))$ also, with norm less than or equal to $2C_{3}\lambda_{0}^{5} < \frac{\epsilon}{2}$. So the solution to the Cauchy problem with initial data $\varphi^{\omega}$ exists in $C([0, T]; H^{s}(\mathbb{R}^{2}))$ with norm less than $\epsilon$ for all $\omega \in A_{\epsilon, \varphi}$ with $P(A_{\epsilon, \varphi}) > p$, whenever $||\varphi||_{H^{s}(\mathbb{R}^{2})} < \delta$. 
\fi

\section{Appendix}\label{appendix}

In the appendix, we provide the proof of the local existence result that was used in the proof of deterministic ill-posedness for $0 < s < s_{cr}$ in Sec. \ref{illposed}. Specifically, we recall that this proof relied crucially on the result of Proposition \ref{closeness}, which states that the solution $\phi(t, x)$ to the initial value problem for  \eqref{IVPdispVNLWE} is close to the solution $\phi^{(0)}(t, x)$ to the initial value problem for the visco-dispersive limit \eqref{IVPdisplimit} in $H^{k}(\mathbb{R}^{n})$ norm for all visco-dispersive parameters $\nu$ sufficiently close to $0$, and for a given range of times $t$. In the proof of Proposition \ref{closeness}, we did not explicitly justify why the initial value problem in \eqref{IVPproof} indeed has a solution that exists and is unique for the times for which we perform our analysis. This is what we establish in this appendix in the following lemma.

{\bf Note on notation.} In what follows, we use multi-indices $\alpha$ in differentiation operators $\partial^{\alpha}_{x}$ to represent differentiations with respect to spatial variables and explicitly write out $\partial_{t}$ for any differentiations with respect to time. In particular, \textit{multi-indices $\alpha$ will never be used to represent any derivatives involving time}. In addition, we will use the shorthand notation $w'$ to denote the \textit{spacetime} gradient of $w$, 
\begin{equation}\label{spacetime}
w':= (\partial_t w, \partial_{x_1} w, ... , \partial_{x_n} w),
\end{equation}
which includes the derivative of $w$ with respect to each spatial variable and also the derivative of $w$ with respect to time. 

\begin{lemma}\label{existunique}
Let $k \ge n + 1$ be an integer, let $p > 1$ be a positive odd integer, and let $0 < \nu \le 1$. For initial data $(f, g) \in H^{k + 1}(\mathbb{R}^{n}) \times H^{k}(\mathbb{R}^{n})$, consider the initial value problem on $\R^{n}$,
\begin{align}\label{appendixIVP}
\partial_{tt}w - \nu^{2}\Delta w + \nu \sqrt{-\Delta} \partial_{t}w = \nu^{2}&\Delta \phi^{(0)} - G(\phi^{(0)} + w) + G(\phi^{(0)}) - \nu\sqrt{-\Delta}\partial_{t}\phi^{(0)},\\
w(0, x) &= f(x), \qquad \partial_{t}w(0, x) = g(x),
\nonumber
\end{align}
where $G(z) = z^{p}$, and $\phi^{(0)}(t, x)$, given by \eqref{displimitexpsol}, is the solution to the dispersive limit problem given by \eqref{IVPdisplimit} for fixed initial displacement $\phi_{0} \in C_{0}^{\infty}(\R^{n})$ and zero initial velocity. This initial value problem \eqref{appendixIVP} has a unique solution $(w, w')$ in $C([0, T]; H^{k + 1}(\R^{n})) \times C([0, T]; H^{k}(\R^{n}))$ for some $T > 0$ sufficiently small. Furthermore, if we let $T_{*}$ be the supremum of all such times for which this is true, then either $T_{*} = \infty$, or $T_{*}$ is finite and
\begin{equation*}
\sup_{0 \le t < T_{*}} \left(\sum_{|\alpha| \le k + 1} ||\partial^{\alpha}w(t, \cdot)||_{L^{2}(\mathbb{R}^{n})} + \sum_{|\alpha| \le k} ||\partial^{\alpha}w_{t}(t, \cdot)||_{L^{2}(\mathbb{R}^{n})}\right) = \infty
\end{equation*}
\end{lemma}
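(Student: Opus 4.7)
The plan is a Picard iteration argument yielding local existence in $C([0,T]; H^{k+1}) \times C([0,T]; H^k)$, followed by a standard blow-up continuation argument. Since $\nu > 0$ is fixed throughout the lemma, constants may depend on $\nu$ without loss.

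First I would set up the linear theory. For source $F \in L^1([0,T]; H^k(\mathbb{R}^n))$ and data $(f,g) \in H^{k+1} \times H^k$, the inhomogeneous linear problem $\partial_{tt}u - \nu^2 \Delta u + \nu\sqrt{-\Delta}\partial_t u = F$, $u(0)=f$, $\partial_t u(0) = g$, has a unique solution $(u,u') \in C([0,T]; H^{k+1}) \times C([0,T]; H^k)$. Existence follows from the Fourier representation \eqref{Fouriersoln1}, and the quantitative bound comes from combining the $\nu$-wave energy inequality $\tfrac{d}{dt} E_{\nu,k}^{1/2}(u(t)) \le C\|F(t)\|_{H^k}$ derived in the proof of Proposition \ref{closeness} with the integral identity $\partial_x^\alpha u(t) = \partial_x^\alpha f + \int_0^t \partial_x^\alpha u_t(\tau)\,d\tau$ for $|\alpha| \le k+1$.

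Next, I would run the iteration: set $w^{(0)} \equiv 0$ and let $w^{(j+1)}$ solve the linear problem with data $(f,g)$ and source
\begin{equation*}
F^{(j)} := \nu^2 \Delta \phi^{(0)} - G(\phi^{(0)} + w^{(j)}) + G(\phi^{(0)}) - \nu\sqrt{-\Delta}\,\partial_t \phi^{(0)}.
\end{equation*}
Because $\phi_0 \in C_0^\infty(\mathbb{R}^n)$ and $V$ is smooth, $\phi^{(0)}(t,\cdot)$ has compact spatial support with $H^{k+2}$-norm polynomial in $t$, so the purely $\phi^{(0)}$-dependent contributions to $F^{(j)}$ lie in $C([0,T]; H^k)$ with controlled norms. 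The nonlinear contribution is handled by the Moser-type estimate \eqref{CCTest} already proved in the paper, which is available precisely because $k \ge n+1 > n/2$ makes $H^k(\mathbb{R}^n)$ a Banach algebra embedding into $L^\infty$. Choosing $T$ small in terms of $R := \|f\|_{H^{k+1}} + \|g\|_{H^k}$ and the relevant $\phi^{(0)}$-bounds, the energy estimate closes the iteration: the $w^{(j)}$ stay uniformly bounded in $L^\infty([0,T]; H^{k+1}) \cap W^{1,\infty}([0,T]; H^k)$ by some $2R + 1$, and a Lipschitz version of the same Moser estimate makes the differences $w^{(j+1)} - w^{(j)}$ Cauchy in the weaker norm $L^\infty([0,T]; H^k) \cap W^{1,\infty}([0,T]; H^{k-1})$. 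Passing to the limit produces the desired solution, and uniqueness follows by applying the same energy estimate to the difference of two putative solutions.

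For the continuation dichotomy, let $T_*$ be the supremum of times for which such a solution exists. Assume, toward contradiction, that $T_* < \infty$ and that the quantity $M$ displayed in the statement of the lemma is finite. By the local existence step, the guaranteed time of existence for a Cauchy problem with data of size $\lesssim M$ depends only on $M$, $\nu$, $\phi_0$, and $k$; call it $\tau_M > 0$. Restarting the problem at time $T_* - \eta$ for any $0 < \eta < \tau_M$, using $(w(T_*-\eta), w_t(T_*-\eta))$ as data, yields a solution on $[T_*-\eta, T_*-\eta + \tau_M]$ which extends past $T_*$, contradicting maximality. Hence if $T_* < \infty$ then $M = \infty$, as claimed. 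The main obstacle throughout is closing the Picard iteration on a time interval whose length depends only on the size of the data (and on $\nu$, $\phi_0$): this rests entirely on the Moser-type estimate \eqref{CCTest} together with the algebra and embedding properties of $H^k(\mathbb{R}^n)$ for $k \ge n+1$, after which both existence and continuation are routine consequences of the $\nu$-wave energy inequality from the proof of Proposition \ref{closeness}.
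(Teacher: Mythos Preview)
Your outline is correct and follows the same broad strategy as the paper: a linear energy inequality, a Picard iteration closed by the Moser-type estimate \eqref{CCTest}, uniqueness via the energy estimate on differences, and the standard blow-up dichotomy. Two differences are worth noting. First, the paper proves local existence initially for Schwartz data and then passes to general $(f,g)\in H^{k+1}\times H^k$ by approximation, whereas you run the iteration directly with general data; your route is cleaner and works because the Fourier representation \eqref{Fouriersoln1} already defines the linear flow on $H^{k+1}\times H^k$. Second, you propose showing the iterates Cauchy only in the weaker norm $L^\infty_t H^k \cap W^{1,\infty}_t H^{k-1}$, as one would for a genuinely quasilinear problem. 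Here that loss of a derivative is unnecessary: since the equation is semilinear, the difference estimate \eqref{modifiedCCTest} bounds $\|G(\phi^{(0)}+w^{(j)})-G(\phi^{(0)}+w^{(j-1)})\|_{H^k}$ by $\|w^{(j)}-w^{(j-1)}\|_{H^k}$, and feeding this into the energy inequality \eqref{energyineq} controls the full $\mathcal{E}$-norm (hence $H^{k+1}\times H^k$) of $w^{(j+1)}-w^{(j)}$. The paper exploits exactly this and obtains Cauchy in $C([0,T];H^{k+1})\times C([0,T];H^k)$ directly. If you keep the weak-norm contraction, you should add a line explaining how to recover strong continuity in time at the top level $H^{k+1}$ (uniform boundedness plus convergence in $H^k$ gives only $C_w([0,T];H^{k+1})$ a priori); the simplest fix is to observe that no derivative loss occurs and upgrade to full-norm Cauchy.
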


Intuitively, this lemma says that there is a solution to the initial value problem \eqref{appendixIVP} which we can extend locally in time, as long as the $H^{k + 1} \times H^{k}$ norm of the solution at a given time is bounded.

The proof below follows standard Picard iteration and energy methods, as one can find in Sogge \cite{S} in the context of quasilinear wave equations. We divide the proof into the following steps. 

\vskip 0.1in

\noindent
{\bf Step 1: Energy Inequality}

\vskip 0.1in
The first step is to obtain an energy inequality for the problem
\begin{equation*}
w_{tt} - \nu^{2} \Delta w + \nu \sqrt{-\Delta} \partial_{t}w = F(t, x),
\end{equation*}
where we emphasize that $\nu \in (0, 1]$ is fixed. We have already done this in the proof of Proposition \ref{closeness}, so we simply tailor the result of that proof to fit our current problem. 

We first start by defining the $\nu$-wave energy of the solution $w$ by
\begin{equation}\label{appendixenergy}
E_{\nu}(w(t)) := \int_{\mathbb{R}^{2}} \frac{1}{2}|w_{t}(t, x)|^{2} + \frac{\nu^{2}}{2}|\nabla w(t, x)|^{2} dx = \frac{1}{2}\|w_{t}(t, \cdot)\|_{L^{2}}^{2} + \frac{\nu^{2}}{2}\|\nabla w(t, \cdot)\|_{L^{2}}^{2}.
\end{equation}
The corresponding energy inequality in the proof of Proposition \ref{closeness} shows that for the $k$th $\nu$-wave energy defined as
\begin{equation}\label{kenergy}
E_{\nu, k}(w(t)) = \sum_{|\alpha| \le k} E_{\nu}(\partial^{\alpha}_{x}w(t)),
\end{equation}
which includes the energy of all derivatives of order less than or equal to $k$, we have that 
\begin{equation*}
\partial_{t}(E_{\nu, k}^{1/2}(w(t))) \le C||F(t, \cdot)||_{H^{k}}.
\end{equation*}
Hence,
\begin{equation}\label{step1energyest1}
E_{\nu, k}^{1/2}(w(t)) \le E_{\nu, k}^{1/2}(w(0)) + C\int_{0}^{t} ||F(\tau, \cdot)||_{H^{k}} d\tau
\end{equation}
by integrating. 

We want to estimate our solutions to \eqref{appendixIVP} with $(w, w') \in H^{k + 1} \times H^{k}$, so in particular, 
our $k$th $\nu$-wave energy defined by \eqref{appendixenergy} and \eqref{kenergy} is missing a term that estimates $||w(t, \cdot)||_{L^{2}}$. However, we can estimate $||w(t, \cdot)||_{L^{2}}$, 
which is subsumed in $\|w(t, \cdot)\|_{H^{k}}$, by using the fundamental theorem of calculus:
\begin{equation}\label{step1energyest2}
\|w(t, \cdot)\|_{H^{k}} \le \|w(0, \cdot)\|_{H^{k}} + \int_{0}^{t} \|\partial_{t}w(\tau, \cdot)\|_{H^{k}} d\tau.
\end{equation}
Therefore, by \eqref{step1energyest1} and \eqref{step1energyest2},
\begin{equation}\label{step1energyest3}
\|w(t, \cdot)\|_{H^{k}} + E_{\nu, k}^{1/2}(w(t)) \le \|w(0, \cdot)\|_{H^{k}} + E_{\nu, k}^{1/2}(w(0)) + \int_{0}^{t} \|\partial_{t}w(\tau, \cdot)\|_{H^{k}} d\tau + C\int_{0}^{t} \|F(\tau, \cdot)\|_{H^{k}} d\tau.
\end{equation}

This expression \eqref{step1energyest3} gives the desired energy inequality, as $\|w(t, \cdot)\|_{H^{k}} + E_{\nu, k}^{1/2}(w(t))$ is the appropriate energy for estimating $(w, w')$ in $H^{k + 1} \times H^{k}$. We can rewrite the energy $\|w(t, \cdot)\|_{H^{k}} + E_{\nu, k}^{1/2}(w(t))$ in the more concise form, 
\begin{equation}\label{energydef}
\mathcal{E}(w(t)) := ||\partial_{t}w(t, \cdot)||_{H^{k}} + \sum_{|\alpha| \le 1} ||\partial^{\alpha}_{x}w(t, \cdot)||_{H^{k}},
\end{equation}
which is equivalent to $\|w(t, \cdot)\|_{H^{k}} + E_{\nu, k}^{1/2}(w(t))$ \textit{since, as we emphasize, $\nu \in (0, 1]$ is fixed}. Then, by \eqref{step1energyest3}, we conclude that for some constant $C$ independent of $w$ and $t$ (which depends on the choice of $k$ and $\nu$), 
\begin{equation*}
\mathcal{E}(w(t)) \le C\left(\mathcal{E}(w(0)) + \int_{0}^{t} \mathcal{E}(w(\tau)) d\tau + \int_{0}^{t}||F(\tau, \cdot)||_{H^{k}} d\tau\right).
\end{equation*}
Then, 
\begin{equation*}
\mathcal{E}(w(t)) \le Ce^{Ct}\left(\mathcal{E}(w(0)) + \int_{0}^{t} ||F(\tau, \cdot)||_{H^{k}} d\tau\right)
\end{equation*}
by Gronwall's inequality. By taking 
\begin{equation}\label{CT}
C_{T} := Ce^{CT}
\end{equation}
where $C_{T}$ depends on $T$, we have that for all $0 \le t \le T$, 
\begin{equation}\label{energyineq}
\mathcal{E}(w(t)) \le C_{T}\left(\mathcal{E}(w(0)) + \int_{0}^{t} ||F(\tau, \cdot)||_{H^{k}}d\tau\right),
\end{equation}
where $T$ is an arbitrary time in $0 < T < \infty$. Here, $C_{T}$ depends on $T$ and furthermore, $C_{T}$ can be chosen to be strictly increasing in $T$, as seen by the definition \eqref{CT}. 

\vskip 0.1in
\noindent
{\bf Step 2: Picard iteration and uniform boundedness of iterates}

\vskip 0.1in

Now that we have an energy inequality, the general strategy will be to show that our initial value problem has local existence for well-behaved initial data $(f, g)$. Then, we will use the energy inequality \eqref{energyineq} to extend this to more general data. 

In particular, we first assume that $f, g \in \mathcal{S}(\mathbb{R}^{n})$ and we wish to show that we have local existence. We use Picard iteration. Define the first iterate as $w_{-1} := 0$. Picard iteration then defines a sequence of solutions inductively, where we hope that the iterates converge to an actual solution. Explicitly, for $m \ge 0$, we define $w_{m}$ to be the solution to the initial value problem
\begin{align}\label{picardIVP}
\partial_{tt}w - \nu^{2} \Delta w + \nu \sqrt{-\Delta} \partial_{t}w = \nu^{2} \Delta &\phi^{(0)} - G(\phi^{(0)} + w_{m - 1}) + G(\phi^{(0)}) - \nu \sqrt{-\Delta}\partial_{t}\phi^{(0)}, \\
w(0, x) &= f(x), \qquad \partial_{t}w(0, x) = g(x).
\nonumber
\end{align}
This has a solution in all of time, as can be seen by Fourier methods, because the inhomogeneous term no longer depends on the solution $w$ anymore. So we can indeed define $w_{m}$ by this inductive procedure.

We will consider the following $m$th step energy:
\begin{equation}\label{menergydef}
\mathcal{E}(w_{m}(t)) := \|\partial_{t}w_{m}(t, \cdot)\|_{H^{k}} + \sum_{|\alpha| \le 1} \|\partial^{\alpha}_{x}w_{m}(t, \cdot)\|_{H^{k}}.
\end{equation}
where $\mathcal{E}$ is the energy in \eqref{energydef}. We want to show that there exists a time $T$ and a constant $A$ such that for all $m$, the following uniform energy estimate holds:
\begin{equation}\label{uniformbound}
\mathcal{E}(w_{m}(t)) \le A < \infty, \qquad \text{ for all } 0 \le t \le T.
\end{equation}
This is the content of Step 2. We will show this by induction. 

For the base case, estimate \eqref{uniformbound} is clearly true for $u_{-1} \equiv 0$ for any positive $A$, where we will choose $A$ later in the inductive step. To establish the inductive step, we will use the energy inequality. Before using the energy inequality, we collect some estimates from the proof of Proposition \ref{closeness} that we will need. By the previous estimates \eqref{energyest1} and \eqref{energyest3}, there exists a constant $C$ (depending on $k$ and our fixed $\nu$) such that
\begin{equation*}
\|\nu^{2} \Delta \phi^{(0)}\|_{H^{k}} \le C(1 + t)^{C},
\end{equation*}
\begin{equation*}
\|\nu \sqrt{-\Delta}\partial_{t}\phi^{(0)}\|_{H^{k}} \le C(1 + t)^{C}.
\end{equation*}
In addition since $k > 1$, we can appeal to the previous estimate \eqref{CCTest} to obtain
\begin{equation*}
\|G(\phi^{(0)} + w_{m - 1})(t) - G(\phi^{(0)})(t)\|_{H^{k}} \le C(1 + t)^{C}\|w_{m - 1}(t, \cdot)\|_{H^{k}}\left(1 + \|w_{m - 1}(t, \cdot)\|_{H^{k}}\right)^{p - 1}.
\end{equation*}

Now that we have collected all of the necessary estimates, we use the energy inequality \eqref{energyineq} to estimate $\mathcal{E}(w_{m}(t))$ as
\begin{align}\label{inductiveenergy}
\mathcal{E}(w_{m}(t)) &\le C_{T}\left(\mathcal{E}(w_{m}(0)) + \int_{0}^{t} (C(1 + s)^{C} + C(1 + s)^{C}\|w_{m - 1}(s, \cdot)\|_{H^{k}}\left(1 + \|w_{m - 1}(s, \cdot)\|_{H^{k}}\right)^{p - 1}ds\right) \nonumber \\
&\le C_{T}\left(\mathcal{E}(w_{m}(0)) + (C + CA(1 + A)^{p - 1}) \int_{0}^{t} (1 + s)^{C} ds\right),
\end{align}
where we used the inductive assumption in the second inequality. Now, we will choose $A$ to be any positive number such that 
\begin{equation}\label{Achoice}
A > C_{1}\mathcal{E}(w_{m}(0)),
\end{equation}
where $C_{1}$ is the constant $C_{T}$ for $T = 1$ defined by \eqref{CT}. This is possible because the right hand side of \eqref{Achoice} is the same for all $m$ since $w_{m}$ all have the same initial data $(f, g)$. Upon choosing $A$, we can then choose $0 < T < 1$ sufficiently small so that
\begin{equation}\label{Tchoice1}
C_{1}\left(\mathcal{E}(w_{m}(0)) + (C + CA(1 + A)^{p - 1})\int_{0}^{T}(1 + s)^{C}ds\right) < A.
\end{equation}
Then because $C_{T}$ is strictly increasing in $T$ by \eqref{CT}, we get from \eqref{inductiveenergy} and \eqref{Tchoice1} that $\mathcal{E}(w_{m}(t)) \le A$ for all $0 \le t \le T$. So for this choice of $A$ and $0 < T < 1$, we have the desired uniform bound \eqref{uniformbound} on the $m$th step energies of our iterates $w_{m}$.

\vskip 0.1in
\noindent
{\bf Step 3: Convergence to a solution}

\vskip 0.1in

Next, we show that the iterates $w_{m}$ from Step 2 form a Cauchy sequence in $C([0, T_{1}]; H^{k + 1}(\R^{n})) \cap C([0, T_{1}]; H^{k}(\R^{n}))$, for some time $0 < T_{1} < T$ where $T$ is the time chosen in Step 2 in \eqref{Tchoice1}.

First note that since $f, g \in \mathcal{S}$, the $w_{m}$ are all smooth since the inhomogeneous term is smooth and rapidly decreasing. Using the notation $\mathcal{E}$ from \eqref{energydef}, it suffices to show that for all $t \in [0, T_{1}]$, 
\begin{align}\label{cauchybound}
\mathcal{E}((w_{m} - w_{m - 1})(t)) &:=  \|\partial_{t}w_{m}(t, \cdot) - \partial_{t}w_{m - 1}(t, \cdot)\|_{H^{k}} 
\nonumber
\\
&+ \sum_{|\alpha| \le 1} \|\partial^{\alpha}_{x}w_{m}(t, \cdot) - \partial^{\alpha}_{x}w_{m - 1}(t, \cdot)\|_{H^{k}} = O(2^{-m}),
\end{align}
for some time $0 < T_{1} < T$, where $T$ is chosen in Step 2 in \eqref{Tchoice1}. Let $A$, chosen in the previous step in \eqref{Achoice}, be the constant in this $O(2^{-m})$. Thus, we claim that there exists a time $T_{1} > 0$, smaller than the $T$ from \eqref{Tchoice1}, such that
\begin{equation}\label{cauchybound2}
\mathcal{E}((w_{m} - w_{m - 1})(t)) \le A2^{-m} \qquad \text{ for all } t \in [0, T_1], m \ge 0.
\end{equation}
As before, we will show this by induction. This inequality \label{cauchybound2} is indeed true for $m = 0$ because $w_{-1} = 0$, and thus the inequality for $m = 0$ follows directly from \eqref{uniformbound}. 

For the inductive step, note that $w_{m + 1} - w_{m}$ is a solution to
\begin{align}\label{differenceIVP}
\partial_{tt}v - \nu^2 \Delta v + \nu \sqrt{-\Delta}\partial_{t}v = &-G(\phi^{(0)} + w_{m}) + G(\phi^{(0)} + w_{m - 1}),\\
v(0, x) &= 0, \qquad \partial_{t}v(0, x) = 0.
\nonumber
\end{align}
By using the same inequality on pg.~11 of Christ, Colliander, and Tao \cite{CCT} used to establish the inequality in \eqref{CCTest}, we get that
\begin{align}\label{modifiedCCTest}
&\|G(\phi^{(0)} + w_{m}) - G(\phi^{(0)} + w_{m - 1})\|_{H^{k}} \\
&\le C((1 + t)^{C} + A)^{C} \|w_{m}(t, \cdot) - w_{m - 1}(t, \cdot)\|_{H^{k}}(1 + \|w_{m}(t, \cdot) - w_{m - 1}(t, \cdot)\|_{H^{k}})^{p - 1}.
\nonumber
\end{align}

Then, by the energy inequality \eqref{energyineq}, for all $0 \le t \le T$ for $T$ as in \eqref{Tchoice1}, 
\begin{align}\label{step3ineq}
&\mathcal{E}((w_{m + 1} - w_{m})(t)) \nonumber \\
&\le C_{T}\left(C\int_{0}^{t} ((1 + s)^{C} + A)^{C}\left(1 + \mathcal{E}((w_{m} - w_{m - 1})(s))\right)^{p - 1}\mathcal{E}((w_{m} - w_{m - 1})(s)) ds\right),
\end{align}
since there is zero initial data in \eqref{differenceIVP}. Choose $0 < T_1 < T$ such that
\begin{equation}\label{T1choice}
C_{T_{1}}CT_{1}((1 + T_1)^{C} + A)^{C}(1 + A)^{p - 1} < 1/2,
\end{equation}
where $C_{T}$ is defined by \eqref{CT}. Then, for all $m \ge 0$, 
\begin{equation}\label{T1choiceresult}
C_{T_1}CT_1((1 + T_1)^{C} + A)^{C}(1 + A2^{-m})^{p - 1}A2^{-m} < A2^{-m - 1}.
\end{equation}
Then, by \eqref{step3ineq} and \eqref{T1choiceresult}, we get that
\begin{equation*}
\mathcal{E}((w_{m + 1} - w_{m})(t)) \le A2^{-(m + 1)} \qquad \text{ for all } 0 \le t \le T_1,
\end{equation*}
which establishes the inductive step. Hence, we have proved \eqref{cauchybound}. 

So $(w_{m}, w_{m}')$, where the prime denotes the spacetime gradient \eqref{spacetime},
forms a Cauchy sequence in $C([0, T_1]; H^{k + 1}) \times C([0, T_1]; H^{k})$. Therefore, $w_{m} \to w$ for some $w \in C([0, T_1]; H^{k + 1})$ and $w_{m}' \to v$ for some $v \in C([0, T_1]; H^{k})$. Hence $w_{m} \to w$ in $\mathcal{D}'([0, T_1) \times \R^{n})$ and hence $w_{m}' \to w'$ in $\mathcal{D}'([0, T_1) \times \R^{n})$ also. But by uniqueness, this means that $w' = v$. Therefore, $(w, w') \in C([0, T_1]; H^{k + 1}) \times C([0, T_1]; H^{k})$. 

Note in particular that
\begin{equation}\label{uniformboundsol}
\|\partial_{t}w(t, \cdot)\|_{H^{k}} + \sum_{|\alpha| \le 1} \|\partial^{\alpha}w(t, \cdot)\|_{H^{k}} \le A \qquad \text{ for all } 0 < t \le T_1,
\end{equation}
since this is true for each of the $w_{m}$ by the uniform bound \eqref{uniformbound} in Step 2. \textit{In addition, we emphasize that the constants $A$ and $T_1$ here depend only on the $H^{k + 1} \times H^{k}$ norm of the initial data $(f, g) \in \mathcal{S}(\R^{n}) \times \mathcal{S}(\R^{n})$.} This fact will be important in the next step. 

It remains to show that $w$, constructed as the limit of the iterates $w_{m}$, solves the original initial value problem \eqref{appendixIVP}. Because the inhomogeneous term in \eqref{appendixIVP} depends on the solution $w$ itself, we need to check that this inhomogeneous term converges appropriately as $m \to \infty$ to conclude that $w$ is a weak solution. To see this, for all $0 \le t \le T_1$, by the argument on pg.~11 in Christ, Colliander, and Tao \cite{CCT}, 
\begin{multline*}
\|G(\phi^{(0)} + w)(t) - G(\phi^{(0)} + w_{m})(t)\|_{H^{k}} \\
\le C((1 + t)^{C} + A)^{C} \|w(t, \cdot) - w_{m}(t, \cdot)\|_{H^{k}}(1 + \|w(t, \cdot) - w_{m}(t, \cdot)\|_{H^{k}})^{p - 1} \to 0,
\end{multline*}
where this convergence as $m \to \infty$ happens uniformly in $t \in [0, T_1]$ (since the convergence of $w_{m} \to w$ happens in $C([0, T_1]; H^{k + 1})$). Therefore, $G(\phi^{(0)} + w_{m}) \to G(\phi^{(0)} + w)$ in $L^{\infty}([0, T_1]; H^{k})$ and hence this convergence happens in the sense of weak convergence of distributions $\mathcal{D}'([0, T_1) \times \R^{n})$. So we conclude that $w$ is indeed a weak solution to the initial value problem \eqref{appendixIVP}.

\vskip 0.1in
\noindent
{\bf Step 4: Approximation argument for general initial data}

\vskip 0.1in

Now, we consider the initial value problem
\begin{align}\label{generalIVP}
\partial_{tt}w - \nu^{2}\Delta w + \nu \sqrt{-\Delta} \partial_{t}w = \nu^{2}&\Delta \phi^{(0)} - G(\phi^{(0)} + w) + G(\phi^{(0)}) - \nu\sqrt{-\Delta}\partial_{t}\phi^{(0)},\\
w(0, x) &= f(x), \qquad \partial_{t}w(0, x) = g(x),
\nonumber
\end{align}
for general $f, g \in H^{k + 1}(\R^{n}) \times H^{k}(\R^{n})$ and not just $f, g$ that are in $\mathcal{S}(\R^{n})$. For this more general class of initial data, we show that we still have local existence by approximating this general initial data by functions in $\mathcal{S}(\R^{n})$. 

Let $f_{m} \to f$ and $g_{m} \to g$ in $H^{k + 1}$ and $H^{k}$, where $f_{m}, g_{m} \in \mathcal{S}(\R^{n})$. By refining this sequence as necessary, 
we can choose a subsequence (which we will continue to denote by $f_m$ and $g_m$)
so that
\begin{equation*}
\|f_{m} - f_{m - 1}\|_{H^{k + 1}} \le \frac{1}{n + 2}(\max(2, C_{1}))^{-m}, \qquad \|g_{m} - g_{m - 1}\|_{H^{k}} \le \frac{1}{n + 2}(\max(2, C_{1}))^{-m},
\end{equation*}
where $C_{1}$ is the constant $C_{T}$ \eqref{CT} for $T = 1$ from the energy inequality \eqref{energyineq}. Since the norms of $f_{m}$ and $g_{m}$ are uniformly bounded in $H^{k + 1}$ and $H^{k}$ respectively, from our previous step, there exist uniform constants $A$ and 
\begin{equation}\label{T2}
0 < T_2 < 1, 
\end{equation}
(since the norms of $(f_{m}, g_{m}) \in H^{k + 1} \times H^{k}$ are all uniformly bonded in $m$, see the remark immediately following \eqref{uniformboundsol}) such that:
\begin{itemize}
\item There exists a solution $(w_{m}, w_{m}') \in C([0, T_2]; H^{k + 1}) \times C([0, T_2]; H^{k})$ (with $T_2$ uniform) to the initial value problem 
\begin{align}\label{step4ivp}
\partial_{tt}w - \nu^{2}\Delta w + \nu \sqrt{-\Delta} \partial_{t}w = \nu^{2}&\Delta \phi^{(0)} - G(\phi^{(0)} + w) + G(\phi^{(0)}) - \nu\sqrt{-\Delta}\partial_{t}\phi^{(0)},\\
w(0, x) &= f_{m}(x), \qquad \partial_{t}w(0, x) = g_{m}(x).
\nonumber
\end{align}
\item The solutions $w_{m}$ satisfy the uniform energy bound
\begin{equation}\label{umbound}
\|\partial_{t}w_{m}(t, \cdot)\|_{H^{k}} + \sum_{|\alpha| \le 1} \|\partial^{\alpha}_{x}w_{m}(t, \cdot)\|_{H^{k}} \le A \qquad \text{ for all } 0 \le t \le T_2,
\end{equation}
by \eqref{uniformboundsol}.
\end{itemize}
We want to show that 
\begin{equation}\label{step4claim}
(w_{m}, w_{m}') \text{ is Cauchy in } C([0, T_3]; H^{k + 1}) \times C([0, T_3]; H^{k}) \text{ for some time } 0 < T_3 < T_2.
\end{equation}

To do this, consider 
\begin{equation}\label{step4energy}
\mathcal{E}((w_{m} - w_{m - 1})(t)) := \|\partial_{t} w_{m}(t, \cdot) - \partial_{t} w_{m - 1}(t, \cdot)\|_{H^{k}} + \sum_{|\alpha| \le 1} \|\partial^{\alpha}_{x}w_{m}(t, \cdot) - \partial^{\alpha}_{x}w_{m - 1}(t, \cdot)\|_{H^{k}}.
\end{equation}
Here, $\mathcal{E}$ is the energy from \eqref{energydef} and $w_{m}$ is defined by \eqref{step4ivp}. Then, $w_{m} - w_{m - 1}$ satisfies the initial value problem
\begin{align}\label{step4ivp2}
v_{tt} - \nu^{2} \Delta v + \nu \sqrt{-\Delta}\partial_{t}v = -G(&\phi^{(0)} + w_{m}) + G(\phi^{(0)} + w_{m - 1}),\\
v(0, x) = f_{m}(x) - f_{m - 1}(x), \ \ \partial_{t}&v(0, x) = g_{m}(x) - g_{m - 1}(x),
\nonumber
\end{align}
where we recall that by the choice of our subsequences $f_{m}$ and $g_{m}$, 
\begin{equation}\label{subseqchoice}
\|f_{m} - f_{m - 1}\|_{H^{k + 1}} \le \frac{1}{n + 2}\rho^{-m}, \qquad \|g_{m} - g_{m - 1}\|_{H^{k}} \le \frac{1}{n + 2}\rho^{-m}, \quad \text{ where } \rho := \max(2, C_{1}).
\end{equation}

We claim that for some $T_{3}$ such that $0 < T_{3} < T_{2}$, where $T_{2}$ satisfies the conditions in \eqref{step4ivp} and \eqref{umbound},
\begin{equation}\label{step4claim}
\mathcal{E}((w_{m} - w_{m - 1})(t)) \le \rho^{-m + 1} \ \ \ \ \ \text{ for all } m \ge 2, \ \ 0 \le t \le T_3.
\end{equation}
We show this by using a bootstrap argument.

\if 1 = 0

We first consider $m = 2$ before proceeding to general $m$, which requires a bootstrap argument. 

For $m = 2$, recall that $w_{2} - w_{1} \in C([0, T_2]; H^{k + 1})$. We calculate that 
\begin{equation*}
\mathcal{E}((w_{2} - w_{1})(0)) \le \rho^{-2},
\end{equation*}
since we are in $\R^{n}$ and hence the sum over $|\alpha| \le 1$ in \eqref{step4energy} has $n + 1$ terms. So by the way we chose $f_{m}$ and $g_{m}$ in \eqref{subseqchoice} and the continuity of $w_{2} - w_{1}$, for some $0 < T_{2}' < T_{2}$,
\begin{equation}\label{T2primechoice}
\mathcal{E}((w_{2} - w_{1})(t)) \le \rho^{-1} \qquad \text{ for all } t \in [0, T_{2}'],
\end{equation}
as $\rho^{-2} < \rho^{-1}$.

\fi 

We consider $m \ge 2$. By applying the energy inequality \eqref{energyineq}, the uniform bound \eqref{umbound} on the $w_{m}$, 
and the previous estimate \eqref{modifiedCCTest} to the initial value problem in \eqref{step4ivp2}, 
\begin{align*}
\mathcal{E}((w_{m} - w_{m - 1})(t)) &\le C_{T_2}\bigg(\mathcal{E}((w_{m} - w_{m - 1})(0) \\
&\left. + C\int_{0}^{t} ((1 + s)^{C} + A)^{C}(1 + \mathcal{E}((w_{m} - w_{m - 1})(s)))^{p - 1}\mathcal{E}((w_{m} - w_{m - 1})(s)) ds\right),
\end{align*}
for all $0 \le t \le T_2$ for $T_2$ defined as in \eqref{step4ivp} and \eqref{umbound}. Note that by the way we chose $f_{m}$ and $g_{m}$ in \eqref{subseqchoice}, we have $\mathcal{E}((w_{m} - w_{m - 1})(0)) \le \rho^{-m}$. Therefore, for all $0 \le t \le T_2$,
\begin{align*}
&\mathcal{E}((w_{m} - w_{m - 1})(t)) \\
&\le C_{T_2}\left(\rho^{-m} + C\int_{0}^{t} ((1 + s)^{C} + A)^{C}(1 + \mathcal{E}((w_{m} - w_{m - 1})(s)))^{p - 1}\mathcal{E}((w_{m} - w_{m - 1})(s)) ds\right).
\end{align*}

Let us make the \textit{bootstrap assumption} that
\begin{equation}\label{bootstrap}
\mathcal{E}((w_{m} - w_{m - 1})(t)) \le 1 \qquad \text{ for all } 0 \le t \le T_2',
\end{equation}
where $T_2' < T_2$ will be chosen later (\textit{independently} of $m$). Then, for $t \in [0, T_{2}']$, 
\begin{equation*}
\mathcal{E}((w_{m} - w_{m - 1})(t)) \le C_{T_2'}\left(\rho^{-m} + C\int_{0}^{t} ((1 + s)^{C} + A)^{C}2^{p - 1}\mathcal{E}((w_{m} - w_{m - 1})(s)) ds\right).
\end{equation*}
After consolidating constants and using the fact that $0 < T_{2}' < T_2 < 1$ (see \eqref{T2}),
\begin{equation*}
\mathcal{E}((w_{m} - w_{m - 1})(t)) \le C_{T_{2}'}\left(\rho^{-m} + C'\int_{0}^{t} \mathcal{E}((w_{m} - w_{m - 1})(s)) ds\right), \quad \text{ for all } t \in [0, T_{2}'].
\end{equation*}
Using Gronwall's inequality, we get that for $t \in [0, T_{2}']$:
\begin{equation*}
\mathcal{E}((w_{m} - w_{m - 1})(t)) \le C_{T_2'}\rho^{-m}\exp(C'C_{T_2'}t).
\end{equation*}
Then, we can choose the $T_{2}' < T_2$ appearing in the bootstrap assumption \eqref{bootstrap} so that
\begin{equation*}
C_{T_{2}'}\exp(C'C_{T_{2}'}T_{2}') \le \rho = \max(2, C_{1}).
\end{equation*}
This is possible because the constant $C_{T}$ \eqref{CT} in the energy inequality \eqref{energyineq} is strictly increasing in $T$. Thus, we get that 
\begin{equation}\label{step4inductiveresult}
\mathcal{E}((w_{m} - w_{m - 1})(t)) \le \rho^{-m + 1} \qquad \text{ for all } t \in [0, T_{2}'].
\end{equation}
Since $m \ge 2$ and $\rho \ge 2$, this also closes the bootstrap assumption \eqref{bootstrap}, since we have shown that if $\mathcal{E}((w_{m} - w_{m - 1})(t)) \le 1$ for all $t \in [0, T_{2}']$, then
\begin{equation*}
\mathcal{E}((w_{m} - w_{m - 1})(t)) \le \rho^{-m + 1} \le \rho^{-1} \le \frac{1}{2} \qquad \text{ for all } t \in [0, T_{2}'].
\end{equation*}
In addition, since $\mathcal{E}((w_{m} - w_{m - 1})(0)) \le \rho^{-m}$, the bootstrap assumption \eqref{bootstrap} is true at $t = 0$. Note that indeed, $\mathcal{E}((w_{m} - w_{m - 1})(t))$ is continuous on $[0, T_2']$, since we showed in the previous step that the solutions $(w_{m}, w_{m}')$ are in $C([0, T_2]; H^{k + 1}) \times C([0, T_2]; H^{k})$.

So we have established the claim \eqref{step4claim} for $T_{3} := T_{2}'$. Therefore, $(w_{m}, w_{m}')$ is Cauchy in $C([0, T_3]; H^{k + 1}) \times C([0, T_3]; H^{k})$. By the same argument in the previous step, there exists $w \in C([0, T_3]; H^{k + 1})$ with $w' \in C([0, T_3]; H^{k})$ such that $(w_{m}, w_{m}') \to (w, w')$ in the sense of weak convergence of distributions. 

To show that the $w$ we have constructed is indeed a weak solution to our desired initial value problem \eqref{generalIVP}, we again must show appropriate convergence of the inhomogeneous terms which depend on the solution $w$. As before, an argument similar to that on pg.~11 in Christ, Colliander, and Tao \cite{CCT} shows that for all $0 \le t \le T_3$,
\begin{multline*}
\|G(\phi^{(0)} + w)(t) - G(\phi^{(0)} + w_{m})(t)\|_{H^{k}} \\
\le C((1 + t)^{C} + A)^{C} ||w(t, \cdot) - w_{m}(t, \cdot)||_{H^{k}}(1 + \|w(t, \cdot) - w_{m}(t, \cdot)\|_{H^{k}})^{p - 1} \to 0
\end{multline*}
uniformly in $t \in [0, T_3]$ (since the convergence of $w_{m} \to w$ happens in $C([0, T_3]; H^{k + 1})$) as $m \to \infty$, where we used the uniform bound on $w_{m}$ in \eqref{umbound}. So $G(\phi^{(0)} + w_{m}) \to G(\phi^{(0)} + w)$ in $L^{\infty}([0, T_3]; H^{k})$. Finally, since $f_{m} \to f$ and $g_{m} \to g$ in $H^{k + 1}$ and $H^{k}$ respectively, this convergence of the initial data happens in the sense of distributions on $\mathbb{R}^{n}$ also. 

Therefore, 
\begin{equation}\label{T3}
(w, w') \in C([0, T_3]; H^{k + 1}) \times C([0, T_3]; H^{k})
\end{equation}
is indeed a weak solution to the given initial value problem with general initial data $(f, g) \in H^{k + 1} \times H^{k}$. 

\vskip 0.1in
\noindent
{\bf Step 5: Uniqueness of solution}

\vskip 0.1in

Next, we show that the solution $(w, w')$ in $C([0, T]; H^{k + 1}) \times C([0, T]; H^{k})$ that we have constructed is unique for any given $T$ for which the solution exists, where for simplicity of notation, we have replaced $T_3$ from Step 4 \eqref{T3} with $T$. This uniqueness will play an important role in the next step.

Suppose $w$ and $\tilde{w}$ are both solutions to the initial value problem given in \eqref{appendixIVP}. Then, their difference $v := w - \tilde{w}$ is a solution to
\begin{align}\label{step5IVP}
v_{tt} - \nu^{2} \Delta v + \nu \sqrt{-\Delta}&\partial_{t}v = -G(\phi^{(0)} + w) + G(\phi^{(0)} + \tilde{w}),
\\
v(0, x) &= 0, \qquad \partial_{t}v(0, x) = 0.
\nonumber
\end{align}
Using an argument similar to that on pg.~11 of Christ, Colliander, and Tao \cite{CCT}, for all $0 \le t \le T$,
the right hand side of equation \eqref{step5IVP} can be bounded as follows:
\begin{align*}
&\|G(\phi^{(0)} + w)(t) - G(\phi^{(0)} + \tilde{w})(t)\|_{H^{k}} \\
&\le C((1 + t)^{C} + ||\tilde{w}(t, \cdot)||_{H^{k}})^{C} \|w(t, \cdot) - \tilde{w}(t, \cdot)\|_{H^{k}}(1 + \|w(t, \cdot) - \tilde{w}(t, \cdot)\|_{H^{k}})^{p - 1}.
\end{align*}
Note that since $\tilde{w} \in C([0, T]; H^{k + 1})$, we have that $\|\tilde{w}(t, \cdot)\|_{H^{k}} \le A$ for all $0 \le t \le T$ for some $A$. So then,
\begin{align}\label{step5est}
&\|G(\phi^{(0)} + w)(t) - G(\phi^{(0)} + \tilde{w})(t)\|_{H^{k}} \nonumber\\
&\le C((1 + t)^{C} + A)^{C} \|w(t, \cdot) - \tilde{w}(t, \cdot)\|_{H^{k}}(1 + \|w(t, \cdot) - \tilde{w}(t, \cdot)\|_{H^{k}})^{p - 1} \nonumber \\
&\le C'\|w(t, \cdot) - \tilde{w}(t, \cdot)\|_{H^{k}}(1 + \|w(t, \cdot) - \tilde{w}(t, \cdot)\|_{H^{k}})^{p - 1} \qquad \text{ for all } t \in [0, T], 
\end{align}
where $C' := C((1 + T)^{C} + A)^{C}$. 

For the difference $v := w - \tilde{w}$, consider
\begin{equation*}
\mathcal{E}(v(t)) := \|\partial_{t}v(t, \cdot)\|_{H^{k}} + \sum_{|\alpha| \le 1} \|\partial^{\alpha}v(t, \cdot)\|_{H^{k}} = \|\partial_{t}(w - \tilde{w})(t, \cdot)\|_{H^{k}} + \sum_{|\alpha| \le 1} \|\partial^{\alpha}(w - \tilde{w})(t, \cdot)\|_{H^{k}},
\end{equation*}
where $\mathcal{E}$ is defined as in \eqref{energydef}. 
Let us make the \textit{bootstrap assumption} that
\begin{equation}\label{bootstrap2}
\|w(t, \cdot) - \tilde{w}(t, \cdot)\|_{H^{k}} \le 1 \qquad \text{ for all } t \in [0, T]. 
\end{equation} 
Then, using the energy inequality \eqref{energyineq}, for all $t \in [0, T]$, 
\begin{align*}
\mathcal{E}(v(t)) &\le C_{T}\left(\int_{0}^{t} C'||w(s, \cdot) - \tilde{w}(s, \cdot)||_{H^{k}}(1 + ||w(s, \cdot) - \tilde{w}(s, \cdot)||_{H^{k}})^{p - 1} ds\right) \\
&\le C_{T}C''\int_{0}^{t}||w(s, \cdot) - \tilde{w}(s, \cdot)||_{H^{k}} ds \ \ \ \ \ (\text{by the bootstrap assumption}) \\
&\le C_{T}C''\int_{0}^{t}\mathcal{E}(v(s)) ds,
\end{align*}
since $\mathcal{E}(v(0)) = 0$. So by Gronwall's inequality, $\mathcal{E}(v(t)) \le 0$ for $t \in [0, T]$. So therefore, $\mathcal{E}(v(t)) = 0$ for $t \in [0, T]$. This also closes the bootstrap assumption \eqref{bootstrap2}, since $\mathcal{E}(v(0)) = 0$ so the bootstrap assumption is satisfied for $t = 0$, and in addition, $\mathcal{E}(v(t))$ is a continuous function of $t$, since $(w - \tilde{w}, w' - \tilde{w}')$ is in $C([0, T]; H^{k + 1}) \times C([0, T]; H^{k})$. This shows that $(w, w') = (\tilde{w}, \tilde{w}')$ in $C([0, T]; H^{k + 1}) \times C([0, T]; H^{k})$. 

\vskip 0.1in
\noindent
{\bf Step 6: Existence as long as $H^{k + 1} \times H^{k}$ norm is bounded}

\vskip 0.1in

We have finished the proof of local existence and uniqueness. However, for the purposes of the proof of Proposition \ref{closeness}, we need something stronger: existence in $H^{k + 1} \times H^{k}$ as long as this norm is bounded. This claim justifies our computations in the proof of Proposition \ref{closeness}. This will be the content of this final step.

We show that a solution to the initial value problem \eqref{appendixIVP} exists as long as the $H^{k + 1} \times H^{k}$ norm of the solution is bounded. So far, for given $(f, g) \in H^{k + 1} \times H^{k}$, we have shown that there exists a $T > 0$ \textit{depending only on the norm of the initial data} such that there is a unique solution $w$ to the given initial value problem \eqref{appendixIVP} with
\begin{equation}\label{finalenergybound}
\sum_{|\alpha| \le k + 1} \|\partial^{\alpha}_{x}w(t, \cdot)\|_{L^{2}} + \sum_{|\alpha| \le k} \|\partial^{\alpha}_{x}w_{t}(t, \cdot)\|_{L^{2}} < \infty \qquad \text{ for all } t \in [0, T],
\end{equation}
where the left hand side is equivalent to the energy $\mathcal{E}(w(t))$ defined in \eqref{energydef}. 

Let $T_{*}$ be the supremum of all such times $T > 0$ for which we have local existence and uniqueness of a solution for given initial data $(f, g) \in H^{k + 1} \times H^{k}$ on $[0, T]$. We assert that either $T_{*} = \infty$ or
\begin{equation*}
\sup_{0 \le t < T_{*}} \left(\sum_{|\alpha| \le k + 1} \|\partial^{\alpha}_{x}w(t, \cdot)\|_{L^{2}} + \sum_{|\alpha| \le k} \|\partial^{\alpha}_{x}w_{t}(t, \cdot)\|_{L^{2}}\right) = \infty.
\end{equation*}

To see this, suppose for contradiction that $T_* < \infty$ satisfies
\begin{equation*}
\sup_{0 \le t < T_{*}} \left(\sum_{|\alpha| \le k + 1} \|\partial^{\alpha}_{x}w(t, \cdot)\|_{L^{2}} + \sum_{|\alpha| \le k} \|\partial^{\alpha}w_{t}(t, \cdot)\|_{L^{2}}\right) := M < \infty.
\end{equation*}
Recall that the time of existence that we found in Step 4 depends only on the norm of the initial data. Since $\|w(t, \cdot)\|_{H^{k + 1}}$ and $\|\partial_{t}w(t, \cdot)\|_{H^{k}}$ are both bounded in their sum by $M$ for $0 \le t < T_{*}$, there exists a \textit{uniform} time of existence $T_{M} > 0$ for the initial value problem \eqref{appendixIVP} for any initial data with $H^{k + 1} \times H^{k}$ norm less than or equal to $M$. Recalling that $(w, w') \in C([0, T]; H^{k + 1}) \times C([0, T]; H^{k})$ for $T \in [0, T_{*})$, we can consider $0 < t_{0} < T_{*}$ such that $t_{0} > T_{*} - T_{M}$. We then consider the initial value problem \eqref{appendixIVP} with initial data $w(t_{0}, x)$ and $\partial_{t}w(t_{0}, x)$. Gluing the resulting solution which exists for at least time $T_{M}$ with the previous solution $w$ from time $0$ to $t_{0}$, we get a new solution that is extended past $T_{*}$. The uniqueness assertion from Step 5 shows that on the overlap, $w$ and this newly constructed solution must be the same, and furthermore, the newly constructed solution is unique on the time interval on which it is defined. This contradicts the definition of $T_{*}$. 

\section*{Acknowledgements}
This work was partially supported by the National Science Foundation under grants DMS-1613757, DMS-1853340, and DMS-2011319, and by the UC Berkeley start-up funds.

\bibliography{main}
\bibliographystyle{plain}

\end{document}